\documentclass[reqno, 12pt, oneside]{amsart}

\usepackage{mathtools, bm}
\usepackage[all]{xy}
\usepackage{xcolor}
\usepackage{MnSymbol}
\usepackage{microtype}

\DeclareFontFamily{OT1}{pzc}{}
\DeclareFontShape{OT1}{pzc}{m}{it}{<-> s * [1.200] pzcmi7t}{}
\DeclareMathAlphabet{\mathpzc}{OT1}{pzc}{m}{it}

\usepackage{hyperref}

\hypersetup{colorlinks}
\definecolor{darkred}{rgb}{0.5,0,0}
\definecolor{darkgreen}{rgb}{0.3,0.7,0}
\definecolor{darkblue}{rgb}{0,0,0.5}

\makeatletter 
\@addtoreset{equation}{section}
\makeatother  

\numberwithin{equation}{section}


\usepackage{tikz-cd}
\usepackage{tikz}
\usetikzlibrary{arrows}
\usepackage{float}

\newcommand{\mb}{\mathbb}
\newcommand{\mf}{\mathfrak}
\newcommand{\mc}{\mathcal}

\renewcommand{\i}{{\bf i}}
\newcommand{\Gammait}{{\mathit{\Gamma}}}

\newcommand{\ov}{\overline}

\newcommand{\mz}{\mathpzc}
\DeclareMathAlphabet{\mathbbmsl}{U}{bbm}{m}{sl}
\newcommand{\mbit}{\mathbbmsl}

\newcommand{\bigslant}[2]{{\raisebox{.3em}{$#1$}\left/\raisebox{-.3em}{$#2$}\right.}}

\newcommand{\beq}{\begin{equation}}
\newcommand{\eeq}{\end{equation}}
\newcommand{\beqn}{\begin{equation*}}
\newcommand{\eeqn}{\end{equation*}}

\newcommand{\uds}[1]{\underline{\smash{#1}}}

\renewcommand{\setminus}{\smallsetminus}
\newcommand{\sld}{{\slashdiv}}
\newcommand{\wh}{\widehat}

\newcommand{\preq}{\preccurlyeq}

\usepackage[left= 3 cm, right= 3 cm, top= 2.7 cm, bottom=2.9 cm, foot= 1.2 cm, marginparwidth=2.3 cm, marginparsep=0.3 cm]{geometry}

\author[Tian]{Gang Tian}
\address{
Beijing International Center for Mathematical Research\\
Peking University\\
Beijing, China\\
and\\
Department of Mathematics \\
Princeton University\\
Fine Hall, Washington Road\\
Princeton, NJ 08544 USA
}
\email{tian@math.princeton.edu}

\author[Xu]{Guangbo Xu}
\address{
Department of Mathematics \\
Princeton University\\
Fine Hall, Washington Road\\
Princeton, NJ 08544 USA
}
\email{guangbox@math.princeton.edu}

\date{\today}

\title[]{A Wall-Crossing Formula and the Invariance of GLSM Correlation Functions}


\newtheorem{thm}{Theorem}[section]
\newtheorem{lemma}[thm]{Lemma}
\newtheorem{cor}[thm]{Corollary}
\newtheorem{prop}[thm]{Proposition}

\newtheorem{fact}[thm]{Fact}
	
\theoremstyle{definition}
\newtheorem{defn}[thm]{Definition}
\newtheorem{hyp}[thm]{Hypothesis}
	
\theoremstyle{remark}
\newtheorem{rem}[thm]{Remark}

\newtheorem{notation}[thm]{Notation}

\begin{document}

\begin{abstract}
In this paper we prove a wall-crossing formula, a crucial ingredient needed to prove that the correlation function of gauged linear $\sigma$-model is independent of the choice of perturbations. 
\end{abstract}

\maketitle

\setcounter{tocdepth}{1}
\tableofcontents 

\section{Introduction}

The gauged linear $\sigma$-model (GLSM) is a two-dimensional supersymmetric quantum field theory introduced by Witten \cite{Witten_LGCY}. It plays a fundamental role in the physics ``proof'' of mirror symmetry \cite{Hori_Vafa}, and its idea has been adopted in many mathematical studies of mirror symmetry. It provides an important framework in studying the relation between Gromov--Witten theory and the Landau--Ginzburg model, better known as the Landau--Ginzburg/Calabi--Yau correspondence.

The significance of GLSM calls for a rigorous mathematical construction. Such a mathematical framework is well-understood when the superpotential is zero. On the algebraic side, there have been the construction of stable quotient invariants \cite{MOP_2011} and quasimap invariants \cite{CKM_quasimap}. On the symplectic side, there have been various studies on the theory of symplectic vortex equations aiming at defining Gromov--Witten type invariants (called the gauged GW or Hamiltonian GW invariants), see for example \cite{Mundet_thesis, Cieliebak_Gaio_Salamon_2000, Mundet_2003, Cieliebak_Gaio_Mundet_Salamon_2002, Mundet_Tian_2009, Mundet_Tian_draft}. It has also been applied to Floer theory, such as \cite{Frauenfelder_thesis, Xu_VHF}.

The case with a nontrivial superpotential is far more difficult. A simple case is when the superpotential $W$ is a nondegenerate quasihomogeneous polynomial on ${\mb C}^n$ and the gauge group is a finite abelian group of symmetries of $W$. Such a theory (which we call the orbifold Landau--Ginzburg A-model) was proposed by Witten in \cite{Witten_spin}, and constructed rigorously by Fan--Jarvis--Ruan via \cite{FJR1, FJR2, FJR3}. A new feature in the presence of a superpotential, which seems to indicate the insufficiency of algebraic methods, is the separation of narrow and broad states. While the narrow states correspond to algebraic cohomological classes, the broad states are transcendental and often of odd degrees. Algebraic method can only treat the narrow case, for example, in Chang--Li--Li's construction of Witten's top Chern class \cite{Chang_Li_Li}, while for the broad case one needs to perturb the differential equation using non-algebraic objects (see \cite{FJR3} for details).

For general GLSM target spaces, while there have been a few works on the algebraic geometry construction (for example \cite{FJR_GLSM} \cite{CLLL_15}) focusing on the narrow case, our project starting at \cite{Tian_Xu} aims at giving a symplectic geometry construction covering both narrow and broad states. In \cite{Tian_Xu, Tian_Xu_2, Tian_Xu_3} and this paper, we consider a special case, where the gauge group is $U(1)$ and the superpotential is of Lagrange multiplier type. Over a fixed smooth domain curve, in \cite{Tian_Xu} we studied the basic analytic properties of the moduli space of perturbed gauged Witten equation, including the compactness. In \cite{Tian_Xu_3} we constructed a virtual fundamental cycle on the moduli space and defined the GLSM correlation function which was forecasted in \cite{Tian_Xu_2}. (See \cite{Tian_Xu_2017} and \cite{Tian_Xu_geometric} for the case of geometric phases of more general GLSM target spaces.)

\subsection{Main result}

This paper proves a technical result needed for \cite{Tian_Xu_3} (this result is also claimed in \cite{Tian_Xu_2}). Let us briefly recall the setup and the state the main result. The detailed setting will be recalled in Section \ref{section3}. Let $X$ be a noncompact K\"ahler manifold and  $Q: X \to {\mb C}$ be a holomorphic function. Assume $Q$ has only one critical point and there is a ${\mb C}^*$-action on $X$ making $Q$ homogeneous of degree $r \geq 2$. Denote $\tilde X = {\mb C} \times X$ with coordinates $(p, x)$ and consider the function $W(p, x) = p Q(x)$. On the other hand, a smooth $r$-spin curve is a compact Riemann surface $\Sigma$ with orbifold singularities $z_1, \ldots, z_n$, together with an orbifold line bundle $L \to \Sigma$ and an isomorphism 
\beqn
\phi: L^{\otimes r} \to K_{\rm log} = K_\Sigma \otimes {\mc O}(z_1) \otimes \cdots \otimes {\mc O}(z_n).
\eeqn
The existence of the above isomorphism implies that the local group of the orbifold structure at $z_i$ is ${\mb Z}_{r_i}$ with $r_i$ divides $r$; moreover, viewing ${\mb Z}_{r_i}$ as a subgroup of ${\mb Z}_r$, the monodromy of the orbifold line bundle $L$ at $z_i$ is an element $\gamma_i \in {\mb Z}_r$. The orbifold point $z_i$ (or the monodromy $\gamma_i$) is called {\bf narrow} if the fixed point set $X_{\gamma_i} \subset X$ is a single point which is the critical point of $Q$; otherwise we say it is {\bf broad}. 

The gauged Witten equation over $\Sigma$ is a first-order partial differential equation of two variables $A$ and $u$. It is elliptic modulo gauge transformation. The variable $A$ is a connection on certain $U(1) \times U(1)$-bundle over $\Sigma$, and the variable $u$ is locally a map from $\Sigma$ to $\tilde X$. Near a broad orbifold point $z_i$ with cylindrical coordinates $(s, t)$, for fixed $A$, the equation which $u$ satisfies is asymptotic to the following Floer-type equation
\beqn
\frac{\partial u}{\partial s} + J \frac{\partial u}{\partial t}  + \nabla W(u) = 0.
\eeqn
It follows that $u$ converges to a critical point of $W|_{{\mb C} \times X_{\gamma_i}}$. Since the critical points are degenerate, the natural linearization of the gauged Witten equation is not Fredholm.

For a broad $\gamma \in {\mb Z}_r$, consider perturbations of $W$ of the form 
\beqn
W'(p, x)= W(p, x) - ap + F(x)
\eeqn
where $a \in {\mb C}$ and $F: X \to {\mb C}$ is a $\gamma$-invariant function of lower degree. If $(a, F)$ is generic, then the perturbed function has nondegenerate critical points. Upon choosing $(a, F)$ for each broad orbifold point, one can perturb the gauged Witten equation over $\Sigma$ (equipped with a ``rigidification'' of the $r$-spin structure) so that the linearization becomes Fredholm. 

Furthermore, one can use a list of critical points $(\kappa_1, \ldots, \kappa_b)$ of the perturbed functions to label a moduli space ${\mc M}(\kappa_1, \ldots, \kappa_b)$ which consists of solutions $(A, u)$ with $u$ being asymptotic to the critical point $\kappa_i$ at $z_i$. When the perturbations are generic (called {\bf strongly regular perturbations}), such moduli spaces are compact. One can then construct a virtual fundamental cycle of ${\mc M}(\kappa_1, \ldots, \kappa_b)$. However it turns out that the virtual cycles depend on the choice of perturbations of $W$. To prove that the resulting correlation functions are independent of perturbations, one needs to compare the virtual cycles for two choices. This is referred to as a ``wall-crossing formula'' which is very similar to the wall-crossing formula in \cite{FJR3} for orbifold Landau--Ginzburg theory. Our main theorem is as follows. 

\begin{thm}\label{thm11}
Let $\Sigma$ be a rigidified $r$-spin curve with broad markings $z^*, z_1, \ldots, z_b$ with monodromies $\gamma^*, \gamma_1, \ldots, \gamma_b \in {\mb Z}_r$. Let $W_1, \ldots, W_b$ be strongly regular perturbations of $W$ associated to the markings $z_1, \ldots, z_b$ and let $W_{\iota_-}$, $W_{\iota_+}$ be two strongly regular perturbations of $W$ associated to $z^*$. Let $\kappa_1, \ldots, \kappa_b$ be critical points of the perturbed superpotential $W_1, \ldots, W_b$ restricted to the fixed point sets of $\gamma_1, \ldots, \gamma_b$ respectively. 

Suppose $W_{\iota_-}$ and $W_{\iota_+}$ are connected via a generic path $\tilde W = \{ W_\iota \}_{\iota \in [\iota_-, \iota_+]}$ such that there is only one $\iota_0 \in [\iota_-, \iota_+]$ for which $W_\iota$ fails to be strongly regular. Assume for $\iota = \iota_0$, there are only two critical points $\upsilon_{\iota_0}$ and $\kappa_{\iota_0}$ of $W_{\iota_0}$ restricted to the fixed point set of $\gamma^*$ such that 
\begin{align*}
&\ {\bf Im} W_{\iota_0}(\upsilon_{\iota_0}) = {\bf Im} W_{\iota_0} (\kappa_{\iota_0}),\ &\ {\bf Re} W_{\iota_0}(\upsilon_0) > {\bf Re} W_{\iota_0}( \kappa_{\iota_0});
\end{align*}
assume in addition 
\beq\label{eqn11}
\Big. \frac{d}{d\iota} \Big|_{\iota = \iota_0} \Big[ {\bf Im} W_{\iota_0}(\upsilon_{\iota_0}) - {\bf Im} W_{\iota_0}(\kappa_{\iota_0}) \Big] \neq 0.
\eeq

Let $\# {\mc M}(\kappa_{\iota_\pm}^*, \kappa_1, \ldots, \kappa_b) \in {\mb Q}$ be the virtual cardinality of the moduli space of the gauged Witten equation defined in \cite{Tian_Xu_3}. Then we have 
\beqn
\# {\mc M}(\kappa_{\iota_+}^*, \kappa_1, \ldots, \kappa_b) - \# {\mc M}(\kappa_{\iota_-}^*, \kappa_1, \ldots, \kappa_b) = (-1)^{\tilde W} \Big( \# {\mc N}(\upsilon_{\iota_0}, \kappa_{\iota_0}) \Big) \Big( \# {\mc M}(\upsilon_{\iota_-}^*, \kappa_1, \ldots, \kappa_b)\Big).
\eeqn
Here $(-1)^{\tilde W} \in \{ \pm 1\}$ is the sign of \eqref{eqn11}, and $\# {\mc N}(\upsilon_{\iota_0}, \kappa_{\iota_0})$ is the algebraic count of the number of BPS soliton solutions connecting $\upsilon_{\iota_0}$ and $\kappa_{\iota_0}$ (where the number equals to a topological intersection number).
\end{thm}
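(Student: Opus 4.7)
The plan is to set up the parametrized moduli space
\[
\mc M^{\rm par} = \Big\{ (\iota, (A, u)) : \iota \in [\iota_-, \iota_+],\ (A, u) \in \mc M(\kappa^*_\iota, \kappa_1, \ldots, \kappa_b) \Big\},
\]
where $\kappa^*_\iota$ is the continuous family of critical points of $W_\iota|_{X_{\gamma^*}}$ interpolating $\kappa^*_{\iota_-}$ and $\kappa^*_{\iota_+}$. Using the Fredholm and transversality machinery from \cite{Tian_Xu_3}, this space carries a virtual structure of a compact one-manifold with boundary, away from any failure of compactness. The boundary at the endpoints $\iota = \iota_\pm$ contributes $\# \mc M(\kappa^*_{\iota_\pm}, \kappa_1, \ldots, \kappa_b)$ with appropriate signs, so the theorem will follow once all other boundary contributions are identified and their signed count is computed.

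The first key step is to show that the only source of non-compactness in $\mc M^{\rm par}$ occurs at $\iota = \iota_0$, and takes the form of a soliton bubble off the marked point $z^*$. Using the action–energy identity for the Floer-type asymptotic equation near $z^*$, one sees that a nontrivial soliton trajectory between two critical points $\alpha, \beta$ of $W_\iota|_{X_{\gamma^*}}$ can exist only when ${\bf Im}\, W_\iota(\alpha) = {\bf Im}\, W_\iota(\beta)$. By the hypothesis, this condition is met in our path only at $\iota = \iota_0$ and only for the pair $(\upsilon_{\iota_0}, \kappa_{\iota_0})$. One would combine the a priori estimates established in \cite{Tian_Xu} with a neck-stretching argument in the cylindrical coordinates near $z^*$ to show that any sequence $(\iota_k, (A_k, u_k))$ without a convergent subsequence must develop, after passing to a subsequence, into a broken configuration: a principal piece in $\mc M(\upsilon^*_{\iota_0}, \kappa_1, \ldots, \kappa_b)$ together with a soliton in $\mc N(\upsilon_{\iota_0}, \kappa_{\iota_0})$ capturing the asymptotic jump at $z^*$.

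The second step is a gluing theorem: any such broken configuration $(u_\infty, \sigma) \in \mc M(\upsilon^*_{\iota_0}, \kappa_1, \ldots, \kappa_b) \times \mc N(\upsilon_{\iota_0}, \kappa_{\iota_0})$ arises as the limit of a unique (modulo reparametrization) family of smooth solutions in $\mc M^{\rm par}$. This is a pregluing plus Newton-iteration argument analogous to the one in \cite{FJR3}, carried out in the virtual setting of \cite{Tian_Xu_3}. The gluing parameter, together with the parameter $\iota$, together parametrize a half-open neighborhood of the broken configuration in the compactified $\overline{\mc M}{}^{\rm par}$; the derivative of ${\bf Im}\, W_\iota(\upsilon_\iota) - {\bf Im}\, W_\iota(\kappa_\iota)$ at $\iota_0$, whose nonvanishing is assumed in \eqref{eqn11}, is exactly the transversality condition that forces the glued solutions to lie on one definite side of $\iota_0$, with the side determined by the sign $(-1)^{\tilde W}$.

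The final step is bookkeeping: by the cobordism principle, the signed boundary count of the compact virtual one-manifold $\overline{\mc M}{}^{\rm par}$ vanishes, so
\[
\# \mc M(\kappa^*_{\iota_+}, \kappa_1, \ldots, \kappa_b) - \# \mc M(\kappa^*_{\iota_-}, \kappa_1, \ldots, \kappa_b)
\]
equals the signed count of broken boundary strata at $\iota_0$. By the gluing theorem this signed count is $(-1)^{\tilde W} \bigl(\# \mc N(\upsilon_{\iota_0}, \kappa_{\iota_0})\bigr) \bigl(\# \mc M(\upsilon^*_{\iota_-}, \kappa_1, \ldots, \kappa_b)\bigr)$, using that $\# \mc M(\upsilon^*_{\iota_-}, \cdots) = \# \mc M(\upsilon^*_{\iota_0}, \cdots)$ because $\upsilon^*_\iota$ deforms through strongly regular data on $[\iota_-, \iota_0)$. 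The main obstacle is the gluing analysis of the second step: one must show that the virtual perturbation scheme of \cite{Tian_Xu_3} is compatible with the soliton degeneration at the wall, so that the gluing map is a genuine local diffeomorphism at the virtual level and the orientation computation pinning down $(-1)^{\tilde W}$ is valid. The orientation/sign reduces, by the standard linearization argument, to comparing the orientations of determinant lines along the pregluing family with those of the parametrized operator, and the sign of \eqref{eqn11} enters precisely through the spectral flow of the asymptotic operator at $z^*$.
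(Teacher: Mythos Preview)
Your overall architecture---parametrized moduli space, compactify by soliton configurations at $\iota_0$, cobordism argument---is the same as the paper's. But there are two genuine gaps that the paper confronts explicitly and your outline does not.

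\textbf{BPS versus non-BPS solitons.} Your compactness step says a divergent sequence breaks into a principal piece together with a soliton in $\mc N(\upsilon_{\iota_0},\kappa_{\iota_0})$. But $\mc N$ denotes \emph{BPS} solitons (the $t$-independent Floer trajectories), and general limits produce arbitrary Floer trajectories, not only BPS ones. So the compactified space $\tilde{\mc M}(\kappa)$ contains a stratum $\tilde{\mc M}^s$ of non-BPS soliton configurations as well. The crucial point, which your sketch misses, is that non-BPS configurations are \emph{interior} points of $\tilde{\mc M}(\kappa)$, not boundary: a non-BPS soliton has only a finite automorphism group $\Gammait_q\subset S^1$, so the gluing chart is modeled on a disk $\Delta^\epsilon$ (modulo $\Gammait_q$), whereas a BPS soliton has full $S^1$ symmetry and the $S^1$-quotient of the gluing disk is the half-interval $[0,\epsilon)$, which is the local boundary model. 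This is why only the BPS stratum $\tilde{\mc M}^b$ enters the wall-crossing count (the paper's Corollary~\ref{cor414}, Corollary~\ref{cor416}, and Theorem~\ref{thm34}). Without this distinction your boundary identification is incomplete.

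\textbf{The BPS boundary stratum is not a product.} You write the broken locus as $\mc M(\upsilon_{\iota_0}^*,\dots)\times\mc N(\upsilon_{\iota_0},\kappa_{\iota_0})$ and read off the count as a product. In fact the soliton equation on the rational component depends on a parameter $\delta=\delta_A\in(0,1]$ determined by the gauge field $A$ on the principal component, so $\mc M_{\iota_0}^b(\kappa_{\iota_0})$ is only a \emph{fibered} object over $\mc M_{\iota_0}(\upsilon_{\iota_0})$. The paper resolves this with a separate one-parameter homotopy (Proposition~\ref{prop35}) interpolating $\delta_A$ to $1$, whose own cobordism relation yields $\#\mc M_{\iota_0}^b(\kappa_{\iota_0})=\#\mc M_{\iota_0}(\upsilon_{\iota_0})\cdot\#\mc N(\upsilon_{\iota_0},\kappa_{\iota_0})$. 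This step is not optional.

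\textbf{On the sign.} Your description that glued solutions lie ``on one definite side of $\iota_0$'' is not how the sign arises; the gluing parameter $|\alpha|$ is transverse to the $\iota$-direction, and glued solutions occupy both sides. The sign $(-1)^{\tilde W}$ comes from an orientation comparison on determinant lines (Subsection~\ref{subsection64}): deforming the linearized operator at the broken configuration and tracking the canonical generator, one picks up exactly $\mathrm{Sign}\,\langle L_\infty(\partial_\iota), J\dot\sigma_q\rangle$, which is the sign of \eqref{eqn11}.
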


For the precise meanings of relevant notions and concepts in the above statement, see \cite{Tian_Xu_3} and Section \ref{section3} of this paper. We actually rephrase the theorem in Section \ref{section3} in a much more simplified situation without losing generality. For example, it suffices to consider the case that $\Sigma$ has no narrow markings and only one broad marking, because extra markings at which the perturbations do not vary only brings in notational complexities. The simplified version of the above theorem is restated as Theorem \ref{thm33}.

Both the definition of the correlation function and the proof of the invariance rely on constructing certain virtual fundamental chains or cycles. The virtual cycle theory provides a general method of defining Euler numbers for infinite dimensional bundles where a natural section is not transverse. Our framework of virtual cycle theory is based on the original approach of Li--Tian \cite{Li_Tian} developed for defining symplectic Gromov--Witten theory for general compact targets. In the appendix of \cite{Tian_Xu_3}, we provided a detailed discussion of this approach. We remark that our approach (as well as that of \cite{Li_Tian}) is topological, hence we do not need smooth structures as in Kuranishi theory (see \cite{Fukaya_Ono}) or polyfold theory (see \cite{HWZ1}).

This paper is organized as follows. In Section \ref{section2} we briefly recall the abstract setting of virtual orbifold atlases of \cite{Tian_Xu_3}. In Section \ref{section3} we briefly recall the setting of \cite{Tian_Xu_3} and restate the wall-crossing formula we want to prove. Certain simplifications will also be adopted. In Section \ref{section4} we construct local charts for the relevant moduli space. In Section \ref{section5} we provide the details of the gluing analysis. In Section \ref{section6} we patch the local charts together to construct a virtual orbifold atlas on the moduli space. 

{\bf Acknowledgments.} The authors would like to thank David Morrison, Edward Witten, Kentaro Hori, and Mauricio Romo for their patience in communicating the physics about GLSM. The second named author would like to thank Huai-Liang Chang for helpful discussions.

\section{Virtual Orbifold Atlases}\label{section2}

In this section we recall the basic notions in the abstract level needed for constructing virtual fundamental chains. We refer the reader to \cite{Tian_Xu_3} or \cite{Tian_Xu_geometric} for details.

\subsection{Topological orbifolds and continuous orbibundles}

The basic definitions of topological orbifolds and orbibundles have been recalled in \cite{Tian_Xu_3}. Here we first remark on a few specifics about topological manifolds. A comprehensive reference is \cite{Kirby_Siebenmann}.

Let $M$ be a topological manifold. A subset of $M$ is a topological submanifold if the subset with the subspace topology is a topological manifold. Let $S$ be another topological manifold. A continuous map $\phi: S \to M$ is called an embedding if it is a homeomorphism onto a topological submanifold of $M$. In \cite{Tian_Xu} and this paper, all embeddings are assumed to be {\it locally flat}. Namely, for any $p \in \phi(S)$, there exists a local coordinate chart $\varphi_p: M_p \to {\mb R}^n$ of $M$ around $p$ such that $\phi(S) \cap M_p \subset \varphi_p^{-1}( {\mb R}^k \times \{0\})$. The local flatness guarantees the existence of tubular neighborhoods. 

Given two continuous vector bundles $E \to S$ and $F \to M$ a bundle embedding is a pair $(\phi, \wh \phi)$ where $\phi: S \to M$ is a locally flat embedding, $\wh\phi: E \to F$ is a bundle embedding covering $\phi$. It is easy to see that bundle embeddings can compose.

In the topological category there is still a notion of transversality. Let $U \subset {\mb R}^m$ be an open subset and $f: U \to {\mb R}^n$ be a continuous map. We say that $f$ is {\bf transverse} to $0\in {\mb R}^n$ if $f^{-1}(0)$ is a locally flat submanifold of $U$ and for each $p \in f^{-1}(0)$, each locally flat chart $\varphi_p: U_p \to {\mb R}^m$ such that $\varphi_p(f^{-1}(0) \cap U_p) \subset {\mb R}^{m-n} \times \{0\}$ and $\varphi_p(p) = 0$, for all $x$ close to $0 \in {\mb R}^{m-n}$, the restriction of $f$ to $\varphi_p^{-1}( \{ x\} \times {\mb R}^n)$ is a local homeomorphism near the origin. The transversality notion for such maps can be generalized to the notion of a continuous map between topological manifolds being transverse to a locally flat submanifold of the target, and the notion of a continuous section of a continuous vector bundle being transverse to the zero section. Such generalization relies on the notion of microbundles invented by Milnor \cite{Milnor_micro_1}.

An important theorem used in our construction is topological transversality theorem, proved by Kirby--Siebenmann \cite{Kirby_Siebenmann} and Quinn \cite{Quinn_1982, Quinn}. The precise statement (see the main theorem of \cite{Quinn}) says that two properly embedded topological manifolds $X, M$ in an ambient manifold $Y$ can be made transverse by an arbitrary small isotopy of $M$. This implies the existence of transverse perturbations in the following situation: for a continuous vector bundle $E$ over a topological manifold $M$, any continuous section can be made transverse to the zero section by an arbitrarily small perturbation. 

All relevant notions about topological manifolds can be extended to topological orbifolds. In this paper we only need to consider effective orbifolds. Typical examples are global quotients $U = \tilde U / \Gammait$ where $\tilde U$ is a topological manifolds, $\Gammait$ is a finite group acting continuously and effectively on $\tilde U$. The broadest category we consider is that of orbifolds with boundary. 

\subsection{Charts and transitions}

In the following discussion of the topological virtual cycle theory, we always assume that $X$ is a compact Hausdorff topological space.

\begin{defn}\label{defn21}
A {\bf virtual orbifold chart} (chart for short) of $X$ is a tuple $C = (U, E, S, \psi, F)$ where
\begin{enumerate}
\item $U$ is an orbifold (with or without boundary).

\item $E \to U$ is an orbifold vector bundle.

\item $S: U \to E$ is a continuous section.

\item $F \subset X$ is an open subset.

\item $\psi: S^{-1}(0) \to F$ is a homeomorphism.
\end{enumerate}
$F$ is called the {\bf footprint} of the chart $C$. ${\bf dim} U - {\bf rank} E$ is called the virtual dimension of $C$. If $U' \subset U$ is an open subset, then we can restrict $C$ to $U'$ in the obvious way, denoted by $C|_{U'}$ and called a {\bf subchart} or a {\bf shrinking} of $C$. If $U'$ is a precompact subset of $U$, denoted as $U' \sqsubset U$, then we say that $C|_{U'}$ is a precompact shrinking of $C$.
\end{defn}

\begin{defn}\label{defn22}
Let $C_i:= (U_i, E_i, S_i, \psi_i, F_i)$, $i=1,2$ be two charts of $X$. An {\bf embedding} of $C_1$ into $C_2$ consists of a bundle embedding $(\phi_{21}, \wh \phi_{21})$ of orbifold vector bundles such that: 1) the diagrams
\begin{align*}
&\ \vcenter{ \xymatrix{
E_1 \ar[r]^{\hat\phi_{21}} \ar[d]^{\pi_1} & E_2 \ar[d]_{\pi_2}\\
U_1 \ar@/^1pc/[u]^{S_1} \ar[r]^{\phi_{21}} & U_2 \ar@/_1pc/[u]_{S_2}}},\ &\ \vcenter{ \xymatrix{S_1^{-1}(0) \ar[r]^{\phi_{21}} \ar[d]^{\psi_1} & S_2^{-1}(0) \ar[d]^{\psi_2}\\
                                 X \ar[r]^{{\rm Id}} & X} }
																\end{align*}
commute; 2) $(\phi_{21}, \wh\phi_{21})$ satisfy the ``tangent bundle condition,'' namely, there exist a neighborhood $N$ of $\phi_{21}(U_1)$ and a subbundle $E_{1;2} \subset E_2|_N$ extending $\wh\phi_{21}(E_1)$ such that $S_2|_N$ is transverse to $E_{1;2}$ and $S_2^{-1}(E_{1;2}) \cap N = \phi_{21}(U_1)$.
\end{defn}

It is also not hard to show that embeddings of charts can compose.

\begin{defn}\label{defn23}
Let $C_i = (U_i, E_i, S_i, \psi_i, F_i)$, $i=1, 2$ be two charts. A {\bf coordinate change} from $C_1$ to $C_2$ is a triple $T_{21} = (U_{21}, \phi_{21}, \wh\phi_{21})$ where $U_{21}\subset U_1$ is an open set and $(\phi_{21}, \wh\phi_{21})$ is an embedding from $C_1|_{U_{21}}$ to $C_2$. They must satisfy the following conditions.
\begin{enumerate}

\item $\psi_1( S_1^{-1}(0) \cap U_{21}) = F_1\cap F_2  \subset X$.

\item If $x_k \in U_{21}$ converges to $x_\infty \in U_1$, $y_k = \phi_{21}(x_k)$ converges to $y_\infty \in U_2$, then $x_\infty \in U_{21}$ and $y_\infty = \phi_{21}(x_\infty)$.
\end{enumerate}
\end{defn}

For $i = 1, 2$, let $C_i' = C_i|_{U_i'}$ be a shrinking of $C_i$. Then we can restrict the coordinate change $T_{21}$ to 
\beqn
U_{21}':= U_1' \cap \phi_{21}^{-1}(U_2') \subset U_{21}.
\eeqn
Denote the induced coordinate change by $T_{21}': C_1' \to C_2'$. 

\subsection{Atlases}

Unlike atlases of manifolds or orbifolds, coordinate changes in a virtual atlas are not bi-directional. Naturally charts in a virtual atlas are indexed by a partially ordered set.

\begin{defn}\label{defn24}
A {\bf virtual orbifold atlas} of virtual dimension $k$ on $X$ is a collection
\beqn
{\mf A}:= \Big( \big\{  C_I = (U_I, E_I, S_I, \psi_I, F_I) \ |\ I \in \mbit{I} \big\},\ \big\{ T_{JI} = (U_{JI}, \phi_{JI}, \wh \phi_{JI} ) \ |\ I \preq J \big\} \Big),
\eeqn
where
\begin{enumerate}

\item $(\mbit{I}, \preq)$ is a finite, partially ordered set.

\item For each $I\in \mbit{I}$, $C_I$ is a virtual orbifold chart of virtual dimension $k$ on $X$.

\item For $I \preq J$, $T_{JI}$ is a coordinate change from $C_I$ to $C_J$.
\end{enumerate}
They are subject to the following conditions.
\begin{itemize}
\item {\bf (Covering Condition)} $X$ is covered by all the footprints $F_I$. 

\item {\bf (Cocycle Condition)} For $I \preq J \preq K$,
\beqn
\wh\phi_{KI}|_{U_{KJI}} = \wh\phi_{KJ} \circ \wh\phi_{JI}|_{U_{KJI}},\ {\rm where}\ U_{KJI} = U_{KI} \cap \phi_{JI}^{-1} (U_{KJ}).
\eeqn

\item {\bf (Overlapping Condition)} For $I, J \in \mbit{I}$, we have
\beqn
\ov{F_I} \cap \ov{F_J} \neq \emptyset \Longrightarrow I \preq J\ {\rm or}\ J \preq I.
\eeqn
\end{itemize}
\end{defn}

All virtual orbifold atlases considered in this paper have definite virtual dimensions, although we do not always explicitly mention it.

\begin{rem}\label{rem25}
The above setting is slightly more general than what we need in our application in this paper and the companion \cite{Tian_Xu_3}. In this paper we will see the following situation in the concrete situations.
\begin{enumerate}

\item The index set $\mbit{I}$ consists of certain nonempty subsets of a finite set $\{1, \ldots, m \}$, which has a natural partial order given by inclusions. 

\item For each $i \in I$, $\Gammait_i$ is a finite group and $\Gammait_I =  \mathit{\Pi}_{i\in I} \Gammait_i$. $U_I = \tilde U_I/ \Gammait_I$ is a global quotient. Moreover, $\tilde E_1, \ldots, \tilde E_m$ are vector spaces acted by $\Gammait_i$ and the orbifold bundle $E_I \to U_I$ is the quotient $(\tilde U_I \times \tilde E_I) / \Gammait_I$ where $\tilde E_I = \bigoplus_{i \in I} \tilde E_i$.

\item For $I \preq J$, $U_{JI} = \tilde U_{JI} / \Gammait_I$ where $\tilde U_{JI} \subset \tilde U_I$ is a $\Gammait_I$-invariant open subset and the coordinate change is induced from the following diagram
\beq\label{eqn21}
\vcenter{ \xymatrix{ \Gammait_{J-I} \ar[r] & \tilde V_{JI} \ar[r] \ar[d] & \tilde U_J \\
                                         &    \tilde U_{JI}              & }   }
\eeq
Here $\tilde V_{JI} \to \tilde U_{JI}$ is a covering map with group of deck transformations identical to $\Gammait_{J-I} = \mathit{\Pi}_{j \in J - I} \Gammait_j$; then $\Gammait_J$ acts on $\tilde V_{JI}$ and $\tilde V_{JI} \to \tilde U_J$ is a $\Gammait_J$-equivariant embedding of manifolds, which induces an orbifold embedding $U_{JI} \to U_J$ and an orbibundle embedding $E_I|_{U_{JI}} \to E_J$. Moreover, there is a natural subbundle $E_{I;J} \subset E_J$ which can be used to verify the tangent bundle condition.
\end{enumerate}

\end{rem}

\subsection{The virtual fundamental chain and cycle}

In order to construct the virtual chain or the virtual cycle, we need some more technical preparations.

\begin{defn}
Let ${\mf A}:= ( \{ C_I |\ I \in \mbit{I} \},\ \{ T_{JI} |\ I \preq J \})$ be a virtual orbifold atlas on $X$. A {\bf shrinking} of ${\mf A}$ is another virtual orbifold atlas ${\mf A}' = ( \{ C_I' |\ I \in \mbit{I} \},\ \{ T_{JI}' |\ I \preq J \})$ indexed by elements of the same set $\mbit{I}$ such that 
\begin{enumerate}
\item For each $I \in \mbit{I}$, $C_I'$ is a shrinking of $C_I$.

\item For each $I \preq J$, $T_{JI}'$ is the induced shrinking of $T_{JI}$.
\end{enumerate}
\end{defn}

Given a virtual orbifold atlas 
\beqn
{\mf A}:= ( \{ C_I = (U_I, E_I, S_I, \psi_I, F_I)\ |\ I \in \mbit{I} \},\ \{ T_{JI}= (U_{JI}, \phi_{JI}, \wh\phi_{JI})\ |\ I \preq J \} ),
\eeqn 
we define a relation $\curlyvee$ on the disjoint union $\bigsqcup_{I \in \mbit{I}} U_I$ as follows. $U_I \ni x \curlyvee y\in U_J$ if one of the following holds.
\begin{enumerate}
\item $I = J$ and $x = y$;

\item $I \preq J$, $x \in U_{JI}$ and $y = \phi_{JI}(x)$;

\item $J \preq I$, $y \in U_{IJ}$ and $x = \phi_{IJ}(y)$.
\end{enumerate}
If ${\mf A}'$ is a shrinking of ${\mf A}$, then it is easy to see that the relation $\curlyvee'$ on $\bigsqcup_{I \in \mbit{I}} U_I'$ defined as above is induced from the relation $\curlyvee$ for ${\mf A}$ via restriction. If $\curlyvee$ is an equivalence relation, we can form the quotient space
\beqn
|{\mf A}|:= \Big( \bigsqcup_{I \in \mbit{I}} U_I \Big)/ \curlyvee.
\eeqn
with the quotient topology. $X$ is then a compact subset of $|{\mf A}|$. 

\begin{lemma}\label{lemma27}
For each virtual orbifold atlas ${\mf A}$, there exists a shrinking ${\mf A}'$ of ${\mf A}$ such that $\curlyvee'$ is an equivalence relation. Moreover, the shrinking can be made such that $|{\mf A}'|$ is a Hausdorff topological space and for each $I\in \mbit{I}$, the natural map $U_I' \to |{\mf A}'|$ is a homeomorphism onto its image.
\end{lemma}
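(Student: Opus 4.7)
The plan is to construct $\mathfrak{A}'$ by a finite sequence of precompact shrinkings of the charts $U_I$, exploiting the compactness of $X$, the finiteness of $\mbit{I}$, and crucially the closure axiom (item (2) of Definition \ref{defn23}) at every stage.

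As a setup, I would first produce nested precompact shrinkings $F_I^{(3)} \sqsubset F_I^{(2)} \sqsubset F_I^{(1)} \sqsubset F_I$ of the footprints, each family still covering $X$; such shrinkings exist because $X$ is compact and $\mbit{I}$ is finite. These lift to open sets $U_I^{(k)} \sqsubset U_I$ satisfying $\psi_I(S_I^{-1}(0) \cap U_I^{(k)}) = F_I^{(k)}$ and $\overline{U_I^{(k+1)}} \subset U_I^{(k)}$. The restricted data automatically form a shrunken atlas at every level, and the ``buffer'' between consecutive levels is what will absorb the finer shrinkings required below.

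The central step is to arrange that $\curlyvee'$ is transitive. By the overlapping condition, any witnessing chain $x_1 \curlyvee x_2 \curlyvee x_3$ lies in a totally ordered triple of indices; take the representative case $I_1 \preq I_2 \preq I_3$. The cocycle condition gives $x_1 \curlyvee x_3$ as soon as $x_1 \in U_{I_3 I_1}$, so transitivity fails only on the set $\phi_{I_2 I_1}^{-1}(U_{I_3 I_2}) \setminus U_{I_3 I_1}$. Item (2) of Definition \ref{defn23} says precisely that $U_{I_3 I_1}$ is relatively closed in the preimage with respect to convergence in $U_{I_1}$, so the failure set is disjoint from a neighborhood of its intersection with $\overline{U_{I_1}^{(3)}}$. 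Iterating this argument over the finitely many ordered triples and then over longer chains in $\mbit{I}$, I can replace each $U_I^{(3)}$ by an open set $U_I'$ that still covers $F_I^{(3)}$ through $\psi_I$ but contains no offending points; the precompact buffer $\overline{U_I^{(3)}} \subset U_I^{(2)}$ is exactly what gives room for this. The resulting $\curlyvee'$ is reflexive and symmetric by construction, and now transitive, hence an equivalence relation.

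With transitivity secured, Hausdorffness of $|\mathfrak{A}'|$ follows from a parallel closure argument: if two classes $[x] \neq [y]$ with $x \in U_I'$, $y \in U_J'$ admitted no separating neighborhoods, one would extract sequences $x_k \to x$, $y_k \to y$ with $x_k \curlyvee' y_k$, and the closure axiom applied within $\overline{U_I'} \subset U_I^{(2)}$ would force $x \curlyvee' y$, a contradiction; any residual pathology is removed by one last small shrinking. Once $|\mathfrak{A}'|$ is Hausdorff and $\curlyvee'$ is an equivalence relation, each map $U_I' \to |\mathfrak{A}'|$ is continuous and injective, and its image is open since every $U_{JI}'$ is open in $U_I'$ and $\phi_{JI}'$ is a homeomorphism onto its image; this makes the map a topological embedding. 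The main obstacle is not any single estimate but the simultaneous bookkeeping: all shrinkings must be performed coherently across $\mbit{I}$ without ever breaking the covering condition, and it is precisely the closure axiom of Definition \ref{defn23} that provides the room to do so.
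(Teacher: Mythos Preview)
The paper does not actually prove Lemma~\ref{lemma27}; it is stated without proof, with the details deferred to \cite{Tian_Xu_3} and \cite{Tian_Xu_geometric} (see the opening of Section~\ref{section2}). So there is no in-paper argument to compare against.

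Your sketch follows the standard route used in the virtual-cycle literature (precompact nested shrinkings of footprints, then iterated use of the closure axiom in Definition~\ref{defn23} to kill the obstructions to transitivity and Hausdorffness), and this is indeed the approach taken in the references the paper cites. Two places deserve more care. First, your reduction of transitivity to ``the representative case $I_1 \preq I_2 \preq I_3$'' is too quick: from $x_1 \curlyvee x_2 \curlyvee x_3$ you only know $I_1,I_2$ are comparable and $I_2,I_3$ are comparable, so the configurations $I_1 \preq I_2 \succcurlyeq I_3$ and $I_1 \succcurlyeq I_2 \preq I_3$ with $I_1,I_3$ incomparable must be ruled out separately; this is exactly where the overlapping condition on the \emph{shrunk} footprints, together with a further shrinking of the $U_I$ close to $\psi_I^{-1}(F_I')$, is needed. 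Second, your last paragraph's argument that $U_I' \to |\mathfrak{A}'|$ is a homeomorphism onto its image is incomplete: injectivity and continuity are clear, but openness onto the image does not follow merely from ``$U_{JI}'$ is open and $\phi_{JI}'$ is a homeomorphism onto its image''; one needs to show that any $\curlyvee'$-saturated open set in $\bigsqcup U_I'$ meets each $U_J'$ in an open set, which again uses the closure axiom and the precompact buffer. Both points are handled in the cited references, and your outline is compatible with them, but as written it is a plan rather than a proof.
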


\begin{defn}
A virtual orbifold atlas satisfying the conditions required for ${\mf A}'$ in Lemma \ref{lemma27} is called a {\bf good atlas} or a {\bf good coordinate system}.
\end{defn}

This is the same notion of good coordinate systems in the Kuranishi approach. On the other hand, there are also notions of orientations and orientability in the topological category, and one can define orientations and orientability of virtual orbifold atlases. We do not recall them. With an oriented good atlas one can construct perturbations inductively on all charts and define the virtual cycle. More precisely, one chooses (multivalued) sections 
\beqn
t_I: U_I \to E_I,\ \forall I \in \mbit{I}
\eeqn
satisfying certain compatibility condition with respect to coordinate changes, such that $S_I + t_I: U_I \to E_I$ is transverse to the zero section. 

When the virtual dimension of the atlas is zero, the union of zeroes of $S_I + t_I$ modulo the equivalence relation $\curlyvee$ are discrete. Indeed one can prove that for suitable sufficiently small perturbations $t_I$, the zero locus is compact, hence finite. The details are in \cite[Appendix]{Tian_Xu_3}. Then the counting of zeroes of the perturbed sections with signs, one can define the {\bf virtual cardinality} of ${\mf A}$, which is a rational number $\# {\mf A} \in {\mb Q}$. It can be viewed as a generalization of the orbifold Euler characteristic. 

\subsection{Boundary}

We consider the induced atlas on the ``virtual'' boundary. 

Let $X$ be equipped with a good virtual orbifold atlas ${\mf A} = \big( \{ C_I\ |\ I\in \mbit{I}\}, \{ T_{JI}\ |\ I \preq J \in \mbit{I}\} \big)$. Denote
\begin{align*}
&\ \partial F_I = \psi_I ( S_I^{-1}(0) \cap \partial U_I ),\ &\ \partial X:= \bigcup_{I \in \mbit{I}} \partial F_I \subset X.
\end{align*}
It is easy to see that $\partial X$ is closed in $X$ and hence compact. Take
\beqn
\partial C_I = ( \partial U_I, E_I|_{\partial U_I}, S_I|_{\partial U_I}, \psi_I|_{\partial U_I \cap S_I^{-1}(0)}, \partial F_I).
\eeqn
This is a virtual orbifold chart of $\partial X$ (which may be empty). Denote $\partial \mbit{I} = \{ I \in \mbit{I}\ |\ \partial F_I \neq \emptyset \}$, with the induced partial order $\preq$. Moreover, the coordinate changes can be restricted to the boundary. For each pair $I \preq J \in \partial \mbit{I}$, the object
\beqn
\partial T_{JI}:= (\partial U_{JI}, \partial \phi_{JI}, \partial \wh\phi_{JI}):= ( \partial U_{JI},\ \phi_{JI}|_{\partial U_{JI}}, \wh\phi_{JI}|_{\partial U_{JI}} )
\eeqn
is a coordinate change from $\partial C_I$ to $\partial C_J$. Hence we obtain a virtual atlas
\beqn
\partial {\mf A}:= \Big( \big\{ \partial C_I\ |\ I \in  \partial \mbit{I} \big\},\ \big\{ \partial T_{JI}\ |\  I \leq J \in \partial \mbit{I} \big\} \Big).
\eeqn
If ${\mf A}$ is oriented, then it is routine to check that $\partial {\mf A}$ has an induced orientation. 

A natural and useful result about virtual cardinality is the following.
\begin{prop}\label{prop29}
If ${\mf A}$ is an oriented virtual orbifold atlas of dimension one with boundary on $X$ and let $\partial X$ be equipped with the induced atlas $\partial {\mf A}$, then $\# (\partial {\mf A}) = 0$.
\end{prop}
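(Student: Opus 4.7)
The proposal is to prove this by the standard cobordism argument for oriented $1$-manifolds, adapted to the topological virtual cycle framework. The strategy is to realize the perturbed zero locus in $|\mf A|$ as a compact oriented $1$-manifold with boundary, and to identify its boundary (as a signed weighted set) with the zero locus of the perturbed sections for $\partial \mf A$. Then $\#(\partial \mf A) = 0$ follows from the elementary fact that a compact oriented $1$-manifold has trivial oriented boundary count.

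First, I would pass to a good shrinking of $\mf A$ via Lemma \ref{lemma27}, so that $|\mf A|$ is Hausdorff and each $U_I$ embeds as an open subset. Next, invoking the inductive construction of perturbations in \cite[Appendix]{Tian_Xu_3}, I would choose a family of (multivalued) sections $t_I : U_I \to E_I$, indexed by $I \in \mbit{I}$ and defined by induction on the partial order, such that: (i) $S_I + t_I$ is transverse to the zero section on $U_I$; (ii) $t_I$ is compatible with the coordinate changes $T_{JI}$ in the usual sense, so that the perturbed zero sets glue consistently under $\curlyvee$; and (iii) the restriction $(S_I + t_I)|_{\partial U_I}$ is transverse to the zero section of $E_I|_{\partial U_I}$. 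Condition (iii) can be arranged because the topological transversality theorem of Kirby--Siebenmann and Quinn, already cited in Section \ref{section2}, can be applied first on the boundary charts and then extended inward. By construction, the restrictions $\{ t_I|_{\partial U_I} \}$ then furnish a compatible, transverse perturbation datum for the induced boundary atlas $\partial \mf A$.

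With these perturbations in place, the virtual dimension being one means that each $Z_I := (S_I + t_I)^{-1}(0) \subset U_I$ is a topological $1$-orbifold with boundary $Z_I \cap \partial U_I$, carrying an orientation induced from the orientation of $\mf A$. The compatibility with coordinate changes ensures that the quotient $Z := \bigl( \bigsqcup_I Z_I \bigr)/\curlyvee$ is well-defined inside $|\mf A|$, and the compactness argument from \cite[Appendix]{Tian_Xu_3}, applied for sufficiently small perturbations, shows that $Z$ is compact. Hence $Z$ is a compact oriented topological $1$-manifold with boundary $\partial Z$, and tracing through the definitions shows that $\partial Z$ is precisely (as a weighted, signed set) the zero locus of the perturbations on $\partial \mf A$ used to define $\#(\partial \mf A)$. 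Since any compact oriented $1$-manifold is a finite disjoint union of circles and closed arcs, the signed (and weighted, accounting for orbifold automorphism orders) count of its boundary points is zero, which gives $\#(\partial \mf A) = 0$.

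The main obstacle, which is nontrivial but essentially already handled by the machinery set up in \cite[Appendix]{Tian_Xu_3}, is the inductive construction of $\{t_I\}$ that is simultaneously (a) transverse on $U_I$, (b) transverse on $\partial U_I$, (c) compatible across coordinate changes, and (d) small enough to preserve compactness of $Z$. Once this is accomplished, the remainder of the argument is the classification of compact oriented $1$-manifolds, together with a bookkeeping check that the induced orientation on $\partial \mf A$ agrees with the boundary orientation of $Z$ so that the signs match.
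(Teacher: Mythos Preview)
The paper does not supply a proof of Proposition \ref{prop29}; it is stated as a fact, with the underlying machinery (good atlases, compatible transverse multivalued perturbations, compactness of the perturbed zero locus) deferred to \cite[Appendix]{Tian_Xu_3}. Your proposal is exactly the standard cobordism argument one would expect and is consistent with how that appendix is organized: pass to a good atlas, build inductively a system of small compatible transverse perturbations, identify the glued zero locus as a compact oriented one-dimensional object, and conclude from the classification of compact oriented $1$-manifolds. So there is nothing to compare against in this paper itself, and your outline is the intended route.

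One point worth tightening: in the orbifold setting the perturbed zero locus $Z$ need not literally be a $1$-manifold; it is a compact oriented $1$-dimensional orbifold (equivalently, after passing to branches of the multivalued sections, a weighted manifold). You acknowledge this in passing, but the conclusion ``signed count of boundary points is zero'' then relies on the fact that each branch contributes a genuine compact $1$-manifold whose boundary count vanishes, and the weights are constant along connected components. Making that sentence explicit would close the only visible gap in your sketch.
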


\section{The Wall-Crossing Formula}\label{section3}

In this section we briefly recall the setting of \cite{Tian_Xu_3} and the wall-crossing formula needed for the proof of the invariance of the GLSM correlation function. All details can be found in \cite{Tian_Xu} \cite{Tian_Xu_3}.

Let $(X, Q)$ be a Landau--Ginzburg space. That is, $X$ is a noncompact K\"ahler manifold and $Q: X \to {\mb C}$ is a holomorphic function with a single critical point. Assume that there is a holomorphic ${\mb C}^*$-action on $X$, often called the {\bf R-symmetry} or R-charge, such that $Q$ is homogeneous of degree $r$ ( $r \geq 2$ \footnote{We assumed $r \geq 2$ in \cite{Tian_Xu, Tian_Xu_2, Tian_Xu_3}. However our whole construction, most essentially the compactness theorem proved in \cite{Tian_Xu}, can be extended to the $r = 1$ case.}). Assume that the R-symmetry has a moment map $\mu': X \to \i {\mb R}$, where we regard $\i {\mb R}$ as the Lie algebra of $S^1$. We also assumed certain conditions on the Landau--Ginzburg space in \cite{Tian_Xu} which was used to prove the compactness of the moduli spaces. However we do not need to recall the explicit statements here. 

The GLSM space is a triple $(\tilde X, W, G)$. Here $\tilde X$ is the product ${\mb C} \times X$ whose coordinates are denoted by $(p, x)$, $W: \tilde X \to {\mb C}$ is the ``Lagrange multiplier'' $W(p, x) = pQ(x)$, and $G = G' \times G''$ is a group acting on $\tilde{X}$, defined as follows. $G' \simeq S^1$ acts on the $x$ coordinate via the R-symmetry, and $G' \simeq S^1$ acts on $(p, x)$ by $e^{\i a} (p, x) = ( e^{-\i r a} p, e^{\i a} x)$. Then $W$ is $G''$-invariant and homogeneous with respect to $G'$. The $G$-action has a moment map
\beqn
\mu(p, x) = ( \mu'(p, x), \mu''(p, x)) = \Big( \mu' (x),\ \mu' (x) + \frac{\i r}{2} |p|^2 - \tau \Big).
\eeqn
Here $\tau \in \i {\mb R}$ is a constant which we fix from now on. We also fix a metric on the Lie algebra ${\mf g}:= {\bf Lie} G \simeq \i {\mb R} \times \i{\mb R}$.

We do not need to recall the general definition of $r$-spin curves (which can be found in \cite{FJR2}) but only the special case of one broad marking, and we state the definition without using orbifold language. Let $\Sigma$ be a compact Riemann surface and ${\bf z} \in \Sigma$ be a point. Denote $\Sigma^*:= \Sigma \setminus \{ {\bf z}\}$. If we take a holomorphic coordinate $w$ centered at ${\bf z}$, then the log-canonical bundle $K_{\rm log}:= K_\Sigma \otimes {\mc O}({\bf z})$ has a local holomorphic frame $d \log w = dw/w$. An $r$-spin structure consists of a holomorphic line bundle $L' \to \Sigma$, an integer $m \in \{0, 1, \ldots, r-1\}$, and a line bundle isomorphism $\phi': (L')^{\otimes r} \simeq K_\Sigma \otimes {\mc O}( (1-m) {\bf z})$. Using the local coordinate $w$, one can find a local holomorphic frame $e'$ of $L'$ near ${\bf z}$ such that 
\beq\label{eqn31}
\phi'((e')^{\otimes r}) = w^m \frac{dw}{w}.
\eeq

The integer $m$ is a crucial parameter. Denote $\gamma = e^{2m \pi \i/r} \in {\mb Z}_r$, and regard it as an element of $G'$. $\gamma$ is called {\bf narrow} if the fixed point set $X_\gamma \subset X$ contains only the critical point of $Q$; otherwise it is called {\bf broad}. The wall-crossing formula is only about the broad case so we assume from now on that $\gamma$ is broad, namely, $X_\gamma$ is a positive dimensional complex submanifold of $X$.

The triple $(\Sigma^*, L', \phi')$ is called an $r$-spin curve. Notice that $e'$ satisfying \eqref{eqn31} is well-defined up to a ${\mb Z}_r$-action. A {\bf rigidification} of the $r$-spin curve is a choice of $e'$ satisfying \eqref{eqn31}. From now on we fix the data $\Sigma^*$, $L'$, $m$, $\phi'$, a local coordinate $w$ and a rigidification $e'$ satisfying \eqref{eqn31}. We call these data a rigidified $r$-spin curve, denoted by ${\mc C}$. We also choose a cylindrical end $C \subset \Sigma^*$ around ${\bf z}$, and the coordinate $w$ identifies $C$ with a half cylinder $[0, +\infty) \times S^1$. Denote $C_T:= [T, +\infty) \times S^1 \subset C$.

We need to perturb the superpotential $W$ in order to do Fredholm theory. 

\begin{hyp}\label{hyp31}
There exists a nonzero, finite dimensional complex vector space ${\bf V}$ parametrizing certain $\gamma$-invariant holomorphic functions $F: X \to {\mb C}$ satisfying \cite[Hypothesis 2.8]{Tian_Xu}, particularly the following conditions. 
\begin{enumerate}

\item For each $a \in {\mb C}^*$, there is a complex analytic proper subset ${\bf V}_a^{\rm sing} \subset {\bf V}$ such that for each $F \in {\bf V} \setminus {\bf V}_a^{\rm sing} $, the restriction of the function
\beq\label{eqn32}
\tilde{W} (x, p) = W(x, p) - a p + F (x)
\eeq	
to $\tilde{X}_\gamma$ is a Morse function with finitely many critical points.

\item The set ${\bf V}_a^{\rm wall} \subset {\bf V} \setminus {\bf V}_a^{\rm sing}$ defined by the coincidence of the imaginary parts of two critical values of $\tilde{W} |_{\tilde{X}_\gamma}$ is a locally finite union of real analytic hypersurfaces.
\end{enumerate}
\end{hyp}

Denote $\tilde{\bf V} = {\mb C}^* \times {\bf V}$. An element ${\mc P} = (a, F) \in \tilde{\bf V}$ is called a perturbation. We also identify the pair $(a, F)$ with the induced function $\tilde W$ defined by \eqref{eqn32}. ${\mc P}$ is called {\bf regular} (resp. {\bf strongly regular}) if $F \in {\bf V}_a \setminus {\bf V}_a^{\rm sing}$, i.e., the perturbed superpotential is Morse (resp. $F \in {\bf V}_a \setminus ( {\bf V}_a^{\rm sing} \cup {\bf V}_a^{\rm wall})$, i.e., the perturbed superpotential is Morse and all critical values have distinct imaginary parts).

\subsection{Perturbed gauged Witten equation}

We choose a smooth volume form ${\bm \nu}_s\in \Omega^2(\Sigma)$ and a cylindrical volume form ${\bm \nu}_c \in \Omega^2(\Sigma^*)$ such that over the cylindrical end $C$, ${\bm \nu}_s/ {\bm \nu}_c = |w|$. ${\bm \nu}_c$ determines a cylindrical type metric on $\Sigma^*$, for which we can define weighted Sobolev spaces $W^{k, p}_\tau$ for $k\geq 0$, $p>1$ and $\tau>0$. 

Choose a Hermitian metric $H'$ on $L'$ such that for the section $e'$ in \eqref{eqn31}, $\| e'\|_{H'} \cdot |w|^{-m/r} \in W^{2, p}_\tau$ for certain $p>2$ and $\tau>0$. Choose another Hermitian line bundle $L'' \to \Sigma$. Let $P' \subset L'$, $P'' \subset L''$ be the unit circle bundles over $\Sigma^*$. 

The variables of the perturbed gauged Witten equation are triples ${\bm u} = (A, u, \phi)$. Here $A = (A', A'')$ are certain unitary connections on the principal $G$-bundle $P$
\beqn
P : = P' \underset{\ \Sigma^*}{\times} P''.
\eeqn
Here $A'$ should induce a holomorphic structure on $L'$ isomorphic to the original one. $u$ is a section of the associated bundle
\beqn
Y := P \times_G \tilde{X}.
\eeqn
$\phi: G'' \to P''|_{\bf z}$ is a trivialization of the fibre of $P''$ at ${\bf z}$. Notice that $G''$ acts on $\phi$ by $e^{\i a} \cdot \phi = \phi \circ e^{\i a}$. We extend the $G''$-torsor of trivializations of the fibre of $P''$ to a $G''$-torsor of local trivializations of $P''|_C$, which is still denoted by $\phi: C \times G'' \to P''|_C$. The two perspectives will be applied interchangeably. Moreover, the rigidification and the choice of local coordinate $w$ induce a local trivialization of $P'|_C$. We denote the product with $\phi$, which is a local trivialization of $P|_C$, still by $\phi$. Any section $u: C \to Y|_C$ can be written with respect to the induced trivialization $\phi$ of $Y|_C$ as 
\beqn
u(z) = (z, u_\phi(z)),\ \ {\rm where}\ u_\phi: C \to \tilde X.
\eeqn

\begin{notation}
We will frequently use $u_\phi$ to denote the map obtained by applying the trivialization $\phi$. When $\phi$ is understood from the context, to simplify notations, we also denote $\uds u = u_\phi$ and these two notations are used interchangeably.
\end{notation}

Let $\pi: Y \to \Sigma^*$ be the projection. For any $A$ one can lift $W$ to a section ${\mc W}_A \in \Gamma(Y, \pi^* K_{\Sigma^*})$ as follows. Choose local holomorphic coordinate $z$ on $\Sigma^*$. The $r$-spin structure provides a local holomorphic section $e'$ of $L'$ with $\phi'((e')^{\otimes r}) = dz$. Choose an arbitrary locally nonvanishing smooth section $e''$ of $L''$. Then locally a point of $Y$ can be represented by $(e', e'', \tilde x)$ where $\tilde x \in \tilde X$. Then define
\beqn
{\mc W}_A (e', e'', \tilde x) = W(\tilde x) dz.
\eeqn
Using the symmetries of $W$ it is easy to verify that the above expression is independent of the choices of the local coordinate $z$ and the local sections $e', e''$, hence defines a section of $\pi^* K_{\Sigma^*}$. Moreover, given a perturbation ${\mc P} = (a, F) \in {\bf V}$, one can lift $\tilde{W}$ defined by \eqref{eqn32} to a section $\tilde{\mc W}_{A, \phi} \in \Gamma(Y, \pi^* K_{\Sigma^*})$, parametrized smoothly by the connection $A$ and the local trivialization $\phi$ of $Y$ over the cylindrical end $C$. Using the vertical metric on $T^\bot Y$, one can define the gradient $\nabla \tilde{\mc W}_{A, \phi} \in \Gamma(Y, \pi^* \Omega_{\Sigma^*}^{0,1} \otimes T^\bot Y)$\footnote{The explicit expression of $\tilde {\mc W}_{A, \phi}$ and its gradient can be found in \cite{Tian_Xu}; in this paper we only need to know certain properties of it, which will be reviewed in Section \ref{section5}.}. Then any section $u \in \Gamma(Y)$ pulls back $\nabla \tilde{\mc W}_{A, \phi}$ to a section $\nabla \tilde{\mc W}_{A, \phi}(u) \in \Omega^{0,1} (\Sigma^*, u^* T^\bot Y)$. 

The ${\mc P}$-perturbed gauged Witten equation for variables ${\bm u} = (A, u, \phi)$ reads
\begin{align}\label{eqn33}
&\ \ov\partial_A u + \nabla {\mc W}_{A, \phi}(u) = 0,\ &\ * F_A + \mu^* (u) = 0.
\end{align}
We explain the terms appearing above. The complex structure on $\tilde{X}$ induces a complex structure on $T^\bot Y$; the connection $A$ also gives a covariant derivative $d_A u \in \Omega^1(\Sigma^*, T^\bot Y)$; $\ov\partial_A u$ is the $(0, 1)$ part of $d_A u$. On the other hand, $* F_A \in \Omega^0(\Sigma^*, {\mf g})$ is obtained by taking the Hodge star (with respect to the volume form ${\bm \nu}_s$) of the curvature $F_A \in \Omega^2(\Sigma^*, {\mf g})$, and $\mu^*(u) \in \Omega^0(\Sigma^*, {\mf g})$ is the dual of the moment map $\mu(u)$ with respect to the metric on ${\mf g}$. There is a gauge symmetry on \eqref{eqn33}.

\subsection{The virtual cardinalities and the wall-crossing formula}

When ${\mc P}$ is a regular perturbation, $\tilde W|_{\tilde{X}_\gamma}$ is a Morse function having finitely many critical points $\kappa_1, \kappa_2, \ldots$. We showed in \cite{Tian_Xu} that for a finite energy solution to \eqref{eqn33}, up to gauge transformation, there exists $\kappa \in {\it Crit} \tilde W|_{\tilde{X}_\gamma}$ such that 
\beqn
\lim_{z \to {\bf z}} u_\phi(z) = \kappa. \footnote{Indeed this is not precise. The limit of $u_\phi$ as $z \to {\bf z}$ is a critical point of $\tilde{W}|_{\tilde{X}_\gamma} \circ \delta$ where $\delta \in {\mb R}_+ \subset G'$ depends on the connection $A$. However critical points of $\tilde{W}|_{\tilde{X}_\gamma}$ can still label moduli spaces.}
\eeqn
Therefore one uses $\kappa$ to label the moduli spaces. Let ${\mc M}_{\mc P} (\kappa)$ be the moduli space of gauge equivalence of solutions ${\bm u} = (A, u, \phi)$ that satisfy the above asymptotic constrain. When ${\mc P}$ is strongly regular, ${\mc M}_{\mc P} (\kappa)$ is a compact Hausdorff space (see \cite[Theorem 6.5]{Tian_Xu}). Moreover, for a strongly regular perturbation ${\mc P}$, in \cite{Tian_Xu_3} we constructed an oriented virtual orbifold atlas on ${\mc M}_{\mc P} (\kappa)$. It has an associated virtual cardinality $\# {\mc M}_{\mc P} (\kappa) \in {\mb Q}$ (which is nonzero only when the virtual dimension, which equals to the Fredholm index computed in \cite{Tian_Xu}, is zero). In the current case, $\# {\mc M}_{\mc P}(\kappa)$ is an integer since there is no nontrivial automorphism group.

In order to define a correlation function, it is important to understand the dependence of the virtual cardinality $\# {\mc M}_{\mc P}(\kappa)$ on the choice of strongly regular perturbation. We need to compare two different strongly regular perturbations ${\mc P}_-$ and ${\mc P}_+$. In \cite{Tian_Xu_3} we have already reduced the comparison to the case that ${\mc P}_- = (a, F_-)$ and ${\mc P}_+ = (a, F_+)$ for $a \in {\mb C}^*$ and $F_-, F_+ \in {\bf V}\setminus ( {\bf V}_a^{{\rm sing}} \cup {\bf V}_a^{\rm wall})$. Since ${\bf V}_a^{\rm sing}$ is of codimension two while ${\bf V}_a^{{\rm wall}}$ is of codimension one, we only need to consider the ``wall-crossing'' situation. Namely, we connect $F_-$ and $F_+$ in ${\bf V}\setminus {\bf V}_a^{\rm sing}$ by a path $\tilde F = (F_\iota)_{\iota_- \leq\iota \leq \iota_+}$ such that $\tilde F$ only intersects once at $\iota = \iota_0$ with ${\bf V}_a^{\rm wall}$ at the smooth part of ${\bf V}_a^{\rm wall}$, and the intersection is transverse. Since $F_\iota$ is always regular, we obtain families of critical points $\kappa_\iota \in {\it Crit} \tilde{W}_\iota|_{\tilde{X}_\gamma}$. Moreover, by the definition of ${\bf V}_a^{\rm wall}$, there exist exactly a pair $\tilde\kappa$, $\tilde\upsilon$ of paths of critical points such that
\beqn
{\bf Re} F_\iota (\upsilon_\iota) > {\bf Re} F_\iota (\kappa_\iota ),\ {\bf Im} F_0 (\upsilon_0) = {\bf Im} F_0(\kappa_0),
\eeqn
\beq\label{eqn35}
 \left. \frac{d}{d\iota} \right|_{\iota=0} \Big[ {\bf Im} F_\iota ( \upsilon_\iota ) - {\bf Im} F_\iota( \kappa_\iota) \Big] \neq 0.
\eeq
The sign of the path $\tilde F$, denoted by $(-1)^{\tilde F} \in \{ 1, -1\}$, which is the same as the sign $(-1)^{\tilde W}$ in Theorem \ref{thm11}, is defined to be positive (resp. negative) if \eqref{eqn35} is positive (resp. negative).

On the other hand, since the imaginary part of $F_{\iota_0} (\upsilon_{\iota_0} ) = (W + F_{\iota_0}) (\upsilon_{\iota_0})$ and $F_{\iota_0} (\kappa_{\iota_0} ) = (W + F_{\iota_0})(\kappa_{\iota_0})$ coincide, there may exist solitons starting from $\upsilon_{\iota_0}$ to $\kappa_{\iota_0}$ (when their imaginary parts differ, there is no soliton). Recall that a {\bf soliton} connecting $\upsilon_{\iota_0}$ and $\kappa_{\iota_0}$ is a Floer trajectory for the Hamiltonian ${\rm Re} ( W+ F_{\iota_0})$ that connects $\upsilon_{\iota_0}$ and $\kappa_{\iota_0}$. Namely a soliton is a solution $u: {\mb R} \times S^1 \to X$ to the equation 
\beqn
\frac{\partial u}{\partial s} + J \left( \frac{\partial u}{\partial t} + \nabla {\bf Re}( W + F_{\iota_0})(u) \right) = 0, \lim_{s \to -\infty} u(s, t) = \upsilon_{\iota_0},\ \lim_{s \to +\infty} u(s, t) = \kappa_{\iota_0}.\footnote{When there is an nontrivial monodromy of the $r$-spin structure on the infinite cylinder, the expression of the soliton equation is more complicated. BPS solitons in the presence of nontrivial monodromy is a negative gradient flow trajectory of the perturbed superpotential in the fixed point set of this monodromy.}
\eeqn
Since the Hamiltonian is independent of the variable $t \in S^1$, there could be Floer trajectories that are $t$-independent, namely, negative gradient flow trajectories of ${\rm Re}(W + F_{\iota_0})$; they are called {\bf BPS solitons}. Let ${\mc N}(\upsilon_{\iota_0}, \kappa_{\iota_0})$ be the moduli space of BPS solitons connecting $\upsilon_{\iota_0}$ and $\kappa_{\iota_0}$, or equivalently, the moduli space of negative gradient flow lines of the real part of $W_{\iota_0}$ that connect $\upsilon_{\iota_0}$ and $\kappa_{\iota_0}$. After choosing certain orientations on this moduli space, one can define the counting $\# {\mc N}(\upsilon_{\iota_0}, \kappa_{\iota_0}) \in {\mb Z}$, which is indeed equal to a topological intersection number. 

With all above understood, one can state the wall-crossing formula, which is reformulation of Theorem \ref{thm11}.

\begin{thm}[Wall-crossing formula] \cite[Theorem 4.6]{Tian_Xu_3} \label{thm33}
The virtual cardinalities $\# {\mc M}_{{\mc P}_-} (\kappa_-)$ and $\# {\mc M}_{{\mc P}_+}(\kappa_+)$ satisfy 
\beqn
\# {\mc M}_{{\mc P}_+}(\kappa_+) - \# {\mc M}_{ {\mc P}_-}(\kappa_-) = - (-1)^{\tilde{F}} \Big[ \# {\mc N}(\upsilon_{\iota_0}, \kappa_{\iota_0}) \Big] \Big[ \# {\mc M}_{ {\mc P}_\iota}(\upsilon_\iota) \Big]. 
\eeqn
\end{thm}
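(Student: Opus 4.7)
The plan is to build a one-dimensional oriented virtual orbifold atlas $\wt{\mf A}$ on the parametrized moduli space
\beqn
\wt{\mc M} := \bigsqcup_{\iota \in [\iota_-, \iota_+]} \{\iota\} \times {\mc M}_{{\mc P}_\iota}(\kappa_\iota),
\eeqn
where $\kappa_\iota$ is the smooth family of critical points of $\wt W_\iota|_{\wt X_\gamma}$ extending $\kappa_\pm$ (which exists because the path $\tilde F$ avoids ${\bf V}_a^{\rm sing}$), and then invoke Proposition \ref{prop29}, which asserts $\#(\partial \wt{\mf A}) = 0$. Away from $\iota = \iota_0$ the perturbation $\mc P_\iota$ is strongly regular, so the chart construction and compactness results of \cite{Tian_Xu_3} apply with $\iota$ treated as an extra Fredholm parameter; this produces virtual orbifold charts of dimension one on $\wt{\mc M}|_{[\iota_-,\iota_+] \setminus \{\iota_0\}}$, whose boundary at $\iota = \iota_\pm$ is the oriented moduli space ${\mc M}_{{\mc P}_\pm}(\kappa_\pm)$ (with opposite outward-normal signs at the two ends).

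The first main step is to isolate the failure of compactness at $\iota = \iota_0$. Because ${\bf Im}\, W_{\iota_0}(\upsilon_{\iota_0}) = {\bf Im}\, W_{\iota_0}(\kappa_{\iota_0})$, the imaginary-part monotonicity underlying \cite[Theorem 6.5]{Tian_Xu} degenerates, and a sequence ${\bm u}_n \in {\mc M}_{{\mc P}_{\iota_n}}(\kappa_{\iota_n})$ with $\iota_n \to \iota_0$ can lose energy along the cylindrical end $C$ by bubbling off a BPS soliton between $\upsilon_{\iota_0}$ and $\kappa_{\iota_0}$. Adapting the asymptotic analysis of \cite{Tian_Xu} to the parametrized setting, I would show that the only new ideal limits are broken pairs
\beqn
(u_\infty, v) \in {\mc M}_{{\mc P}_{\iota_0}}(\upsilon_{\iota_0}) \times {\mc N}(\upsilon_{\iota_0}, \kappa_{\iota_0}),
\eeqn
consisting of a genuine solution asymptotic to $\upsilon_{\iota_0}$ together with one BPS gradient flow line. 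The fact that the two critical points paired up are precisely $\upsilon_{\iota_0}$ and $\kappa_{\iota_0}$ uses the hypothesis that this is the unique pair with coincident imaginary parts, while multi-level breakings are excluded by a virtual-dimension count generic in the path $\tilde F$.

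The second and hardest step is gluing. For each broken pair $(u_\infty, v)$ I would build an approximate solution $u_R$ on $\Sigma^*$ by truncating $u_\infty$ on a cylinder $C_T$ with $T = T(R)$ and splicing in a reparametrised copy of $v$ of length $R$; these details occupy Sections \ref{section4}--\ref{section5}. Applying the implicit function theorem to the linearised gauged Witten operator at $u_R$, taken in the weighted Sobolev setup of \cite{Tian_Xu} and enlarged by the parameter $\iota$, should produce, for each large $R$, a unique honest solution together with a unique $\iota(R) \in (\iota_-, \iota_+)$. The quantitative input is hypothesis \eqref{eqn11}: the $\iota$-derivative of ${\bf Im}\, W_\iota(\upsilon_\iota) - {\bf Im}\, W_\iota(\kappa_\iota)$ at $\iota_0$ is nonzero, and in the matching equation it forces $\iota(R) - \iota_0$ to be nonzero with a definite sign equal to $(-1)^{\tilde F}$. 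A companion surjectivity argument, following the standard Newton iteration scheme, ensures that every solution sufficiently close to a broken configuration arises this way. Hence exactly one collar of $\wt{\mc M}$, lying on the side of the wall prescribed by $(-1)^{\tilde F}$, is attached for each element of ${\mc M}_{{\mc P}_{\iota_0}}(\upsilon_{\iota_0}) \times {\mc N}(\upsilon_{\iota_0}, \kappa_{\iota_0})$.

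Patching these gluing charts with the off-wall charts as in Section \ref{section2} and \cite[Appendix]{Tian_Xu_3} produces $\wt{\mf A}$ as a good, oriented, one-dimensional virtual orbifold atlas with boundary. Its boundary consists of three strata: ${\mc M}_{{\mc P}_+}(\kappa_+)$ and $-{\mc M}_{{\mc P}_-}(\kappa_-)$ from the endpoints, together with a product atlas on ${\mc M}_{{\mc P}_\iota}(\upsilon_\iota) \times {\mc N}(\upsilon_{\iota_0}, \kappa_{\iota_0})$ coming from the wall; an orientation comparison between the boundary orientation of $\wt{\mf A}$ (induced from the collar built by the gluing parameter) and the product orientation yields the wall sign $(-1)^{\tilde F}$. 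Proposition \ref{prop29} then reads
\beqn
\# {\mc M}_{{\mc P}_+}(\kappa_+) - \# {\mc M}_{{\mc P}_-}(\kappa_-) + (-1)^{\tilde F}\, \# {\mc N}(\upsilon_{\iota_0}, \kappa_{\iota_0}) \cdot \# {\mc M}_{{\mc P}_\iota}(\upsilon_\iota) = 0,
\eeqn
which is Theorem \ref{thm33}. The principal obstacle is the gluing analysis: pairing a two-dimensional gauged Witten solution with a one-dimensional negative gradient flow, and proving that the implicit function theorem pairs the gluing scale $R$ bijectively and signedly with a wall-crossing parameter $\iota(R)$, is a nonstandard Floer-style gluing problem that is the technical heart of the paper; the remaining delicacy is the orientation bookkeeping that ties the wall sign to $(-1)^{\tilde F}$ rather than $-(-1)^{\tilde F}$.
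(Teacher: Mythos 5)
Your high-level strategy -- building a one-dimensional oriented virtual orbifold atlas on the parametrized moduli space over the path $\tilde F$, identifying the boundary strata, and invoking Proposition~\ref{prop29} -- is exactly the route the paper takes (Theorem~\ref{thm34}, Proposition~\ref{prop35}). However, there are two genuine gaps in how you treat the wall.

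First, your claim that ``the only new ideal limits are broken pairs $(u_\infty,v)$ \dots\ together with one BPS gradient flow line'' is not correct: the Gromov--Floer compactification of $\tilde{\mc M}^*(\kappa)$ at $\iota=\iota_0$ adds \emph{all} soliton solutions, i.e.\ all Floer trajectories connecting $\upsilon_{\iota_0}$ and $\kappa_{\iota_0}$, not merely the $t$-independent (BPS) ones. The paper splits the added stratum into $\tilde{\mc M}^s$ (non-BPS) and $\tilde{\mc M}^b$ (BPS); the non-BPS configurations are \emph{interior} points of the one-dimensional moduli -- their local charts involve a gluing parameter $\alpha$ in a full disk $\Delta$ rather than a half-interval, because the $S^1$-rotation on $\alpha$ is absorbed by reparametrization of a non-BPS soliton. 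Your atlas as described has no charts covering the non-BPS stratum, so it fails the Covering Condition of Definition~\ref{defn24}; moreover you need Corollary~\ref{cor416}'s observation that $\tilde{\mc M}^\sld$ decomposes into the disjoint closed subsets $\tilde{\mc M}^s$ and $\tilde{\mc M}^b$ before you can declare the BPS part a boundary stratum.

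Second, the BPS boundary stratum is \emph{not} a product ${\mc M}_{{\mc P}_{\iota_0}}(\upsilon_{\iota_0})\times{\mc N}(\upsilon_{\iota_0},\kappa_{\iota_0})$, because the soliton equation on the rational component depends on the parameter $\delta=\delta_A$, which in turn depends on the gauge field $A$ on the principal component (Fact~\ref{fact51}). So there is no ``product atlas coming from the wall'' as you assert, and you cannot directly read off the virtual count as $\#{\mc N}\cdot\#{\mc M}(\upsilon)$. The paper needs the additional homotopy argument of Proposition~\ref{prop35}: one interpolates from $\delta^\epsilon({\bm u})=(\delta_A)^\epsilon$ with $\epsilon=1$ down to $\epsilon=0$, where the coupling vanishes and the moduli genuinely splits as a product, and then uses cobordism invariance of the virtual cardinality. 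Without this step the identification of the wall term with $\#{\mc N}(\upsilon_{\iota_0},\kappa_{\iota_0})\cdot\#{\mc M}_{{\mc P}_{\iota_0}}(\upsilon_{\iota_0})$ is unjustified.
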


In other words, there is a nontrivial wall-crossing term when ${\mc P}$ becomes regular but not strongly regular. Therefore, to obtain a nontrivial number, one needs to pair the virtual cardinalities with quantities which have opposite wall-crossing effect. The idea is the same as how Fan--Jarvis--Ruan define the correlation functions for Landau--Ginzburg theory. 

Now we restate the definition of the correlation function as in \cite{Tian_Xu_2}. For a broad $\gamma \in {\mb Z}_r$, define the {\bf state space} in the $\gamma$-sector to be 
\beqn
H^{\rm mid} ( Q_\gamma^a; {\mb Q})^{{\mb Z}_r}.
\eeqn
Here $Q_\gamma^a = Q^{-1}(a) \cap X_\gamma$, for any $a \in {\mb C} \setminus \{0\}$. This vector space is independent of the choice of $a$. When $Q$ is a homogeneous polynomial on ${\mb C}^n$ and $\gamma = 1$, this vector space is isomorphic to the primitive cohomology of the projective hypersurface defined by $Q$. Then the {\bf correlation function} in the $\gamma$-sector is a linear function $f_{\mc P}: H^{\rm mid}(Q_\gamma^a)^{{\mb Z}_r} \to {\mb Z}$ defined by
\beq\label{eqn34}
f_{\mc P} (c) = \sum_{\kappa \in {\it Crit} \tilde{W}|_{\tilde{X}_\gamma}} a_{\mc P} (c, \kappa) \cdot \# {\mc M}_{\mc P} (\kappa). 
\eeq
Here $a_{\mc P} (c, \kappa)$ is the topological intersection number in $Q_\gamma^a$ between the cycle $c$ and the unstable submanifold of $\kappa$. We have shown that $a_{\mc P}(c, \kappa)$ satisfies a Picard--Lefschetz type formula when crossing the wall (see \cite[Proposition 4.7]{Tian_Xu_3}), which is exactly opposite to how $\#{\mc M}_{\mc P}(\kappa)$ changes. It then follows (see the end of \cite{Tian_Xu_2}) that $f_{\mc P}$ is independent of the choice of a strongly regular perturbation ${\mc P}$. 

The main objective of the rest of this paper is to prove Theorem \ref{thm33}. First, we consider a universal moduli space over the path of perturbations $\tilde F$. For $\iota \in [\iota_-, \iota_+]$, let ${\mc M}_\iota(\kappa)$ (resp. ${\mc M}_\iota(\upsilon)$) be the moduli space of solutions to the ${\mc P}_\iota$-perturbed gauged Witten equation which converge to $\kappa_\iota$ (resp. $\upsilon_\iota$) as $z \to {\bf z}$. Define 
\begin{align*}
&\ \tilde{\mc M}^* (\kappa) = \bigsqcup_{\iota_- \leq \iota \leq \iota_+} {\mc M}_\iota(\kappa) \times \{ \iota\},\ &\ \tilde{\mc M}^* (\upsilon) = \bigsqcup_{\iota_- \leq \iota \leq \iota_+} {\mc M}_\iota(\upsilon) \times \{ \iota\}.
\end{align*}
Since ${\mc P}_{\iota_0}$ is not strongly regular and ${\bf Im}(\tilde{W}_{\iota_0} ( \kappa_{\iota_0} ) ) = {\bf Im}( \tilde{W}_{\iota_0} (\upsilon_{\iota_0} ))$, there may exist solitons connecting $\upsilon_0$ and $\kappa_0$. By the compactness result of \cite{Tian_Xu}, $\tilde {\mc M}^* (\kappa)$ can be compactified by adding soliton solutions at the $\iota = \iota_0$ slice. We define
\beqn
\tilde{\mc M} (\kappa) = \tilde{\mc M}^* (\kappa) \sqcup \tilde{\mc M}^\sld (\kappa) = \tilde{\mc M}^* (\kappa) \sqcup ( {\mc M}^\sld (\kappa_{\iota_0}) \times \{ \iota_0\}).
\eeqn
Here $\tilde{\mc M}^\sld (\kappa)$ is the moduli space of soliton solutions whose domain is a nodal curve with principal component $\Sigma$ and a rational component. $\tilde{\mc M}(\kappa)$ is a compact and Hausdorff space. The subset of soliton solutions has two parts, the moduli space of BPS soliton solutions $\tilde {\mc M}^b (\kappa) $ and the moduli space of non-BPS soliton solutions $\tilde{\mc M}^s (\kappa)$. We prove the following theorem in Section \ref{section4}---\ref{section6}. 
\begin{thm}\label{thm34}
There exists an oriented virtual orbifold atlas with boundary on $\tilde {\mc M}(\kappa)$, whose oriented virtual boundary is the disjoint union of three pieces
\beq\label{eqn36}
\Big[ - {\mc M}_{\iota_-} (\kappa) \Big] \sqcup \Big[ + {\mc M}_{\iota_+} (\kappa) \Big] \sqcup  \Big[ (-1)^{\tilde F} {\mc M}_{\iota_0}^b(\kappa_{\iota_0} ) \Big].
\eeq
Here the orientations on ${\mc M}_{\iota_-}(\kappa_{\iota_-})$, ${\mc M}_{\iota_+}(\kappa_{\iota_+})$ and ${\mc M}_{\iota_0}^b(\kappa_{\iota_0})$ are determined by orientations on the unstable manifolds of $\kappa_\iota$ and $\upsilon_\iota$ chosen continuously depending on $\iota$. 
\end{thm}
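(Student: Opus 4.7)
The plan is to build the virtual orbifold atlas on $\tilde{\mc M}(\kappa)$ chart-by-chart, mirroring the construction in \cite{Tian_Xu_3} but now applied to the one-parameter family of perturbations $\tilde F$. Three types of charts are needed: interior charts for points in $\tilde{\mc M}^*(\kappa)$ with $\iota \in (\iota_-, \iota_+)$; boundary charts at $\iota = \iota_\pm$; and gluing charts near the soliton stratum $\tilde{\mc M}^\sld(\kappa)$ at $\iota = \iota_0$. Once these charts and their coordinate changes are constructed and verified to satisfy the cocycle and overlapping conditions of Definition \ref{defn24}, Lemma \ref{lemma27} produces a good coordinate system, and the identification of the oriented virtual boundary with \eqref{eqn36} follows by inspecting which charts carry nonempty topological boundary. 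Proposition \ref{prop29} then recovers Theorem \ref{thm33}.

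For the interior and $\iota_\pm$ charts, the Fredholm setup of \cite{Tian_Xu_3} applied with $\iota$ as an additional real parameter gives a smooth thickening: the enlarged configuration space is one dimension higher than at fixed $\iota$, and one selects obstruction sections as in Remark \ref{rem25} to arrange surjectivity of the linearization, producing a virtual orbifold chart $C_I$ of virtual dimension one. At $\iota = \iota_\pm$ the same construction yields boundary charts with boundary $\{\iota = \iota_\pm\}$. When $\iota = \iota_0$ but no soliton bubble is present, mere regularity (not strong regularity) of $\tilde W_{\iota_0}|_{\tilde X_\gamma}$ is enough for the chart construction, and local compactness in $\tilde{\mc M}(\kappa)$ near such a point follows from the arguments of \cite{Tian_Xu}.

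The analytically substantive step, and the main obstacle, is the construction of gluing charts near $\tilde{\mc M}^\sld(\kappa)$. Given a broken configuration $({\bm u}_0, v)$ consisting of a solution ${\bm u}_0 \in {\mc M}_{\iota_0}(\upsilon_{\iota_0})$ on $\Sigma$ and a soliton $v$ from $\upsilon_{\iota_0}$ to $\kappa_{\iota_0}$ on an infinite cylinder, one builds an approximate solution $\wt{\bm u}_T$ by cutting off ${\bm u}_0$ on $C_T$, cutting off $v$ on a dual half-cylinder, and patching over a neck of length $O(1)$, with $T \in (T_0, \infty]$. A quantitative implicit function theorem applied to the linearization at $\wt{\bm u}_T$---with $\iota$ promoted to a free variable to absorb the one-dimensional cokernel created by the broken/resonant critical values---produces a local family of honest solutions. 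Hypothesis \eqref{eqn11} enters here crucially: it guarantees that the map $\iota \mapsto {\bf Im}(\tilde W_\iota(\upsilon_\iota) - \tilde W_\iota(\kappa_\iota))$ has nonvanishing derivative at $\iota_0$, which is what lets the implicit function theorem solve for $\iota$ as a function of the remaining gluing data.

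The dichotomy between BPS and non-BPS controls the boundary structure. For BPS $v$, the soliton is rigid modulo $\mb R$-translation, so the glued family is parametrized by $T \in (T_0, \infty]$ alone, producing a half-collar whose $T = \infty$ end lies on the BPS stratum; BPS solitons therefore contribute a boundary face. For non-BPS $v$, the soliton moduli carries a free $S^1$-action by $t$-translation which introduces an extra phase parameter $\theta$; the resulting gluing family, parametrized effectively by $(T, \theta)$, sweeps out a neighborhood of the non-BPS stratum on \emph{both} sides of $\iota_0$ via the nontrivial coupling of $\theta$ to $\iota$ forced by \eqref{eqn11}, so the non-BPS stratum lies in the interior of $\tilde{\mc M}(\kappa)$ rather than on the boundary. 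Orientations on each chart are transported from continuously chosen orientations on the unstable manifolds of $\kappa_\iota, \upsilon_\iota$; the sign $(-1)^{\tilde F}$ in \eqref{eqn36} appears as the relative orientation between the outward $T$-direction on the BPS collar and the $\iota$-direction of the ambient atlas, which by a direct computation equals the sign of \eqref{eqn35}. The cocycle and overlapping conditions then follow by uniqueness of the gluing construction up to reparametrization, in the same spirit as \cite{Tian_Xu_3}.
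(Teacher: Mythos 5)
Your overall plan tracks the paper's own strategy closely: charts for interior, $\iota=\iota_\pm$, and soliton strata; gluing near the soliton stratum; identification of BPS solitons as boundary and non-BPS as interior; and the use of Proposition \ref{prop29} to close the argument. The one substantive discrepancy is where Hypothesis \eqref{eqn11} enters. You claim it is used ``crucially'' in the gluing implicit function theorem, to let the free variable $\iota$ absorb a one-dimensional cokernel created by the resonant critical values. This is not how the paper proceeds, and as stated your version has a gap: there is no reason the linearization at a broken configuration should have exactly a one-dimensional cokernel absorbable by $\partial_\iota$; in general it can be arbitrarily obstructed. The paper instead selects a finite-dimensional obstruction space $E_q$ (with components for the principal component and for the soliton, and with a $\Gammait_q$- or $S^1$-symmetry requirement, see Lemma \ref{lemma412}) and glues inside the $E_q$-thickened moduli, so surjectivity is a soft statement independent of \eqref{eqn11}. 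Hypothesis \eqref{eqn11} is used only in Subsection \ref{subsection64}, in the determinant-line computation (deforming $D^\sld_q$, using Floer--Hofer coherence, and evaluating the pairing $\langle L_\infty(\partial_\iota), J\dot\sigma_q\rangle$), where it produces the sign $(-1)^{\tilde F}$.

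Two smaller remarks. First, your explanation for why the non-BPS stratum is interior --- that $(T,\theta)$ ``sweeps both sides of $\iota_0$ via the coupling of $\theta$ to $\iota$'' --- is a reasonable heuristic, but the paper's argument is purely structural: the gluing parameter is a complex disk $\Delta$, the stabilizer of a non-BPS soliton solution is a finite subgroup $\Gammait_q\subset S^1$, and $\Delta/\Gammait_q$ has no boundary, whereas for BPS the stabilizer is all of $S^1$ and $\Delta/S^1\simeq[0,\epsilon)$ (Propositions \ref{prop43} and \ref{prop413}, Corollaries \ref{cor44} and \ref{cor414}, and Corollary \ref{cor416} ensuring $\tilde{\mc M}^s$ and $\tilde{\mc M}^b$ are disjoint and closed). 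Second, you omit the stabilization by marked points with normalization functionals $h_q$ (needed to kill the cylinder reparametrization and to get embedded charts), and you omit the sum-chart construction and the explicit shrinking scheme (Lemma \ref{lemma65}) that enforces the overlapping condition of Definition \ref{defn24}; these are necessary to go from ``local charts exist'' to ``a virtual orbifold atlas exists,'' and are not mere bookkeeping given that coordinate changes in this framework are one-directional.
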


Lastly, observe that the boundary ${\mc M}_{\iota_0}^b(\kappa_{\iota_0})$ is not a product of two moduli spaces, because the soliton equation on the rational component depends on a parameter $\delta = \delta_A \in (0, 1]$ (depending smoothly on the gauge field $A$ on the principal component, see \cite[Definition 2.15]{Tian_Xu} or Fact \ref{fact51}.). One can construct a ``homotopy'' from ${\mc M}_{\iota_0}^b(\kappa_{\iota_0})$ to a product of moduli spaces. Let $\epsilon \in [0, 1]$. Let $\delta^\epsilon: {\mc B}(\upsilon_{\iota_0}) \to {\mb R}_+$ be the functional $\delta^\epsilon({\bm u}) = \delta^\epsilon( A, u, \phi) = ( \delta_A)^\epsilon$. Let ${\mc M}_{\iota_0}^b(\kappa_{\iota_0})_\epsilon$ be the moduli space of BPS soliton solutions to the gauged Witten equation over ${\mc C}$ where the parameter for the soliton components is $\delta_{{\bm u}}^\epsilon$. Then consider the universal moduli space
\beqn
\tilde{\mc N}_{\iota_0}^b (\kappa_{\iota_0} ):= \Big\{ (\epsilon; \iota_0; {\bm u}, \sigma) \ |\ \epsilon \in [0, 1],\ ({\bm u}, \sigma) \in {\mc M}_{\iota_0}^b(\kappa_{\iota_0})_\epsilon  \Big\}.
\eeqn
Since it is not involved with gluing, it is easy to prove the following statement. 
\begin{prop}\label{prop35}
On $\tilde{\mc N}^b(\kappa)$ there exists an oriented virtual atlas with boundary  such that its oriented virtual boundary is
\beqn
\partial \tilde{\mc N}_{\iota_0}^b(\kappa_{\iota_0} ) = \Big[ - {\mc M}_{\iota_0}^b(\kappa_{\iota_0} ) \Big] \sqcup \Big[ + {\mc M}_{\iota_0}^b(\kappa_{\iota_0} )_0 \Big].
\eeqn
\end{prop}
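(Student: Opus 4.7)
The plan is to exploit the fact that $\tilde{\mc N}^b_{\iota_0}(\kappa_{\iota_0})$ is a smooth $1$-parameter family through the scalar $\delta^\epsilon = (\delta_A)^\epsilon$, rather than a genuine moduli space of broken or glued objects. Since $\epsilon$ enters only via an auxiliary bounded, strictly positive function on the rational component, varying $\epsilon \in [0,1]$ induces no change of domain stratification, no bubbling, and no asymptotic degeneration. As indicated by the remark before the proposition, the construction therefore parallels Sections \ref{section4}--\ref{section6} but avoids the gluing analysis completely.

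First I would establish uniform compactness: for any sequence $(\epsilon_n; \iota_0; {\bm u}_n, \sigma_n) \in \tilde{\mc N}^b_{\iota_0}(\kappa_{\iota_0})$, the parameters $\epsilon_n$ lie in $[0,1]$ and $(\delta_A)^\epsilon$ is jointly continuous in $(\epsilon, A)$ with $\epsilon$-independent upper and lower bounds on bounded sets of connections, so all a priori bounds for the gauged Witten equation with a BPS soliton component established in \cite{Tian_Xu} go through uniformly in $\epsilon$ and a subsequential limit exists in $\tilde{\mc N}^b_{\iota_0}(\kappa_{\iota_0})$. Next I would construct local virtual orbifold charts: around $(\epsilon_*; {\bm u}_*, \sigma_*)$ I take the obstruction spaces $\tilde E_i$, slices for the gauge action, and local trivializations from Section \ref{section4} adapted to ${\mc M}^b_{\iota_0}(\kappa_{\iota_0})_{\epsilon_*}$, and simply enlarge the chart domain by an interval factor $(\epsilon_* - \varrho, \epsilon_* + \varrho) \cap [0,1]$, treating $\epsilon$ as a free parameter in the thickened equation. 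The fiberwise linearization is surjective at $\epsilon = \epsilon_*$ by the construction of the atlas on the slice, hence remains so nearby; transversality and the tangent bundle condition are inherited slice-by-slice. When $\epsilon_* \in \{0, 1\}$ the interval is half-open, which is exactly what creates the boundary of the resulting atlas.

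I would then patch the charts using the partial order and shrinking procedure of Section \ref{section6} and invoke Lemma \ref{lemma27} to obtain a good coordinate system; because every local chart has the product form (open in $[0,1]) \times$ (fiber chart) and all coordinate changes respect this product structure, the patching is strictly a special case of the work done in Section \ref{section6}. The induced boundary atlas is tautologically the disjoint union of the atlas on the $\epsilon = 1$ slice, namely the atlas of ${\mc M}^b_{\iota_0}(\kappa_{\iota_0})$ built in \cite{Tian_Xu_3}, and the atlas on the $\epsilon = 0$ slice, namely that of ${\mc M}^b_{\iota_0}(\kappa_{\iota_0})_0$; orienting by the wedge of the standard fiberwise orientation of \cite{Tian_Xu_3} with an appropriate orientation of $[0,1]$ yields the signs $-$ and $+$ recorded in the statement. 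The main, and essentially only, subtlety will be to ensure that the obstruction data, and hence the Kuranishi bundles and coordinate changes, depend continuously on $\epsilon$ across the entire interval so that the cocycle condition of Definition \ref{defn24} holds; since the $\epsilon$-dependence sits in a smooth bounded scalar factor on a trivial component, this reduces to a parameter-dependent Fredholm setup already accommodated by the framework of Section \ref{section2} and is routine.
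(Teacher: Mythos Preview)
Your proposal is correct and matches the paper's own treatment: the paper does not give a detailed proof of this proposition at all, merely remarking just before the statement that ``since it is not involved with gluing, it is easy to prove.'' Your sketch fills in exactly what that remark leaves implicit --- a product-type chart construction in the extra parameter $\epsilon$ with no domain degeneration --- and the key observation you isolate (that $\epsilon$ enters only through the smooth bounded scalar $(\delta_A)^\epsilon$ in the soliton equation, so the Fredholm setup and obstruction data vary continuously and no gluing analysis is needed) is precisely the point the paper is relying on.
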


Notice that $\tilde{\mc M}^b(\kappa)_0$ is the product ${\mc M}(\upsilon_{\iota_0}) \times {\mc N}(\upsilon_{\iota_0}, \kappa_{\iota_0})$. One can choose a perturbation on $\tilde{\mc N}^b(\kappa)$ which is of product type on the $\epsilon = 0$ boundary. It follows that 
\beqn
\# \tilde{\mc M}^b(\kappa) = \# \tilde{\mc M}^b(\kappa)_0 = \Big[ \# {\mc N}(\upsilon_{\iota_0}, \kappa_{\iota_0} ) \Big] \cdot \Big[ \# {\mc M}_{\iota_0}(\upsilon_{\iota_0})  \Big].
\eeqn
Combining Theorem \ref{thm34}, Proposition \ref{prop35} and Proposition \ref{prop29}, the wall-crossing formula Theorem \ref{thm33} is proved.

\section{Constructing Local Charts}\label{section4}

Section \ref{section4}, \ref{section5}, and \ref{section6} are devoted to the proof of Theorem \ref{thm34}. In this section we construct virtual orbifold charts around all points in $\tilde{\mc M}(\kappa)$ while many details of the gluing construction will be provided in Section \ref{section5}. This section is organized as follows. In Subsection \ref{subsection41}, we consider those points represented by smooth solutions; this part also serves for fixing notations. In Subsection \ref{subsection42}, we state the main proposition (Proposition \ref{prop43}) about the local charts of non-BPS soliton solutions. The proof of Proposition \ref{prop43} is provided in Subsections \ref{subsection43}, \ref{subsection44} and \ref{subsection45}. In Subsection \ref{subsection46} we finish the separate discussion of local charts around BPS soliton solutions.

\subsection{Local charts for non-soliton solutions}\label{subsection41}

We briefly recall the discussion of linear Fredholm theory for perturbed gauged Witten equation here. More details can be found in \cite{Tian_Xu_3}. Choose $p>0$ and fix a sufficiently small $\tau>0$. We describe a Banach manifold ${\mc B}_\iota(\kappa)$ as follows. 

Let ${\mc A}_\tau$ be the space of connections $A = (A', A'')$ such that with respect to the local trivializations over the cylindrical end $C$ (we fixed an $S^1$-torsor of trivializations), $A' = d + \frac{{\bf i}m}{r} dt + \alpha'$ and $A'' = d + \alpha''$ such that $\alpha', \alpha''$ are of class $W_\tau^{1,p}$. Given a connection $A \in {\mc A}_\tau$, let $\delta = \delta_A \in (0, 1)$ be the number described in Item (2) of Fact \ref{fact51} (see also Item (5) of Fact \ref{fact51}). ${\mc B}_\iota(\kappa)$ consists of triples ${\bm u} = (A, u, \phi)$ where $A \in {\mc A}_\tau$, $\phi \in {\bf Fr}$ is a framing of $P''$ at ${\bf z}$, and $u$ is a section of $Y$ of regularity $W^{1, p}_{\rm loc}$, such that with respect to the trivialization $\phi: C \times \tilde X \to Y|_C$, for $T\gg 0$ and $z \in C_T\subset C$, one can write
\beqn
u (z) = \phi \big( \exp_{\delta \kappa} \xi(z) \big),\ \ \ {\rm where}\ \xi\in W^{1, p} (C_T, T_{\delta \kappa} \tilde{X}). 
\eeqn

\begin{rem}
The reader should keep in mind that the Sobolev spaces modeling the sections have the usual un-weighted norms $W^{k,p}$, while the Sobolev spaces modeling the gauge fields have the weighted norms $W_\tau^{k,p}$.
\end{rem}

There is a Banach space bundle whose fibre over ${\bm u}$ is $L^p (\Lambda^{0,1} \otimes u^* T^\bot Y) \oplus L_\tau^p({\mf g})$. The perturbed gauged Witten equation defines a section 
\beqn
{\bm u} \mapsto ( \ov\partial_A u + \nabla \tilde{\mc W}_{A, \phi}(u),\ * F_A + \mu(u) ).
\eeqn
The abelian condition of the group $G$ allows us to introduce a global gauge fixing condition. Choose $\Lambda_{GF} \subset L_\tau^p({\mf g})$ to be a finite dimensional subspace generated by smooth sections with compact supports, such that the Laplacian $\Delta: W^{2, p}_\tau({\mf g}) \to L_\tau^p({\mf g})/ \Lambda_{GF}$ is bijective. Denote $\ov{L_\tau^p({\mf g})}:= L_\tau^p({\mf g})/ \Lambda_{GF}$. Define an augmented Banach space bundle over ${\mc B}_\iota(\kappa)$ whose fibre over ${\bm u}$ is 
\beqn
{\mc E}_{\bm u}:= L^p(\Lambda^{0,1} \otimes u^* T^\bot Y) \oplus L_\tau^p({\mf g}) \oplus \ov{L^p_\tau({\mf g})}.
\eeqn
Choose a smooth reference connection $A_0 \in {\mc A}_\tau$ so that for all $A \in {\mc A}_\tau$, $A - A_0 \in W^{1, p}_\tau(\Lambda^1 \otimes {\mf g})$. Define the {\bf augmented equation} (with respect to $A_0$) to be 
\beqn
\ov\partial_A u + \nabla \tilde{\mc W}_{A, \phi}(u) = 0,\ \ * F_A + \mu(u) = 0,\ \ \ov{d^*}( A- A_0) = 0.
\eeqn
Here $\ov{d^*} (A - A_0) \in \ov{L_\tau^p({\mf g})}$ is $d^* (A - A_0)$ modulo $\Lambda_{GF}$. The augmented equation gives a section ${\mc F}_\iota: {\mc B}_\iota(\kappa) \to {\mc E}$. Then the moduli space ${\mc M}_\iota(\kappa)$ is homeomorphic to ${\mc F}_\iota^{-1}(0)$, i.e., each gauge equivalence class of solutions contains exactly one element that satisfies the gauge fixing condition. 

It is convenient for us to introduce the notations
\begin{align*}
&\ {\mc E}_{\bm u}': = L^p(\Lambda^{0,1}\otimes u^* T^\bot Y),\ &\ {\mc E}_{\bm u}'':= L_\tau^p({\mf g}) \oplus \ov{L_\tau^p({\mf g})}.
\end{align*}

Now vary $\iota$. Let $\tilde{\mc B} (\kappa)$ consists of quadruples $\tilde{\bm u} = ({\bm u}, \iota) = (\iota; A, u, \phi)$ where $\iota \in [\iota_-, \iota_+]$ and ${\bm u} \in {\mc B}_\iota(\kappa)$. This is again a Banach manifold (with boundary). One also has a Banach space bundle ${\mc E} \to \tilde{\mc B}(\kappa)$ whose fibre over $\tilde {\bm u} = (\iota; {\bm u})$ is ${\mc E}_{\bm u}$. The augmented gauged Witten equation defines a smooth Fredholm section
\begin{align*}
&\ \tilde{\mc F}: \tilde{\mc B}(\kappa) \to {\mc E},\ &\ \tilde{\mc F}(\iota; {\bm u})  = {\mc F}_\iota ( {\bm u}).
\end{align*}
The zero locus of $\tilde{\mc F}$ is the moduli space $\tilde{\mc M}^* (\kappa)$ (not the compactified moduli). For an arbitrary $\tilde {\bm u} \in \tilde {\mc M}^* (\kappa)$\footnote{Notice that in this case the object $\tilde{\bm u}$ is the same as its equivalence class, so is a genuine point in the moduli space.}, we can construct a virtual orbifold chart $C_{\tilde {\bm u}} = (U_{\tilde {\bm u}}, E_{\tilde {\bm u}}, S_{\tilde {\bm u}}, \psi_{\tilde {\bm u}}, F_{\tilde {\bm u}})$ in the same way as we did in \cite{Tian_Xu_3} where $\iota$ is not varying. Notice that $U_{\tilde{\bm u}}$ is a manifold.

\subsection{Local charts for non-BPS soliton solutions}\label{subsection42}

Under the current restricted situation, we agree with the following conventions. Whenever we talk about solitons (or objects in the same Banach manifold), we mean an object defined over the infinite cylinder which converges to $\kappa_\iota$ at $+\infty$ and to $\upsilon_\iota$ at $-\infty$, for some $\iota \in [\iota_-, \iota_+]$; such an object is denoted by $\sigma$. Whenever we talk about soliton solutions (or objects in the same Banach manifold), we mean an object defined on a nodal domain with two components, whose restriction to the principal component converges to $\upsilon_\iota$ as $z \to {\bf z}$, whose restriction to the rational component is a soliton connecting $\upsilon_\iota$ and $\kappa_\iota$ for some $\iota \in [\iota_-, \iota_+]$; such an object will have a label ${}^\sld$ to indicate that it has two components.

\subsubsection{General settings and notations}

Given $\tilde {\bm u} = (\iota; {\bm u}) \in \tilde{\mc B}(\upsilon)$, it determines a vector field $H_{W_\iota}^\delta$ on $\tilde X$ with finitely many zeroes (see Fact \ref{fact51} for related notations). For any pair of critical points, say $\upsilon, \kappa \in {\it Crit} W_\iota|_{\tilde X_\gamma}$, denote by ${\mc B}_\infty' (\upsilon, \kappa)$ the Banach manifold of $W^{1, p}_{\rm loc}$-maps from ${\mb R} \times S^1$ to $\tilde X$ that approach $\upsilon$/$\kappa$ at $+\infty$/$-\infty$ in the $W^{1, p}$-sense\footnote{Here $\phi$ is the frame belonging to ${\bm u}$.}. When $\upsilon$ and $\kappa$ are understood, we simply denote this Banach manifold by ${\mc B}_\infty'$\footnote{Although we skipped the dependence on $\tilde{\bm u}$ in this notation but the reader should keep in mind that it depends on a point $\tilde{\bm u} \in \tilde{\mc B}(\upsilon)$.}. There is a Banach space bundle ${\mc E}_\infty' \to {\mc B}_\infty'$ whose fibres over $\sigma$ is $L^p({\mb R} \times S^1, \sigma^* T\tilde X)$. The soliton equation reads 
\beq\label{eqn41}
\tilde{\mc F}_\infty'( \tilde{\bm u}; \sigma):= \frac{\partial \sigma}{\partial s} + J \left( \frac{\partial \sigma}{\partial t} + {\mc X}_{\frac{\i m}{r}}(\sigma ) \right) + H_{W_\iota}^\delta \Big( e^{\i m t/r} \sigma \Big) = 0.
\eeq

In Section \ref{section5} for technical reasons we will ``augment'' objects in ${\mc B}_\infty'$ by including gauge fields on the soliton component, and the notation ${\mc B}_\infty$ is reserved for the Banach manifolds of augmented objects. Here we set
\beq\label{eqn42}
\tilde {\mc B}^\sld:= \Big\{ \tilde{\bm u}^\sld =  (\iota; {\bm u}, \sigma )\ |\ \tilde{\bm u} = (\iota; {\bm u}) \in \tilde{\mc B}(\upsilon),\ \sigma \in {\mc B}_\infty' \Big\}.
\eeq
The product ${\mc E}^\sld:= {\mc E} \boxtimes {\mc E}_\infty'$ is a Banach space bundle over $\tilde{\mc B}^\sld$. The augmented section $\tilde{\mc F}^\sld: \tilde{\mc B}^\sld \to {\mc E}^\sld$ defined by 
\beq\label{eqn43}
\tilde{\mc F}^\sld(\tilde{\bm u}, \sigma): = \Big( \tilde{\mc F}(\tilde{\bm u}), \tilde{\mc F}_\infty( \tilde{\bm u}; \sigma) \Big):= \Big( \tilde{\mc F}(\tilde{\bm u}), \tilde{\mc F}_\infty'(\tilde{\bm u}; \sigma) \Big).
\eeq

On $\tilde{\mc B}^\sld$ there is a free action by the infinite cylinder ${\mb R} \times S^1$ which is defined by reparametrize $\sigma$ via translation. It also acts on the bundle ${\mc E}_\infty'$ (hence on ${\mc E}^\sld$) equivariantly. By the symmetry of the operator $\tilde{\mc F}_\infty'$ we see that the section $\tilde{\mc F}^\sld$ defined by \eqref{eqn43} is an equivariant section. Therefore the moduli space $\tilde{\mc M}^\sld$ of soliton solutions is homeomorphic to $(\tilde{\mc F}^\sld)^{-1}(0)/ {\mb R} \times S^1$.

\subsubsection{The stabilization and the universal curve}

In order to deal with the unstable rational component, we need to add an extra marked point to stabilize, and first construct local charts for the moduli space of solutions with a marked point in the domain.

Let $\bar{\mz m}_k$ be the compactified configuration space of $k$ points in $\Sigma^*$. There is a distinguished point in $\bar{\mz m}_1$ corresponding to the degeneration at the original marking ${\bf z}$, which is a nodal curve with principal component $\Sigma$ and a rational component with three markings $-\infty$, $+\infty$, and the extra one. Denote $\bar {\mz u}_1 \simeq \bar {\mz m}_2$ and the map $\pi_1:= \bar {\mz u}_1 \to \bar {\mz m}_1$ that forgets the second marking provides a universal family. Locally around the distinguished point, we denote the universal family by $\pi_1: \bar {\mc U}_1 \to \Delta$ where $\Delta \subset {\mb C}$ is a small disk centered at the origin. To be consistent with other notations we will regard $\alpha = 0 \in \Delta$ the same as the symbol $\sld$. Denote by $\Sigma^\alpha$ the fibre $\pi_1^{-1}(\alpha)$ and the central fibre $\Sigma^0 = \Sigma^\sld$ is a nodal curve with two components, a principal component isomorphic to $\Sigma$, and a rational component attached to ${\bf z} \in \Sigma$. We identify the rational component with the infinite cylinder ${\mb R} \times S^1$ and the marking with ${\bm w}^\sld = (0, 0) \in {\mb R} \times S^1$. On the other hand, for each nonzero $\alpha \in \mathring \Delta := \Delta \setminus \{0\}$, the regular fibre $\Sigma^\alpha$ is canonically identified with $\Sigma$ with a marking ${\bm w}^\alpha \in C \subset \Sigma^*$. As a convention, we will identify $\alpha \in \mathring \Delta$ with its logarithm as $\alpha = e^{- (4T + \i \theta)}$, and using the cylindrical coordinate on $C \subset \Sigma^*$, ${\bm w}^\alpha$ has coordinate $(4T,  \theta)$. Lastly, there is a natural $S^1$-action on the family $\pi_1: \bar {\mc U}_1 \to \Delta$ by rotating the first marked point using the cylindrical coordinate, which projects to the standard $S^1$-rotation on $\Delta$. 

Now we introduce a family of Banach manifolds $\tilde{\mc B}^\alpha$ for all $\alpha \in \Delta$. If $\alpha \neq 0$, then $\tilde{\mc B}^\alpha$ is a copy of $\tilde{\mc B}(\kappa)$ as described in Subsection \ref{subsection41}; if $\alpha = 0 = \sld$, then $\tilde{\mc B}^\sld$ is the Banach space defined by \eqref{eqn42}. The disjoint union $\sqcup_{\alpha \neq 0} \tilde{\mc B}^\alpha$ is also a Banach manifold for which we view $\alpha$ parametrizes the extra marking ${\bm w}^\alpha\in \Sigma^*$. 

It is a standard knowledge that for any compact subset $K \subset \Sigma^\sld$ that is away from the singularity of the central fibre $\Sigma^\sld$, there exists $\epsilon$ sufficiently small such that for all $\alpha \in \Delta_\epsilon^*$, there is a canonical embedding $K \hookrightarrow \Sigma_\alpha$.

Now let $q \in \tilde{\mc M}^s \subset \tilde{\mc M}^\sld \subset \tilde{\mc M}$ be a point that is represented by a non-BPS soliton solution. Let $\Gammait_q \subset S^1$ be the automorphism group of $\tilde{\bm u}_q^\sld$, which is independent of the choice of representatives. Choose for each such $q$ a representative $\tilde{\bm u}_q^\sld = ( \iota_0; {\bm u}_q, \sigma_q) = (\iota_0; A_q, u_q, \phi_q, \sigma_q)$. By a well-known result in Floer theory (see for example \cite[Theorem 4.3]{Floer_Hofer_Salamon}), we can choose ${\bm w}_q^\sld \in {\mb R} \times S^1$ where $\sigma_q$ is an embedding near ${\bm w}_q^\sld$. Fix such a ${\bm w}_q$ and denote the promoted object 
\beqn
\tilde{\mz u}_q^\sld:= ( \tilde{\bm u}_q^\sld, {\bm w}_q^\sld).
\eeqn
By reparametrizing the solution, one may assume that ${\bm w}_q^\sld = (0, 0) \in {\mb R} \times S^1$. Therefore, the domain of $\tilde{\mz u}_q^\sld$ is identified with the central fibre $\Sigma^\sld$. 

\subsubsection{Normalization conditions}

We need to fix a few other choices. To simplify the notations, we assume that the injectivity radius of $\tilde{X}$ is $\infty$ and the exponential map at every point of $\tilde X$ is a diffeomorphism. Denote ${\bm x}_q = \sigma_q ( {\bm w}_q^\sld ) \in \tilde{X}$; choose a hyperplane $H_q \subset T_{{\bm x}_q} \tilde{X}$ which is transverse to $\sigma_q$ at ${\bm w}_q^\sld$. Then $D_q:= \exp_{{\bm x}_q} H_q$ is transverse to $\sigma_q$. Moreover, choose a complex linear function $T_{{\bm x}_q} \tilde{X} \to {\mb C}$ with kernel $H_q$, which is pushed forward to a smooth function $h_q: \tilde{X} \to {\mb C}$ that vanishes on $D_q$. 

For the representative $\tilde{\bm u}_q^\sld$, we fix the function $h_q$.

For a section $u$ of $Y$, we denote by $u_\phi: C \to \tilde X$ the corresponding map under the local trivialization of $Y$ induced by $\phi$. When $\phi$ is clear from the context, we also denote it by $\uds u$. 
\begin{defn} 
For $\alpha \in \Delta^*$, $\tilde{\bm u}^\alpha = (\iota; A, u, \phi) \in \tilde{\mc B}^\alpha$ is called {\bf normal} to $(\tilde{\bm u}_q^\sld, h_q)$ if 
\beq\label{eqn44}
\sum_{g \in \Gammait_q} h_q ( \uds u ( g \cdot {\bm w}_q^\alpha )) = 0.
\eeq
On the other hand, $\tilde{\bm u}^\sld = (\iota; A, u, \phi; \sigma) \in \tilde{\mc B}^\sld$ is {\bf normal} to $(\tilde{\bm u}_q^\sld, h_q)$ if
\beqn
\sum_{g \in \Gammait_q} h_q ( \sigma ( g \cdot {\bm w}_q^\sld)) = 0.
\eeqn
\end{defn}

\subsubsection{The obstruction spaces}

We also need to choose obstruction spaces. For the principal component of $\tilde{\bm u}_q^\sld$, which is ${\bm u}_q = (A_q, u_q, \phi_q) \in {\mc B}_{\iota_0} (\upsilon)$, the way of choosing an obstruction space is the same as in \cite{Tian_Xu_3}. Here we recall it. Denote 
\beqn
{\mc E}_{{\bm u}_q} = {\mc E}_{{\bm u}_q}' \oplus {\mc E}_{{\bm u}_q}''.
\eeqn
Here ${\mc E}_{{\bm u}_q}' = L^p(\Lambda^{0,1} \otimes u_q^* T^\bot Y)$ and ${\mc E}_{{\bm u}_q}'' = L^p_\tau({\mf g}) \oplus \ov{L_\tau^p({\mf g})}$. We write the linearization of ${\mc F}_{\iota_0}: {\mc B}_{\iota_0} (\upsilon) \to {\mc E}$ as $D_{{\bm u}_q} {\mc F}_{\iota_0} = (D_{{\bm u}_q}', D_{{\bm u}_q}'')$ with respect to the above decomposition. Then we can choose finite dimensional subspaces $E_{{\bm u}_q}' \subset C_0^\infty( Y, \pi^* \Lambda^{0,1} \otimes T^\bot Y)$, $E_{{\bm u}_q}'' \subset {\mc E}_{{\bm u}_q}''$ satisfying the following condition.
\begin{enumerate}

\item $E_{{\bm u}_q}''$ is generated by smooth sections with compact supports.

\item $u_q^* E_{{\bm u}_q}' \subset {\mc E}_{{\bm u}_q}'$ is transverse to the linearization of $\ov\partial_A u + \nabla \tilde{\mc W}_{A, \phi}(u)$ in the directions of $u$, and $E_{{\bm u}_q}''$ is transverse to the linearization of the operator $(* F_A + \mu(u), \ov{d^*} (A-A_0))$ in the directions of $A$. 
\end{enumerate}
Therefore $u_q^* E_{{\bm u}_q}' \oplus E_{{\bm u}_q}'' \subset {\mc E}_{{\bm u}_q}$ is transverse to the linearization $D_{{\bm u}_q} {\mc F}_{\iota_0}$ in the directions of $A$ and $u$. That means transversality is achieved without deforming $\phi$ and $\iota$. Define $E_{{\bm u}_q} = E_{{\bm u}_q}' \oplus E_{{\bm u}_q}''$. There is no need to discuss group actions since ${\bm u}_q$ has trivial stabilizer.

For the soliton components $\sigma_q$. By the Fredholm property of $D_{\sigma_q} \tilde{\mc F}_\infty'(\tilde{\bm u}_q; \cdot)$\footnote{This is because solitons are Floer trajectories.}, one can choose a finite dimensional subspace $E_{\sigma_q}' \subset C_0^\infty({\mb R} \times S^1 \times \tilde X, T \tilde X)$ such that $\sigma_q^* E_{\sigma_q}'$ is transverse to $D_{\sigma_q} \tilde{\mc F}_\infty'(\tilde{\bm u}_q; \cdot)$. One can also choose $E_{\sigma_q}'$ to be $\Gammait_q$-invariant in the following sense. Recall that $\Gammait_q \subset S^1$ acts on the domain of $\tilde{\bm u}^\sld$ by rotating the cylindrical coordinate. Then for any $s \in C_0^\infty( {\mb R} \times S^1 \times \tilde X, T \tilde X)$ and $g \in \Gammait_q$, $g^* s \in C_0^\infty( {\mb R} \times S^1 \times \tilde X, T \tilde X)$. So we require that $E_{\sigma_q}'$ is an invariant subspace of $C_0^\infty({\mb R} \times S^1 \times \tilde X, \tilde X)$ under this $\Gammait_q$-action. Indeed this can be done because $\Gammait_q$ is a finite group. 

Now define $E_q = E_{{\bm u}_q} \oplus E_{\sigma_q}'$. $E_q$ has a $\Gammait_q$-action. We call $E_q$ the {\bf obstruction space}. 

Now we want to extend $E_q$ to the universal curve $\bar{\mc U}_1$ in a $\Gammait_q$-equivariant way. Suppose sections in $E_{{\bm u}_q}$ are all supported in $Y|_{Z_\Sigma} \subset Y$ for a compact subset $Z_\Sigma \subset \Sigma^*$, and sections in $E_{\sigma_q}'$ are all supported in $Z_\infty \times \tilde X$ for a compact subset $Z_\infty \subset {\mb R} \times S^1$. Let $Z = Z_\Sigma \sqcup Z_\infty \subset \Sigma^\sld$. For $|\alpha|$ sufficiently small, one can embed $Z$ into the fibre $\Sigma^\alpha$. Then $E_q$ gives an embedding 
\beqn
E_q \hookrightarrow C_0^\infty ( \Delta \times Y|_{Z_\Sigma}, \pi^* \Lambda^{0,1} \otimes T^\bot Y) \oplus C_0^\infty( \bar{\mc U}_1 \times \tilde X, \pi^* \Lambda^{0,1} \otimes T\tilde X).
\eeqn

Take $e \in E_q$. For any object $\tilde{\bm u}^\sld = (\iota; A, u, \phi, \sigma) \in \tilde{\mc B}^\sld$, since its domain is the central fibre $\Sigma^\sld \subset \bar{\mc U}_1$, $u$ and $\sigma$ pulls back $e$ to a section 
\beqn
e(\tilde{\bm u}^\sld) \in {\mc E}_{\tilde{\bm u}^\sld}^\sld.
\eeqn
For any object $\tilde{\bm u}^\alpha = (\iota; A, u, \phi) \in \tilde{\mc B}^\alpha$, since its domain is the regular fibre $\Sigma^\alpha$, one can also pull back $e$ to a section 
\beqn
e(\tilde{\bm u}^\alpha) \in {\mc E}_{\tilde{\bm u}^\alpha}.
\eeqn
These sections are equivariant in the following sense. For $g \in \Gammait_q$, the action on $\tilde{\bm u}^\sld$ is $g \cdot \tilde{\bm u}^\sld = (\iota; {\bm u}, g^* \sigma)$. So we have
\beqn
(g \cdot e) (g \cdot \tilde{\bm u}^\sld) = g^* e( \tilde{\bm u}^\sld).
\eeqn
The action on $\tilde{\bm u}^\alpha$ is $g \cdot \tilde{\bm u}^\alpha = (\iota; {\bm u}, {\bm w}_{ g \cdot \alpha})$. So 
\beqn
(g \cdot e) (\tilde{\bm u}^\alpha ) = e ( g\cdot \tilde{\bm u}).
\eeqn

\subsubsection{The thickened moduli space}

The $E_q$-thickened moduli space is defined as 
\beq\label{eqn45}
\tilde {\mc M}_{E_q} = \left\{ (\alpha, \tilde {\bm u}, e) \ \left| \ \begin{array}{c} \alpha \in \Delta,\ \tilde {\bm u} \in \tilde {\mc B}^\alpha,\ e \in E_q\\      
\tilde {\mc F}^\alpha( \tilde {\bm u}) + e(\alpha, \tilde {\bm u}) = 0
\end{array}   \right. \right\}.
\eeq
$\tilde {\mc M}_{E_q}$ has a naturally defined Hausdorff topology. Moreover, $\Gammait_q$ acts continuously on $\tilde {\mc M}_{E_q}$. Notice that we have chosen a point $(\sld, \tilde {\bm u}_q^\sld, \varepsilon_q) \in \tilde {\mc M}_{E_q}$. We first construct a local chart of this thickened moduli space near this point. 

Indeed, introduce
\beq\label{eqn46}
M_q = \Big\{ {\bm \xi} = ( \xi, e) \in T_{\tilde {\bm u}_q^\sld} \tilde {\mc B}^\sld \oplus E_q\ |\ D_{\tilde {\bm u}_q^\sld} \tilde {\mc F}_{\iota_0}^\sld ( \xi) + e(\sld, \tilde {\bm u}_q^\sld) = 0 \Big\}.
\eeq
This is a vector space acted by $\Gammait_q$, equipped with an invariant metric. There is a subspace consisting of pairs $(\xi, e)$ in which $\xi$ satisfies the {\bf infinitesimal normalization condition}, namely
\beq\label{eqn47}
N_q = \Big\{ {\bm \xi} =  ( \xi, e) \in M_q\ |\ \xi = ( \iota;  \beta_\Sigma, v_\Sigma, \rho; v_\infty),\ \sum_{g \in \Gammait_q} \uds v{}_\infty( g\cdot {\bm w}_q^\sld) \in H_q \Big\}.
\eeq

Now we state the main technical technical result of this section. Its proof is given in Subsection \ref{subsection43}, Subsection \ref{subsection44}, and Subsection \ref{subsection45}.

\begin{prop}\label{prop43}
There exist $\epsilon_q>0$ and a $\Gammait_q$-equivariant continuous map $\psi_{E_q}: \Delta^{\epsilon_q} \times M_q^{\epsilon_q} \to \tilde {\mc M}_{E_q}$ satisfying the following conditions. 
\begin{enumerate}

\item $\psi_{E_q} ( \sld, 0)  = (\sld, \tilde {\bm u}_q, \varepsilon_q)$ and $\psi_{E_q}$ is a homeomorphism onto its image. 

\item Denote $\psi_{E_q} ( \alpha, {\bm \xi}) = (\tilde {\bm u}_{\bm \xi}^\alpha, e_{\bm \xi}^\alpha)$, then the map $n_q: \Delta^{\epsilon_q} \times M_q^{\epsilon_q} \to {\mb C}$ defined by
\beqn
n_q (\alpha, {\bm \xi}) = \sum_{g \in \Gammait_q} h_q \Big( \uds u{}_{\bm \xi}^\alpha ( g\cdot {\bm w}_q^\alpha) \Big)
\eeqn
is transversal to $0 \in {\mb C}$ (in the topological sense) and $n_q^{-1}(0) = \Delta^{\epsilon_q} \times N_q^{\epsilon_q}$.
\end{enumerate}
\end{prop}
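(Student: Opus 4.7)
The plan is a standard gluing-plus-implicit-function-theorem construction, adapted to carry the obstruction space $E_q$ and the nodal parameter $\alpha \in \Delta$. I will build $\psi_{E_q}(\alpha, {\bm \xi})$ by (i) producing an approximate solution on the fibre $\Sigma^\alpha$ for each small $(\alpha, {\bm \xi})$, (ii) correcting it via Newton iteration to a genuine zero of $\tilde{\mc F}^\alpha + e(\alpha, \cdot)$, and (iii) checking the claimed topological and equivariance properties.

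For the pregluing, write $\alpha = e^{-(4T + \i \theta)}$ so that $\alpha \to 0$ corresponds to $T \to \infty$. Decompose ${\bm \xi} = (\iota; \beta_\Sigma, v_\Sigma, \rho; v_\infty) \in M_q$ and, when $\alpha \neq 0$, preglue the perturbed principal datum $({\bm u}_q + (\beta_\Sigma, v_\Sigma, \rho))$ on $\Sigma^*$ with the perturbed soliton $\sigma_q + v_\infty$ on ${\mb R} \times S^1$ across a neck of length $\sim 4T$, using the $S^1$-rotation by $\theta$ to match the $S^1$-action on $\bar{\mc U}_1$ and the monodromy of the $r$-spin structure; when $\alpha = \sld$ the pregluing is tautologically $\tilde{\bm u}_q^\sld + \xi$ in $\tilde{\mc B}^\sld$. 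Because sections of $E_q$ were extended to the universal curve $\bar{\mc U}_1$, the term $e(\alpha, \tilde{\bm u}_{\mathrm{app}}^\alpha)$ is well-defined on each fibre, and $\Gammait_q$-equivariance of the whole recipe follows from the $\Gammait_q$-invariance of $E_{\sigma_q}'$ together with the $\Gammait_q$-symmetry of the gluing profile around ${\bm w}_q^\sld$.

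The Newton step requires a right inverse to $D\tilde{\mc F}^\alpha + De$ at the approximate solution, uniformly bounded in $\alpha$. At $\alpha = \sld$ this is by construction of $E_q$, since the direct sum $D_{\tilde{\bm u}_q^\sld}\tilde{\mc F}_{\iota_0}^\sld \oplus \mathrm{ev}_{E_q}$ is surjective with kernel $M_q$. A standard breaking-of-symmetry argument on the neck, combined with the exponential weight $W_\tau^{k,p}$ and the asymptotic hyperbolicity of the linearized operators near $\upsilon_{\iota_0}, \kappa_{\iota_0}$ (valid because both are nondegenerate critical points of $\tilde W_{\iota_0}|_{\tilde X_\gamma}$), then furnishes a uniform right inverse for $|\alpha|$ small. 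The quadratic estimate for $\tilde{\mc F}^\alpha$ follows from the explicit expression of the perturbed gauged Witten operator recalled in Section~\ref{section5} and Fact~\ref{fact51}. Newton iteration then produces a unique correction in a slice transverse to $\ker(D\tilde{\mc F}^\alpha \oplus \mathrm{ev}_{E_q})$, yielding the map $\psi_{E_q}$.

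The hard part, and the reason for invoking Section~\ref{section5} in detail, is the uniform right-inverse bound as $\alpha \to 0$ together with the injectivity and local surjectivity of $\psi_{E_q}$ onto $\tilde{\mc M}_{E_q}$. Injectivity follows from the uniqueness in Newton iteration once one checks that $M_q$ parametrizes a slice transverse to the orbits of the family of solutions. Local surjectivity combines compactness of $\tilde{\mc M}^\sld$, elliptic regularity, and the standard fact that any element of $\tilde{\mc M}_{E_q}$ close to $(\sld, \tilde{\bm u}_q^\sld, \varepsilon_q)$ must, by removal of singularities and soft rescaling, come from a unique nodal parameter $\alpha$ and a unique normal deformation ${\bm \xi}$. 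Homeomorphism onto the image and $\Gammait_q$-equivariance are then immediate from the uniqueness in each step.

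Finally I verify part (2). The function $n_q$ vanishes at $(\sld, 0)$, and its differential there in the $M_q$-direction sends ${\bm \xi} = (\iota; \beta_\Sigma, v_\Sigma, \rho; v_\infty)$ to $\sum_{g \in \Gammait_q} dh_q\bigl(\uds v_\infty(g\cdot {\bm w}_q^\sld)\bigr) \in {\mb C}$, whose kernel is exactly the infinitesimal normalization subspace $N_q$ by construction of $h_q$ and $H_q$ (transversality of $D_q$ to $\sigma_q$ at ${\bm w}_q^\sld$). Since $\psi_{E_q}$ is a homeomorphism, the topological implicit function theorem (applied to the continuous map $n_q$ on the topological manifold $\Delta^{\epsilon_q} \times M_q^{\epsilon_q}$, using the evident complementary ${\mb C}$-direction inside $M_q$) shows $n_q$ is transverse to $0 \in {\mb C}$ in the sense of Section~\ref{section2}, and its zero locus coincides with $\Delta^{\epsilon_q} \times N_q^{\epsilon_q}$ after possibly shrinking $\epsilon_q$.
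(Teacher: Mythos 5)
Your overall strategy (pregluing plus Newton iteration plus a uniform right inverse on the neck) matches the paper's, and the uniform right-inverse and quadratic-estimate ingredients you cite are exactly the ones proved in Section~\ref{section5} (Lemmas~\ref{lemma45}, \ref{lemma46}, \ref{lemma48}, \ref{lemma49}). But there is a genuine gap in your verification of part~(2), specifically in the claim that $n_q^{-1}(0) = \Delta^{\epsilon_q}\times N_q^{\epsilon_q}$.

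Computing the differential of $n_q$ at $(\sld, 0)$ and observing that its restriction to $M_q$ has kernel $N_q$ gives you transversality near $(\sld, 0)$ and the fact that $n_q^{-1}(0)$ is a locally flat submanifold of the right dimension. It does \emph{not} give you that this submanifold is the flat slice $\Delta^{\epsilon_q}\times N_q^{\epsilon_q}$. The Newton correction is quadratic at $(\sld,0)$ but not elsewhere, and without further constraints it will generically push the glued solution off the normalized locus, so that for ${\bm\xi}\in N_q^{\epsilon_q}$ one may have $n_q(\alpha,{\bm\xi})\neq 0$. The device the paper uses to kill this problem is to build the \emph{infinitesimal normalization condition} into the choice of right inverse, both on the lower stratum (the condition on $\tilde Q^\sld_{E_q}$ preceding~\eqref{eqn411}) and in the gluing step (Item (2) of Lemma~\ref{lemma46}, the condition~\eqref{eqn415}). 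With that constraint, (i) the corrected singular solution $\tilde{\mz u}^\sld_{\bm\xi}$ satisfies the normalization~\eqref{eqn411} exactly for ${\bm\xi}\in N_q$, (ii) its pregluing $\tilde{\bm u}^{\rm app}_{\alpha,{\bm\xi}}$ still does because of the $\Gammait_q$-symmetry of the neck, and (iii) the final Newton correction lies in the image of $\tilde Q^\alpha_{E_q}$ and so does not break it either. This yields the containment $\Delta^{\epsilon_q}\times N_q^{\epsilon_q}\subset n_q^{-1}(0)$, and then equality follows from transversality and a dimension count — which is exactly how the paper argues in Subsection~\ref{subsection45}. Your proposal never imposes the normalization constraint on the right inverse, and therefore cannot conclude the set-theoretic equality stated in part~(2). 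Secondarily, note that the paper preglues the \emph{corrected} singular solution $\tilde{\mz u}^\sld_{\bm\xi}$ rather than the raw deformation $\tilde{\bm u}^\sld_q+\xi$; this two-step arrangement is what makes the normalization argument propagate cleanly, and adopting it would also streamline your write-up. Finally, your surjectivity sketch (``removal of singularities and soft rescaling'') does not quite fit this setting — there is no bubbling and no singularity to remove; the relevant argument is the convexity-along-segments scheme of Lemma~\ref{lemma411} together with the estimate that the $W^{1,p}$ distance between a nearby solution and the preglued approximate solution over the neck is small.
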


Now we see how we can use it to construct a local chart of $\tilde {\mc M}$.

\begin{cor}\label{cor44}
For $\epsilon \in (0, \epsilon_q]$, define
\beqn
U_q^\epsilon = \bigslant{\Delta^\epsilon \times N_q^\epsilon }{\Gammait_q},\ \ E_q^\epsilon = \bigslant{ \Delta^\epsilon \times \tilde{N}_q^\epsilon \times E_q}{\Gammait_q};
\eeqn
let $S_q^\epsilon: U_q^\epsilon \to E_q^\epsilon$ be the section induced from the map $(\alpha, {\bm \xi}) \mapsto e_{{\bm \xi}}^\alpha$; define $\psi_q^\epsilon: (S_q^\epsilon)^{-1}(\varepsilon_q) \to \tilde {\mc M}$ by $\psi_q^\epsilon (\alpha, {\bm \xi}) = [ \tilde {\bm u}_{{\bm \xi}}^\alpha]$ with image $F_q \subset \tilde {\mc M}$. Then for $\epsilon$ small enough, the tuple
\beqn
C_q^\epsilon = (U_q^\epsilon, E_q^\epsilon, S_q^\epsilon, \psi_q^\epsilon, F_q^\epsilon)
\eeqn
is a local chart of $\tilde{\mc M}$ around $q$.
\end{cor}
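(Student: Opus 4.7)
The plan is to derive the corollary by unpacking Proposition \ref{prop43} and verifying each item in Definition \ref{defn21}. First I would check the basic topological structures. Since $\Gammait_q \subset S^1$ is a finite group acting continuously on $\Delta^\epsilon$ (by rotating the extra marking through the $S^1$-action on $\bar{\mc U}_1$) and on $M_q$ (preserving the linear subspace $N_q$ by $\Gammait_q$-invariance of the normalization condition \eqref{eqn47}), the quotient $U_q^\epsilon = (\Delta^\epsilon \times N_q^\epsilon)/\Gammait_q$ is a topological orbifold, possibly with boundary from $\iota \in \{\iota_-,\iota_+\}$. Similarly $E_q^\epsilon$ is the total space of a continuous orbifold vector bundle over $U_q^\epsilon$, obtained as the $\Gammait_q$-quotient of the trivial bundle $(\Delta^\epsilon \times N_q^\epsilon) \times E_q$, using the chosen $\Gammait_q$-invariant structure on $E_q$.

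Next I would verify that $S_q^\epsilon$ is a well-defined continuous section whose zero locus parameterizes the moduli points we want. By Proposition \ref{prop43}(1) the assignment $(\alpha, {\bm\xi}) \mapsto e_{\bm\xi}^\alpha$ is continuous, and it is $\Gammait_q$-equivariant because $\psi_{E_q}$ is equivariant and the obstruction factor is a direct summand of the target. Descending to the quotient yields $S_q^\epsilon$, and its zero locus $(S_q^\epsilon)^{-1}(\varepsilon_q)$ consists exactly of $\Gammait_q$-orbits of pairs $(\alpha, {\bm\xi})$ whose image under $\psi_{E_q}$ is of the form $(\alpha, \tilde{\bm u}, 0)$, i.e., an honest solution of the perturbed gauged Witten equation on $\Sigma^\alpha$ that in addition satisfies the normalization \eqref{eqn44}. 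The map $\psi_q^\epsilon$ is then the composition of $\psi_{E_q}$ restricted to this zero locus with the natural projection to $\tilde{\mc M}$ that forgets the extra marking and takes the gauge equivalence class of $\tilde{\bm u}$, so it is continuous; injectivity modulo $\Gammait_q$ follows because $\psi_{E_q}$ is a homeomorphism onto its image and distinct $\Gammait_q$-orbits in the normalized slice $N_q$ give distinct equivalence classes in $\tilde{\mc M}$.

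The substantive step, modulo which the rest is routine, is to show that after possibly shrinking $\epsilon$, the image $F_q^\epsilon$ is an \emph{open} neighborhood of $q$ in $\tilde{\mc M}$ and $\psi_q^\epsilon$ is a homeomorphism onto it. Given a sequence $q_n \to q$ in $\tilde{\mc M}$, one chooses representatives $\tilde{\bm u}_n^{\alpha_n}$ on fibres $\Sigma^{\alpha_n}$ with either $\alpha_n \to 0 = \sld$ (for non-soliton solutions degenerating to $q$) or $\alpha_n = \sld$ eventually (for nearby soliton solutions). Using the gauge freedom on the principal part and the ${\mb R} \times S^1$ reparameterization freedom of the soliton part, one seeks to adjust $\tilde{\bm u}_n^{\alpha_n}$ so that it satisfies \eqref{eqn44}. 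The transversality statement in Proposition \ref{prop43}(2)---that $n_q$ is transverse to $0$ with zero locus exactly $\Delta^{\epsilon_q} \times N_q^{\epsilon_q}$---plays the role of an implicit function theorem: once the solution is close enough to $\tilde{\bm u}_q^\sld$, there is a unique $\Gammait_q$-orbit of small reparameterizations bringing it into normal form, and by Proposition \ref{prop43}(1) the resulting normalized object then lies in the image of $\psi_{E_q}$. This shows $q_n \in F_q^\epsilon$ for large $n$, whence openness, and the homeomorphism property follows from the continuity of both $\psi_{E_q}$ and its inverse on the image.

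The main obstacle will be this last openness/inversion step. Although it is morally an implicit function argument, it must work uniformly across two qualitatively different regimes: on the smooth fibres $\alpha \neq 0$ the would-be soliton has been glued into the principal component and the marking ${\bm w}^\alpha$ sits in the cylindrical end of $\Sigma$, while on the nodal fibre $\alpha = \sld$ the marking sits on the rational component with its full ${\mb R}\times S^1$-reparameterization freedom. Reconciling these regimes---and in particular proving that every moduli point near $q$ admits a $\Gammait_q$-orbit of normalized representatives---ultimately relies on the pregluing and compactness output of Section \ref{section5}, combined with the transversality of $n_q$ from Proposition \ref{prop43}(2). Once this is granted, assembling $(U_q^\epsilon, E_q^\epsilon, S_q^\epsilon, \psi_q^\epsilon, F_q^\epsilon)$ into a virtual orbifold chart is a direct check against Definition \ref{defn21}.
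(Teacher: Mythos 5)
Your proposal follows essentially the same route as the paper's proof of Corollary~\ref{cor44}: equivariance of the family $(\alpha,{\bm\xi})\mapsto e_{\bm\xi}^\alpha$ gives well-definedness of $S_q^\epsilon$ and $\psi_q^\epsilon$, local surjectivity is obtained by normalizing an arbitrary nearby solution with a marked point ${\bm w}$ and then identifying it as an image point of $\psi_{E_q}$, and local injectivity comes from $\psi_{E_q}$ being a homeomorphism onto its image. The key ingredients are all correctly identified, so the plan is sound.

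One point to tighten, which is more a matter of logical bookkeeping than of substance. In the paper's argument, the existence of a marking ${\bm w}\in\Sigma^*$ satisfying \eqref{eqn48} does \emph{not} come from the transversality of $n_q$; it is a direct consequence of $\sigma_q$ being an embedding near ${\bm w}_q^\sld$ transverse to the slice $D_q$, together with $C^0$-closeness of the nearby solution to $\tilde{\bm u}_q^\sld$. The transversality of $n_q$ from Proposition~\ref{prop43}(2) is a statement about the function $n_q$ \emph{on the chart domain} $\Delta^{\epsilon_q}\times M_q^{\epsilon_q}$ (i.e., pulled back through $\psi_{E_q}$), and it enters \emph{after} one already knows $(\alpha,\tilde{\bm u})$ lies in the image of $\psi_{E_q}$: it is then used to conclude that $n_q=0$ forces $(\alpha,{\bm\xi})\in\Delta^{\epsilon_q}\times N_q^{\epsilon_q}$. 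Relatedly, to place $(\alpha,\tilde{\bm u})$ in the image of $\psi_{E_q}$ in the first place you need not just the literal statement of Proposition~\ref{prop43}(1) (homeomorphism onto image), but the surjectivity/openness of that image near $\hat q$ established in Subsection~\ref{subsection45} as part of the proof of that Proposition. Your text attributes the normalization step to Proposition~\ref{prop43}(2) and the in-image step to Proposition~\ref{prop43}(1), which inverts the intended roles; reordering these, the argument aligns exactly with the paper's.
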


\begin{proof}
Proposition \ref{prop43} implies that $(\alpha, {\bm \xi}) \mapsto e_{\bm \xi}^\alpha$ is equivariant, hence $S_q^\epsilon$ and $\psi_q^\epsilon$ are well-defined. It remains to show that $\psi_q^\epsilon$ is a homeomorphism onto an open neighborhood of $q$. For (local) surjectivity, if $\tilde{\bm u} = (\iota; A, u, \phi) \in \tilde{\mc M}$ sufficiently close to $q$, then there exists ${\bm w} \in \Sigma^*$ such that
\beq\label{eqn48}
\sum_{g \in \Gammait_q} h_q \big( u_{\phi_q} ( g \cdot {\bm w}) \big) = 0.
\eeq
Then $(\Sigma^*, {\bm w})$ can be identified with a fibre $\Sigma_\alpha$ of $\bar {\mc U}_1$. Moreover, $(\alpha, \tilde {\bm u})$ is in the image of the map $\hat\psi_q$ constructed by Proposition \ref{prop43}. Then $n_q (\alpha, \tilde {\bm u}) = 0$. By Item (2) of Proposition \ref{prop43}, $(\alpha, \tilde {\bm u}) \in \Delta^{\epsilon_q} \times N_q^{\epsilon_q}$. Hence the local surjectivity follows. The local injectivity follows from Item (1) of Proposition \ref{prop43}. Therefore for $\epsilon$ small enough, $\psi_q^\epsilon$ is a homeomorphism onto an open neighborhood of $q$.
\end{proof}

\subsection{Proof of Proposition \ref{prop43}: part I}\label{subsection43}

To prepare for the gluing process, we first construct the map $\psi_{E_q}$ on the lower stratum. For $r > 0$, take ${\bm \xi} = (\iota; \xi, e) \in \tilde{M}_q^r$. Consider the family of elements
\beq\label{eqn49}
\tilde{\mz u}_{\sld, {{\bm \xi}}}^{\rm app}:= \Big( \tilde{\bm u}_{\sld, {\bm \xi}}^{\rm app}, e_{\sld,{\bm \xi}}^{\rm app} \Big):= \Big( \iota_0 + \iota;\ {\bf exp}_{{\bm u}_q^\sld} \xi,\ \varepsilon_q + e \Big) \in \tilde{\mz B}_{E_q}^\sld,
\eeq
where ${\bf exp}^\sld$ is the exponential map in ${\mc B}^\sld$. If we choose a right inverse $\tilde{Q}^{\sld}_{E_q}$ of
\beq\label{eqn410}
D_{\tilde{\mz u}_q^\sld} \tilde{\mz F}_{E_q}^\sld: T_{\tilde{\mz u}_q^\sld} \tilde{\mz B}_{E_q}^\sld \to {\mc E}_{\tilde{\bm u}_q^\sld}^\sld,
\eeq
then by the standard implicit function theorem, for $r$ small enough, the family \eqref{eqn49} can be corrected by adding a unique element ${\bm \xi}_{\sld, {\bm \xi}}'':= ( \iota''; \xi_{\sld, {\bm \xi}}'', e_{\sld, {\bm \xi}}'' ) \in {\it Image} ( \tilde{Q}_{E_q}^\sld )$\footnote{Notice that we can choose the right inverse such that $\iota'' = 0$.} such that for
\beqn
\tilde{\mz u}_{{\bm \xi}}^\sld:= \Big( \iota_{{\bm \xi}}^\sld;\ {\bm u}_{{\bm \xi}}^\sld,\ e_{{\bm \xi}}^\sld	 \Big):=\Big( \iota_0 + \iota + \iota'',  {\bf exp}_{{\bm u}_q^\sld} \big( \xi + \xi_{\sld, {\bm \xi}}'' \big), \varepsilon_q + e + e_{\sld, {\bm \xi}}'' \Big),
\eeqn
one has
\beqn
\tilde{\mz F}_{E_q}^\sld ( \tilde{\mz u}_{{\bm \xi}}^\sld ) = e_{{\bm \xi}}^\sld( \tilde{\bm u}_{{\bm \xi}}^\sld) + \tilde{\mc F}^\sld ( \tilde{\bm u}_{{\bm \xi}}^\sld ) = 0.\footnote{Recall that $e_{{\bm \xi}}^\sld( \tilde{\bm u}_{{\bm \xi}}^\sld)$ is defined by pulling back the vector $e_{{\bm \xi}}^\sld \in E_q$ by the section part of ${\bm u}_{{\bm \xi}}^\sld \in \tilde{\mc B}^\sld$.}
\eeqn
The existence of the right inverse $\tilde{Q}_{E_q}^\sld$ is due to the surjectivity $D_{\tilde{\mz u}_q^\sld} \tilde{\mz F}_{E_q}^\sld$. In order to guarantee Item (2) of Proposition \ref{prop43}, we need to impose the following condition on $\tilde{Q}_{E_q}^\sld$: for any ${\bm \xi} = (\iota; \xi, e) \in {\it Image} (\tilde{Q}_{E_q}^\sld) \subset T_{\tilde{\bm u}_q^\sld} \oplus E_q$, if in $\xi$ the infinitesimal deformation of the soliton $\sigma_q$ is $v \in W^{1, p} ( {\mb R} \times S^1, \sigma_q^* T\tilde X)$, then we require that 
\beqn
\sum_{g \in \Gammait_q} v ( g \cdot {\bm w}_q^\sld) \in H_q.
\eeqn
We call this condition the {\bf infinitesimal normalization condition}. Since the exact solution $\tilde {\mz u}_{\bm \xi}^\sld$ is obtained from the approximate solution by adding an element in the image of the right inverse, the infinitesimal normalization condition implies 
\beq\label{eqn411}
\sum_{g \in \Gammait_q}h_q \Big( \uds u{}_{\bm \xi}^\sld ( g \cdot {\bm w}_q^\sld) \Big) = 0.
\eeq

\subsection{Gluing}\label{subsection44}

\subsubsection{Pregluing}

Let $\alpha = e^{- (4T +  \i \theta)} \in \mathring \Delta$ be a (small) gluing parameter. We first introduce a notation which will be used frequently. Let $f$ be a map/section defined on a cylinder $[a, b] \times S^1$ (where $a, b$ could be infinity). We define ${\mz t}_\alpha^* (f)$, called the $\alpha$-twist of $f$, to be the map/section defined on $[a + 4T, b+ 4T] \times S^1$ which has the expression 
\beqn
{\mz t}_\alpha^* (f)(s, t) = e^{-\lambda \theta}\cdot f(s - 4T, t - \theta)\footnote{Here $\lambda = \i m /r \in {\mf g}'$ is the Lie algebra vector so that $\exp(2\pi \lambda) = \gamma$. The action on the values of $f$ by $e^{-\lambda \theta}$ in this expression will be understood from the context.}. 
\eeqn

Now given an arbitrary smooth $\tilde{\bm u}^\sld = (\iota; A, u, \phi, \sigma) \in \tilde{\mc B}^\sld$, we would like to construct for each gluing parameter $\alpha$ a preglued object $\tilde{\bm u}_\alpha^{\rm app}$. Recall that $C \subset \Sigma^*$ has a cylindrical coordinate so that $C$ is identified with $[0, +\infty) \times S^1$. Let $C_T \subset C$ be the closed subset corresponding to $[T, +\infty) \times S^1$. We cut $\Sigma^*$ into the following pieces
\beqn
\Sigma^* = \big[ \Sigma \setminus C_T\big] \cup  \big[ C_T\setminus C_{3T}\big] \cup C_{3T}.
\eeqn

By the asymptotic behavior of the section $u$, there is a loop $\tilde \upsilon: S^1 \to \tilde{X}$ such that $e^{ \i \lambda t} \tilde \upsilon$ is a constant in $\tilde{X}_\gamma$, and such that $u_\phi$ converges to $\tilde \upsilon(t)$ exponentially as $s \to +\infty$. Then for $T$ sufficiently large, there is $\zeta_\Sigma \in W^{1,p} ( [T, +\infty) \times S^1, \tilde \upsilon^* T \tilde{X} )$ such that over $C_T$, $u_\phi (s, t) = \exp_{\tilde \upsilon(t)} \zeta_\Sigma (s, t)$. For the same reason, there exists $\zeta_\infty \in W^{1, p} ( (-\infty, -T] \times S^1, \tilde \upsilon^* T\tilde{X})$ such that over $(-\infty, -T] \times S^1$, $\sigma (s, t) = \exp_{\tilde \upsilon(t)} \zeta_\infty(s, t)$. Then the $\alpha$-twist ${\mz t}_\alpha^* \sigma$ is still asymptotic to $\tilde \upsilon$, and we have
\beqn
({\mz t}_\alpha^* \sigma )(s, t)  = \exp_{\tilde \upsilon(s, t)} \big( {\mz t}_\alpha^* (\zeta_\infty)(s, t) \big),\ \  (s, t) \in (-\infty, 3T] \times S^1.
\eeqn

Choose a cut-off function $\rho: {\mb R} \to [0, 1]$ such that $\rho (s) = 0$ for $s \leq 0$, $\rho (s) = 1$ for $s \geq 1/2$. Let $\rho_\Sigma^T (s) = 1- \rho ( s/T - 1)$, $\rho_\infty^T (s) = \rho ( s/T - 5/2)$\footnote{Roughly, $\rho_T^\Sigma$ starts to decrease at $s = T$ and gets to zero at $s = \frac{3T}{2}$; $\rho_T^\infty$ starts to increase at $s = 5T/2$ and gets to one at $s = 3T$.}. Define $u_\alpha^{\rm app} \in \Gamma(\Sigma^*, Y)$
\beq\label{eqn412}	
u_{\alpha}^{\rm app} = \left\{ \begin{array}{ll} u ,&\ \Sigma \setminus C_T,\\
                                              \phi \Big[ \exp_{\tilde \upsilon(t)} \Big( \rho_\Sigma^T \zeta_\Sigma + \rho_\infty^T {\mz t}_\alpha^* \zeta_\infty \Big) \Big],&\ C_T \setminus C_{3T},\\
																							\phi  \Big[ ( {\mz t}_\alpha^* \sigma ) (s, t) \Big] ,&\ C_{3T}.
\end{array} \right.
\eeq

On the other hand, we use the same connection $A$ and the same frame $\phi$ in the approximate solution, namely $A_\alpha^{\rm app} = A$ and $\phi_\alpha^{\rm app} = \phi_\alpha$. The preglued object in the space $\tilde{\mz B}^\alpha$ is defined as 
\beqn
\tilde{\bm u}_\alpha^{\rm app} = (\iota; A_\alpha^{\rm app}, u_\alpha^{\rm app}, \phi). 
\eeqn

Apply the pregluing construction for the family $\tilde{\mz u}_{{\bm \xi}}^\sld = ( \tilde{\bm u}_{{\bm \xi}}^\sld, e_{{\bm \xi}}^\sld)$ constructed in Subsection \ref{subsection43}, which gives a family of approximate solutions
\beqn
\tilde{\mz u}_{\alpha, {\bm \xi}}^{\rm app}:= \Big( \tilde{\bm u}_{\alpha, {\bm \xi}}^{\rm app}, e_{\alpha, {\bm \xi}}^{\rm app} \Big),\ \ {\rm where}\ e_{\alpha, {\bm \xi}}^{\rm app} = e_{{\bm \xi}}^\sld.
\eeqn
Among this family there is a central one corresponding to ${\bm \xi} = 0$, denoted by $\tilde{\mz u}_{\alpha, q}^{\rm app}$. 

Recall that ${\bm \xi}$ lies in the set $M_q^r$ for some $r>0$. It is easy to see that if $r$ is small enough and $(\alpha, {\bm \xi}) \in ( \Delta^r \setminus \{\sld\} ) \times \tilde{M}_q^r$, we can write
\beq\label{eqn413}
\tilde{\mz u}_{\alpha, {\bm \xi}}^{\rm app} = ( \tilde{\bm u}_{\alpha, {\bm \xi}}^{\rm app}, e_{\alpha, {\bm \xi}}^{\rm app}) = ( \exp_{\tilde{\bm u}_{\alpha, q}^{\rm app}} {\bm \xi}_{\alpha, {\bm \xi}}',\ e_{\alpha, q}^{\rm app} + e_{\alpha, {\bm \xi}}'),\ \ {\rm where}\ {\bm \xi}_{\alpha, {\bm \xi}} \in T_{\tilde{\bm u}_{\alpha, q}^{\rm app}} \tilde{\mc B}^\alpha.
\eeq

\subsubsection{Estimates}

The domain of the approximate soliton is identified with the punctured surface $\Sigma$. We can regard the approximate solution as an element of the Banach manifold $\tilde {\mc B}_\kappa$. The norm of its tangent space and the norm on the Banach vector bundles are not modified though we have a varying gluing parameter.

We look for a family of solutions to the equation $\tilde{\mz F}_{E_q}^\alpha (\tilde{\mz u}) = 0$ which are close to the family $\tilde{\mz u}_{\alpha, {\bm \xi}}^{\rm app}$. We first state a few technical results.

\begin{lemma}\label{lemma45}
There exist $r_1, c_1, \tau_1>0$ such that for $(\alpha, {\bm \xi}) \in \mathring \Delta^{r_1} \times \tilde{M}_q^{r_1}$,
\beq\label{eqn414}
\big\| \tilde{\mz F}_{E_q}^\alpha (\tilde{\mz u}_{\alpha, {\bm \xi}}^{\rm app}) \big\| \leq c_1 |\alpha|^{\tau_1}.
\eeq
\end{lemma}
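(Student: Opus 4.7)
The plan is to localize the error to the neck region $C_T \setminus C_{3T}$ and then exploit the exponential convergence of both the principal component and the soliton component to their common asymptotic loop $\tilde\upsilon$. Outside the neck the preglued object coincides, by construction, either with the exact solution $\tilde{\mz u}_{{\bm \xi}}^\sld$ on $\Sigma \setminus C_T$ (where $\tilde{\mz F}_{E_q}^\sld = 0$ by Subsection \ref{subsection43}) or with the $\alpha$-twist of its soliton part on $C_{3T}$. Since the soliton equation \eqref{eqn41} is invariant under the $\alpha$-twist (the constants $\lambda$ and $\delta$ are unaffected and the factor $e^{\i m t/r}\sigma$ absorbs the rotation by $\theta$), and since the obstruction section $e_{\bm \xi}^\sld \in E_q$ was chosen to be supported in a compact set $Z_\Sigma \sqcup Z_\infty$ contained in the complement of the neck for $T$ large, the error vanishes pointwise off $C_T \setminus C_{3T}$.

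On the neck, I would work in the local trivialization $\phi$ and the geodesic chart centered at the asymptotic loop $\tilde\upsilon$, in which the principal datum is $\zeta_\Sigma$ and the twisted soliton datum is ${\mz t}_\alpha^* \zeta_\infty$. Decompose the error into three types: (i) a cut-off error coming from the derivatives $(\rho_\Sigma^T)'$, $(\rho_\infty^T)'$ hitting $\zeta_\Sigma$ and ${\mz t}_\alpha^* \zeta_\infty$; (ii) a nonlinear interpolation error of the form $\mathcal{O}(|\zeta_\Sigma|^2 + |{\mz t}_\alpha^* \zeta_\infty|^2)$ produced by expanding $\ov\partial_A u + \nabla \tilde{\mc W}_{A,\phi}(u)$ around the critical loop, since the linear part acting on either piece alone gives zero by the asymptotic form of the equation at $\upsilon$; (iii) a negligible error in the moment-map and gauge-fixing components, which depend only on $A$ (unchanged by pregluing) and on $u$ through $|p|^2$, which is again exponentially close to its asymptotic value.

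The main estimate then reduces to the exponential-decay bounds for the asymptotic data. Because $\tilde{\bm u}_{\bm \xi}^\sld$ lies in the weighted Banach manifold $\mc B_{\iota_\xi}(\upsilon)$ and $\sigma_{\bm \xi}$ is a Floer trajectory between nondegenerate critical points, standard asymptotic analysis (as in \cite{Tian_Xu}) yields
\begin{equation*}
\|\zeta_\Sigma\|_{W^{1,p}([s,s+1]\times S^1)} \leq C e^{-\tau s}, \qquad \|\zeta_\infty\|_{W^{1,p}([-s-1,-s]\times S^1)} \leq C e^{-\tau s}
\end{equation*}
for some $\tau > 0$ uniform in ${\bm \xi} \in M_q^{r_1}$, with $r_1$ small. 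On the neck $s \in [T, 3T]$ both $|\zeta_\Sigma(s,t)|$ and $|{\mz t}_\alpha^* \zeta_\infty(s,t)| = |\zeta_\infty(s - 4T, t-\theta)|$ are therefore bounded by $C e^{-\tau T}$, and the same holds for their first derivatives. Integrating over the neck (whose $L^p$-volume is uniformly bounded) gives an error of order $e^{-\tau T} = |\alpha|^{\tau/4}$, so that \eqref{eqn414} holds with $\tau_1 = \tau/4$ and some constant $c_1$.

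The main obstacle I anticipate is obtaining a \emph{uniform} decay rate $\tau$ across the family parametrized by ${\bm \xi}$, because the asymptotic loop $\tilde\upsilon = \tilde\upsilon_{\bm \xi}$ and the linearization spectra at $\upsilon_{\iota_\xi}$ vary with ${\bm \xi}$; this requires choosing $r_1$ so small that the spectral gap at the nondegenerate critical point $\upsilon_{\iota_0}$ persists for all nearby perturbation parameters $\iota_0 + \iota + \iota''$, which is a standard but careful application of continuity of the Hessian together with the implicit function theorem setup of Subsection \ref{subsection43}. A secondary nuisance is that the weighted norm on the gauge-field components forces us to check that the unchanged connection $A$ contributes no neck error in the $W^{1,p}_\tau$-sense, which follows immediately because $A - A_0$ has already been set equal on the two sides and no cut-off is applied to it.
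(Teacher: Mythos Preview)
Your overall strategy---localize to the neck and exploit exponential decay of $\zeta_\Sigma$, $\zeta_\infty$---is exactly the paper's, and your treatment of the Witten-equation component on the neck is correct. There is, however, one genuine gap, in your item~(iii).

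The vortex-equation error does \emph{not} vanish on $C_{3T}$, and your mechanism for bounding it is wrong. On $C_{3T}$ the preglued section $u_\alpha = \phi({\mz t}_\alpha^*\sigma)$ is asymptotic to $\kappa$ while the original $u$ is asymptotic to $\upsilon$, so $\mu(u_\alpha)-\mu(u)$ is $O(1)$ wherever $u_\alpha$ sits near $\kappa$; in particular the $p$-coordinates of $\kappa$ and $\upsilon$ need not agree, so your ``$|p|^2$ exponentially close to its asymptotic value'' argument does not apply. What actually controls this term is the decay of the area density: the Hodge star in $*F_A+\mu(u)$ is taken with respect to the \emph{smooth} volume form ${\bm\nu}_s$, which on the cylindrical end decays exponentially relative to the cylindrical form ${\bm\nu}_c$. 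Writing the equation in cylindrical coordinates as $*_cF_A + \nu\,\mu(u)$ with $\nu\sim e^{-2s}$, the error becomes $\nu\bigl(\mu(u_\alpha)-\mu(u)\bigr)$, and it is the factor $\nu$---not smallness of the bracket---that yields $\bigl\|\nu(\mu(u_\alpha)-\mu(u))\bigr\|_{L^p_\tau(C_T)}\le c\,e^{-(2-\tau)T}$ as in \eqref{eqn51}.

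A smaller point: your claim that the Witten-equation error vanishes on $C_{3T}$ is also not literally true, since the connection there is still $A=d+\tfrac{\i m}{r}dt+\phi\,ds+\psi\,dt$ with nonzero $\phi,\psi$ (and the associated function $h$ of Fact~\ref{fact51}), whereas the soliton equation has $h=0$ and no extra connection terms. The discrepancy is ${\mc X}_\phi+J{\mc X}_\psi$ plus $H_W^\delta(h,\cdot)-H_W^\delta(0,\cdot)$, which is exponentially small on $C_{3T}$ and does not affect the final bound; the paper's own proof is equally casual about this region.
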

\begin{proof}
See Subsection \ref{subsection51}.
\end{proof}

\begin{lemma}\label{lemma46}
There exist $r_2 \in (0, r_1]$, $c_2>0$ and a family of bounded linear operators
\beqn
\tilde{Q}_{E_q}^\alpha: {\mc E}|_{\tilde{\bm u}_{\alpha, q}^{\rm app}} \to T_{\tilde{\bm u}_{\alpha, q}^{\rm app}} \tilde{\mc B}^\alpha \oplus E_q,\ \forall \alpha \in  \mathring \Delta^{r_2}
\eeqn
satisfying the following conditions.
\begin{enumerate}
\item For each $\alpha$, $\tilde{Q}_{E_q}^\alpha$ is a right inverse of $D_{\tilde{\mz u}_{\alpha, q}^{\rm app}} \tilde{\mz F}_{E_q}^\alpha$ and 
 $\|\tilde{Q}_q^\alpha \| \leq c_2$.

\item For each $\alpha$ and ${\bm \xi} = (\iota; \xi, e) \in {\it Image} (\tilde{Q}_{E_q}^\alpha)$, if in $\xi$ the deformation of the section is $v \in W^{1, p} ( \Sigma, (u_{\alpha, q}^{\rm app})^* T^\bot Y)$, then 
\beq\label{eqn415}
\sum_{g \in \Gammait_q} \uds v ( g\cdot {\bm w}_q^\alpha) \in H_q.\footnote{Notice that by construction, the approximate solution $\tilde{\mz u}_{\alpha, q}^{\rm app}$ still has the $\Gammait_q$ symmetry on $C_{3T}$. So if the section part of $\tilde{\mz u}_{\alpha, q}^{\rm app}$ is $u_{\alpha, q}^{\rm app}$, then $\uds u{}_{\alpha, q}^{\rm app} ( g\cdot {\bm w}^\alpha) = \uds u{}_q ( g \cdot {\bm w}_q^\sld) = {\bm x}_q$ is the same point for all $g \in \Gammait_q$. So this condition is well-defined.}
\eeq
\end{enumerate}
\end{lemma}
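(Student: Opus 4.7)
The plan is a standard Floer-style break-and-reglue gluing: I would construct an approximate right inverse $\tilde Q_{\rm app}^\alpha$ from the right inverse $\tilde Q_{E_q}^\sld$ of \eqref{eqn410} built in Subsection \ref{subsection43}, then correct it to an exact right inverse by a Neumann series. Concretely, given $\eta \in {\mc E}|_{\tilde{\bm u}_{\alpha,q}^{\rm app}}$, use cutoffs $\rho_\Sigma^T,\rho_\infty^T$ on the neck $C_T\setminus C_{3T}$ to split $\eta$ into a piece supported on $\Sigma\setminus C_{3T/2}$ and a piece supported on $C_{5T/2}$; untwist the second piece by ${\mz t}_\alpha^{-*}$ so that it becomes a section on the rational component $\mathbb R\times S^1$. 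Feed the resulting pair into $\tilde Q_{E_q}^\sld$ to obtain $(\xi_\Sigma,\xi_\infty;e)\in T_{\tilde{\bm u}_q^\sld}\tilde{\mc B}^\sld\oplus E_q$, then graft the tangent-vector outputs back via the same cutoff profile (after ${\mz t}_\alpha^*$-twisting $\xi_\infty$), while leaving the $E_q$-component $e$ unchanged. This defines $\tilde Q_{\rm app}^\alpha\eta$.

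The core estimate is that $D_{\tilde{\mz u}_{\alpha,q}^{\rm app}}\tilde{\mz F}_{E_q}^\alpha\circ \tilde Q_{\rm app}^\alpha = \mathrm{Id}+K^\alpha$ with $\|K^\alpha\|\leq c\,|\alpha|^{\tau}$ for some uniform $\tau>0$. The error $K^\alpha$ comes from two sources, both supported on the annulus $C_T\setminus C_{3T}$: the commutator of the linearized operator with the cutoffs, and the pointwise discrepancy between the linearizations at the approximate solution and at the asymptotic loop. The commutator term is tamed because $\xi_\Sigma$ and $\xi_\infty$ decay exponentially on the cylindrical ends thanks to the nondegeneracy of the critical points $\upsilon_{\iota_0}$ (the Hessian eigenvalue at $\upsilon_{\iota_0}$ beats the Sobolev weight $\tau$ once $\tau$ is chosen small enough), while the cutoff derivatives have amplitude $O(1/T)$; the discrepancy term is controlled by Lemma \ref{lemma45} together with a pointwise estimate on $\tilde{\mz u}_{\alpha,q}^{\rm app}$. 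Once $r_2$ is small enough to force $\|K^\alpha\|<1/2$, we set $\tilde Q_{E_q}^\alpha := \tilde Q_{\rm app}^\alpha\circ(\mathrm{Id}+K^\alpha)^{-1}$, which yields a right inverse of $D_{\tilde{\mz u}_{\alpha,q}^{\rm app}}\tilde{\mz F}_{E_q}^\alpha$ with $\|\tilde Q_{E_q}^\alpha\|\leq 2\|\tilde Q_{\rm app}^\alpha\|=:c_2$ independent of $\alpha$.

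For condition (2), recall that Subsection \ref{subsection43} has already forced $\tilde Q_{E_q}^\sld$ to take values in the codimension-two subspace cut out by $\sum_{g\in\Gammait_q}v(g\cdot{\bm w}_q^\sld)\in H_q$. Because $\tilde{\mz u}_{\alpha,q}^{\rm app}$ equals $\phi\circ{\mz t}_\alpha^*\sigma_q$ on all of $C_{3T}$, and because ${\bm w}_q^\alpha=(4T,\theta)$ lies in $C_{3T}$ and is the ${\mz t}_\alpha^*$-image of ${\bm w}_q^\sld$, the grafted deformation equals ${\mz t}_\alpha^*\xi_\infty$ near ${\bm w}_q^\alpha$; the $e^{-\lambda\theta}$ twist sends $H_q$ to itself (by the $\Gammait_q$-equivariance of the construction, $H_q$ may be taken invariant under the $\gamma$-action, of which $e^{-\lambda\theta}$ is a lift), so \eqref{eqn415} holds on the level of $\tilde Q_{\rm app}^\alpha$. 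The Neumann-series correction preserves the codimension-two linear constraint, so $\tilde Q_{E_q}^\alpha$ inherits it as well.

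The main obstacle will be the $\alpha$-uniform bound $\|\tilde Q_{\rm app}^\alpha\|\leq \mathit{const}$: the linearized operator has small eigenvalues on the long neck, so one has to verify that the weighted $W^{1,p}_\tau$ norm on the $\Sigma$-side and the unweighted $W^{1,p}$ norm on the $\mathbb R\times S^1$-side can be matched after the $\alpha$-twist without the bound blowing up as $T=-\tfrac14\log|\alpha|\to\infty$. This amounts to choosing the weight $\tau$ strictly smaller than the spectral gap of the asymptotic Hessians at $\upsilon_{\iota_0}$ and $\kappa_{\iota_0}$, so that exponential decay of the relevant deformations on the neck dominates the polynomial weight. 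The remaining bookkeeping—the $\Gammait_q$-equivariance of all cutoffs and grafts, and the smooth $\alpha$-dependence of $\tilde Q_{E_q}^\alpha$ needed in later sections—is routine once this uniform estimate is in hand.
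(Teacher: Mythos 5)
Your overall strategy --- construct an approximate right inverse by break/reglue from the singular-point right inverse, then correct by a Neumann series --- matches the paper's, and your treatment of condition (2), by forcing the $\sld$-right-inverse to respect the infinitesimal normalization and tracing the constraint through the graft and the Neumann correction, is also essentially the paper's argument. However, there is a material gap in your construction of $\tilde Q_{\rm app}^\alpha$. You propose to feed the cylindrical part of the cut-off error, untwisted, into $\tilde Q_{E_q}^\sld$ from Subsection~\ref{subsection43}. That operator's soliton component is a right inverse on the Banach manifold ${\mc B}_\infty'$ of bare maps ${\mb R}\times S^1\to\tilde X$, with ${\mc E}_\infty'|_\sigma=L^p(\Theta,\sigma^*T\tilde X)$ carrying only the Floer-equation residual. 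But the full error $\eta=(\eta',\eta'',\eta''')\in{\mc E}|_{\tilde{\bm u}_{\alpha,q}^{\rm app}}$ has curvature and gauge-fixing components in $L_\tau^p({\mf g})\oplus\overline{L_\tau^p({\mf g})}$ that do not vanish on the neck, and the portion of $(\eta'',\eta''')$ you cut onto $C_{5T/2}$ has no destination in ${\mc E}_\infty'|_\sigma$. Absorbing all of $(\eta'',\eta''')$ into the $\Sigma$-piece is also not obviously uniform: $Q_\Sigma$ then produces a gauge-field deformation $\beta_\Sigma\in W_\tau^{1,p}$ that must be cut off on the long neck, and nothing in the weighted norm alone forces it to decay fast enough there. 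The paper resolves this by \emph{augmenting} the soliton (Subsection~\ref{subsection53}): solitons become pairs $(\sigma,\vartheta)$ with a connection $1$-form $\vartheta$ on the cylinder, modeled on the $T$-dependent norm $RW_{\tau,T}^{1,p}$ that allows a constant limit at $-\infty$; the augmented right inverse $Q_{\infty,T}$ is built from $Q_\infty^-$ and the translation-invariant inverse $Q_{dR}$ of the cylinder de~Rham operator, and Lemma~\ref{lemma54} establishes its $T$-uniform bound. Without some version of this, the uniform bound you assert on $\|\tilde Q_{\rm app}^\alpha\|$ is unproved.

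Two further inaccuracies. You attribute uniformity to ``choosing $\tau$ smaller than the spectral gap of the asymptotic Hessians,'' but the section deformation is modeled on unweighted $W^{1,p}$ (only the gauge field carries the weight $\tau$; see Remark~\ref{rem25} in spirit, and the explicit remark after ${\mc A}_\tau$ in Subsection~\ref{subsection41}); the competition against $\tau$ occurs in the gauge-field estimates, which is precisely what the $RW_{\tau,T}$ scaling is built to handle, not something a generic small $\tau$ in the Floer part resolves. Also, the paper's glue does not cut off the connection form naively: the $\diamond$ operation of \eqref{eqn515} applies the cutoff to the potential $h_\beta$ of Fact~\ref{fact51}, since that is what couples to $\nabla\tilde{\mc W}_{A,\phi}$ in the linearization; and the error is $O(1/T)$ (Lemma~\ref{lemma56}), not $O(|\alpha|^{\tau})$ as you state --- both vanish as $\alpha\to 0$, so this is cosmetic, but the mechanism of the estimate is different.
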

\begin{proof}
See Subsection \ref{subsection55}.
\end{proof}

\subsubsection{Exact solutions}

Recall the following version of the implicit function theorem.

\begin{prop}[Implicit Function Theorem]\cite[Proposition A.3.4]{McDuff_Salamon_2004}\label{prop47} Let ${\bm X}$, ${\bm Y}$, ${\bm U}$, ${\bm F}$, ${\bm Q}$, ${\bm d}$, ${\bm	 c}$ be as follows. ${\bm X}, {\bm Y}$ are Banach spaces, ${\bm U} \subset {\bm X}$ is an open neighborhood of the origin and ${\bm F}: {\bm U} \to {\bm Y}$ is a continuously differentiable map such that $\bm{DF} ({\bm o}_{\bm X}): {\bm X} \to {\bm Y}$ is surjective and ${\bm Q}: {\bm Y} \to {\bm X}$ is a bounded right inverse to $\bm{DF}({\bm o}_{\bm X})$. Moreover,
\beq\label{eqn416}
\| {\bm Q} \| \leq {\bm c},\ {\bm B}^{\bm d}({\bm X}) \subset {\bm U},
\eeq
\beq\label{eqn417}
\| {\bm x} \| < {\bm d} \Longrightarrow \| \bm{DF}({\bm x}) - \bm{DF}({\bm o}_{\bm X}) \| \leq \frac{1}{2{\bm c}}.
\eeq
Then, if ${\bm x}' \in {\bm X}$ satisfies
\beq\label{eqn418}
\|  {\bm F}({\bm x}') \| < \frac{\bm d}{4{\bm c}},\ \| {\bm x}'  \| < \frac{\bm d}{8},
\eeq
there exists a unique ${\bm x} \in {\bm X}$ such that
\beq\label{eqn419}
{\bm F}({\bm x}) = 0,\ {\bm x} - {\bm x}' \in {\it Image} {\bm Q},\ \| {\bm x} \| \leq {\bm d}.
\eeq
Moreover,
\beq\label{eqn420}
\| {\bm x} - {\bm x}' \| \leq 2 {\bm c} \| {\bm F} ({\bm x}') \|.
\eeq
\end{prop}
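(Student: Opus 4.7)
The strategy is the standard Newton--Picard iteration: convert ${\bm F}({\bm x})={\bm o}_{\bm Y}$ (with ${\bm x}-{\bm x}'\in {\it Image}\,{\bm Q}$) into a fixed-point problem for a contraction on a closed ball in ${\it Image}\,{\bm Q}$, and then invoke the Banach contraction principle. Two preliminary observations make the bookkeeping transparent. First, since ${\bm Q}$ is a right inverse of the surjection $\bm{DF}({\bm o}_{\bm X})$, it is automatically injective (if ${\bm Q}({\bm y})={\bm o}_{\bm X}$ then ${\bm y}=\bm{DF}({\bm o}_{\bm X}){\bm Q}({\bm y})={\bm o}_{\bm Y}$); consequently ${\bm Q}\bm{DF}({\bm o}_{\bm X})$ acts as the identity on ${\it Image}\,{\bm Q}$, and ${\it Image}\,{\bm Q}$ is closed in ${\bm X}$.

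Define $\Phi: B\to {\it Image}\,{\bm Q}$ on the closed ball $B := \{{\bm v}\in {\it Image}\,{\bm Q} : \|{\bm v}\|\le 7{\bm d}/8\}$ by $\Phi({\bm v}) := {\bm v} - {\bm Q}({\bm F}({\bm x}' + {\bm v}))$; fixed points of $\Phi$ correspond, by injectivity of ${\bm Q}$, to solutions of ${\bm F}({\bm x}'+{\bm v})={\bm o}_{\bm Y}$. Using ${\bm Q}\bm{DF}({\bm o}_{\bm X})=\mathrm{Id}$ on ${\it Image}\,{\bm Q}$ together with the fundamental theorem of calculus I would write
$$
\Phi({\bm v}_1)-\Phi({\bm v}_2) \;=\; {\bm Q}\int_0^1\bigl[\bm{DF}({\bm o}_{\bm X})-\bm{DF}({\bm x}'+{\bm v}_2+t({\bm v}_1-{\bm v}_2))\bigr]({\bm v}_1-{\bm v}_2)\,dt.
$$
Since $\|{\bm x}'\| < {\bm d}/8$ and $\|{\bm v}_i\|\le 7{\bm d}/8$, the segment of integration lies inside ${\bm B}^{\bm d}({\bm X})\subset {\bm U}$, so \eqref{eqn417} combined with $\|{\bm Q}\|\le{\bm c}$ yields the contraction estimate $\|\Phi({\bm v}_1)-\Phi({\bm v}_2)\|\le\tfrac12\|{\bm v}_1-{\bm v}_2\|$. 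Moreover $\|\Phi({\bm o}_{\bm X})\|=\|{\bm Q}({\bm F}({\bm x}'))\|\le{\bm c}\cdot \tfrac{\bm d}{4{\bm c}}=\tfrac{\bm d}{4}$ by \eqref{eqn418}, hence $\|\Phi({\bm v})\|\le \tfrac{\bm d}{4}+\tfrac12\cdot\tfrac{7{\bm d}}{8}<\tfrac{7{\bm d}}{8}$, so $\Phi$ is a self-map of $B$.

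Banach's fixed point theorem then produces a unique ${\bm v}^*\in B$ with $\Phi({\bm v}^*)={\bm v}^*$; setting ${\bm x}:={\bm x}'+{\bm v}^*$ gives ${\bm F}({\bm x})={\bm o}_{\bm Y}$, ${\bm x}-{\bm x}'\in {\it Image}\,{\bm Q}$, and $\|{\bm x}\|\le{\bm d}/8+7{\bm d}/8={\bm d}$, i.e., \eqref{eqn419}. The estimate \eqref{eqn420} drops out of $\|{\bm v}^*\|\le\|\Phi({\bm o}_{\bm X})\|+\tfrac12\|{\bm v}^*\|$, giving $\|{\bm x}-{\bm x}'\|\le 2{\bm c}\|{\bm F}({\bm x}')\|$. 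For the global uniqueness claim in \eqref{eqn419}, applying the same identity to two solutions ${\bm x}_1,{\bm x}_2$ with ${\bm v}_i:={\bm x}_i-{\bm x}'\in {\it Image}\,{\bm Q}$ yields
$$
{\bm o}_{\bm X} \;=\; {\bm Q}({\bm F}({\bm x}_1)-{\bm F}({\bm x}_2))\;=\;({\bm v}_1-{\bm v}_2) + {\bm Q}\!\int_0^1\![\bm{DF}({\bm o}_{\bm X})-\bm{DF}({\bm x}_2+t({\bm x}_1-{\bm x}_2))]({\bm v}_1-{\bm v}_2)\,dt,
$$
which under $\|{\bm x}_i\|\le{\bm d}$ forces $\|{\bm v}_1-{\bm v}_2\|\le\tfrac12\|{\bm v}_1-{\bm v}_2\|$, hence ${\bm v}_1={\bm v}_2$. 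The result is classical and presents no genuine obstacle; the only delicate point is choosing the radius $7{\bm d}/8$ of $B$ so that the line segments $\{{\bm x}'+t{\bm v}:t\in[0,1]\}$ remain strictly inside ${\bm B}^{\bm d}({\bm X})$, which is precisely what the $1/8$ factor in \eqref{eqn418} is tuned to guarantee.
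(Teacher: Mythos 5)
Your argument is correct and is the same classical Newton--Picard contraction-mapping proof given in the cited source \cite[Proposition A.3.4]{McDuff_Salamon_2004}; the paper itself does not reprove this but simply quotes it. (Two inessential nits: in your global-uniqueness display the bracket should read $\bm{DF}({\bm x}_2+t({\bm x}_1-{\bm x}_2))-\bm{DF}({\bm o}_{\bm X})$ rather than the reverse, which is only a sign typo since you take norms immediately; and applying \eqref{eqn417} along a segment whose endpoints have norm exactly ${\bm d}$ technically requires a continuity remark, a tension already built into the hypothesis $\|{\bm x}\|<{\bm d}$ versus the conclusion $\|{\bm x}\|\le{\bm d}$ in the statement as quoted.)
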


Let $r_2$ be the one of Lemma \ref{lemma46} and fix $\alpha \in \mathring \Delta^{r_2}$. Define ${\bm X}_{E_q}^\alpha = T_{\tilde{\mz u}_{\alpha, q}^{\rm app}} \tilde{\mz B}_{E_q}^\alpha$. We identify points in $\tilde{\mz B}_{E_q}^\alpha$ near $\tilde{\mz u}_{\alpha, q}^{\rm app}$ with tangent vectors in ${\bm X}_{E_q}^\alpha$, using the exponential map of $\tilde{\mz B}_{E_q}^\alpha = \tilde{\mc B}^\alpha \times E_q$. This gives a small neighborhood ${\bm U}_{E_q}^\alpha \subset {\bm X}_{E_q}^\alpha$ of the origin. For each ${\bm x} \in {\bm U}_{E_q}^\alpha$, let $\tilde{\mz u}_{\bm x} \in \tilde{\mz B}_{E_q}^\alpha$ be the corresponding point in the Banach manifold. On the other hand, define ${\bm Y}_{E_q}^\alpha:= {\mc E}|_{\tilde{\bm u}_{\alpha, q}^{\rm app}}$. Parallel transport between nearby objects induces a continuous trivialization ${\mc E}|_{{\bm U}_{E_q}^\alpha} \simeq {\bm U}_{E_q}^\alpha \times {\bm Y}_{E_q}^\alpha$. Then consider the map ${\bm F}_{E_q}^\alpha: {\bm U}_{E_q}^\alpha \to {\bm Y}_{E_q}^\alpha$ given by ${\bm F}_{E_q}^\alpha({\bm x}) = \tilde{\mz F}_{E_q}^\alpha( \tilde{\mz u}_{\bm x})$. Let ${\bm Q}_{E_q}^\alpha: {\bm Y}_{E_q}^\alpha \to {\bm X}_{E_q}^\alpha$ be the operator given in Lemma \ref{lemma46}, which is a right inverse to $\bm{DF}_{E_q}^\alpha({\bm o}_{\bm X})$. Take ${\bm c} = c_2$ where $c_2$ is the one in Lemma \ref{lemma46}.

To apply Proposition \ref{prop47} there are two conditions yet to check. The following two lemmata can be proved via straightforward calculation.

\begin{lemma}\label{lemma48}
There exist ${\bm d} = d_q >0$, $r_3 \in (0, r_2]$ such that
\beqn
\alpha \in \mathring \Delta^{r_3},\ {\bm x}  \in {\bm U}_{E_q}^\alpha,\ \| {\bm x} \|\leq {\bm d} \Longrightarrow \| \bm{DF}_{E_q}^\alpha( {\bm x}) - \bm{DF}_{E_q}^\alpha({\bm o}_{\bm X}) \| \leq \frac{1}{10 {\bm c}}.
\eeqn
\end{lemma}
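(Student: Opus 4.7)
The plan is to establish Lemma \ref{lemma48} as a standard quadratic estimate for the nonlinear Fredholm operator, using the mean value theorem in Banach spaces together with pointwise bounds obtained from Sobolev embedding. Writing
\beqn
\bm{DF}_{E_q}^\alpha(\bm{x}) - \bm{DF}_{E_q}^\alpha(\bm{o}_{\bm X}) = \int_0^1 \bm{D^2 F}_{E_q}^\alpha(t \bm{x})\,\bm{x}\, dt,
\eeqn
it suffices to show a uniform bound of the form $\|\bm{D^2 F}_{E_q}^\alpha(\bm{x})\| \leq K$ for $\bm{x}$ in some fixed ball, independently of $\alpha \in \mathring{\Delta}^{r_3}$; then choosing $\bm{d}$ so that $Kd \leq \frac{1}{10\bm{c}}$ completes the estimate.

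First I would decompose the operator $\tilde{\mz F}_{E_q}^\alpha$ into its components: the Cauchy--Riemann-type term $\bar\partial_A u + \nabla \tilde{\mc W}_{A,\phi}(u)$, the curvature/moment map term $*F_A + \mu(u)$, the linear gauge fixing term $\bar{d^*}(A - A_0)$, and the obstruction-pullback term $e(\alpha, \tilde{\bm u})$. The gauge-fixing term is linear and contributes nothing. The moment map $\mu$ and the vertical gradient $\nabla \tilde{\mc W}$ are smooth functions on $\tilde X$ whose behavior on the relevant target region is controlled (by Hypothesis~\ref{hyp31} and the asymptotic convergence of $\uds u$ toward the critical locus). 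Using $p>2$, the embedding $W^{1,p} \hookrightarrow C^0$ converts second-derivative estimates of these nonlinearities into operator-norm bounds on the tangent bundle trivializations obtained by parallel transport, giving pointwise quadratic control on difference quotients. Since the image of $u_{\alpha,q}^{\rm app}$ stays in a fixed compact region of $\tilde X$ (outside of the critical neighborhoods where bounded geometry is automatic), all second derivatives of the pointwise nonlinear maps are uniformly bounded.

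The key point requiring care is uniformity in the gluing parameter $\alpha$. As $|\alpha|\to 0$, the neck region $C_T \setminus C_{3T}$ lengthens, but the norms $W^{1,p}$ on sections and $W^{1,p}_\tau$ on connections are set up in the pregluing so that the trivializations and metrics scale consistently; in particular, $\bm{D^2F}$ is computed in fiberwise coordinates around $\tilde{\bm u}_{\alpha,q}^{\rm app}$ using the exponential map on $\tilde X$ and the linear structure of ${\mc A}_\tau$, both of which yield $\alpha$-independent pointwise bounds. The term $e(\alpha, \tilde{\bm u})$ requires separate treatment: since sections of $E_q$ are compactly supported smooth sections of $\pi^* \Lambda^{0,1} \otimes T^\bot Y$ extended via the universal family $\bar{\mc U}_1$, their second variation in the $(u,\sigma)$ directions is also bounded uniformly on the support region, and the dependence on $\alpha$ through the varying fibers $\Sigma^\alpha$ is smooth over $\Delta^{r_3}$, giving uniform estimates after shrinking $r_3$ if necessary.

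The main obstacle I expect is keeping this uniformity honest across the neck: one must verify that the cutoff profiles $\rho_\Sigma^T, \rho_\infty^T$ used in the pregluing \eqref{eqn412} do not introduce derivatives that blow up as $T \to \infty$, which is standard since $\rho$ is fixed and the rescaling uses $s/T$ (so derivatives pick up a factor $T^{-1}$, which only helps). Once this is checked, a routine combination of Hölder's inequality and the Sobolev embedding yields the stated $\frac{1}{10\bm c}$ bound, with $\bm{d} = d_q$ chosen small enough to absorb the product of constants from Lemma \ref{lemma46} and the uniform second-derivative bound. With Lemma \ref{lemma45}, Lemma \ref{lemma46} and Lemma \ref{lemma48} in hand, Proposition \ref{prop47} applies to produce the desired family of exact solutions, completing the gluing portion of the proof of Proposition \ref{prop43}.
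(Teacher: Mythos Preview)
Your proposal is correct and is precisely the kind of standard quadratic estimate the paper has in mind; the paper itself does not give a proof and simply remarks that Lemma~\ref{lemma48} (together with Lemma~\ref{lemma49}) ``can be proved via straightforward calculation.'' Your outline --- mean value theorem, uniform bounds on the second variation using the Sobolev embedding $W^{1,p}\hookrightarrow C^0$ for $p>2$, and checking $\alpha$-uniformity across the neck --- is exactly this straightforward calculation made explicit.
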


\begin{lemma}\label{lemma49}
For any $d >0$, there exists $r = r(d) \in (0, r_3]$ such that for $\alpha \in \mathring \Delta^r$ and ${\bm \xi} \in \tilde{M}_q^r$, $\tilde{\mz u}_{\alpha, {\bm \xi}}^{\rm app}$ lies in the $d$-neighborhood of $\tilde{\mz u}_{\alpha, q}^{\rm app}$.
\end{lemma}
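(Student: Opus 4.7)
The plan is to compare the two approximate solutions on each of the three pieces of $\Sigma^*$ appearing in the pregluing formula \eqref{eqn412}, and to use the fact (from Subsection \ref{subsection43}) that $\tilde{\mz u}_{{\bm \xi}}^\sld \to \tilde{\mz u}_q^\sld$ in $\tilde{\mz B}_{E_q}^\sld$ as ${\bm \xi} \to 0$. Writing $\tilde{\bm u}_{{\bm \xi}}^\sld = ({\bm u}_{{\bm \xi}}^\sld, \sigma_{{\bm \xi}}^\sld)$ with ${\bm u}_{{\bm \xi}}^\sld = (A_{{\bm \xi}}, u_{{\bm \xi}}, \phi_{{\bm \xi}})$, and similarly $\tilde{\bm u}_q^\sld = ({\bm u}_q, \sigma_q)$, the implicit function theorem output used in \eqref{eqn49} guarantees the estimate
\beq
\| A_{{\bm \xi}} - A_q \|_{W^{1,p}_\tau} + \| u_{{\bm \xi}} - u_q\|_{W^{1,p}} + \| \sigma_{{\bm \xi}} - \sigma_q\|_{W^{1,p}} + |\iota_{{\bm \xi}}^\sld - \iota_0| + \| e_{{\bm \xi}}^\sld - \varepsilon_q\| \leq C \|{\bm \xi}\|
\eeq
with $C$ independent of ${\bm \xi}$ for $\|{\bm \xi}\|$ small. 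Here the norms on $u_{\bm\xi} - u_q$ and $\sigma_{\bm\xi} - \sigma_q$ are taken via the exponential chart at $\tilde{\bm u}_q^\sld$, which is legitimate for $\|{\bm\xi}\|$ small.

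On the inner piece $\Sigma \setminus C_T$, both preglued objects coincide with $(A_{{\bm \xi}}, u_{{\bm \xi}}, \phi_{{\bm \xi}})$ and $(A_q, u_q, \phi_q)$ respectively; hence the difference on this region is controlled directly by the estimate above. On the outer piece $C_{3T}$, both preglued objects are (via the framing $\phi$) the $\alpha$-twists of $\sigma_{{\bm \xi}}$ and $\sigma_q$; since the $\alpha$-twist is an isometry of the relevant weighted $W^{1,p}$ spaces (it is a pure translation composed with the fixed unitary action $e^{-\lambda\theta}$), the difference here is also bounded by $C\|\sigma_{{\bm \xi}} - \sigma_q\|_{W^{1,p}}$. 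On the neck $C_T \setminus C_{3T}$, one writes both approximate sections in the common exponential chart at $\tilde\upsilon(t)$ (which is available once $\|{\bm\xi}\|$ is small enough that both asymptotic loops lie in a single normal chart), getting expressions of the form $\rho_\Sigma^T \zeta_\Sigma + \rho_\infty^T {\mz t}_\alpha^* \zeta_\infty$. The difference of the two is then $\rho_\Sigma^T (\zeta_{\Sigma,{\bm\xi}} - \zeta_{\Sigma,q}) + \rho_\infty^T {\mz t}_\alpha^*(\zeta_{\infty,{\bm\xi}} - \zeta_{\infty,q})$, and the $W^{1,p}$ norm of multiplication by $\rho_\Sigma^T, \rho_\infty^T$ is bounded independently of $T$ (since $|(\rho_\Sigma^T)'| \lesssim 1/T$ on a region of length $T/2$, giving $L^p$ norm $\lesssim T^{(1-p)/p} \to 0$).

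Putting these three pieces together yields
\beq
\bigl\| \tilde{\mz u}_{\alpha, {\bm \xi}}^{\rm app} - \tilde{\mz u}_{\alpha, q}^{\rm app} \bigr\|_{\tilde{\mz B}_{E_q}^\alpha} \leq C \|{\bm \xi}\|
\eeq
for some constant $C$ independent of $\alpha \in \mathring\Delta^{r_3}$ and ${\bm \xi}$ with $\|{\bm\xi}\|$ small. Choosing $r = r(d) \leq \min(r_3, d/C)$ then gives the desired conclusion.

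The only subtle point is the uniform-in-$\alpha$ nature of the neck estimate; the rest is just linearity of the cutoff construction combined with the implicit-function-theorem output. I would expect this to be the routine technical obstacle, and the gain from the scaling $(\rho_\Sigma^T)' = O(1/T)$ makes it go through cleanly.
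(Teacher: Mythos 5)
The paper does not actually supply a proof of Lemma \ref{lemma49}; it states before Lemmas \ref{lemma48} and \ref{lemma49} that ``[t]he following two lemmata can be proved via straightforward calculation.'' Your argument is the natural straightforward calculation, and the overall structure --- (i) Lipschitz dependence of the corrected singular solutions $\tilde{\mz u}_{\bm\xi}^\sld$ on ${\bm\xi}$ coming from the IFT in Subsection \ref{subsection43}, (ii) region-by-region comparison of the preglued objects using \eqref{eqn412}, (iii) uniform-in-$T$ boundedness of multiplication by the cutoffs --- is correct and is what the authors must have in mind.

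Two small imprecisions worth flagging. First, on the neck you write both approximate sections ``in the common exponential chart at $\tilde\upsilon(t)$''; but the reference loop $\tilde\upsilon$ in \eqref{eqn412} is built from the connection and frame of the solution being preglued, so $\tilde{\mz u}_{\alpha,{\bm\xi}}^{\rm app}$ and $\tilde{\mz u}_{\alpha,q}^{\rm app}$ use slightly different reference loops $\tilde\upsilon_{\bm\xi}$ and $\tilde\upsilon_q$. The difference of the two preglued sections over the neck is therefore not literally $\rho_\Sigma^T(\zeta_{\Sigma,{\bm\xi}}-\zeta_{\Sigma,q})+\rho_\infty^T{\mz t}_\alpha^*(\zeta_{\infty,{\bm\xi}}-\zeta_{\infty,q})$; one also picks up a term of size $d(\tilde\upsilon_{\bm\xi},\tilde\upsilon_q)$, which is again $O(\|{\bm\xi}\|)$ since $\delta_A$ and the perturbed critical point depend continuously on the data. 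This does not affect the conclusion. Second, your parenthetical justification for the uniform boundedness of multiplication by $\rho_\Sigma^T$ appeals to the $L^p$ norm of $(\rho_\Sigma^T)'$ going to zero; the bound actually used in estimating $\|(\rho_\Sigma^T)' g\|_{L^p}\leq\|(\rho_\Sigma^T)'\|_{L^\infty}\|g\|_{L^p}$ is the $L^\infty$ bound $|(\rho_\Sigma^T)'|\lesssim 1/T$ from \eqref{eqn514}, not the $L^p$ bound, though the final conclusion is the same.
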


Then the tuple $({\bm X}_{E_q}^\alpha, {\bm Y}_{E_q}^\alpha, {\bm U}_{E_q}^\alpha, {\bm F}_{E_q}^\alpha, {\bm Q}_{E_q}^\alpha, {\bm d}, {\bm c})$ satisfies the hypothesis of Proposition \ref{prop47}. Moreover, by Lemma \ref{lemma45} and Lemma \ref{lemma49}, there exists $r_4 \in (0, r_3]$ such that for $(\alpha, {\bm \xi}) \in \mathring \Delta^{r_4} \times \tilde{M}_q^{r_4}$, $\tilde{\mz u}_{\alpha, {\bm \xi}}^{\rm app}$ is identified with a point ${\bm x}_{\alpha, {\bm \xi}}' \in {\bm X}_{E_q}^a$ satisfying \eqref{eqn418}. Then by the implicit function theorem, there exists a unique ${\bm x}_{\alpha, {\bm \xi}}$ satisfying \eqref{eqn419}, i.e.,
\beqn
{\bm F}_{E_q}^\alpha ({\bm x}^\alpha_{{\bm \xi}}) = 0,\ {\bm x}^\alpha_{{\bm \xi}} - {\bm x}_{\alpha, {\bm \xi}}' \in {\it Image} ({\bm Q}_q^\alpha),\ \|{\bm x}_{{\bm \xi}}^\alpha \|\leq {\bm d}.
\eeqn
So we have actually proved
\begin{prop}\label{prop410}
There exist $\epsilon_q>0$ such that for every $\alpha \in \mathring \Delta^{\epsilon_q}$ and ${\bm \xi} \in \tilde{M}_q^{\epsilon_q}$, there exists a unique ${\bm \xi}_{\alpha, {\bm \xi}}'' = ( \iota''; \xi_{\alpha, {\bm \xi}}'', e_{\alpha, {\bm \xi}}'') \in {\it Image} ( \tilde{Q}_{E_q}^\alpha)$, such that if we denote
\beqn
\tilde{\mz u}_{{\bm \xi}}^\alpha:= \Big( \iota_{{\bm \xi}}^\alpha;\ {\bm u}_{{\bm \xi}}^\alpha,\ e_{{\bm \xi}}^\alpha \Big):= \Big(\iota_{{\bm \xi}} + \iota'' ;\  {\bf exp}_{{\bm u}_{\alpha, {\bm \xi}}^{\rm app}}  ( \xi_{\alpha, {\bm \xi}}^{\rm app} + \xi_{\alpha, {\bm \xi}}''),\  e_{\alpha, {\bm \xi}}^{\rm app} + e_{\alpha, {\bm \xi}}'' \Big),
\eeqn
then $\tilde{\mz F}_{E_q}^\alpha( \tilde{\mz u}_{{\bm \xi}}^\alpha )  = 0$ and $\| {\bm \xi}_{\alpha, {\bm \xi}}''\|\leq {\bm d}$.
\end{prop}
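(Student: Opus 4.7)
The plan is to apply the abstract implicit function theorem (Proposition \ref{prop47}) to the family of equations $\tilde{\mz F}_{E_q}^\alpha = 0$, where the Banach manifold structure near the approximate solution $\tilde{\mz u}_{\alpha, q}^{\rm app}$ is identified with a Banach space via the exponential map, and where all pieces needed to invoke the theorem have essentially been assembled in Lemmas \ref{lemma45}, \ref{lemma46}, \ref{lemma48}, and \ref{lemma49}. Thus the task is primarily one of bookkeeping: choosing $\epsilon_q$ small enough so that the hypotheses hold uniformly in $(\alpha, {\bm \xi}) \in \mathring \Delta^{\epsilon_q} \times \tilde M_q^{\epsilon_q}$.

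Concretely, for each $\alpha \in \mathring \Delta^{r_3}$, take ${\bm X}_{E_q}^\alpha = T_{\tilde{\mz u}_{\alpha,q}^{\rm app}} \tilde{\mz B}_{E_q}^\alpha$ and ${\bm Y}_{E_q}^\alpha = {\mc E}|_{\tilde{\bm u}_{\alpha, q}^{\rm app}}$, with the map ${\bm F}_{E_q}^\alpha({\bm x}) = \tilde{\mz F}_{E_q}^\alpha(\tilde{\mz u}_{\bm x})$ defined on a neighborhood ${\bm U}_{E_q}^\alpha$ of the origin via the exponential chart, and take ${\bm Q} = \tilde Q_{E_q}^\alpha$ and ${\bm c} = c_2$ from Lemma \ref{lemma46}. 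The constant ${\bm d} = d_q$ of Lemma \ref{lemma48} realizes \eqref{eqn417}, while Lemma \ref{lemma46} realizes the first inequality of \eqref{eqn416}; after possibly shrinking ${\bm d}$, the ball ${\bm B}^{\bm d}({\bm X}_{E_q}^\alpha)$ sits inside ${\bm U}_{E_q}^\alpha$, so \eqref{eqn416} is fully in hand.

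Next, Lemma \ref{lemma49} produces $r_4 = r(\tfrac{{\bm d}}{8}) \in (0, r_3]$ such that for $(\alpha, {\bm \xi}) \in \mathring \Delta^{r_4} \times \tilde M_q^{r_4}$ the point $\tilde{\mz u}_{\alpha, {\bm \xi}}^{\rm app}$ corresponds to some ${\bm x}_{\alpha, {\bm \xi}}' \in {\bm X}_{E_q}^\alpha$ with $\|{\bm x}_{\alpha, {\bm \xi}}'\| < \tfrac{{\bm d}}{8}$. By Lemma \ref{lemma45}, after possibly further shrinking to some $\epsilon_q \in (0, r_4]$ so that $c_1 |\alpha|^{\tau_1} < \tfrac{{\bm d}}{4 {\bm c}}$ for all $\alpha \in \mathring \Delta^{\epsilon_q}$, we also have $\|{\bm F}_{E_q}^\alpha({\bm x}_{\alpha, {\bm \xi}}')\| < \tfrac{{\bm d}}{4{\bm c}}$. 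Hence \eqref{eqn418} is satisfied and Proposition \ref{prop47} provides a unique ${\bm x}_{\bm \xi}^\alpha \in {\bm X}_{E_q}^\alpha$ with
\beqn
{\bm F}_{E_q}^\alpha({\bm x}_{\bm \xi}^\alpha) = 0, \qquad {\bm x}_{\bm \xi}^\alpha - {\bm x}_{\alpha, {\bm \xi}}' \in {\it Image}\ \tilde Q_{E_q}^\alpha, \qquad \|{\bm x}_{\bm \xi}^\alpha\| \leq {\bm d}.
\eeqn
Setting ${\bm \xi}_{\alpha, {\bm \xi}}'' := {\bm x}_{\bm \xi}^\alpha - {\bm x}_{\alpha, {\bm \xi}}'$ and translating back through the exponential chart yields the required tuple $(\iota''; \xi_{\alpha, {\bm \xi}}'', e_{\alpha, {\bm \xi}}'')$, and the defining identity $\tilde{\mz F}_{E_q}^\alpha(\tilde{\mz u}_{\bm \xi}^\alpha) = 0$. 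The bound \eqref{eqn420} gives $\|{\bm \xi}_{\alpha, {\bm \xi}}''\| \leq 2{\bm c} \cdot c_1 |\alpha|^{\tau_1}$, which is stronger than $\leq {\bm d}$.

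The main obstacle is entirely packaged inside the preceding lemmas; once those are granted, the proof reduces to matching numerical constants. The only subtle point worth flagging is to ensure the neighborhood ${\bm U}_{E_q}^\alpha$ and the constant ${\bm d}$ can be chosen uniformly in $\alpha$ as $\alpha \to \sld$, which follows because the gluing construction makes the geometry of $\tilde{\mz B}_{E_q}^\alpha$ near $\tilde{\mz u}_{\alpha,q}^{\rm app}$ uniformly comparable to a fixed model on the principal component $\Sigma$ and on the cylinder, so the exponential map has a uniform radius of injectivity and uniform Lipschitz constants for its derivative. No further estimates beyond those already recorded are required.
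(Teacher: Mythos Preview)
Your proposal is correct and follows essentially the same route as the paper: set up the exponential chart at $\tilde{\mz u}_{\alpha,q}^{\rm app}$, take the right inverse and bound ${\bm c}=c_2$ from Lemma~\ref{lemma46}, use Lemma~\ref{lemma48} for the quadratic estimate and Lemmas~\ref{lemma45} and~\ref{lemma49} to place ${\bm x}_{\alpha,{\bm \xi}}'$ in the hypothesis \eqref{eqn418}, then apply Proposition~\ref{prop47}. The only cosmetic addition is your explicit remark on the uniformity of ${\bm U}_{E_q}^\alpha$ and ${\bm d}$ in $\alpha$, which the paper absorbs into Lemma~\ref{lemma48}.
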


\subsection{Proof of Proposition \ref{prop43}: part II}\label{subsection45}

Proposition \ref{prop410} allows us to define the map 
\beqn
\psi_{E_q}: \Delta^{\epsilon_q} \times M_q^{\epsilon_q} \to \tilde{\mc M}_{E_q},\ \psi_{E_q}(\alpha, {\bm \xi}) = ( \iota_{\bm \xi}^\alpha; {\bm u}_{\bm \xi}^\alpha, e_{\bm \xi}^\alpha).
\eeqn
Now we prove that this map satisfies the conditions of Proposition \ref{prop43}.

\subsubsection{Normalization}

By \eqref{eqn411}, each member of the family $\tilde {\mz u}_{\bm \xi}^\sld$ for ${\bm \xi} \in N_q^{\epsilon_q}$ satisfies the normalization condition. So by the pregluing construction, the approximate solutions $\tilde{\bm u}_{\alpha, {\bm \xi}}^{\rm app}$ for ${\bm \xi} \in N_q^{\epsilon_q}$ all satisfy the normalization condition. Since the exact solutions are corrected from approximate solutions by adding elements in the image of the right inverse, by Lemma \ref{lemma46}, the normalization condition persists. This proves that $\Delta^{\epsilon_q} \times N_q^{\epsilon_q} \subset n_q^{-1}(0)$. On the other hand, the function $n_q(\sld, {\bm \xi})$ is transversal to $0$ and vanishes at $N_q^{\epsilon_q}$. It follows that for $\epsilon_q$ sufficiently small, $n_q(\alpha, {\bm \xi})$ is also transversal to zero and $n_q^{-1}(0) = \Delta^{\epsilon_q} \times N_q^{\epsilon_q}$. This proves Item (2) of Proposition \ref{prop43}.

\subsubsection{Injectivity}

We prove that $\psi_{E_q}$ is injective. Injectivity on the lower stratum follows from the usual implicit function theorem. Indeed, the difference between an approximate solution and the corresponding exact solution is in the image of the right inverse of the linearization, while by construction, different approximate solutions differ by elements in the kernel of the linearization. Hence for two different elements of $M_q^{\epsilon_q}$, which corresponds to different approximate solutions, their corresponding exact solutions are also different. 

Now we explain why injectivity holds in the higher stratum. Suppose it is not the case, then there exist $(\alpha_i, {\bm \xi}_i) \in \mathring \Delta^{\epsilon_q} \times \tilde{M}_q^{\epsilon_q}$, $i =1, 2$, such that
\beqn
 \psi_{E_q} ( \alpha_1, {\bm \xi}_1 ) = \psi_{E_q} (\alpha_2, {\bm \xi}_2) \in \tilde {\mc M}_{E_q}.
\eeqn
Since the underlying marked curves of $\tilde{\mz u}_{{\bm \xi}_1}^{\alpha_1}$ and $\tilde{\mz u}_{{\bm \xi}_2}^{\alpha_2}$ are isomorphic, the gluing parameters $\alpha_1$ and $\alpha_2$ are identical. Then ${\bm \xi}_1 = {\bm \xi}_2$ follows from the implicit function theorem. 

\subsubsection{Surjectivity}

Now we prove the surjectivity of $\psi_{E_q}$. Define ${\bm l}_{E_q}^\alpha: \tilde{M}_q^{\epsilon_q} \to {\bm X}_{E_q}^\alpha$ by
\beqn
{\bm l}_{E_q}^\alpha({\bm \xi}) = {\bm x}_{{\bm \xi}}^\alpha = {\bm x}_{\alpha,{\bm \xi}}^{\rm app} + {\bm x}_{\alpha, {\bm \xi}}'.
\eeqn
By shrinking $\epsilon_q$ a little, we may assume that ${\bm l}_{E_q}^\alpha$ is defined over the closure of $\tilde{M}_q^{\epsilon_q}$.
\begin{lemma}\label{lemma411}
There exist ${\bm d}' \in (0, {\bm d}]$ and $\epsilon_q' \in (0, \epsilon_q]$ such that for each $\alpha \in \mathring \Delta^{\epsilon_q'}$, ${\bm B}^{{\bm d}'}({\bm X}_{E_q}^\alpha) \cap ({\bm F}_{E_q}^\alpha)^{-1}(0)$ is contained in ${\bm l}_{E_q}^\alpha ( \tilde{N}_q^{\epsilon_q})$.
\end{lemma}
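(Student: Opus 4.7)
The plan is to show that ${\bm l}_{E_q}^\alpha$ restricted to $\tilde{N}_q^{\epsilon_q}$ is a local homeomorphism onto a relative neighborhood of ${\bm l}_{E_q}^\alpha(0)$ inside the zero locus $({\bm F}_{E_q}^\alpha)^{-1}(0)$, and then to use that ${\bm l}_{E_q}^\alpha(0)$ lies within distance $2 c_2 c_1 |\alpha|^{\tau_1}$ of the origin ${\bm o}_{\bm X}$ (combine Lemma \ref{lemma45} with \eqref{eqn420}) to conclude that ${\bm B}^{{\bm d}'}({\bm X}_{E_q}^\alpha) \cap ({\bm F}_{E_q}^\alpha)^{-1}(0)$ lies entirely inside this neighborhood, once ${\bm d}'$ and $\epsilon_q'$ are chosen small enough uniformly in $\alpha$.

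The structural input is a dimension count. By Lemma \ref{lemma46}, $D_{\tilde{\mz u}_{\alpha,q}^{\rm app}} \tilde{\mz F}_{E_q}^\alpha$ is surjective with a uniformly bounded right inverse $\tilde{Q}_{E_q}^\alpha$, so the topological implicit function theorem exhibits $({\bm F}_{E_q}^\alpha)^{-1}(0)$ near ${\bm o}_{\bm X}$ as a locally flat topological submanifold of dimension $\dim \ker D_{\tilde{\mz u}_{\alpha,q}^{\rm app}} \tilde{\mz F}_{E_q}^\alpha$. Fredholm index is preserved under pregluing, while the ${\mb R}\times S^1$-translation symmetry of the soliton component contributes a $2$-real-dimensional subspace to $\ker D_{\tilde{\bm u}_q^\sld} \tilde{\mz F}_{E_q}^\sld$ that is broken for $\alpha\neq 0$. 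The infinitesimal normalization hyperplane carved out in \eqref{eqn47} is precisely the real-codimension-$2$ condition that quotients out these translation directions, so $\dim \tilde{N}_q = \dim \ker D{\bm F}_{E_q}^\alpha|_0$. Source and target are therefore topological manifolds of equal finite dimension.

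I would next verify that the derivative $D {\bm l}_{E_q}^\alpha|_0$, composed with the projection onto $\ker D{\bm F}_{E_q}^\alpha|_0$ along ${\it Image}(\tilde{Q}_{E_q}^\alpha)$, equals the identity on $\tilde{N}_q$ up to an error $O(|\alpha|^{\tau_1})$ coming from the same pregluing estimates that power Lemma \ref{lemma45} and Lemma \ref{lemma48}. Hence it is an isomorphism for small $\alpha$. Together with the continuity and injectivity of ${\bm l}_{E_q}^\alpha$ (the latter established earlier in Subsection \ref{subsection45}) and the equality of dimensions, topological invariance of domain implies ${\bm l}_{E_q}^\alpha(\tilde{N}_q^{\epsilon_q})$ is a relatively open neighborhood of ${\bm l}_{E_q}^\alpha(0)$ in the zero locus, and the conclusion follows.

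The step I expect to be the main obstacle is the \emph{uniformity in} $\alpha$, namely that the size of this relative open neighborhood does not degenerate as $\alpha \to 0$. If the quantitative derivative estimate above becomes delicate, a clean alternative is an argument by contradiction: a sequence $(\alpha_k, {\bm x}_k)$ with $\alpha_k \to 0$, $\|{\bm x}_k\| \to 0$, and ${\bm x}_k \in ({\bm F}_{E_q}^{\alpha_k})^{-1}(0) \setminus {\bm l}_{E_q}^{\alpha_k}(\tilde{N}_q^{\epsilon_q})$ would satisfy the smallness hypothesis \eqref{eqn418} of Proposition \ref{prop47}, so the uniqueness clause of that proposition, applied to the decomposition of ${\bm X}_{E_q}^{\alpha_k}$ into pregluing directions and ${\it Image}(\tilde{Q}_{E_q}^{\alpha_k})$, would force ${\bm x}_k = {\bm l}_{E_q}^{\alpha_k}({\bm \xi}_k)$ for some ${\bm \xi}_k \in \tilde{N}_q^{\epsilon_q}$, contradicting the choice of ${\bm x}_k$.
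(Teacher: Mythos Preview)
Your dimension count contains the main gap. You assert that the ${\mb R}\times S^1$ translation of the soliton contributes a $2$-dimensional subspace to $\ker D_{\tilde{\bm u}_q^\sld}\tilde{\mz F}_{E_q}^\sld$ that is ``broken'' for $\alpha\neq 0$, and conclude $\dim\tilde N_q=\dim\ker D{\bm F}_{E_q}^\alpha|_0$. But both linearizations are surjective (the first by construction of $E_q$, the second by Lemma \ref{lemma46}), so in each case kernel dimension equals Fredholm index; since you yourself grant that the index is preserved under pregluing, it follows that $\dim M_q=\dim\ker D{\bm F}_{E_q}^\alpha|_0$. The translation directions are already counted in the index of $D_\infty'$, not added on top of it. Hence $\dim N_q=\dim M_q-2$ is strictly smaller than the dimension of the zero locus, and invariance of domain applied to ${\bm l}_{E_q}^\alpha|_{\tilde N_q}$ cannot produce a relatively open set. (The appearance of $\tilde N_q$ in the lemma statement is evidently a typo for $\tilde M_q$: both the paper's proof and the application of the lemma in the surjectivity argument immediately afterward use $\tilde M_q$.)

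With $\tilde M_q$ in place, invariance of domain does show ${\bm l}_{E_q}^\alpha(\tilde M_q^{\epsilon_q})$ is open in the zero locus for each fixed $\alpha$, but the uniformity in $\alpha$ remains. Your fallback---decompose an arbitrary zero ${\bm x}_k$ as ``pregluing direction plus element of ${\it Image}(\tilde Q_{E_q}^{\alpha_k})$'' and invoke the uniqueness clause of Proposition \ref{prop47}---does not work as stated: the preglued approximates $\{{\bm x}_{\alpha_k,{\bm \xi}}^{\rm app}\}$ form a nonlinear family, not a linear complement to ${\it Image}(\tilde Q)$, so there is no a priori decomposition, and producing one is tantamount to the derivative estimate you already flagged as the hard step. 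The paper supplies uniformity by a different device. Given a hypothetical escaping zero ${\bm y}_i$ with $\|{\bm y}_i\|\to 0$, it connects ${\bm y}_i$ to $0$ by the segment $s\mapsto s{\bm y}_i$, applies the implicit function theorem at each $s{\bm y}_i$ to produce a path ${\bm y}_i(s)=s{\bm y}_i+{\bm w}_i(s)$ of zeros with $\|{\bm y}_i(s)\|\to 0$ uniformly in $s$, and observes that ${\bm y}_i(0)={\bm l}_{E_q}^{\alpha_i}(0)$ lies in the image while ${\bm y}_i(1)={\bm y}_i$ does not. The path must therefore exit through ${\bm l}_{E_q}^{\alpha_i}(\partial\tilde M_q^{\epsilon_q})$; but for ${\bm \xi}\in\partial\tilde M_q^{\epsilon_q}$ the norm $\|{\bm x}_{\alpha_i,{\bm \xi}}^{\rm app}\|$ is bounded below by a positive constant depending only on $\epsilon_q$ (and independent of $\alpha_i$), contradicting $\|{\bm y}_i(s)\|\to 0$. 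This $\alpha$-uniform lower bound on the boundary image is the ingredient your sketch is missing.
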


\begin{proof}
Suppose the lemma is not true. Then there is a sequence $\alpha_i \to 0$ and a sequence of points ${\bm y}_i \in ({\bm F}_{E_q}^{\alpha_i})^{-1}(0)$ which is not in the image of ${\bm l}_{E_q}^{\alpha_i}$, and $\| {\bm y}_i \| \to 0$. Then consider the segment $s {\bm y}_i$ for $s\in [0, 1]$. By using Lemma \ref{lemma45} and Lemma \ref{lemma48}, one can show that as $i \to \infty$, ${\bm F}_{E_q}^{\alpha_i}(s {\bm y}_i)$ converges to zero uniformly in $s$. Hence we can apply the implicit function theorem: for each $s$, there exists a unique ${\bm w}_i(s) $ such that
\beqn
{\bm F}_{E_q}^{\alpha_i}( s {\bm y}_i + {\bm w}_i(s) ) = 0,\ {\bm w}_i(s) \in {\it Image}( {\bm Q}_{E_q}^{\alpha_i}),\ \| {\bm w}_i(s) \|\leq {\bm d}.
\eeqn
Moreover, by \eqref{eqn419},
\beqn
\| {\bm w}_i(s)\| \leq 2 {\bm c} \| {\bm F}_{E_q}^{\alpha_i}( s {\bm y}_i) \| \to 0.
\eeqn
Denote ${\bm y}_i(s) = s {\bm y}_i + {\bm w}_i(s)$, whose norm converges uniformly to zero as $i \to \infty$. Note that for each $i$, ${\bm y}_i(0)$ lies in the image of ${\bm l}_{E_q}^{\alpha_i}$. Suppose $s_i$ is the largest number in $[0, 1]$ such that ${\bm y}_i([0, s_i))\subset {\it Image} ( {\bm l}_{E_q}^{\alpha_i})$. Then ${\bm y}_i(s_i) \in {\bm l}_{E_q}^{\alpha_i} ( \partial  \tilde{M}_q^{\epsilon_q} )$. Take ${\bm \xi}_i \in \partial \tilde{M}_q^{\epsilon_q}$ such that ${\bm y}_i(s_i) = {\bm x}_{{\bm \xi}_i}^{\alpha_i} = {\bm x}_{\alpha_i, {\bm \xi}_i}^{\rm app} + {\bm x}_{\alpha_i, {\bm \xi}_i}'$. So $\| {\bm x}_{\alpha_i, {\bm \xi}_i}^{\rm app}\|$ is bounded from below\footnote{Recall that ${\bm x}_{\alpha_i, {\bm \xi}_i}^{\rm app}$ is obtained by pregluing the singular object $\tilde{\mz u}_{{\bm \xi}_i}^\sld$. Since ${\bm \xi}_i = (\iota_i; \xi_i, e_i)$ is on the boundary of $\tilde{M}_q^{\epsilon_q}$, either $|\iota_i| + |e_i|$ is bounded from below or $\| \xi_i\|$ is bounded from below. In the former case the pregluing construction implies our claim. In the latter case, a certain kind of elliptic estimate shows that the $C^0$ distance between the singular object ${\bm u}_{{\bm \xi}_i}^\sld$ and ${\bm u}_q^\sld$ is bounded from below. Then the pregluing construction also implies our claim.}. However, by Lemma \ref{lemma45} and \eqref{eqn420},
\beqn
\| {\bm x}_{\alpha_i, {\bm \xi}_i}' \|\leq 2 {\bm c} \| {\bm F}_{E_q}^{\alpha_i} ( {\bm x}_{\alpha_i, {\bm \xi}_i}^{\rm app} )  \| \to 0.
\eeqn
Then
\beqn
\liminf_{i \to \infty} \| {\bm y}_i (s_i)\| \geq \liminf_{i \to \infty} \| {\bm x}_{\alpha_i, {\bm \xi}_i}^{\rm app} \| > 0.
\eeqn
This contradicts the convergence $\| {\bm y}_i(s)\| \to 0$. The lemma is proved. \end{proof}

Suppose the surjectivity doesn't hold, then there exists a sequence of points $\hat q_k \in \tilde {\mc M}_{E_q}$ converging to $\hat q$ and $\hat q_k \notin \psi_{E_q} ( \Delta^{\epsilon_q} \times \tilde{M}_q^{\epsilon_q} ) $. It suffices to consider the case that all $\hat q_k$ are in the top stratum, i.e., they are genuine objects
\beqn
\tilde{\bm u}_k^{\alpha_k} = ( \iota_k; A_k, u_k, \phi_k, {\bm w}_q^{\alpha_k}) \in \tilde{\mc B}^{\alpha_k}
\eeqn
converging in weak topology to $\tilde{\bm u}_q^\sld = (\iota_0; {\bm u}_q^\sld, {\bm w}_q^\sld)$. Then for $k$ large enough, we can write ${\bm u}_k^{\alpha_k} = \exp_{{\bm u}_{\alpha_k, q}^{\rm app}} \xi_k$. Here $\xi_k$ is at least a continuous object whose $C^0$ norm converges to zero. 

To use Lemma \ref{lemma411}, one needs to prove that $\xi_k \in T_{{\bm u}_{\alpha_k, q}^{\rm app}} {\mc B}^{\alpha_k}$ and $\lim_{k \to \infty} \| \xi_k \| =0$. Indeed, the domains of both ${\bm u}_k^{\alpha_k}$ and ${\bm u}_{\alpha_k, q}^{\rm app}$ are identified with $\Sigma$. By the definition of convergence which requires no energy is lost, for any $\epsilon>0$, there exist $K_\epsilon>0$ and $T_\epsilon>0$ such that the energy of ${\bm u}_k^{\alpha_k}$ over the neck region $C_{2T_\epsilon} \setminus C_{4T_k-2T_\epsilon}$ is smaller than $\epsilon$ for all $k \geq K_\epsilon$. Here $4T_k = - \log |\alpha_k|$. Then over this neck region, both ${\bm u}_k^{\alpha_k}$ and ${\bm u}_{\alpha_k, q}^{\rm app}$ are close to the critical point $\upsilon$. One can estimate the norms of $\xi_k$ over the neck region as in the Hamiltonian Floer theory, and the norm can be arbitrarily small if $\epsilon$ is small and $k$ is big. On the other hand, over the complement of the neck region, the $C^0$ smallness of $\xi_k$ together with the elliptic regularity shows that the norm of $\xi_k \in T_{{\bm u}_{\alpha_k, q}^{\rm app}} {\mc B}^{\alpha_k}$ can also be arbitrarily small if $k$ is big.

Then for large $k$, ${\bm x}_k: = ( \iota_k - \iota_0; \xi_k, \epsilon_q) \in {\bm B}^{{\bm d}'} ({\bm X}_q^{\alpha_k})$, where ${\bm d}'$ is given by Lemma \ref{lemma411}. Hence ${\bm x}_k = {\bm l}_{E_q}^{\alpha_k}( {\bm \xi}_k )$ for some ${\bm \xi}_k \in \tilde{M}_q^{\epsilon_q}$. This contradicts our assumption. Hence we proved the surjectivity.

A well-known theorem in topology says that a continuous bijection from a compact space to a Hausdorff space is necessarily a homeomorphism. Since the domain of the map $\psi_{E_q}$ is locally compact while the target of $\psi_{E_q}$ is Hausdorff, it follows that $\psi_{E_q}$ is a homeomorphism onto its image. This finishes the proof of Proposition \ref{prop43}.

\subsection{Boundary charts for BPS soliton solutions}\label{subsection46}

Suppose $\tilde{\bm u}_q^\sld:= (\iota_0; {\bm u}_q; \sigma_q)$ is a BPS soliton solution representing a point $q \in \tilde{\mc M}^b \subset \tilde{\mc M}$. In this subsection we will construct a local chart with boundary of the compactified moduli space $\tilde{\mc M}$ around $q$.

By definition, if we set $\sigma_q^\gamma = e^{\i m t/r} \sigma_q$, then the image of $\sigma_q^\gamma$ is contained in $\tilde X_\gamma$ and is a nonconstant solution to the ODE
\beq
\label{eqn421}
\frac{d \sigma_q^\gamma}{d s} + H_W^{\delta_q}( \sigma_q^\gamma(s)) = 0,\ \lim_{s \to -\infty} \sigma_q^\gamma(s) = \delta_q \upsilon,\ \lim_{s \to +\infty} \sigma_q^\gamma (s) = \delta_q \kappa.
\eeq
We also view $\sigma_q^\gamma$ as a map from ${\mb R} \times S^1 $ which is independent of the second variable $t$. Set ${\bm w}_q^\sld = (0, 0) \in {\mb R} \times S^1$ be a reference point and set ${\bm x}_q:= \sigma_q^\gamma({\bm w}_q) \in \tilde{X}_\gamma$. Then there is a real hyperplane $H_q \subset T_{{\bm x}_q} \tilde{X}$ which is transverse to $\sigma_q^\gamma$. Choose a function $h_q: \tilde{X} \to {\mb R}$ which is locally a linear function near ${\bf x}_q$ vanishing on $\exp_{{\bf x}_q} H_q$. 

\begin{lemma}\label{lemma412}
If the perturbation is small enough, then there exists a finite-dimensional subspace $E_\infty \subset C_0^\infty( {\mb R} \times \tilde X_\gamma, T \tilde X_\gamma ) \subset C_0^\infty( {\mb R} \times S^1 \times \tilde X, T \tilde X)$ such that 
\beqn
{\it Image} \big( D_{\sigma_q} \tilde{\mc F}_\infty'( \tilde{\bm u}_q; \cdot) \big) +   (\sigma_q)^* ( e^{- \i m t/r} E_\infty) = {\mc E}_\infty'|_{\sigma_q}.
\eeqn
\end{lemma}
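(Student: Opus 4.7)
The key observation is that after the pointwise gauge transformation $v \mapsto e^{\i m t/r} v$, the operator $L := D_{\sigma_q} \tilde{\mc F}_\infty'(\tilde{\bm u}_q; \cdot)$ is conjugated to an operator $\tilde L$ acting on sections of $(\sigma_q^\gamma)^* T\tilde X$ over ${\mb R} \times S^1$, where $\sigma_q^\gamma = e^{\i m t/r}\sigma_q$ takes values in $\tilde X_\gamma$. The bundle then splits canonically as $(\sigma_q^\gamma)^* T\tilde X_\gamma \oplus (\sigma_q^\gamma)^* N\tilde X_\gamma$, and the statement becomes equivalent to showing that the cokernel of $\tilde L$ can be spanned by $t$-independent elements in the tangential summand, subsequently extended to a neighborhood of the image of $\sigma_q^\gamma$ in $\tilde X_\gamma$. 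The factor $e^{-\i m t/r}$ in the statement of the lemma is precisely what converts these sections back to sections at $\sigma_q$ itself.

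The plan is to analyze $\tilde L$ blockwise with respect to the tangent/normal splitting. On the asymptotic ends $s \to \pm\infty$, $\tilde L$ is asymptotic to constant-coefficient operators of the form $\partial_s + J\partial_t + A_\pm$, where $A_\pm$ is the linearization at $\kappa_{\iota_0}$ and $\upsilon_{\iota_0}$ respectively and preserves the splitting. For the normal block, the $\gamma$-action has nontrivial weights on $N\tilde X_\gamma$, so after Fourier decomposition in $t$ the spectrum of the asymptotic operator is shifted by these weights uniformly away from zero; provided the perturbation $F$ is small enough, this positive distance is preserved and the normal block is invertible on every Fourier mode, hence an isomorphism. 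For the tangential block, nonzero $t$-Fourier modes are likewise invertible by the same spectral reason (the eigenvalues of the bounded Hessian of $\tilde W|_{\tilde X_\gamma}$ never hit $2\pi k$ for a nonzero integer $k$ when the perturbation is small), so the entire cokernel is carried by the zeroth $t$-mode. This zeroth mode is the linearized gradient flow operator $\partial_s + \mathrm{Hess}(\tilde W|_{\tilde X_\gamma})(\sigma_q^\gamma)$ on ${\mb R}$, whose finite-dimensional cokernel is spanned by compactly supported smooth sections of $(\sigma_q^\gamma)^* T\tilde X_\gamma$ by elliptic regularity and density. I would then extend each such cokernel spanner to an element of $C_0^\infty({\mb R} \times \tilde X_\gamma, T\tilde X_\gamma)$ by using a tubular neighborhood of the image of $\sigma_q^\gamma$ in $\tilde X_\gamma$ and a cutoff, and take $E_\infty$ to be the span of these extensions.

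The main obstacle will be the spectral claim on the normal block: one must verify that the eigenvalues of the asymptotic Hessians on $N\tilde X_\gamma$ never combine with any integer $t$-Fourier mode (shifted by the $\gamma$-weights) to produce zero, and that this stability survives a perturbation by $F$. This is precisely where the smallness hypothesis enters, since for the unperturbed $W$ the $\gamma$-weights are nonzero rationals in $\tfrac{1}{r}{\mb Z}$ and stay away from the integers, leaving some slack that can accommodate small $F$. A secondary technical point is that the tangent/normal splitting is not exactly preserved by $\tilde L$ away from the asymptotic ends, but since $\sigma_q^\gamma$ decays exponentially to its Morse critical limits, the off-diagonal coupling is concentrated in a bounded $s$-region and is therefore a compact perturbation of the block-diagonal model; this does not affect the Fredholm index or the conclusion that $E_\infty$ spans the cokernel.
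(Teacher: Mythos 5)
Your proof is correct and expands essentially the same argument as the paper's (quite terse) proof: after conjugating by $e^{\i m t/r}$, the $C^0$-smallness of the Hessian along the BPS soliton --- which follows from the trajectory being trapped near the degenerate critical point of the unperturbed $W$ --- kills all nonzero Fourier modes, the nontrivial $\gamma$-weights on $N\tilde X_\gamma$ make the normal block invertible outright, and the cokernel is therefore carried by $t$-independent tangential sections, which you then extend via a tubular neighborhood to produce $E_\infty$. One small overstatement: the tangent/normal splitting is preserved by $\tilde L$ exactly, not merely asymptotically, since $\tilde X_\gamma$ is the fixed locus of a holomorphic isometry (hence a totally geodesic complex submanifold, so $J$ and $\nabla_s$ respect the splitting) and $\mathrm{Hess}\,\tilde W_\iota$ commutes with the linear $\gamma$-action because $\tilde W_\iota$ is $\gamma$-invariant --- so no compact-perturbation step is actually needed.
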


\begin{proof}
$D_{\sigma_q} \tilde{\mc F}_\infty'(\tilde{\bm u}_q; \cdot)$ is essentially
\beq\label{eqn422}
v_\infty \mapsto \nabla_s v_\infty +  J \partial_t v_\infty + D_{v_\infty} \big( H_W^{\delta_q} (  \sigma_q^\gamma) \big).
\eeq
The last term is the Hessian of the perturbed superpotential applied to $v_\infty$. It is a standard fact that when the Hessian is sufficiently small (in $C^0$ sense), obstructions only come from sections of Fourier mode zero, i.e., one can construct $E_\infty$ from sections which are invariant in the circle direction. It is indeed the case when the perturbation is sufficiently small, since the energy is proportional to the difference of critical values of the two critical points, and hence the solution should be sufficiently close to $(\star, 0)$ where the Hessian of the unperturbed superpotential vanishes.
\end{proof}

Choose $E_\infty \subset L^p({\mb R} \times \tilde X_\gamma, T\tilde X_\gamma)$ as in Lemma \ref{lemma412}. As before, for the principal component one can find an obstruction space $E_\Sigma$. Then we define
\beqn
E_q:= E_\Sigma \oplus E_\infty
\eeqn
which has a trivial $S^1$-action. We can also define a thickened moduli space $\tilde {\mc M}_{E_q}$ as \eqref{eqn45}, while we have constructed an element $\hat q$ from $\tilde {\bm u}_q^\sld$ by adding the marking ${\bm w}_q^\sld$. Notice that since $E_q$ is $S^1$-invariant, there is an $S^1$-action in a neighborhood of $\hat q$ that has a free $S^1$-action. 

To construct a local chart of the thickened moduli $\tilde {\mc M}_{E_q}$ around $\hat q$, introduce the following vector space as before (see \eqref{eqn46})
\beqn
M_q: = \Big\{ {\bm \xi} = ( \xi, e) \in T_{\tilde {\bm u}_q^\sld} \tilde {\mc B}^\sld \oplus E_q \ |\ D_{\tilde {\bm u}_q^\sld} \tilde {\mc F}_{\iota_0}^\sld (\xi) + e( \sld, \tilde {\bm u}_q^\sld) = 0 \Big\}.
\eeqn
Moreover, to induce a local chart of $\tilde {\mc M}$, introduce a {\it real} hypersurface (compare to \eqref{eqn47})
\beqn
N_q:= \Big\{ {\bm \xi} =  ( \xi, e) \in M_q \ |\ \xi = ( \beta_\Sigma, v_\Sigma, \rho; v_\infty),\ \int_{S^1} v_\infty( 0, \theta) d \theta \in H_q \Big\}.
\eeqn
$M_q$ and $N_q$ have trivial $S^1$-actions. For $r > 0$, let $M_q^r$ (resp. $N_q^r$) be the radius $r$ ball centered at the origin. Note that there is a homeomorphism
\beqn
\big( \Delta^r \times N_q^r \big) / S^1 \simeq  [0, r) \times N_q^r.
\eeqn
This is the local model of the boundary charts. More precisely, we have the following proposition analogous to Proposition \ref{prop43}. 

\begin{prop}\label{prop413}
There exist $\epsilon_q >0$ and an $S^1$-equivariant continuous map $\psi_{E_q}: \Delta^{\epsilon_q} \times  M_q^{\epsilon_q} \to \tilde {\mc M}_{E_q}$ satisfying the following conditions. 
\begin{enumerate}
\item $\psi_{E_q}(\sld, 0) = ( \sld, \tilde {\bm u}_q, \varepsilon_q)$ and $\psi_{E_q}$ is a homeomorphism onto its image.

\item Denote $\psi_{E_q} (\alpha, {\bm \xi} ) = ( \tilde {\bm u}_{\bm \xi}^\alpha, e_{\bm \xi}^\alpha )$, then the map $n_q: \Delta^{\epsilon_q} \times  M_q^{\epsilon_q} \to {\mb R}$ defined by 
\beqn
n_q(\alpha, {\bm \xi}) = \int_{S^1} h_q \big( \uds u{}_{{\bm \xi}}^\alpha ({\bm w}_q^\alpha + \i \theta) \big) d\theta
\eeqn
is transversal to $0 \in {\mb R}$ (in the topological sense) and $n_q^{-1}(0) = \Delta^{\epsilon_q} \times N_q^{\epsilon_q}$.
\end{enumerate}
\end{prop}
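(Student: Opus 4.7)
The plan is to follow the template of Proposition \ref{prop43} essentially line by line, replacing the finite group $\Gammait_q$ by the continuous group $S^1$, replacing sums over $\Gammait_q$ by integrals over $S^1$, and replacing the complex normalization function $h_q : \tilde X \to \mb C$ by the real one. The key structural point that makes this replacement work is that the obstruction space $E_\infty$ provided by Lemma \ref{lemma412} consists of $S^1$-invariant sections of the Fourier-mode-zero type, and the linearized soliton operator $\nabla_s + J\partial_t + \mathrm{Hess}$ at the $t$-independent BPS trajectory $\sigma_q^\gamma$ is itself $S^1$-equivariant.

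First, on the lower stratum I would apply the implicit function theorem at $\tilde{\bm u}_q^\sld$ using a right inverse $\tilde Q^\sld_{E_q}$ of $D_{\tilde{\mz u}_q^\sld} \tilde{\mz F}^\sld_{E_q}$ chosen to satisfy the \emph{averaged} infinitesimal normalization: for any $(\iota;\xi,e)$ in its image with soliton deformation $v_\infty$, one requires
\beqn
\int_{S^1} v_\infty(0,\theta)\, d\theta \in H_q.
\eeqn
Since this is a single real linear constraint (codimension one, matching the real hypersurface $N_q$), standard cokernel adjustments show such a $\tilde Q^\sld_{E_q}$ exists; $S^1$-equivariance of the operator allows it to be chosen $S^1$-equivariantly. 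Exact solutions $\tilde{\mz u}_{\bm \xi}^\sld$ follow from a routine application of the implicit function theorem, and the averaged normalization passes from infinitesimal to finite form as in \eqref{eqn411}, with the sum over $\Gammait_q$ replaced by the integral over $S^1$.

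Next, pregluing is defined exactly as in \eqref{eqn412}. Because $\sigma_q^\gamma$ is $t$-independent, the $\alpha$-twist ${\mz t}_\alpha^*\sigma_q$ depends on $\alpha$ only through a rigid rotation in the $S^1$-variable together with the $s$-translation by $4T$, which immediately yields an $S^1$-equivariant family $\tilde{\mz u}_{\alpha,{\bm \xi}}^{\rm app}$ with respect to the $S^1$-action rotating $\alpha \in \Delta$. The quantitative approximation estimate analogous to Lemma \ref{lemma45} goes through unchanged; the uniform right inverse $\tilde Q_{E_q}^\alpha$ of the linearization at the preglued solution is constructed by the standard breaking-the-neck argument, combining $\tilde Q^\sld_{E_q}$ with a right inverse on $\tilde{\mc B}^\alpha$ built from $E_\Sigma$. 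The only care needed is to average the outcome over $S^1$ so as to preserve both equivariance and the averaged infinitesimal normalization \eqref{eqn415}; this averaging is legal because the domain and codomain of the operator, together with $E_q = E_\Sigma \oplus E_\infty$, are $S^1$-invariant.

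With the quantitative hypotheses of Proposition \ref{prop47} verified by the obvious analogues of Lemmas \ref{lemma48} and \ref{lemma49}, the implicit function theorem produces the map
\beqn
\psi_{E_q}(\alpha,{\bm \xi}) = (\tilde{\bm u}_{\bm \xi}^\alpha, e_{\bm \xi}^\alpha),
\eeqn
which is automatically $S^1$-equivariant by uniqueness of the correction term. The verification that (i) the averaged normalization condition $n_q(\alpha,{\bm \xi})=0$ cuts out precisely $\Delta^{\epsilon_q}\times N_q^{\epsilon_q}$, (ii) $\psi_{E_q}$ is injective (gluing parameter is recovered from the relative position of the extra marking, and then uniqueness from the implicit function theorem finishes the argument), and (iii) $\psi_{E_q}$ is surjective onto a neighborhood of $\hat q$, is carried out by the same three-step argument as in Subsection \ref{subsection45}; in the surjectivity step one controls the neck region using that both the genuine and approximate solutions converge to $\upsilon$ on the neck, with energy going to zero.

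The main obstacle, and the only genuinely new point relative to Proposition \ref{prop43}, is the handling of the continuous stabilizer. In particular one must ensure simultaneously that (a) $E_\infty$ is $S^1$-invariant and covers the cokernel of the linearized soliton operator, (b) the right inverse can be chosen $S^1$-equivariantly, and (c) the averaged normalization condition is a bona fide transverse slice to the $S^1$-orbit through $\hat q$. Point (a) is Lemma \ref{lemma412} and relies on smallness of the perturbation so that the Hessian only obstructs Fourier-mode-zero sections; point (b) follows from equivariance of the operator by standard averaging; point (c) is immediate from $H_q$ being a hyperplane transverse to the nonvanishing tangent vector $\partial_s \sigma_q^\gamma({\bm w}_q^\sld)$, which is $t$-independent so that the $S^1$-average coincides with the value at any fixed $\theta$. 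Once these three points are in place, the rest of the argument is a transcription of Subsections \ref{subsection43}--\ref{subsection45} with the notation $\sum_{g\in \Gammait_q}$ replaced by $\int_{S^1} d\theta$ throughout.
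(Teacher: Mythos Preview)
Your proposal is correct and follows essentially the same approach as the paper. The paper's own proof is very terse: it simply states that the pregluing construction and implicit function theorem yield the family of exact solutions, that $S^1$-equivariance can be checked at each step (so the family depends only on $|\alpha|$), and emphasizes exactly the point you single out---that the right inverse $\tilde Q_{\sigma_q}'$ to the linearized soliton operator must be chosen to respect the $S^1$-symmetry and to satisfy the averaged infinitesimal normalization condition $\int_{S^1} v_\infty(0,\theta)\,d\theta \in H_q$, with a footnote noting that Lemma~\ref{lemma412} is what makes this possible.
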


\begin{proof}
By the pregluing construction and applying the implicit function theorem, one can obtain $\epsilon_q>0$ and a family of exact solutions
\beqn
\Big\{ \tilde{\mz u}_{{\bm \xi}}^\alpha = (\tilde{\bm u}_{{\bm \xi}}^\alpha, e_{\bm \xi}^\alpha) \ |\ \alpha \in \Delta^{\epsilon_q},\ {\bm \xi} \in \tilde M_q^{\epsilon_q} \Big\}.
\eeqn
The $S^1$-equivariance can be checked at each step of the construction so that the family only depends on $t = |\alpha|$\footnote{Lemma \ref{lemma412} is necessary to have the $S^1$ symmetry.}. The only thing we want to emphasize is, one needs to choose a right inverse $\tilde{Q}_{\sigma_q}'$ to $D_{\sigma_q} \tilde{\mc F}_\infty'(\tilde{\bm u}_q; \cdot)$ which respects the $S^1$ symmetry and which satisfies that, for any $v_\infty \in {\it Image} (\tilde{Q}_{\sigma_q}') \subset W^{1, p}({\mb R} \times S^1, \sigma_q^* T \tilde X_\gamma  )$,
\beq\label{eqn423}
\int_{S^1} v_\infty ( 0, \theta)  d\theta \in H_q,
\eeq
i.e., the infinitesimal normalization condition. 
\end{proof}

Proposition \ref{prop413} implies the following corollary which is analogous to Corollary \ref{cor44}.
\begin{cor}\label{cor414}
For $\epsilon \in (0, \epsilon_q]$, define 
\beqn
U_q^\epsilon = [0, \epsilon) \times N_q^\epsilon,\ E_q^\epsilon = U_q^\epsilon \times E_q;
\eeqn
let $S_q^\epsilon: U_q^\epsilon \to E_q^\epsilon$ be the section induced from the map $(\alpha, {\bm \xi}) \mapsto e_{\bm \xi}^\alpha$; define $\psi_q^\epsilon: (S_q^\epsilon)^{-1}(0) \to \tilde {\mc M}$ by $\psi_q^\epsilon( t, {\bm \xi}) = [\tilde {\bm u}_{\bm \xi}^t]$ with image $F_q^\epsilon \subset \tilde {\mc M}$. Then for $\epsilon$ sufficiently small, the tuple $C_q^\epsilon = (U_q^\epsilon, E_q^\epsilon, S_q^\epsilon, \psi_q^\epsilon, F_q^\epsilon)$ is a local chart (with boundary) of $\tilde {\mc M}$ around $q$. 
\end{cor}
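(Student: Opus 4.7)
The plan is to mimic the proof of Corollary \ref{cor44}, replacing the finite group $\Gammait_q$ by the continuous $S^1$-action that is present for BPS solitons. First I would verify that $U_q^\epsilon$ is a topological manifold with boundary and that the chart data is well-defined. Since $S^1$ rotates $\Delta^\epsilon$ with the origin as its unique fixed point and acts trivially on $N_q^\epsilon$, the map $(\alpha, {\bm \xi}) \mapsto (|\alpha|, {\bm \xi})$ descends to a homeomorphism $(\Delta^\epsilon \times N_q^\epsilon)/S^1 \cong [0, \epsilon) \times N_q^\epsilon$. The bundle $E_q^\epsilon$, the section $S_q^\epsilon$, and the footprint map $\psi_q^\epsilon$ are all well-defined thanks to the $S^1$-equivariance supplied by Proposition \ref{prop413}: the assignment $(\alpha,{\bm \xi}) \mapsto e^\alpha_{\bm \xi}$ depends only on $(|\alpha|, {\bm \xi})$, and if $\alpha_1 = e^{\i\theta}\alpha_2$ then $\tilde{\bm u}^{\alpha_1}_{\bm \xi}$ and $\tilde{\bm u}^{\alpha_2}_{\bm \xi}$ differ by a reparametrization of the rational (soliton) component and so represent the same class in $\tilde{\mc M}$.

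The central step is to prove that $\psi_q^\epsilon$ is a homeomorphism onto an open neighborhood of $q$. Local injectivity follows from item (1) of Proposition \ref{prop413}: distinct $S^1$-orbits in $\Delta^{\epsilon_q} \times M_q^{\epsilon_q}$ map to distinct points of $\tilde{\mc M}_{E_q}$, and intersecting with the locus $\{e^\alpha_{\bm \xi} = \varepsilon_q\} = (S^\epsilon_q)^{-1}(0)$ transports this statement to $\tilde{\mc M}$. For local surjectivity, given $\tilde{\bm u} = (\iota; A, u, \phi) \in \tilde{\mc M}$ sufficiently close to $q$, I need to produce a marked point ${\bm w} \in \Sigma^*$ such that the resulting object satisfies the normalization condition $\int_{S^1} h_q(\uds u(\bm{w} + \i\theta))\,d\theta = 0$. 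Combined with the natural identification of $(\Sigma^*, {\bm w})$ with a fiber $\Sigma_\alpha$, this exhibits $\tilde{\bm u}$ (up to $S^1$-reparametrization) in the image of $\psi_{E_q}$ restricted to $\Delta^{\epsilon_q} \times N_q^{\epsilon_q}$; by item (2) of Proposition \ref{prop413}, the preimage lies in $n_q^{-1}(0) = \Delta^{\epsilon_q} \times N_q^{\epsilon_q}$, yielding the required element of $U_q^\epsilon$.

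The step I expect to be the most delicate is the existence and uniqueness of the normalizing marked point ${\bm w}$. For $\alpha = 0$, this reduces to the fact that $\sigma_q^\gamma$ meets the slice $D_q$ transversely at ${\bm w}_q^\sld$, unique modulo the $S^1$-rotation symmetry. For small $\alpha \neq 0$ one must show that the approximate (and hence the exact) solution meets $D_q$ in a single $S^1$-orbit within a fixed compact region; the long-neck behavior of $u$ near $\upsilon$ could in principle produce spurious intersections, but since $\upsilon \neq {\bm x}_q$ (as ${\bm x}_q$ lies on the nonconstant soliton) and $h_q(\upsilon) \neq 0$, no such spurious zeros can occur within the neck once $\epsilon$ is small enough. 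The transversality clause in Proposition \ref{prop413}(2) then guarantees that the implicit function theorem produces a unique normalizing ${\bm w}$ modulo $S^1$. Finally, invoking the standard fact that a continuous bijection from a locally compact space to a Hausdorff space is a homeomorphism, we conclude that $\psi_q^\epsilon$ is a homeomorphism onto an open neighborhood $F_q^\epsilon$ of $q$ in $\tilde{\mc M}$, completing the construction of the boundary chart.
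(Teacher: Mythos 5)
Your proof takes essentially the same approach as the paper: invoke the $S^1$-equivariance and transversality statements of Proposition~\ref{prop413} together with the existence and uniqueness of a normalizing slice point, the latter being exactly the content that the paper isolates as Lemma~\ref{lemma415} (whose proof is left to the reader). The details you supply for the normalizing slice — transversality of $\sigma_q^\gamma$ to $D_q$ at ${\bm w}_q^\sld$, uniqueness up to $S^1$-rotation, and absence of spurious zeros in the long neck — correspond to the argument the reader is expected to supply for Lemma~\ref{lemma415}, while the paper's attribution of ``surjectivity from Item (1), injectivity from the Lemma'' versus your reversed attribution reflects only that both ingredients feed into both halves of the homeomorphism claim.
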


\begin{proof}
The local surjectivity of $\psi_q^\epsilon$ follows from Item (1) of Proposition \ref{prop413}; the local injectivity follows from the lemma below whose proof is left to the reader.
\end{proof}

\begin{lemma}\label{lemma415}
There is a neighborhood $\tilde{\mc U}_q \subset \tilde{\mc M}$ of $q$ satisfying the following conditions.
\begin{enumerate}
\item For any $\tilde{\bm u} \in (\iota; A, u, \phi) \in \tilde{\mc U}_q \cap \tilde{\mc M}^*$, there exists a unique $s \in {\mb R}$ such that
\beq\label{eqn424}
\int_{S^1} h_q \big( \uds u ( s, \theta ) \big) d\theta  = 0.
\eeq

\item For any soliton solution representing a point in $\tilde{\mc U}_q \cap \tilde{\mc M}^\sld$ whose soliton component is $\sigma$, there exists a unique $s \in {\mb R}$ such that
\beqn
\int_{S^1} h_q \big( \sigma ( s, \theta ) \big) d\theta  = 0.
\eeqn
\end{enumerate}
\end{lemma}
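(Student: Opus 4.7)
The plan is to prove the lemma by a two-step strategy: first establish existence and uniqueness of the normalization coordinate for the reference BPS soliton $\sigma_q$ itself via transversality plus asymptotic control, and then bootstrap to nearby objects via the implicit function theorem combined with the Gromov-type convergence that underlies the topology on $\tilde{\mc M}$ near $q$.

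For the reference soliton, define $F_q(s) := \int_{S^1} h_q(\sigma_q(s,\theta))\, d\theta$. By construction ${\bm x}_q = \sigma_q^\gamma(0)$ lies on $\exp_{{\bm x}_q} H_q$, so $F_q(0)=0$ (up to the coherent choice of $h_q$ on the $\gamma$-orbit through ${\bm x}_q$, averaging if necessary). Since $H_q$ is transverse to $\partial_s \sigma_q^\gamma(0)$, we have $F_q'(0) = \int_{S^1} dh_q(\sigma_q(0,\theta))\cdot \partial_s\sigma_q(0,\theta)\,d\theta \neq 0$, so $0$ is an isolated transverse zero. For uniqueness of zeros away from $s=0$, we use that $\sigma_q^\gamma$ solves the ODE \eqref{eqn421} and converges in $C^\infty$ to $\delta_q\upsilon$ and $\delta_q\kappa$ at $\mp\infty$. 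Since neither critical point lies on the local transverse hyperplane $\exp_{{\bm x}_q} H_q$, by extending $h_q$ globally so that $h_q(\delta_q\upsilon)$ and $h_q(\delta_q\kappa)$ are nonzero with opposite signs (a condition that can be arranged once $H_q$ is fixed, as $h_q$ is only pinned down locally near ${\bm x}_q$), the continuous function $F_q$ has opposite-sign limits at $\pm\infty$ and is monotone outside a large compact interval in $s$. Combined with the transversality at $0$, this gives global uniqueness.

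For $\tilde{\bm u} = (\iota;A,u,\phi) \in \tilde{\mc U}_q\cap \tilde{\mc M}^*$ close to $q$, the Gromov-type convergence (recalled in the pregluing and compactness discussion of Section \ref{subsection44} and \cite{Tian_Xu}) tells us that the cylindrical-end portion of $\uds u$ is close in $C^\infty_{\rm loc}$ to a translate $\sigma_q(\,\cdot\, - s_0(\tilde{\bm u}),\,\cdot\,)$ with $s_0 \to +\infty$ as $\tilde{\bm u}\to q$; away from the neck, $\uds u$ is close to the critical points $\upsilon$ (on the interior side of the principal component) or $\kappa$ (toward the puncture). Consequently the function $s\mapsto \int_{S^1} h_q(\uds u(s,\theta))\,d\theta$ is $C^1$-close to $F_q(s - s_0)$ on compact $s$-intervals inside the neck, and takes values close to $2\pi h_q(\upsilon)$ or $2\pi h_q(\kappa)$ elsewhere. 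The implicit function theorem at the transverse zero of $F_q$ yields exactly one zero in a neighborhood of $s_0$, and the asymptotic nonvanishing inherited from $F_q$ rules out zeros outside that neighborhood. The argument for soliton solutions in $\tilde{\mc U}_q \cap \tilde{\mc M}^\sld$ is strictly simpler since the soliton component $\sigma$ already lives on $\mathbb{R}\times S^1$ and is close to $\sigma_q$ directly; one applies the same IFT plus asymptotic argument with $\sigma$ replacing $\uds u$.

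The main obstacle is the global uniqueness statement: local uniqueness follows straightforwardly from the transversality and a standard IFT, but ruling out extra zeros requires careful control of $F_q(\tilde{\bm u},s)$ for $s$ in both the long neck region (where $\tilde{\bm u}$ is close to a translate of $\sigma_q$) and outside it (where $\tilde{\bm u}$ is close to asymptotic critical points). The cleanest way to handle this appears to be to choose the extension of $h_q$ from its local linear model near ${\bm x}_q$ so as to fix opposite-sign asymptotic values on the two critical points, and to exploit the monotonicity-type estimate that negative gradient flow lines of $\mathrm{Re}(W+F_{\iota_0})$ traverse the transverse hyperplane only once near $0$; this monotonicity descends to the averaged function $F_q$ provided the extension of $h_q$ is compatible with the direction of the flow.
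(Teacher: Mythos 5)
The paper itself leaves this proof to the reader (see the proof of Corollary \ref{cor414}), so there is no paper argument to compare against; your overall strategy --- verify the normalization for the reference soliton $\sigma_q$, then propagate to nearby elements of $\tilde{\mc M}$ via Gromov convergence and the implicit function theorem --- is the right one and matches what the paper seems to intend.

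However, there is a genuine gap in the global-uniqueness step for the reference soliton. You assert a ``monotonicity-type estimate that negative gradient flow lines of $\mathrm{Re}(W+F_{\iota_0})$ traverse the transverse hyperplane only once near $0$,'' but this is not a property of gradient flow lines. Transversality at $s=0$ gives a single crossing in a small interval around $0$, and that is all it gives; nothing prevents $\sigma_q^\gamma$ from returning to $\exp_{{\bm x}_q}H_q$ at some $s$ far from $0$. Likewise, arranging $h_q$ to take opposite-sign values on $\delta_q\upsilon$ and $\delta_q\kappa$ only guarantees an \emph{odd} number of sign changes of $F_q$, not exactly one. So as written your argument does not exclude extra zeros of $F_q$, and the IFT bootstrap then also only yields local, not global, uniqueness for nearby $\tilde{\bm u}$.

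The fix is to make an explicit choice of $h_q$, and this is allowed since $h_q$ is part of the chart data chosen at the start of Subsection \ref{subsection46}. The genuine monotonicity available here is that $\mathrm{Re}\,W_{\iota_0}$ is strictly decreasing along $\sigma_q^\gamma$. So one should take $H_q = \ker\, d(\mathrm{Re}\,W_{\iota_0})({\bm x}_q)$ (this is automatically transverse to the non-constant flow line), and choose $h_q$ agreeing with $\mathrm{Re}\,W_{\iota_0} - \mathrm{Re}\,W_{\iota_0}({\bm x}_q)$ in local coordinates near ${\bm x}_q$ and extended so that $dh_q(\partial_s\sigma_q^\gamma)$ keeps a constant sign along the whole trajectory (e.g.\ extend in a tubular neighborhood of $\sigma_q^\gamma(\mathbb R)$ and cut off, or simply arrange $h_q$ to be nonvanishing on $\sigma_q^\gamma\big((-\infty,-\epsilon]\cup[\epsilon,+\infty)\big)$). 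Then $F_q$ is strictly monotone with nonzero limits of opposite sign, and the unique zero at $s=0$ is the only one. With that in place, your IFT-plus-convergence argument does give the stated conclusion for all $\tilde{\bm u}$ in a small enough $\tilde{\mc U}_q$. Your phrase ``compatible with the direction of the flow'' gestures at this, but it needs to be made into the actual choice above rather than invoked as an estimate.
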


\begin{cor}\label{cor416}
When the perturbation ${\mc P}_{\iota_0}$ is small enough, $\tilde{\mc M}^\sld$ is the disjoint union of $\tilde{\mc M}^s$ and $\tilde{\mc M}^b$, both of which are compact.
\end{cor}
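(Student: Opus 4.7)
The plan is: the statement $\tilde{\mc M}^\sld = \tilde{\mc M}^b \sqcup \tilde{\mc M}^s$ is tautologically a set-theoretic disjoint union, since a soliton solution is BPS precisely when its rational-component map $\sigma$ satisfies $\partial_t \sigma \equiv 0$ and non-BPS otherwise. Since $\tilde{\mc M}(\kappa)$ is compact Hausdorff (recalled in the paragraph preceding Theorem~\ref{thm34}) and $\tilde{\mc M}^\sld$ is a closed subspace of it, $\tilde{\mc M}^\sld$ is itself compact. The whole claim therefore reduces to proving that $\tilde{\mc M}^b$ is both closed and open in $\tilde{\mc M}^\sld$, for then $\tilde{\mc M}^s$ is its complement and each is a closed subset of a compact space.

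Closedness of $\tilde{\mc M}^b$ is immediate: the condition $\partial_t \sigma = 0$ passes to the continuous limit in $\tilde{\mc M}^\sld$. For openness I would invoke the local chart $C_q^\epsilon = (U_q^\epsilon, E_q^\epsilon, S_q^\epsilon, \psi_q^\epsilon, F_q^\epsilon)$ constructed around a BPS point $q$ in Corollary~\ref{cor414}, whose underlying space is $U_q^\epsilon = [0,\epsilon) \times N_q^\epsilon$ and whose $\alpha = 0$ slice $\{0\} \times N_q^\epsilon$ parametrizes the nearby points of $\tilde{\mc M}^\sld$. The space $N_q$ records infinitesimal changes of the soliton component $\sigma_q$ via sections $v_\infty \in W^{1,p}({\mb R} \times S^1, \sigma_q^* T\tilde X_\gamma)$. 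The substantive point, which is the Fourier-mode analysis already used in Lemma~\ref{lemma412}, is that when ${\mc P}_{\iota_0}$ is small enough the kernel of $D_{\sigma_q} \tilde{\mc F}_\infty'(\tilde{\bm u}_q;\cdot)$ consists solely of $t$-independent sections: on each Fourier mode $k \neq 0$ the linearization takes the form $\nabla_s + J(ik + \mathrm{Hess})$, which is uniformly invertible once the Hessian of the perturbed Hamiltonian along $\sigma_q^\gamma$ is $C^0$-smaller than $1$. Consequently every soliton solution parametrized by $N_q^\epsilon$ has $t$-independent rational component, i.e., lies in $\tilde{\mc M}^b$.

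The main obstacle is the \emph{uniformity} of the Fourier-mode estimate across the compact family of BPS limits: one must verify that the Hessian bound holds along every BPS trajectory in $\tilde{\mc M}^b$, not just at a preferred $\sigma_q$. This is precisely what the smallness hypothesis on ${\mc P}_{\iota_0}$ supplies, since the energy of any BPS soliton is controlled by the action difference of the two critical points and hence the whole trajectory is forced to sit in a small neighborhood of the degenerate critical locus of the unperturbed $W$, where $\mathrm{Hess}\,W$ vanishes. With this uniform estimate in place, $\tilde{\mc M}^b$ is both open and closed in $\tilde{\mc M}^\sld$, so $\tilde{\mc M}^s$ is also closed, and both are compact.
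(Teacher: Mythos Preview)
Your proposal is correct and follows the same overall strategy as the paper: closedness of $\tilde{\mc M}^b$ is immediate from passing to limits, and openness in $\tilde{\mc M}^\sld$ is read off from the boundary chart of Corollary~\ref{cor414}. The paper's argument is terser: it simply invokes the $S^1$-equivariance of the map $\psi_{E_q}$ established in Proposition~\ref{prop413}, which forces every soliton solution in the image of the $\alpha=\sld$ slice to be $S^1$-invariant, hence BPS; thus the footprint misses $\tilde{\mc M}^s$, so $\tilde{\mc M}^s$ is closed.

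You instead unpack the mechanism behind that equivariance --- the Fourier-mode invertibility from Lemma~\ref{lemma412} --- and you are right that this is where the smallness of ${\mc P}_{\iota_0}$ enters, uniformly over all BPS trajectories. There is, however, a small slippage in your write-up: knowing that the \emph{kernel} of $D_{\sigma_q}\tilde{\mc F}_\infty'(\tilde{\bm u}_q;\cdot)$ consists of $t$-independent sections does not by itself force the nearby \emph{exact} solutions (which are obtained after adding an obstruction vector in $E_\infty$ and the implicit-function-theorem correction in the image of the right inverse) to have $t$-independent rational component. What actually closes this gap is that $E_\infty$ is chosen inside $C_0^\infty({\mb R}\times\tilde X_\gamma, T\tilde X_\gamma)$, the right inverse $\tilde Q_{\sigma_q}'$ is taken $S^1$-equivariant (see \eqref{eqn423} and the surrounding discussion), and hence the entire Newton iteration preserves $t$-independence. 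That is exactly what Proposition~\ref{prop413} packages as the $S^1$-equivariance of $\psi_{E_q}$. Once you cite that, your argument and the paper's coincide.
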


\begin{proof}
For each $q\in \tilde{\mc M}^b$ and $(\sld, {\bm \xi})\in (S_q^\epsilon)^{-1}(0)$, $\psi_{E_q} (\sld, {\bm \xi})$ is represented by a soliton solution. The $S^1$-equivariance of $\psi_{E_q}$ implies that it is a BPS soliton solution. Hence the image of $\psi_q^\epsilon$ is disjoint from $\tilde{\mc M}^s$, so $\tilde{\mc M}^s$ is closed. On the other hand, $\tilde{\mc M}^b$ is also closed since a sequence of $S^1$-invariant solutions cannot converge to a non-invariant solution. Therefore the corollary holds.
\end{proof}

\section{Technical Details of Gluing}\label{section5}

In this section, we provide the details about certain notions used in Section \ref{section4}. Meanwhile, we prove Lemma \ref{lemma45} and Lemma \ref{lemma46} in the gluing construction.

\subsection{Proof of Lemma \ref{lemma45}}\label{subsection51}

Recall the approximate solution $\tilde {\mz u}_{\alpha, {\bm \xi}}^{\rm app}$ is obtained by pregluing the singular solution $\tilde {\mz u}_{\bm \xi}^{\sld} = (\iota_{\bm \xi}^\sld; A_{\bm \xi}^\sld, u_{\bm \xi}^\sld, \phi_{\bm \xi}^\sld, e_{\bm \xi}^\sld)$ on the long cylinder $C_T \setminus C_{3T}\subset C \subset \Sigma^*$. We use the simplified notation
\beqn
\tilde {\mz u}_{\alpha, {\bm \xi}}^{\rm app} = \tilde {\mz u}_\alpha = (\iota_\alpha; \phi_\alpha, A_\alpha, u_\alpha, e_\alpha)
\eeqn
which won't cause confusion within this proof. 

The approximate solution fails to be a solution to the augmented vortex equation on the infinite cylinder $C_T$. To estimate the failure, recall that we denoted 
\beqn
{\mc V}(A, u, \varphi) = \Big( * F_{A} + \nu \mu(u),\ \ov{d^*} ( A- A_0 ) \Big)\in L_{\tau}^p(\Sigma^*, {\mf g}) \oplus \ov{ L_{\tau}^p (\Sigma^*, {\mf g})}.
\eeqn
Notice that by our definition $A_\alpha = A_{\bm \xi}^\sld$ which still satisfy $\ov{d^*}( A_\alpha  - A_0) = 0$. Hence we only have to estimate the first term. Let ${\mz V}_E$ be the map that includes variables from the obstruction space $E = E_q$. Since the volume form decays like $e^{-2s}$, one has
\beq\label{eqn51}
 \big\| {\mz V}_E ( \tilde {\bm u}_\alpha, e_\alpha ) \big\| =  \big\| * F_{A_\alpha} + \nu \mu ( u_\alpha) + \pi_2 (e_\alpha) \big\|_{L_\tau^p}  \leq \big\| \nu \mu (u_\alpha) - \nu \mu(u) \big\|_{L_\tau^p} \leq c_2 e^{-(2-\tau)T}.
\eeq
Here $\pi_2(e_\alpha)$ is the projection of the obstruction $e_\alpha$ onto $L_{\tau}^p(\Sigma^*, {\mf g}) \oplus \ov{L_{\tau}^p(\Sigma^*, {\mf g})}$.

Now we estimate ${\mz W}_E ( \tilde {\bm u}_\alpha, e_\alpha)$. Similar to the case of gluing pseudoholomorphic curves, it is nonzero only in the long cylinder $C_T \setminus C_{3T}$. We only estimate over $C_T \setminus C_{2T}$ and the case for the other half of the long cylinder is the same. To estimate the failure over this long cylinder, first introduce the ``constant'' solution 
\beqn
\tilde \upsilon (s, t) = e^{ - \frac{{\bf i}mt}{r}} \upsilon.
\eeqn
(It actually depends on $A$ and $\phi$.) Then we can write 
\beqn
u_\alpha (s, t) = \exp_{\tilde \upsilon} \Big( \rho_T^\Sigma (s) \zeta_\Sigma(s, t) \Big),\ \ (s, t) \in C_T \setminus C_{2T},
\eeqn
which is very close to $\tilde \upsilon$. We write the differential of the exponential map on $\tilde{X}$ as 
\beqn
d \exp_x v = E_1(x, \exp_x v) dx + E_2(x, \exp_x v) \nabla v;
\eeqn
for $|V|$ sufficiently small, $E_1, E_2$ are sufficiently close to the parallel transport hence has bounded norm. Then there exists $c_2>0$ such that 
\beqn
| \partial_s u_\alpha | = |E_2( \tilde \upsilon, u_\alpha ) \cdot \nabla_s ( \rho_T^\Sigma \zeta_\Sigma) | \leq c_2 | \nabla_s ( \rho_T^\Sigma \zeta_\Sigma) |;
\eeqn
\beqn
| \partial_t u_\alpha + {\mc X}_{\frac{{\bf i} m}{r}} (u_\alpha) | = | E_2( \tilde \upsilon, u_\alpha) \cdot \nabla_t ( \beta_T^\Sigma \zeta_\Sigma) | \leq c_2 | \nabla_t ( \beta_T^\Sigma \zeta_\Sigma) |. 
\eeqn
On the other hand we write $A_\alpha = d + \frac{{\bf i} m }{r} dt +  \phi_\alpha ds + \psi_\alpha dt$. Then over $C_T \setminus C_{2T}$, for some $c_3>0$, one has 
\begin{multline*}
2 \Big| {\mz W}_E ( \tilde {\mz u}_\alpha) \Big| = \Big| \partial_s u_\alpha + {\mc X}_{\phi_\alpha}(u_\alpha) + J \big( \partial_t u_\alpha + {\mc X}_{\psi_\alpha + \frac{{\bf i} m }{r} } (u_\alpha) \big) + {\mc H}_W ( \tilde {\bm u}_\alpha) \Big| \\
 \leq \Big| {\mc X}_{\phi_\alpha} (u_\alpha) + J {\mc X}_{\psi_\alpha} (u_\alpha ) \Big| + \Big| {\mc H}_W ( \tilde {\bm u}_\alpha ) - {\mc H}_W ( \tilde \upsilon)  \Big| + \Big| E_2 \cdot \nabla_s \big( \rho_T^\Sigma \zeta_\Sigma\big) \Big| + \Big| E_2 \cdot  \nabla_t \big( \rho_T^\Sigma \zeta_\Sigma \big) \Big| \\
 \leq  c_3 \Big( \big| \phi_\alpha  \big| + \big| \psi_\alpha \big| + \big| \zeta_\Sigma \big| + \big| \nabla \zeta_\Sigma \big| \Big).
\end{multline*}
There is a similar estimate on the other half $C_{2T} \setminus C_{3T}$ by replacing $\zeta_\Sigma$ with the $\alpha$-twist of $\zeta_\infty$. Notice that $\phi_\alpha$, $\psi_\alpha$, $\zeta_\Sigma$ and $\zeta_\infty$ decay exponentially, in a rate faster than $e^{-\tau_1 s}$ for some $\tau_1>0$.

\subsection{The inhomogeneous term}\label{subsection52}

From now on to the end of this section, we prove Lemma \ref{lemma46}. Here we give a more explicit characterization of the gradient vector field $\nabla {\mc W}_{A, \phi} \in \Gamma(Y, \pi^* \Omega_{\Sigma^*}^{0,1} \otimes T^\bot Y)$. The details can be found in \cite{Tian_Xu}. 

\begin{fact}\label{fact51}
The following facts are true.
\begin{enumerate}
\item Over the cylindrical end $C \subset \Sigma^*$, there is a function $h: C \to {\mf g}^{\mb C}$ depending on $A$ satisfying: 1) the correspondence $A \mapsto h$ is affine linear; 2) with respect to the trivialization $\phi$, 
\beqn
A = d + \frac{{\bf i}m}{r} dt + \phi ds + \psi dt = d + \frac{{\bf i}m}{r} dt + \ov\partial h + \partial \bar h;
\eeqn
3) if $\phi, \psi \in W^{1, p}_\tau(C, {\mf g})$ for some $p>2$ and $\tau>0$, then $h \in W^{2, p}_\tau(C, {\mf g}^{\mb C})$, and the derivative is a bounded operator from $W_\tau^{1,p}(C, \Lambda^1 \otimes {\mf g}) $ to $W_\tau^{2,p}(C, {\mf g}^{\mb C})$. 

\item There is also a smooth function $A \mapsto \delta \in (0, 1)$ and $\delta$ only depends on the restriction of $A$ to a fixed compact subset of $C$ (see \cite[Definition 2.15]{Tian_Xu}).

\item For ${\bm u} = (A, u)$, over the cylindrical end $C$ we have
\beqn
\nabla {\mc W}_A (u) = {\mc H}_W ({\bm u}) = \phi \Big(  H_W^{\delta}( h, u_\phi ) \Big).
\eeqn
Here at $z = (s, t) \in C$, $H_W^{\delta}$ depends smoothly on $\delta$ and $(h(s, t), u_\phi (s, t)) \in {\mf g}^{\mb C} \times \tilde X$.

\item The derivative of ${\mc H}_W ({\bm u})$ in $\delta$ is supported in $C$. 

\item For any $h$, the set of zeroes of $H_W^\delta(h, \cdot)$ is 
\beqn
{\it Zero} ( H_W^\delta(h, \cdot) ) = \delta e^h \cdot {\it Crit} W_{\mc P} |_{\tilde X_\gamma}
\eeqn
Here $W_{\mc P}$ is the perturbed superpotential defined by \eqref{eqn32} (in our situation $W_{\mc P}$ depends on $\iota$) and $\delta$ is viewed as an element of the group of R-symmetries acting on $\tilde X$.

\end{enumerate}
\end{fact}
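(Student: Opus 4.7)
The final statement, Fact \ref{fact51}, is a package of five structural facts about the cylindrical-end geometry of the gauged Witten equation, stated as a review of material from \cite{Tian_Xu}. My plan is to verify them in order, noting that the only one containing real analytic content is item (1); the rest are essentially bookkeeping once the construction of $\tilde{\mc W}_{A,\phi}$ is unpacked.

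For item (1), the goal is a complex gauge fixing on the half-cylinder: given $A - d - (\i m/r)dt = \phi\,ds + \psi\,dt$ with $\phi,\psi \in W^{1,p}_\tau(C,{\mf g})$, produce $h \in W^{2,p}_\tau(C,{\mf g}^{\mb C})$ solving $\ov\partial h + \partial\bar h = \phi\,ds + \psi\,dt$. The plan is to regard this as an elliptic system for $h$: splitting into real and imaginary parts converts it into a pair of inhomogeneous Cauchy--Riemann equations on the cylinder whose linearizations are Fredholm in the weighted $W^{k,p}_\tau$ topology for $\tau$ avoiding the indicial roots (integer multiples of the circle period). For small $\tau>0$ the operator is in fact an isomorphism onto its image, so one obtains a unique $h$ in the claimed space and, via the open mapping theorem, bounded linear dependence on $A$; affine linearity is immediate from linearity of the equation relative to a fixed reference connection.

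Items (2)--(4) then follow directly from the definitions in \cite{Tian_Xu}. The function $\delta_A$ in item (2) is defined in \cite[Definition~2.15]{Tian_Xu} as a quantity built from the restriction of $A$ to a chosen compact region of $C$, so both smoothness and the locality claim are immediate from inspection of the defining formula. For item (3) one unpacks the lift $\tilde{\mc W}_{A,\phi}$: in the trivialization $\phi$ the complex gauge datum $h$ from item (1) absorbs the connection, and the vertical gradient of the lifted perturbed superpotential rewrites as $\phi\bigl(H_W^\delta(h,u_\phi)\bigr)$ for an explicit smooth map $H_W^\delta$, namely the pointwise gradient of the $\delta$-rescaled perturbed superpotential; smooth dependence on $\delta$ is inherited from the smoothness of the R-symmetry rescaling. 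Item (4) is then automatic, since the $\delta$-dependence of $\tilde{\mc W}_{A,\phi}$ enters only through a cut-off rescaling supported in $C$.

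For item (5), by item (3), $H_W^\delta(h,x) = 0$ holds iff $(\delta e^h)^{-1} \cdot x$ is a critical point of $W_{\mc P}$ restricted to $\tilde X_\gamma$, where $\delta e^h$ is viewed as an element of the complexified R-symmetry/gauge group acting on $\tilde X$; solving for $x$ gives $x \in \delta e^h \cdot {\it Crit}\, W_{\mc P}|_{\tilde X_\gamma}$, which is the stated identity. The main (really, only) obstacle in the entire package is the Fredholm/invertibility analysis for the weighted Cauchy--Riemann operator in item (1); everything else is a direct reading of the constructions already in place in \cite{Tian_Xu}.
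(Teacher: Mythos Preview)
The paper does not prove Fact~\ref{fact51} at all: it is stated as a review of constructions carried out in \cite{Tian_Xu}, with no argument given beyond the citation. Your proposal therefore goes further than the paper does, supplying an outline of why each item holds.

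Your outline is essentially correct and matches the underlying arguments in \cite{Tian_Xu}. For item~(1), solving $\ov\partial h = \alpha^{0,1}$ on the half-cylinder in weighted Sobolev spaces is exactly the right mechanism; the only point to tighten is that you need genuine surjectivity (not merely ``isomorphism onto its image'') to produce $h$, and on a half-cylinder one must say a word about the boundary at $s=0$ --- but both are routine once the indicial analysis is in place, and the paper itself treats the analogous full-cylinder operators as isomorphisms in Subsection~\ref{subsection53}. Items~(2)--(5) are, as you say, direct readings of the construction of $\tilde{\mc W}_{A,\phi}$ and the equivariance of the gradient under the complexified $R$-symmetry; your unpacking of item~(5) via $x \mapsto (\delta e^h)^{-1}\cdot x$ is the intended mechanism.
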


\subsubsection{The linearized operator}

Now we look at the linearization of ${\mc F}_\Sigma: {\mc B}_\Sigma \to {\mc E}_\Sigma$. In three components,
\beqn
{\mc F}_\Sigma ({\bm u})  =  \Big( \ \ov\partial_A u + {\mc H}_W ({\bm u}),\ \ * F_A + \nu \mu(u),\ \ \ov{d^*} (A - A_0) \ \Big).
\eeqn
We denote by ${\mc W}_\Sigma ({\bm u})$ the first component and by ${\mc V}_\Sigma ({\bm u})$ the second and the third components. 
We look more carefully at $D_{\bm u} {\mc W}_\Sigma: {\mb R} \oplus T_{\bm u} {\mc B}_\Sigma \to {\mc E}_\Sigma|_{\bm u}$ over the cylindrical end $C$. Here the first ${\mb R}$ factor parametrizes deformations of $\iota$, i.e. the perturbation of the superpotential. We write
\beq\label{eqn52}
D_{\bm u}{\mc W}_\Sigma ( \beta_\Sigma, v_\Sigma ) := D_\Sigma' ( \beta_\Sigma, v_\Sigma ) + D_\Sigma'' ( \beta_\Sigma ) + D_\Sigma''' (\beta_\Sigma ).
\eeq
Here the first summand is
\beqn
D_\Sigma' (\beta_\Sigma, v_\Sigma ) = D_A^{0,1}(v_\Sigma ) + {\mc X}_{\beta_\Sigma^{0,1}}(u) + D_{v_\Sigma} {\mc H}_W ({\bm u}).
\eeqn
The second and the third summands of \eqref{eqn52} are
\beq\label{eqn53}
D_\Sigma'' (\beta_\Sigma ) = \phi \Big( D_h H_W^{\delta} (h, u_\phi) \Big) \circ D_{\beta_\Sigma} h
\eeq
\beq\label{eqn54}
D_\Sigma'''  (\beta_\Sigma ) = D_A^{0,1}(v_{\beta_\Sigma}) + \phi \Big( D_\delta H_W^{\delta}( h, u_\phi) \Big) \circ D_{\beta_\Sigma} \delta.
\eeq
Here $v_{\beta_\Sigma}$ is a vector field along $u$ supported in $C$ (which may have nonzero limit at infinity) which appears because changing the gauge field will change the asymptotic limit of solutions. On the other hand, the linearization of ${\mc V}_\Sigma$ reads
\beqn
D_{\bm u} {\mc V}_\Sigma ( \beta_\Sigma, v_\Sigma ) = \Big(\ * d\beta_\Sigma + \nu d\mu(u) \cdot (  v_\Sigma + v_{\beta_\Sigma} ),\ \ \ov{d^*} \beta_\Sigma\ \Big).
\eeqn

For the approximate solution $({\bm u}_\alpha, e_\alpha)$, we will use the same type of symbols by replacing the subscripts $\Sigma$ by $\alpha$. For example, the derivative the Witten equation will be decomposed as
\beqn
D_{{\bm u}_\alpha} {\mc W}_\alpha ( \beta_\alpha, v_\alpha)  = D_\alpha'(\beta_\alpha, v_\alpha) + D_\alpha'' ( \beta_\alpha) + D_\alpha''' ( \beta_\alpha).
\eeqn

\subsection{Augmentation of solitons}\label{subsection53}

By definition, a soliton is only a solution to the Floer equation over the cylinder. However, to be glued with solutions over the principal component which has gauge fields, we need to regard a soliton as a map together with a flat connection. This is what we called an {\it augmentation}. 

\subsubsection{Some de Rham and Dolbeault theory on the cylinder}

To add a gauge field to the soliton component, we need some more preparations. For $k \geq 0$, $p>2$, let $RW_\tau^{k, p}(\Theta)$ be the Banach space of functions $f$ on the cylinder $\Theta$ such that 
\beqn
f|_{\Theta_+} \in W_\tau^{k, p}(\Theta_+);\ \ \ \exists a_f \in  {\mb R},\ {\rm s.t.}\ f|_{\Theta_-} - a_f \in W_\tau^{k, p}(\Theta_-).
\eeqn
It is easy to check that the ``de Rham'' operator 
\begin{align}\label{eqn55}
&\ dR: RW_\tau^{1, p}(\Theta, \Lambda^1 \otimes {\mf g}) \to L_\tau^p(\Theta, {\mf g}\oplus {\mf g}),\ &\ dR(\beta) = ( * d\beta, d^* \beta).
\end{align}
is a bijection. Hence it has an inverse
\beq\label{eqn56}
Q_{dR}: L_\tau^p(\Theta, {\mf g} \oplus {\mf g}) \to RW_\tau^1(\Theta, {\mf g}) \oplus E_{dR}.
\eeq
Similarly, the ``Dolbeult'' operator
\beqn
dB:= \ov\partial: RW_\tau^{2, p}(\Theta, {\mf g}^{\mb C}) \to W_\tau^{1,p} (\Theta, \Lambda^{0,1}\otimes {\mf g}^{\mb C}).
\eeqn
is an isomorphism. Its inverse is denoted by $Q_{dB}$.

To prepare for the gluing construction, we need to introduce a new norm depending on a parameter $T>0$. Indeed, the space $W^{k, p}_\tau(\Theta)$ is weighted by the function
\beqn
w_\infty(s, t)^\tau = e^{\tau |s|}.
\eeqn
We define $w_\infty^T (s, t) = w_\infty( s + 4T, t)$ and define the corresponding norm by $W_{\tau, T}^{k,p}$. When $k, p$ are clear from the context, we also abbreviate the norm by $\| \cdot \|_T$. 

Then we want to define this type of norm on $RW_\tau^{k,p}(\Theta)$. Choose a cut-off function $\rho^-: (-\infty, +\infty)$ which is monotonic and changes from $1$ to $0$ over $[-1, 0]$. Define $\rho_T^-(s) = \rho^-(s + 4T)$. Then we can decompose
\beq\label{eqn57}
RW_\tau^{k,p}(\Theta) \simeq {\mb R} \oplus W_{\tau, T}^{k,p}(\Theta),\ f\mapsto (a_f, f - a_f \rho_T^-):= (a_f, f^T ). 
\eeq
Then we define the norm over $RW_\tau^{k,p} (\Theta)$ by this decomposition, namely
\beqn
\| f \|_{RW_{\tau, T}^{k,p}} = \| a_f \| + \| f^T \|_{W_\tau^{k,p}}.
\eeqn
Denote the space equipped with this norm by $RW_{\tau, T}^{k,p}(\Theta)$.

\subsubsection{Augmenting soliton solutions}

Recall that for fixed $\iota = \iota_0$, given $\phi \in {\bf Fr}$ and $(A, u) \in {\mc B}_\Sigma^\phi$, solitons connecting $\upsilon$ and $\kappa$ live in a Banach manifold which depends on the parameters $\iota_0$, $\phi$ and the value of $\delta$ (which depends on $A$).

The augmentations of solitons live in a bigger Banach manifold. Let ${\mc B}_\infty$ be the set of pairs ${\mz s} = (\sigma, \vartheta)$, where $\sigma$ is a $W^{1,p}_{loc}$-map from $\Theta$ to $X$, and $\vartheta \in RW_\tau^{1,p}(\Theta, \Lambda^1 \otimes {\mf g})$. Moreover, we require the following asymptotic condition on $\sigma$. Namely, $\sigma$ is approximate to $\delta \kappa$ (resp. $\delta \upsilon$) at $+\infty$ (resp. $-\infty$) in the $W^{1, p}$-fashion. Namely, for $S>>0$, one can write
\beqn
\sigma(z) = \exp_{\delta \kappa} \xi_+ (z),\ z \in [S, +\infty) \times S^1,\ {\rm where}\ \xi_+ \in W^{1,p}([S, +\infty) \times S^1, T_{\delta \kappa} \tilde X)
\eeqn
\beqn
\left( {\rm resp.}\  
\sigma(z) = \exp_{\delta \upsilon} \xi_- (z),\ z \in (-\infty, -S] \times S^1,\ {\rm where}\ \xi_- \in W^{1,p}( (-\infty, -S] \times S^1, T_{\delta \upsilon} \tilde X) \right).
\eeqn

We define a bundle ${\mc E}_\infty \to {\mc B}_\infty$ by the direct sum 
\beqn
{\mc E}_\infty|_{{\mz s}} = {\mc E}_\infty'|_{{\mz s}} \oplus {\mc E}_{\infty}''|_{{\mz s}} = L^p( \Theta, \Lambda^{0,1} \otimes \sigma^* T \tilde X) \oplus L_{\tau, T}^p(\Theta, {\mf g} \oplus {\mf g}). 
\eeqn
Notice that norm on ${\mc E}_\infty'$ is independent of $T$ but the norm on ${\mc E}_\infty''$ depends on $T$. Now we define a family of sections of ${\mc E}_\infty$.  Recall that the soliton equation defines a section ${\mc F}_\infty' : {\mc B}_\infty' \to {\mc E}_\infty'$ which reads
\beq\label{eqn58}
\frac{\partial \sigma}{\partial s} + J \Big[ \frac{\partial \sigma}{\partial t} + {\mc X}_{\frac{{\bf i} m}{r}} (\sigma) \Big] + H_W^{\delta}(0, \sigma ) = 0.
\eeq
The augmented soliton equation, which includes the variable $\vartheta$, is defined as follows. For $\vartheta \in RW_{\tau, T}^{1,p}(\Theta, \Lambda^1 \otimes {\mf g})$, define 
\begin{align}\label{eqn59}
&\ h_{\vartheta^T} = Q_{dB}( (\vartheta^T)^{0,1} )\in RW_{\tau, T}^{2,p}(\Theta, {\mf g} \otimes {\mb C}),\ &\ h_\vartheta^T = h_{\vartheta^T}^T \in W_{\tau, T}^{2,p}(\Theta, {\mf g} \otimes {\mf C}).
\end{align}
Then the norm of the correspondence $\vartheta \mapsto h_\vartheta^T$ is independent of $T$. Then define
\beq\label{eqn510}
\tilde {\mc F}_{\infty, T}^\delta ({\mz s}) = ( \tilde {\mc F}_{\infty, T}'({\mz s}), \tilde {\mc F}_{\infty, T}''({\mz s})) = \Big( \ov\partial_{\vartheta^T } \sigma + H_W^{\delta} ( h_\vartheta^T, \sigma \Big),\ * d \vartheta,\ d^* \vartheta \Big)
\eeq
It is easy to to see that the only solution $\vartheta \in {\mc B}_\infty''$ for $\tilde {\mc F}_\infty''(\vartheta) = (* d\vartheta, d^* \vartheta) = 0$ is $\vartheta = 0$, and for $\vartheta = 0$, $\tilde {\mc F}_{\infty}' ({\mz s}) = 0$ reduces to \eqref{eqn58}. 

Then consider the linearization of $\tilde {\mc F}_{\infty, T}^\delta$. For fixed $\delta$, denote this linearization at a soliton ${\mz s}$ by 
\beqn
D_{\infty, T}: T_{\mz s} {\mc B}_\infty^\delta \to {\mc E}_\infty|_{\mz s}. 
\eeqn
By the expression \eqref{eqn510}, we can write 
\beqn
D_{\infty, T} = \left[ \begin{array}{cc} D_\infty' & U_{\infty, T} \\    0     & D_{\infty, T}''   \end{array} \right].
\eeqn
Here $D_{\infty, T}''$ is the operator $dR$ of \eqref{eqn55}, and $D_\infty'$ is the linearization of the original soliton equation at $\sigma$. The off-diagonal term 
\beqn
U_{\infty, T}: RW_{\tau, T}^{1,p}(\Theta, \Lambda^1\otimes {\mf g}) \to W^{1,p}(\Theta, \Lambda^{0,1} \otimes \sigma^* T \tilde X)
\eeqn
is the derivative of $\tilde {\mc F}_{\infty, T}'$ with respect to the variation of the gauge filed $\vartheta$. Among them , the norms on the domain and the target of $D_\infty'$ are independent of $T$. 

\begin{lemma}\label{lemma52}
The norm of $U_{\infty, T}$ is uniformly bounded for all $T >>0$.
\end{lemma}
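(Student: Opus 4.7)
The plan is to decompose $U_{\infty,T}$ according to the two ways the gauge field $\vartheta$ enters the augmented section
\beqn
\tilde{\mc F}_{\infty,T}'({\mz s}) = \ov\partial_{\vartheta^T} \sigma + H_W^\delta(h_\vartheta^T, \sigma),
\eeqn
and then estimate each piece by exploiting that the weights $w_\infty^T \geq 1$ and that $\sigma$ takes values in a compact set. Concretely, at $\vartheta = 0$ the linearization in the direction $\dot\vartheta \in RW_{\tau,T}^{1,p}(\Theta, \Lambda^1\otimes {\mf g})$ splits as
\beqn
U_{\infty,T}(\dot\vartheta) \;=\; \big({\mc X}_{\dot\vartheta^T}(\sigma)\big)^{0,1} \;+\; D_h H_W^\delta(0,\sigma)\cdot \dot h_\vartheta^T,
\eeqn
where $\dot\vartheta^T \in W_{\tau,T}^{1,p}$ is the reduced part from \eqref{eqn57} and $\dot h_\vartheta^T \in W_{\tau,T}^{2,p}$ is defined via \eqref{eqn59}. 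Both terms are pointwise products of $\dot\vartheta^T$ or $\dot h_\vartheta^T$ with bounded $\sigma$-dependent tensors.

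First I would handle the $X_\sigma$-term. Since $\sigma(\Theta)$ is precompact in $\tilde X$ (the soliton converges to the critical points $\delta\upsilon$, $\delta\kappa$ at the ends), the infinitesimal action map $\xi \mapsto {\mc X}_\xi(\sigma)$ and its covariant derivative along $\sigma$ are uniformly bounded in $C^0$, so there is a constant $c>0$ independent of $T$ with
\beqn
\big\| ({\mc X}_{\dot\vartheta^T}(\sigma))^{0,1} \big\|_{W^{1,p}(\Theta)} \;\leq\; c\, \|\dot\vartheta^T\|_{W^{1,p}(\Theta)}.
\eeqn
Because $w_\infty^T = e^{\tau|s+4T|} \geq 1$ pointwise (with $|\nabla \log w_\infty^T| \leq \tau$ uniformly in $T$), the unweighted $W^{1,p}$ norm is dominated by the $W_{\tau,T}^{1,p}$ norm with a constant independent of $T$. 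This controls the first piece by $c'\|\dot\vartheta\|_{RW_{\tau,T}^{1,p}}$.

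For the second piece I would use precisely the hypothesis recorded after \eqref{eqn59}: the correspondence $\vartheta \mapsto h_\vartheta^T$ has operator norm $RW_{\tau,T}^{1,p} \to W_{\tau,T}^{2,p}$ bounded independently of $T$. Meanwhile $D_h H_W^\delta(0,\sigma)$ is pointwise a bounded linear map ${\mf g}^{\mb C} \to T^\bot Y|_\sigma$, uniformly bounded because $\sigma$ takes values in a compact set and $H_W^\delta$ depends smoothly on all its arguments (Fact \ref{fact51}); moreover $\delta = \delta_A$ is fixed during this gluing step. The same weight-versus-unweighted argument then yields
\beqn
\big\| D_h H_W^\delta(0,\sigma) \cdot \dot h_\vartheta^T \big\|_{W^{1,p}(\Theta)} \;\leq\; c\, \|\dot h_\vartheta^T\|_{W^{2,p}(\Theta)} \;\leq\; c\, \|\dot h_\vartheta^T\|_{W_{\tau,T}^{2,p}(\Theta)} \;\leq\; c''\, \|\dot\vartheta\|_{RW_{\tau,T}^{1,p}}.
\eeqn

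Summing the two estimates produces a $T$-independent bound on $\|U_{\infty,T}\|$. The only subtle point, which I view as the main obstacle, is the bookkeeping of norms: one must verify that the inclusion $W_{\tau,T}^{k,p} \hookrightarrow W^{k,p}$ is continuous with $T$-independent operator norm, which follows from $w_\infty^T \geq 1$ and the bound $|\nabla w_\infty^T|/w_\infty^T \leq \tau$; and one must check that the asymptotic constant piece $a_{\dot\vartheta}\rho_T^-$ of $\dot\vartheta$ does not enter the linearization at all, since $\tilde{\mc F}_{\infty,T}'$ depends on $\vartheta$ only through $\vartheta^T$ and $h_\vartheta^T$ by construction. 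Everything else is a routine computation using compactness of $\sigma(\Theta)$.
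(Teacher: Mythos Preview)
Your proposal is correct and rests on the same observation as the paper's proof: the target norm on ${\mc E}_\infty'$ is unweighted while the domain norm on $RW_{\tau,T}^{1,p}$ is weighted by $w_\infty^T \geq 1$, so the weighted norm controls the unweighted one with a $T$-independent constant. The paper compresses the entire argument into that single sentence, whereas you spell out the decomposition into the ${\mc X}$-term and the $D_h H_W^\delta$-term and invoke compactness of $\sigma(\Theta)$ explicitly; this is more detail than strictly needed but not a different route.
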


\begin{proof}
This is because the norms on ${\mc E}_\infty'|_{\mz s}$ is un-weighted while the norm on $T_{{\mz s}} {\mc B}_\infty''$ is weighted by the function $w_\infty^T \geq 1$. 
\end{proof}

\subsection{Right inverses along the soliton solution}

To construct a right inverse for approximate solutions, we only need to consider the case for fixed value of $\iota$. Hence we suppress the use of $\iota$ and $\tilde{\ \ }$ in the notations. Given a soliton solution (which is already augmented in the above fashion) ${\bm u}^\sld = ({\bm u}, {\mz s})$, we can write the linearized operator as 
\beq\label{eqn511}
D_T^\sld = \left[ \begin{array}{cc} D_\Sigma & 0 \\
                                  T_\infty^\Sigma & D_{\infty, T} \end{array} \right].
\eeq
Here $D_\Sigma$ is the linearization of the gauged Witten equation (including the gauge fixing) for the solution over the principal component. $D_{\infty, T}$ is the linearization of the operator $\tilde {\mc F}_{\infty, T}^\delta$. The off-diagonal term $T_\infty^\Sigma$ is the derivative of the term $H_W^\delta( h_\vartheta^T, \sigma )$ in the direction of variations of variables on the principal component, via the variation of the real parameter $\delta$. More precisely, 
\beqn
T_\infty^\Sigma(\xi_\Sigma) = T_\infty^\Sigma( \beta_\Sigma, v_\Sigma) = \frac{\partial}{\partial \delta} H_W^\delta( h_\vartheta^T, \sigma ) \circ \frac{\partial \delta}{\partial \beta_\Sigma}.
\eeqn
Notice that it is of rank one. Moreover, it only depends on the restriction of $\beta_\Sigma$ over a fixed compact subset of $C$ (see Item (2) of Fact \ref{fact51}).

 When we include the obstruction spaces, we also denote the linearized operator by $D_T^\sld$ and the same symbols of the components $D_\Sigma$, $T_\infty^\Sigma$ and $D_{\infty, T}$. 

Now we construct a right inverse of $D_{\infty, T}$. Notice that before augmenting the soliton $\sigma$, we have chosen a right inverse (independent of $T$)
\beqn
Q_\infty^-: {\mc E}_\infty'|_{\sigma} \to T_{\sigma} {\mc B}_\infty' \oplus E_\infty. 
\eeqn
to the linearization of the soliton equation plus the obstruction $E_\infty$. On the other hand, for $(\eta_\infty'', \eta_\infty''') \in {\mc E}_\infty''$, using the right inverse $Q_{dR}$ \eqref{eqn56} we obtain 
\beqn
\beta_\infty:= Q_{dR}(\eta_\infty'', \eta_\infty''') \in RW_{\tau, T}^{1, p}(\Theta, \Lambda^1\otimes {\mf g}). 
\eeqn
Then define 
\beq\label{eqn512}
Q_{\infty, T} ( \eta_\infty', \eta_\infty'', \eta_\infty''') = \Big( Q_\infty^-( \eta_\infty' - D_\infty' ( U_{\infty, T} Q_{dR}(\eta_\infty'', \eta_\infty''') ),\ Q_{dR} (\eta_\infty'', \eta_\infty''') \Big).
\eeq
It is easy to see that this is a bounded right inverse to $D_{\infty, T}$. The equivariance still holds since $Q_\infty^-$, $D_\infty$ and $Q_{dR}$ are equivariant. Moreover, since the first component above still lies in the image of $Q_\infty^-$, the infinitesimal normalization condition \eqref{eqn415} still holds for $Q_\infty$. It then follows that 
\beq\label{eqn513}
Q_T^\sld:= \left[ \begin{array}{cc}  Q_\Sigma & 0 \\
                                  - Q_{\infty, T} T_\infty^\Sigma Q_\Sigma   & Q_{\infty, T}
\end{array} \right]
\eeq
is a bounded right inverse of $D^\sld$. 

We need the following technical results. 
\begin{lemma}\label{lemma53}
The norm of the operator $T_\infty^\Sigma$ is independent of $T$. Moreover, there exist $a > 0$ and $\tau_1>0$ such that for $\xi_\Sigma = (\beta_\Sigma, v_\Sigma) \in T_{{\bm u}} {\mc B}_\Sigma$ and $S$ sufficiently large,
\beqn
\| T_\infty^\Sigma ( \xi_\Sigma ) \|_{L^p\left( (-\infty, -S] \times S^1 \cup [S, +\infty) \times S^1\right) } \leq a e^{- \tau_1 S} \| \xi_\Sigma \|.
\eeqn
\end{lemma}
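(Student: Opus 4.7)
Write $T_\infty^\Sigma$ explicitly as a rank-one operator to separate the scalar and infinite-cylinder components, and then extract the exponential decay from the standard Floer-type asymptotics of the soliton together with the vanishing of $\partial_\delta H_W^\delta$ at the critical endpoints.

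First I would exploit the rank-one structure. By the chain rule,
\beqn
T_\infty^\Sigma(\xi_\Sigma) = \bigl[\,\partial_\delta H_W^\delta(h_\vartheta^T, \sigma)\bigr] \cdot \bigl[\,\tfrac{\partial \delta}{\partial \beta_\Sigma}(\beta_\Sigma)\bigr],
\eeqn
so $T_\infty^\Sigma$ is the composition of the real-valued linear functional $\xi_\Sigma \mapsto \tfrac{\partial \delta}{\partial \beta_\Sigma}(\beta_\Sigma)$ with multiplication by the fixed section $\eta := \partial_\delta H_W^\delta(h_\vartheta^T, \sigma)$. Since we are linearizing at the singular soliton solution ${\bm u}^\sld = ({\bm u}, (\sigma, 0))$, one has $\vartheta = 0$ and hence $h_\vartheta^T = 0$ by \eqref{eqn59}; consequently $\eta = \partial_\delta H_W^\delta(0, \sigma)$ is manifestly independent of $T$. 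By Fact \ref{fact51}(2) the functional $\partial \delta/\partial \beta_\Sigma$ depends only on the restriction of $\beta_\Sigma$ to a fixed compact subset $K \subset C$, on which the weighted norm $W_\tau^{1,p}$ is equivalent to the unweighted $W^{1,p}$ norm with constants independent of any gluing parameter. Combining these observations gives $\|T_\infty^\Sigma\| \leq C\,\|\eta\|_{L^p(\Theta)}$ with $C$ independent of $T$, so the first assertion reduces to finiteness of $\|\eta\|_{L^p(\Theta)}$.

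For the tail estimate I would combine two standard ingredients. First, since $\delta \upsilon$ and $\delta \kappa$ are Morse non-degenerate zeroes of $H_W^\delta(0, \cdot)$ on $\tilde X_\gamma$, the standard Floer asymptotic analysis (cf.\ \cite{Floer_Hofer_Salamon}) provides $\mu, c > 0$ with
\beqn
|\sigma(s,t) - \delta\kappa|_{C^1} \leq c\,e^{-\mu s},\ \ s \geq 0;\qquad |\sigma(s,t) - \delta\upsilon|_{C^1} \leq c\, e^{\mu s},\ \ s \leq 0.
\eeqn
Second, the explicit equivariant construction of $H_W^\delta$ given in \cite{Tian_Xu}, together with Fact \ref{fact51}(5), forces $\partial_\delta H_W^\delta(0, \tilde x) = 0$ at each asymptotic endpoint $\tilde x = \delta\upsilon,\ \delta\kappa$. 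Taylor-expanding $\eta$ about the endpoint and inserting the exponential convergence of $\sigma$ yields the pointwise bound
\beqn
|\eta(s,t)| \leq c'\,e^{-\tau_1|s|},\ \ |s| \text{ large},
\eeqn
for any $\tau_1 \in (0, \mu)$. Raising to the $p$-th power, integrating over $|s| \geq S$, and absorbing the bound on $\partial \delta/\partial \beta_\Sigma$ gives the stated estimate. Finiteness of $\|\eta\|_{L^p(\Theta)}$, required in the first step, is then an immediate consequence of the same pointwise bound together with smoothness of $\eta$ on the compact middle portion.

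The main obstacle is the endpoint vanishing $\partial_\delta H_W^\delta(0, \delta\kappa) = 0$. Merely differentiating the identity $H_W^\delta(0, \delta\kappa) = 0$ in $\delta$ gives only the cancellation
\beqn
\partial_\delta H_W^\delta(0, \delta\kappa) + D_{\tilde x} H_W^{\delta}(0, \delta\kappa)\cdot \kappa = 0,
\eeqn
so the required vanishing cannot be read off from Fact \ref{fact51}(5) alone and must be extracted by unfolding the explicit construction of $H_W^\delta$ in \cite{Tian_Xu}, which assembles the Hamiltonian vector field from the R-transport of $\nabla \tilde W$ in a manner for which the $\delta$-partial derivative of the normal component vanishes at critical endpoints. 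Once this structural input is granted, the interior $L^p$ bound is immediate by smoothness and the tail bound is the one-line integration above.
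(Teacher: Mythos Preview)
Your approach is correct and matches the paper's, which gives only a two-line sketch (``the norm of $T_\infty^\Sigma$ is hence independent of $T$ \ldots\ the second part follows from the exponential decay of $\sigma$ towards critical points; details are omitted''). You have supplied exactly the details the paper suppresses---the rank-one factorization, the $T$-independence of both the scalar functional $\partial\delta/\partial\beta_\Sigma$ and the section $\eta$, and the endpoint vanishing $\partial_\delta H_W^\delta(0,\delta\kappa)=0$---and you rightly flag that this last point does not follow from Fact~\ref{fact51}(5) alone but requires the explicit form of $H_W^\delta$ from \cite{Tian_Xu}.
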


\begin{proof}
$T_\infty^\Sigma$ only depends on the parameter $\beta_\Sigma$ through the variation of $\delta$ in the gradient term $H_W^\delta$. Since $H_W^\delta$ is in the space ${\mc E}_\infty'$ where the norm is not shifted, the norm of $T_\infty^\Sigma$ is hence independent of $T$. The second part of this lemma follows from the exponential decay of $\sigma$ towards critical points. The details are omitted.
\end{proof}

\begin{lemma}\label{lemma54}
$Q_T^\sld$ is uniformly bounded with respect to the norm $\| \cdot \|_T$ for all $T\geq 0$.
\end{lemma}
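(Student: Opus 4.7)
The plan is to bound the three nonzero entries of the matrix in \eqref{eqn513} separately in the $\|\cdot\|_T$-norm. The top-left entry $Q_\Sigma$ is a map between function spaces carrying no $T$-dependence at all (it lives entirely on the principal component), so $\|Q_\Sigma\|$ is trivially uniform. The bottom-right entry $Q_{\infty, T}$ is handled by expanding the definition \eqref{eqn512}: its ingredients are $Q_\infty^-$ and $D_\infty'$ (both depending only on the soliton $\sigma$, hence $T$-independent), $U_{\infty, T}$ (uniformly bounded by Lemma \ref{lemma52}), and the de Rham right-inverse $Q_{dR}$. For $Q_{dR}$, I would observe that the operator $dR$ on the cylinder $\Theta$ is translation-invariant and that the shifted weight $w_\infty^T(s,t) = w_\infty(s+4T, t)$ is simply the original weight translated by $-4T$; pulling back by this translation intertwines the norms $L_{\tau,T}^p$ and $RW_{\tau,T}^{1,p}$ with their $T = 0$ counterparts and shows that $\|Q_{dR}\|$ is independent of $T$.

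The off-diagonal term $-Q_{\infty,T} T_\infty^\Sigma Q_\Sigma$ admits a simplification that is the heart of the argument. Since $T_\infty^\Sigma$ arises by differentiating the Witten-type term $H_W^\delta(h_\vartheta^T, \sigma)$ through the scalar parameter $\delta$, its image sits entirely in the ${\mc E}_\infty'$-factor and its components in ${\mc E}_\infty''$ vanish. Inserting $(\eta_\infty', 0, 0)$ into \eqref{eqn512} collapses $Q_{\infty, T}$ to $\bigl(Q_\infty^-(\eta_\infty'), 0\bigr)$, so the cross term equals $(-Q_\infty^- T_\infty^\Sigma Q_\Sigma, 0)$, a composition of three operators none of which depend on $T$; the norm of $T_\infty^\Sigma$ is already controlled uniformly by Lemma \ref{lemma53}. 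Combining the three uniform bounds via the triangle inequality will yield the desired $T$-independent estimate on $Q_T^\sld$.

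The step I expect to demand the most care is the verification of the translation-invariance claim for $Q_{dR}$, particularly because of the decomposition \eqref{eqn57} which separates the constant component $a_f$ from its $W_{\tau, T}^{k,p}$-remainder via the cut-off $\rho_T^-$. I would need to confirm that the map $f \mapsto (a_f, f^T)$ commutes up to uniformly bounded adjustment with the shift $s \mapsto s + 4T$, so that the operator norm of $Q_{dR}$ in the $T$-shifted norms genuinely reduces to the $T = 0$ case. This is essentially a change-of-variables calculation on $\Theta$ using that the cut-off $\rho_T^-$ is defined precisely as the $T$-translate of a fixed $\rho^-$; once this compatibility is established, the rest of the argument is a direct application of the triangle inequality to the matrix decomposition \eqref{eqn513}.
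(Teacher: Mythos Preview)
Your proposal is correct and follows essentially the same approach as the paper: bound the three nonzero blocks of \eqref{eqn513} separately, using $T$-independence of $Q_\Sigma$, Lemma \ref{lemma53} for $T_\infty^\Sigma$, and for $Q_{\infty,T}$ the expansion \eqref{eqn512} together with Lemma \ref{lemma52} and the translation-invariance of $Q_{dR}$. Your additional observation that $T_\infty^\Sigma$ lands only in ${\mc E}_\infty'$, so that $Q_{\infty,T}T_\infty^\Sigma Q_\Sigma$ collapses to the $T$-independent composite $Q_\infty^- T_\infty^\Sigma Q_\Sigma$, is a correct refinement the paper does not bother with (it simply bounds the full $Q_{\infty,T}$ and composes), but the underlying argument is the same.
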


\begin{proof}
The operator $Q_\Sigma$ is independent of $T$, hence its norm is uniformly bounded. By the above lemma, the norm of $T_\infty^\Sigma$ is independent of $T$. Hence the norm of $Q_T^\sld$ is uniformly bounded for all $T\geq 0$ if we can show that $Q_{\infty, T}$ has its norm uniformly bounded. Indeed, the norms on the domains and the targets of $D_\infty'$ and $Q_\infty'$ are not shifted hence their norms are uniformly bounded. Moreover, $Q_{dR}$ is uniformly bounded since it is a translation invariant operator. By Lemma \ref{lemma52}, the norm of $U_{\infty, T}$ is also uniformly bounded. Hence by the expression of $Q_{\infty, T}$ (\eqref{eqn512}), the norm of $Q_{\infty, T}$ is uniformly bounded. 
\end{proof}

\subsection{Proof of Lemma \ref{lemma46}}\label{subsection55}

The values of the constants $c_k$ are reset within this subsection.

\subsubsection{The auxiliary operations}

Recall the definition of $u_\alpha$ in \eqref{eqn412}. One has $u_\alpha |_{C_{3T}} = {\mz t}_\alpha^* (\sigma)$ and $u_\alpha |_{C_T}$ is very close to ${\mz t}_\alpha^*  (\sigma)$. Therefore one can define the parallel transport
\beqn
p_\infty: W^{1, p} ( [-3T, +\infty) \times S^1, \sigma^* T\tilde{X}) \to W^{1, p} ( C_T, u_\alpha^* T^\bot Y),
\eeqn
\beqn
p_\infty: L^p( [-3T, +\infty) \times S^1, \Lambda^{0,1}\otimes \sigma^* T\tilde{X} ) \to L^p( C_T, \Lambda^{0,1}\otimes u_\alpha^* T^\bot Y).
\eeqn
Similarly we define
\beqn
p_\Sigma:  W^{1, p} ( \Sigma \setminus C_{3T}, u^* T^\bot Y) \to W^{1, p} ( \Sigma \setminus C_{3T}, u_\alpha^* T^\bot Y),
\eeqn
\beqn
p_\Sigma:  L^p( \Sigma \setminus C_{3T}, \Lambda^{0,1}\otimes u^* T^\bot Y ) \to L^p( \Sigma\setminus C_{3T}, \Lambda^{0,1}\otimes u_\alpha^* T^\bot Y).
\eeqn

Choose $\hbar\in (0, 1/2)$ and two cut-off functions $\chi_\Sigma^{\hbar T}, \chi_\infty^{\hbar T}: {\mb R} \to [0,1]$ such that when $s  \leq  2T$, $\chi^{\hbar T}_\Sigma (s) = 1$; when $s \geq  (2 + \hbar )T$, $\chi^{\hbar T}_\Sigma (s) = 0$; when $s  \geq  -2T$, $\chi_\infty^{\hbar T}(s) = 1$; when $s \leq  -(2+\hbar )T$, $\chi_\infty^{\hbar T}(s) = 0$ (see the picture). Moreover, we can have
\begin{align}\label{eqn514}
&\ |\nabla \chi_\Sigma^{\hbar T}|,\ |\nabla \chi^{\hbar T}_\infty|  \leq   \frac{2}{\hbar T};\ &\  |\nabla^2 \chi_\Sigma^{\hbar T} |,\ |\nabla^2 \chi_\infty^{\hbar T}|  \leq  \frac{10}{(\hbar T)^2}.
\end{align}
Abbreviate them by $\chi_\Sigma$ and $\chi_\infty$ respectively.

\begin{center}
\begin{tikzpicture}
\node at (2.5, 0.8) {\scriptsize $\chi_\Sigma$};

\draw (0, 0) -- (5, 0);
\draw [fill] (1, 0) circle [radius = 0.04];
\draw [fill] (3, 0) circle [radius = 0.04];
\draw [fill] (4, 0) circle [radius = 0.04];
\draw (0,0.5)--(3, 0.5);
\draw (3, 0.5) to [out= 0, in= 180] (4, 0);
\node at (1, -0.3) {\scriptsize $0$};
\node at (3, -0.3) {\scriptsize $2T$};
\node at (4, -0.3) {\scriptsize $(2+\hbar)T$};

\node at (8.5, 0.8) {\scriptsize $\chi_\infty$};

\draw (6, 0) -- (11, 0);
\draw [fill] (7, 0) circle [radius = 0.04];
\draw [fill] (8, 0) circle [radius = 0.04];
\draw [fill] (10, 0) circle [radius = 0.04];
\draw (7, 0) to [out = 0, in = 180] (8, 0.5);
\draw (8, 0.5)--(11, 0.5);
\node at (7, -0.3) {\scriptsize $-(2+\hbar)T$};
\node at (8, -0.3) {\scriptsize $-2T$};
\node at (10, -0.3) {\scriptsize $0$};

\end{tikzpicture}
\end{center}

We define 
\beqn
{\it glue}: T_{\bm u} {\mc B}_\Sigma \oplus T_{{\mz s}} {\mc B}_\infty \to T_{{\bm u}_\alpha} {\mc B}_\alpha
\eeqn
as follows. Take $\xi_\Sigma = ( v_\Sigma, \beta_\Sigma) \in T_{\bm u} {\mc B}_\Sigma$ and $\xi_\infty = (v_\infty, \beta_\infty) \in T_{{\mz s}} {\mc B}_\infty$. Denote $h_{\beta_\Sigma} = D_{\beta_\Sigma} h_\Sigma$ where $h_\Sigma$ is the function associated to $A$ given in Fact \ref{fact51} and $D_{\beta_\Sigma} h_\Sigma$ is its derivative in $\beta_\Sigma$ direction. Therefore over $C$, $\beta_\Sigma = \ov\partial h_{\beta_\Sigma} + \partial \ov{ h_{\beta_\Sigma}}$. We also recall that from $\beta_\infty \in RW_{\tau, T}^{1,p}(\Theta, \Lambda^1 \otimes {\mf g})$, we can decompose (see \eqref{eqn57})
\beqn
\beta_\infty = \rho_T^- \beta_\infty^- +  \beta_\infty^T,\ {\rm where}\ \beta_\infty^T \in W_{\tau, T}^{1,p}(\Theta, \Lambda^1 \otimes {\mf g}),\ \beta_\infty^- = \lim_{s \to -\infty} \beta_\infty.
\eeqn
By \eqref{eqn59} we defined $h_{\beta_\infty}^T \in W_{\tau, T}^{2,p}(\Theta, {\mf g}^{{\mb C}})$ such that
\beq\label{eqn514x}
\ov\partial h_{\beta_\infty}^T  = ( \beta_\infty^T)^{0,1}\ {\rm over}\ [-4T, +\infty) \times S^1.
\eeq
Then we define 
\begin{align*}
&\ \chi_\Sigma \diamond \beta_\Sigma = \ov\partial (\chi_\Sigma h_{\beta_\Sigma}) + \partial ( \chi_\Sigma \ov{ h_{\beta_\Sigma}}),\  &\ \chi_\Sigma \diamond v_\Sigma = \chi_\Sigma v_\Sigma,
\end{align*}
\begin{align*}
&\ \chi_\infty \diamond \beta_\infty = \ov\partial ( \chi_\infty h_{\beta_\infty}^T ) + \partial ( \chi_\infty \ov{ h_{\beta_\infty}^T    }),\ &\ \chi_\infty \diamond v_\infty = \chi_\infty v_\infty,
\end{align*}
and 
\begin{align}\label{eqn515}
&\ \chi_\Sigma \diamond \xi_\Sigma = \Big( \chi_\Sigma \diamond \beta_\Sigma, \chi_\Sigma \diamond v_\Sigma \Big),\ &\ \chi_\infty \diamond \xi_\infty = \Big( \chi_\infty \diamond \beta_\infty, \chi_\infty \diamond v_\infty \Big).
\end{align}
Recall the parallel transport maps $p_\Sigma$ and $p_\infty$. We define 
\beqn
\beta_\alpha:= {\it glue} (\beta_\Sigma, \beta_\infty) = \chi_\Sigma \diamond \beta_\Sigma + {\mz t}_\alpha^* ( \chi_\infty \diamond \beta_\infty),
\eeqn
\beqn
v_\alpha:= {\it glue} ( v_\Sigma, v_\infty) = p_\Sigma( \chi_\Sigma \diamond v_\Sigma) + p_\infty( \chi_\infty \diamond v_\infty),
\eeqn
and
\beqn
\xi_\alpha:= {\it glue} ( \xi_\Sigma, \xi_\infty) = \Big( \beta_\alpha, v_\alpha \Big).
\eeqn
It is an infinitesimal deformation of the approximate solution. Moreover, one can extend ${\it glue}$ to an operator (which is denoted by the same symbol)
\beqn
{\it glue}: \Big[ T_{\bm u} {\mc B}_\Sigma \oplus E_\Sigma \Big] \oplus  \Big[ T_{\mz s} {\mc B}_\infty \oplus E_\infty \Big] \to T_{{\bm u}_\alpha} {\mc B}_\alpha \oplus E_\Sigma \oplus E_\infty
\eeqn

On the other hand, we define {\it cut} and {\it paste} as linear maps
\begin{align*}
&\ {\it cut}: {\mc E}_\kappa |_{{\bm u}_\alpha} \to {\mc E}^\sld|_{{\bm u}^\sld},\ &\ {\it paste}: {\mc E}^\sld|_{{\bm u}^\sld} \to {\mc E}_\kappa|_{{\bm u}_\alpha},
\end{align*}
such that ${\it paste}$ is a left inverse of ${\it cut}$. More precisely, let $\eta = (\eta', \eta'', \eta''') \in {\mc E}_\alpha|_{{\bm u}_\alpha}$ where $\eta' \in L^p( \Lambda^{0,1}\otimes u_\alpha^* T^\bot Y )$, $\eta'' \in L_\tau^p({\mf g})$, $\eta''' \in \ov{L^p_\tau({\mf g})}$. Define ${\it cut}(\eta) = (\eta_\Sigma, \eta_\infty)$ where
\beq\label{eqn516}
\eta_\Sigma = \left\{ \begin{array}{cc} (p_\Sigma)^{-1} ( \eta', \eta'', \eta'''),&\ \Sigma \setminus C_{2T},\\
                                        0 ,&\ C_{2T};
																				\end{array} \right.
																				\eeq
\beq\label{eqn517}
\eta_\infty = \left\{ \begin{array}{cc} 0,&\ \Theta \setminus C_{-2T},\\
																				                                                  ( p_\infty )^{-1} (  \eta', \eta'', \eta'''),&\ C_{-2T}. \end{array} \right.				
\eeq
Here by abuse of notation, we regard $p_\Sigma$ and $p_\infty$ as operators on $\eta''$ and $\eta'''$ (indeed identically) as ``parallel transport.'' On the other hand, if $\eta_\Sigma = ( \eta_\Sigma', \eta_\Sigma'', \eta_\Sigma''') \in {\mc E}_\Sigma |_{{\bm u}}$, $\eta_\infty = ( \eta_\infty', \eta_\infty'', \eta_\infty''') \in {\mc E}_\infty|_{{\mz s}}$, then define
\beqn
{\it paste}( \eta_\Sigma, \eta_\infty) = \left\{ \begin{array}{cc} p_\Sigma \Big( \eta_\Sigma', \eta_\Sigma'', \eta_\Sigma''' \Big),&\ \Sigma \setminus C_{2T},\\
                                                                   p_\infty \Big( \eta_\infty', \eta_\infty'', \eta_\infty''' \Big),&\ C_{2T} \end{array} \right..
\eeqn

\subsubsection{The approximate right inverse and estimates}

Recall that at the soliton solution ${\bm u}^\sld = ({\bm u}, {\mz s})$, we have a linear operator $D^\sld$ written as in \eqref{eqn511}, and we have constructed a right inverse $Q^\sld$ written as in \eqref{eqn513}. Recall that the relation between $\alpha$ and $T$ is $4T = - \log |\alpha|$. We define the {\bf approximate right inverse} to $D_\alpha$ by
\beqn
Q_{\alpha}' = {\it glue} \circ Q_T^\sld \circ {\it cut}: {\mc E}_\alpha|_{{\bm u}_\alpha}  \to T_{{\bm u}_\alpha} {\mc B}_\alpha \oplus E.
\eeqn

\begin{lemma}\label{lemma55}
For $T$ sufficiently large, $Q_\alpha'$ is bounded by a constant independent of $\alpha$.
\end{lemma}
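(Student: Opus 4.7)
\medskip

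The plan is to bound each of the three factors in the factorization $Q_\alpha' = {\it glue} \circ Q_T^\sld \circ {\it cut}$ uniformly in $T$ (equivalently, in $\alpha$, via the relation $|\alpha| = e^{-4T}$), and then multiply. The middle factor is already handled by Lemma \ref{lemma54}, so the remaining work is to control ${\it cut}$ and ${\it glue}$.

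First I would treat ${\it cut}: {\mc E}_\alpha|_{{\bm u}_\alpha} \to {\mc E}^\sld|_{{\bm u}^\sld}$. Its definition in \eqref{eqn516}--\eqref{eqn517} is piecewise: on $\Sigma \setminus C_{2T}$ it is $p_\Sigma^{-1}$ (a pointwise isometry of fibers), on $C_{2T}$ it is $p_\infty^{-1}$ composed with the $\alpha$-twist identification that sends $(s,t) \in C_{2T}$ to $(s',t') = (s-4T, t-\theta)$ with $s' \geq -2T$. The point here is weight matching: the shifted weight $w_\infty^T(s',t') = e^{\tau|s'+4T|}$ on $\Theta$ pulls back to $e^{\tau s}$ on $C_{2T}$, which is precisely the weight $w_\infty^\tau$ used in $L^p_\tau$ on $\Sigma^*$. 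The scalar ${\mf g}$-valued components $\eta''$, $\eta'''$ transfer by the same weight-compatible identification. Hence $\|{\it cut}\| \leq C$ with $C$ independent of $T$.

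Next I would treat ${\it glue}: T_{\bm u}{\mc B}_\Sigma \oplus T_{\mz s}{\mc B}_\infty \to T_{{\bm u}_\alpha}{\mc B}_\alpha$ (extended to include the obstruction summands, where it acts as the identity). For the section components one has $v_\alpha = p_\Sigma(\chi_\Sigma v_\Sigma) + p_\infty(\chi_\infty v_\infty)$; since parallel transports are pointwise isometries and $|\nabla \chi_\Sigma|, |\nabla \chi_\infty| \leq 2/(\hbar T)$, the derivative estimate
\beqn
\|\nabla(\chi_\Sigma v_\Sigma)\|_{L^p} \leq \|\chi_\Sigma \nabla v_\Sigma\|_{L^p} + \tfrac{2}{\hbar T}\|v_\Sigma\|_{L^p}
\eeqn
is uniformly bounded in $T$. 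For the connection components, expand
\beqn
\chi_\Sigma \diamond \beta_\Sigma = \chi_\Sigma \beta_\Sigma + (\ov\partial \chi_\Sigma) h_{\beta_\Sigma} + (\partial \chi_\Sigma) \ov{h_{\beta_\Sigma}},
\eeqn
where the first term is dominated by $\|\beta_\Sigma\|_{W^{1,p}_\tau}$ and the remaining terms are dominated by $(CT^{-1})\|h_{\beta_\Sigma}\|_{W^{2,p}_\tau} \leq C' \|\beta_\Sigma\|_{W^{1,p}_\tau}$ by Fact \ref{fact51}(1). The analogous expansion for $\chi_\infty \diamond \beta_\infty$ uses $h_{\beta_\infty}^T \in W^{2,p}_{\tau,T}$, whose norm is controlled by $\|\beta_\infty^T\|_{W^{1,p}_{\tau,T}}$ with constants independent of $T$ (by translation-invariance of $\ov\partial$ on the cylinder). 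The constant asymptotic piece $\rho_T^- \beta_\infty^-$ contributes nothing to the glued object because $\chi_\infty \cdot \rho_T^- \equiv 0$ (their supports are disjoint since $-(2+\hbar)T > -4T$). Again, the $\alpha$-twist is an isometry between the shifted norm on $\Theta$ and the ordinary weighted norm on $C \subset \Sigma^*$, so after twisting back the glued tangent vector lies in $T_{{\bm u}_\alpha}{\mc B}_\alpha$ with norm bounded uniformly in $T$.

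The one subtle point—and the only real content beyond routine cutoff estimates—is the matching of the $T$-shifted weighted norm $W^{k,p}_{\tau,T}$ on the soliton component with the un-shifted weighted norm on the cylindrical end of $\Sigma^*$ via $\mz t_\alpha^*$. This compatibility (already anticipated by the choice of norm in Subsection \ref{subsection53}) is what absorbs the $e^{-4\tau T}$ factor that would otherwise appear from the translation, and is what makes the bounds on ${\it cut}$ and ${\it glue}$ depend only on $\hbar$ and the geometry of $\tilde X$, not on $T$. Composing the three uniform bounds yields $\|Q_\alpha'\| \leq C$ for all sufficiently large $T$.
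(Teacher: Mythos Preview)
Your proof is correct and follows essentially the same three-factor strategy as the paper: bound ${\it cut}$, $Q_T^\sld$, and ${\it glue}$ separately, with the key observation being that the $T$-shifted weight on $\Theta$ matches the cylindrical weight on $\Sigma^*$ under $\mz t_\alpha^*$. The paper carries out the ${\it glue}(\beta_\infty)$ estimate in explicit integral form while you sketch it more conceptually; your remark that the asymptotic constant piece $\rho_T^-\beta_\infty^-$ drops out is slightly misattributed (it is already removed in the definition of $h_{\beta_\infty}^T$ rather than via disjoint supports with $\chi_\infty$), but this does not affect the argument.
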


\begin{proof}
We prove this lemma by showing the three factors of $Q_\alpha'$ are all uniformly bounded. First, for $\eta = (\eta', \eta'', \eta''') \in {\mc E}_\alpha|_{{\bm u}_\alpha}$, by the definition \eqref{eqn516} \eqref{eqn517} and the definition of the weighted Sobolev norm $\| \cdot \|_T$ on ${\mc E}_\infty''$ (see Subsection \ref{subsection53}), comparing the weight functions, we have $\|\eta_\Sigma\| + \| \eta_\infty \|_T \leq \| \eta_\alpha \|$. Second, by Lemma \ref{lemma54} the norm of $Q_T^\sld$ is independent of $\alpha$. 

Lastly we show that ${\it glue}$ is uniformly bounded. We only provide the detail of the estimate of ${\it glue}(\beta_\infty)$ where $\beta_\infty \in RW_{\tau, T}^{1,p}(\Theta, \Lambda^1 \otimes {\mf g})$. For other variables the estimates are similar and even simpler. By definition, 
\beqn
{\it glue}(\beta_\infty) = {\mz t}_\alpha^* \Big( \ov\partial ( \chi_\infty h_{\beta_\infty}^T ) + \partial ( \chi_\infty \ov{ h_{\beta_\infty}^T } ) \Big).
\eeqn
We abbreviate $\ov\partial f + \partial \ov{f}= \delta(f)$. Let $w_\Sigma(z)$ be the weight function on $\Sigma$ which is equal to $e^s$ over the cylindrical end. Then for some constant $C>0$ which is independent of $\alpha$, we have
\beqn
\begin{split}
\| {\it glue}(\beta_\infty) \|_{W_\tau^{1,p}(\Sigma)} = &\ \left[ \int_{[(2-\hbar)T, +\infty) \times S^1} \Big| {\mz t}_\alpha^* \delta ( \chi_\infty h_{\beta_\infty}^T )  \Big|^p w_\Sigma(z)^{p\tau} ds dt \right]^{\frac{1}{p}} \\
&\ + \left[\int_{[(2-\hbar) T, +\infty) \times S^1} \Big| \nabla {\mz t}_\alpha^* \delta( \chi_\infty h_{\beta_\infty}^T  ) \Big|^p w_\Sigma(z)^{p\tau} ds dt \right]^{\frac{1}{p}}\\
= &\ \left[ \int_{[-(2 + \hbar )T, +\infty) \times S^1} \Big| \delta( \chi_\infty h_{\beta_\infty}^T  )\Big|^p e^{p\tau( s+ 4T)} ds dt \right]^{\frac{1}{p}}\\
&\ +  \left[ \int_{[-(2 + \hbar )T, +\infty) \times S^1} \Big| \nabla \delta( \chi_\infty h_{\beta_\infty}^T  )\Big|^p e^{p\tau(s + 4T)}  ds dt \right]^{\frac{1}{p}}\\
\leq &\ C \Big( 1 + \| \nabla \chi_\infty \|_{L^\infty} + \| \nabla^2 \chi_\infty \|_{L^\infty} \Big) \| h_{\beta_\infty}^T  \|_{W_{\tau, T}^{2, p}( [-(2+\hbar) T, +\infty) \times S^1)}.
\end{split}
\eeqn
By \eqref{eqn57}, \eqref{eqn59}, and the definition of the norm $RW_{\tau, T}^{k,p}$ on the cylinder, one has 
\beqn
\| h_{\beta_\infty}^T \|_{W_{\tau, T}^{2,p}([-(2 + \hbar) T, +\infty) \times S^1)}  \leq \| h_{\beta_\infty^T} \|_{RW_{\tau, T}^{2,p}}
\eeqn
On the other hand, the map 
\beqn
RW_{\tau, T}^{1,p}(\Theta, \Lambda^1 \otimes {\mf g}) \ni \beta_\infty \mapsto \beta_\infty^T \mapsto h_{\beta_\infty^T} \in RW_{\tau, T}^{2,p}(\Theta, {\mf g}^{\mb C})
\eeqn
is uniformly bounded. Combining with \eqref{eqn514}, we have
\beqn
\| {\it glue}(\beta_\infty) \|_{W_\tau^{1,p}(\Sigma)} \leq C \| \beta_\infty\|_{RW_{\tau, T}^{1,p}(\Theta)}
\eeqn
for some constant $C>0$ independent of $\alpha$. 
\end{proof}

\begin{lemma}\label{lemma56}
There exist $c>0$ such that for $T$ sufficiently large, 
\beqn
\Big\| D_\alpha \circ Q_\alpha' - {\rm Id} \Big\|  \leq \frac{c}{T}.
\eeqn
\end{lemma}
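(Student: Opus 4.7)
The plan is to unwind the definition of $Q_\alpha'$ and localize the error $D_\alpha \circ Q_\alpha' - {\rm Id}$ in the neck region, where it will be controlled by commutators with the cut-off functions $\chi_\Sigma, \chi_\infty$. Given $\eta \in {\mc E}_\alpha|_{{\bm u}_\alpha}$, set $(\eta_\Sigma, \eta_\infty) = {\it cut}(\eta)$, $(\xi_\Sigma, \xi_\infty) = Q_T^\sld(\eta_\Sigma, \eta_\infty)$, and $\xi_\alpha = {\it glue}(\xi_\Sigma, \xi_\infty)$. By construction $D^\sld (\xi_\Sigma, \xi_\infty) = (\eta_\Sigma, \eta_\infty)$, and since ${\it paste}$ is a left inverse of ${\it cut}$ we have ${\it paste}(\eta_\Sigma,\eta_\infty) = \eta$. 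Hence the error can be written as
\[
D_\alpha \xi_\alpha - \eta \;=\; D_\alpha\bigl({\it glue}(\xi_\Sigma,\xi_\infty)\bigr) - {\it paste}\bigl(D^\sld(\xi_\Sigma,\xi_\infty)\bigr),
\]
which measures exactly the failure of $D_\alpha$ to commute with the gluing/pasting operations.

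I would then partition $\Sigma^*$ into three pieces dictated by the supports of $\chi_\Sigma, \chi_\infty$. On $\Sigma\setminus C_T$ one has $u_\alpha = u$ and $\chi_\Sigma \equiv 1$, so $D_\alpha$ applied to $\chi_\Sigma\diamond\xi_\Sigma$ agrees with $D_\Sigma \xi_\Sigma = \eta_\Sigma$; symmetrically on $C_{3T}$ the $\alpha$-twist of the soliton data reproduces $\eta_\infty$ after paste (with the off-diagonal $T_\infty^\Sigma\xi_\Sigma$ contribution absorbed via the $\delta$-dependence of $H_W^\delta$ and the fact that ${\it glue}$ preserves the connection on the principal side). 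Hence the only nonzero contributions come from the neck $C_T\setminus C_{3T}$. In this region I would split the error into: (i) commutator terms $[D_\alpha,\chi_\Sigma]\xi_\Sigma$ and ${\mz t}_\alpha^*[D_\alpha,\chi_\infty]\xi_\infty$ supported on the strips $\{2T \le s \le (2{+}\hbar)T\}$ and $\{-(2{+}\hbar)T\le s\le -2T\}$ respectively; (ii) coefficient discrepancies coming from replacing $D_\Sigma$ (or $D_{\infty,T}$) by $D_\alpha$, whose coefficients differ only through the pregluing interpolation between $u$ and ${\mz t}_\alpha^*\sigma$; and (iii) a moment-map type mismatch exactly of the kind already estimated in Lemma \ref{lemma45}.

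For type (i), each commutator is a zeroth-order multiplication operator whose sup norm is controlled by $\|\nabla\chi_\bullet\|_\infty \le 2/(\hbar T)$ via \eqref{eqn514}, while $\xi_\Sigma$ and $\xi_\infty$ have uniformly bounded $L^p$ mass on the relevant strips by Lemma \ref{lemma54}; this yields the advertised $O(1/T)$ bound. Types (ii) and (iii) are exponentially small in $T$ because in the neck both $u$ and ${\mz t}_\alpha^*\sigma$ are $W^{1,p}$-close to the constant $\tilde\upsilon$ at a rate faster than $e^{-\tau_1 T}$, so those contributions are absorbed into the main $c/T$ term. The key technical point I anticipate having to check carefully is that the asymptotic limits of $\xi_\Sigma$ at $+\infty$ on the principal end and of $\xi_\infty$ at $-\infty$ on the soliton end actually match up under ${\it glue}$, so that no zeroth-order jump survives at the cut-off support; this is precisely what the $RW_{\tau,T}^{k,p}$ norm framework and the definition \eqref{eqn514x} of $h_{\beta_\infty}^T$ are designed to guarantee, because then $\ov\partial(\chi_\infty h_{\beta_\infty}^T)$ differs from $\chi_\infty \cdot (\beta_\infty^T)^{0,1}$ only by the $\ov\partial\chi_\infty$ correction already counted in type (i). Collecting all three contributions and using that the gluing and pasting maps are uniformly bounded in $T$ (by the proof of Lemma \ref{lemma55}) produces the estimate $\|D_\alpha \circ Q_\alpha' - {\rm Id}\|\le c/T$.
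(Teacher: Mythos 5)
Your proposal follows essentially the same route as the paper's proof. You start from the identity $D_\alpha Q_\alpha'(\eta) - \eta = D_\alpha\circ{\it glue}(\xi_\Sigma,\xi_\infty) - {\it paste}\circ D^\sld(\xi_\Sigma,\xi_\infty)$ (using that $D^\sld Q_T^\sld = {\rm Id}$ and ${\it paste}\circ{\it cut}={\rm Id}$, which is exactly the paper's identity ${\rm IV} = \eta$), and then estimate the commutation failure of $D$ with ${\it glue}/{\it paste}$. Your three error types (cut-off commutators $\sim 1/(\hbar T)$ via \eqref{eqn514}, coefficient discrepancies from parallel transport $\sim e^{-\tau_0(2-\hbar)T}$, and the moment-map mismatch $\sim e^{-2(2-\tau)T}$) match the terms appearing in the paper's ${\rm I}$ and ${\rm II}$, and the ingredients you invoke --- Lemma \ref{lemma54} for the uniform bound on $Q_T^\sld$, Lemma \ref{lemma53} for $T_\infty^\Sigma$, and the $RW_{\tau,T}^{k,p}$ / $h_{\beta_\infty}^T$ machinery for the gauge-field asymptotics --- are precisely the ones the paper uses.

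The one place where your sketch is more casual than the paper's argument is the treatment of the algebraic cancellations. The paper isolates a piece ${\rm III} = -{\it paste}\bigl(D_\Sigma(\xi_\Sigma - \chi_\Sigma\diamond\xi_\Sigma),\ D_{\infty,T}(\xi_\infty - \chi_\infty\diamond\xi_\infty)\bigr)$ and proves ${\rm III}=0$ exactly, by matching the supports of $1-\chi_\bullet$ against the switching region of ${\it paste}$ (together with Item (2) of Fact \ref{fact51} for the $T_\infty^\Sigma$ dependence and \eqref{eqn514x} for the gauge-field piece). Similarly, the simplification of ${\rm II}$ relies on the exact vanishing $D_\infty'(v_\infty) + U_{\infty,T}(\beta_\infty) + T_\infty^\Sigma(\xi_\Sigma) = 0$ over $(-\infty,-2T]\times S^1$, which comes from the fact that ${\it cut}$ sets $\eta_\infty=0$ there. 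Your region-by-region decomposition encodes the same information, but you should be explicit that these are \emph{exact} cancellations rather than small terms; in particular the claim that ``on $C_{3T}$ the $\alpha$-twist of the soliton data reproduces $\eta_\infty$ after paste'' is only true up to the same exponentially small coefficient discrepancy, since $A_\alpha = A$ differs from the (trivial) soliton gauge field by $O(e^{-\tau s})$ there. None of these gaps invalidate the argument --- they are exactly the bookkeeping the paper carries out in Lemmata \ref{lemma57}--\ref{lemma59} --- but they are the parts that require the most care in a full write-up.
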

The proof of Lemma \ref{lemma56} occupies the remaining of this section. Together with Lemma \ref{lemma55} it implies that for $T$ sufficiently large (equivalently, $|\alpha|$ sufficiently small), there is a right inverse $Q_\alpha$ to $D_\alpha$ having the same image of $Q_\alpha'$, which has a uniformly bounded norm, independent of $\alpha$, and is equivariant with respect to the symmetry group of the soliton solution. This finishes the proof of Lemma \ref{lemma46}.

\subsubsection{Proof of Lemma \ref{lemma56}}

Take $\eta = (\eta', \eta'', \eta''') \in {\mc E}_\alpha|_{{\bm u}_\alpha}$. Denote ${\it cut}( \eta) = (\eta_\Sigma, \eta_\infty)$ and
\beqn
(\xi_\Sigma, \xi_\infty)  = Q_T^\sld ( \eta_\Sigma, \eta_\infty) \in \Big[ T_{\bm u} {\mc B}_\Sigma \oplus E_\Sigma \Big] \oplus \Big[ T_{\mz s} {\mc B}_\infty \oplus E_\infty \Big].
\eeqn
\begin{align*}
&\ \xi_\Sigma = ( \beta_\Sigma, v_\Sigma, e_\Sigma),\ &\ \xi_\infty = ( \beta_\infty, v_\infty, e_\infty).
\end{align*}
Using the notations of \eqref{eqn515},
\beqn
\begin{split}
D_\alpha Q_\alpha' (\eta) = &\ \Big[ D_\alpha \circ {\it glue} \circ Q_T^\sld \circ {\it cut} \Big] ( \eta) \\
 = &\ D_\alpha \Big( \beta_\alpha, v_\alpha, e_{\Sigma} + e_\infty \Big) \\
 = &\ \Big[ D_\alpha \circ p_\Sigma - {\it paste} \circ D_\Sigma - {\it paste} \circ T_\infty^\Sigma \Big] ( \chi_\Sigma \diamond \xi_\Sigma )  \\
&\ +  \Big[ D_\alpha \circ p_\infty - {\it paste} \circ  D_{\infty, T} \Big] ( \chi_\infty \diamond \xi_\infty ) \\
&\  - {\it paste} \Big( \  D_\Sigma \big( \xi_\Sigma - \chi_\Sigma \diamond \xi_\Sigma \big),\   D_{\infty, T} \big( \xi_\infty - \chi_\infty \diamond \xi_\infty\big) \ \Big)\\
 &\ + {\it paste} \Big( \ D_\Sigma (\xi_\Sigma),\  D_{\infty, T} ( \xi_\infty) + T_\infty^\Sigma ( \chi_\Sigma \diamond \xi_\Sigma ) \ \Big)  + e_\Sigma + e_\infty \\
= &\ :  {\rm I} + {\rm II} +  {\rm III}  +  {\rm IV}.
\end{split}
\eeqn
Note that by the definitions of ${\it paste}$ and ${\it cut}$, as well as the property of $T_\infty^\Sigma$,
\begin{multline*}
{\rm IV} = {\it paste} \Big(\ D_\Sigma( \xi_\Sigma),\ D_\infty ( \xi_\infty) + T_\infty^\Sigma ( \chi_\Sigma \star \xi_\Sigma) \ \Big) + e_\Sigma + e_\infty\\
 = {\it paste} \circ {\it cut}( \eta', \eta'', \eta''') = (\eta', \eta'', \eta''').
\end{multline*}
Then Lemma \ref{lemma56} follows from the three lemmata below. Here $\tau_0>0$ labels the converging rate (exponentially) of the solution towards critical points. 

\begin{lemma}\label{lemma57} For some constant $c_1 >0$, we have
\beqn
\big\| {\rm I} \big\| \leq c_1 \Big[ \frac{1}{\hbar T} + e^{-\tau_0 ( 2- \hbar) T} \Big] \| ( \eta', \eta'',\eta''') \|.
\eeqn
\end{lemma}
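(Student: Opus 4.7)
The plan is to split $\mathrm{I}$ according to where it is nonzero and to identify three distinct error sources. By construction of ${\it paste}$, the output $D_\Sigma(\chi_\Sigma \diamond \xi_\Sigma)$ is placed on $\Sigma \setminus C_{2T}$ and $T_\infty^\Sigma(\chi_\Sigma \diamond \xi_\Sigma)$ on $C_{2T}$; combined with $u_\alpha = u$ on $\Sigma \setminus C_T$ and $\chi_\Sigma \equiv 1$ on $\Sigma \setminus C_{2T}$, one finds that on $\Sigma \setminus C_{2T}$ the error $\mathrm{I}$ reduces to $(D_\alpha \circ p_\Sigma - p_\Sigma \circ D_\Sigma)(\xi_\Sigma)$ (supported in $C_T \setminus C_{2T}$, since it vanishes wherever $u = u_\alpha$), while on $C_{2T}$ the error equals $D_\alpha \circ p_\Sigma(\chi_\Sigma \diamond \xi_\Sigma) - p_\infty \circ T_\infty^\Sigma(\chi_\Sigma \diamond \xi_\Sigma)$, the first summand of which is supported where $d\chi_\Sigma \neq 0$, namely on $[2T, (2+\hbar)T] \times S^1$.

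For the contribution from $C_T \setminus C_{2T}$, the pregluing formula \eqref{eqn412} shows that both $u$ and $u_\alpha$ lie within $O(e^{-\tau_0 s})$ of the critical loop $\tilde\upsilon$, so after identification by parallel transport the pointwise operator norm of $D_\alpha - D_\Sigma$ is at most $c\, |u - u_\alpha|(s,t) \leq c\, e^{-\tau_0 s}$. Integrating against the weighted $L^p$-norm on the target (the weights $w_\Sigma^\tau = e^{\tau s}$ appear only in the $\eta'', \eta'''$ components) and using the uniform bound $\|\xi_\Sigma\| \leq C\|(\eta',\eta'',\eta''')\|$ supplied by Lemma \ref{lemma54}, one obtains a bound of the form $C\, e^{-\tau_0(2-\hbar)T}\,\|(\eta',\eta'',\eta''')\|$; the exponent $(2-\hbar)$ arises from the gap between the decay rate $\tau_0$ and the weight $\tau$ after integrating $e^{-(\tau_0-\tau)s}$ over $[T,(2+\hbar)T] \times S^1$, possibly after shrinking $\tau_0$ by a fixed amount that is absorbed into the symbol.

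On $C_{2T}$ the first summand is a commutator of $D_\alpha$ with the $\diamond$-operation: writing out $\chi_\Sigma \diamond v_\Sigma = \chi_\Sigma v_\Sigma$ and $\chi_\Sigma \diamond \beta_\Sigma = \ov\partial(\chi_\Sigma h_{\beta_\Sigma}) + \partial(\chi_\Sigma \ov{h_{\beta_\Sigma}})$ and using that $(D_\Sigma + T_\infty^\Sigma) \xi_\Sigma$ is absorbed into what is being subtracted, one produces terms involving $\nabla \chi_\Sigma$ (bounded by $2/(\hbar T)$) and $\nabla^2 \chi_\Sigma$ (bounded by $10/(\hbar T)^2$) via \eqref{eqn514}; together with Fact \ref{fact51}(1) for the uniformly bounded correspondence $\beta_\Sigma \mapsto h_{\beta_\Sigma}$ and Lemma \ref{lemma54}, this produces the $1/(\hbar T)$ factor in the claimed bound. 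For the second summand on $C_{2T}$, Lemma \ref{lemma53} yields $\|T_\infty^\Sigma(\chi_\Sigma \diamond \xi_\Sigma)\|_{L^p(C_{2T})} \leq a\, e^{-2\tau_1 T}\,\|\xi_\Sigma\|$, which (after adjusting $\tau_0$ if necessary) is dominated by the exponential term in the claim.

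The main technical obstacle will be the careful bookkeeping of weighted versus unweighted norms on the three output components $(\eta',\eta'',\eta''')$: the $\diamond$-operation for the gauge-field part is defined through the auxiliary function $h_{\beta_\Sigma}$, which is only controlled in $W^{2,p}_\tau$, so one must track how the commutator with $\chi_\Sigma$ on $h_{\beta_\Sigma}$ interacts with the weighted $L^p_\tau$-norm on the target and verify that no decay is lost when passing between $\beta_\Sigma$ and $h_{\beta_\Sigma}$ via the bounded inversion of $\ov\partial$. Once this bookkeeping is done, the three estimates simply add to give the stated bound.
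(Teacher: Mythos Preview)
Your overall decomposition matches the paper's, but there is a genuine gap in your treatment on $C_{2T}$. Two of your claims are incorrect, and each hides an $O(1)$ term.

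First, you assert that $D_\alpha \circ p_\Sigma(\chi_\Sigma \diamond \xi_\Sigma)$ is supported where $d\chi_\Sigma \neq 0$. This ignores the non-local piece $D_\alpha'''$ (see \eqref{eqn54}): it depends on $\beta$ only through $D_\beta \delta$, which by Fact~\ref{fact51}(2) is determined by the restriction of $\beta$ to a fixed compact set on which $\chi_\Sigma \equiv 1$. Hence $D_\alpha'''(\chi_\Sigma \diamond \beta_\Sigma) = D_\alpha'''(\beta_\Sigma)$, and its output is nonzero on all of $C_{2T}$, in particular on $C_{(2+\hbar)T}$ where $\chi_\Sigma \equiv 0$. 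This contribution has $L^p$-norm of order $\|\xi_\Sigma\|$, not $(\hbar T)^{-1}\|\xi_\Sigma\|$.

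Second, your bound $\|T_\infty^\Sigma(\chi_\Sigma \diamond \xi_\Sigma)\|_{L^p(C_{2T})} \leq a\,e^{-2\tau_1 T}\|\xi_\Sigma\|$ misapplies Lemma~\ref{lemma53}. Under the $\alpha$-twist, the region $C_{2T}$ corresponds to $[-2T,+\infty)\times S^1$ in the soliton domain, which contains the bulk of the soliton near $s=0$; Lemma~\ref{lemma53} only gives exponential decay on $(-\infty,-S]\cup[S,+\infty)$. Thus ${\it paste}\circ T_\infty^\Sigma(\xi_\Sigma)$ on $C_{2T}$ is again $O(\|\xi_\Sigma\|)$, not exponentially small.

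The resolution, carried out in the paper, is that these two $O(1)$ terms nearly cancel: on the region where $1-\chi_\Sigma$ is active one forms the difference $(1-\chi_\Sigma)\bigl(D_\alpha'''\circ p_\Sigma(\beta_\Sigma) - {\it paste}\,T_\infty^\Sigma(\xi_\Sigma)\bigr)$, and since both are derivatives of $H_W^\delta$ in $\delta$ evaluated along $u_\alpha$ and ${\mz t}_\alpha^*\sigma$ respectively (which are exponentially close there, and $h_\alpha$ is exponentially small), the difference is bounded by $c\,e^{-2\tau_0 T}\|\xi_\Sigma\|$. The $\chi_\Sigma \cdot {\it paste}\,T_\infty^\Sigma$ piece, supported on $[2T,(2+\hbar)T]\times S^1$, is then the part to which Lemma~\ref{lemma53} genuinely applies (with $S = (2-\hbar)T$ after the twist). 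Your cutoff-commutator and operator-comparison estimates on $C_T\setminus C_{2T}$ are fine; you also need to treat the $\pi_2$-component (the $L^p_\tau({\mf g})\oplus \ov{L^p_\tau({\mf g})}$ part) separately, where the $\nabla^2\chi_\Sigma$ terms and the weighted norm of $h_{\beta_\Sigma}$ enter.
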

\begin{lemma}\label{lemma58} For some constant $c_2 > 0$, we have
\beqn
\big\| {\rm II} \big\| \leq  c_2 \Big[ \frac{1}{\hbar T} + e^{-\tau_0 ( 2- \hbar) T} \Big] \| (\eta', \eta'', \eta''')\|.
\eeqn
\end{lemma}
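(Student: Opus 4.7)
I would prove this in parallel with Lemma \ref{lemma57}, with the roles of the principal and soliton components exchanged. After the $\alpha$-twist, the support of $\chi_\infty \diamond \xi_\infty$ lies in $[(2-\hbar)T, +\infty) \times S^1 \subset C$; the plan is to decompose this support into three subregions and estimate ${\rm II}$ on each separately.

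On the ``pure'' region $s \geq 3T$ (equivalently $s \geq -T$ on the soliton side), we have $u_\alpha = \mz t_\alpha^*(\sigma)$ identically, $\chi_\infty \equiv 1$, and $\nabla \chi_\infty \equiv 0$. Here the parallel transport $p_\infty$ is an isometry and coincides with ${\it paste}$, and by the $\alpha$-twist compatibility of the superpotential gradient $H_W^\delta$ recorded in Fact \ref{fact51}, one verifies that $D_\alpha \circ p_\infty = {\it paste} \circ D_{\infty, T}$ on this piece, so the contribution to $\|{\rm II}\|$ from this region is zero. On the cut-off transition region $s \in [(2-\hbar)T, 2T]$, the discrepancy arises from commutator terms of the schematic form $(\nabla \chi_\infty) v_\infty$ and $\delta(\chi_\infty) h_{\beta_\infty}^T$. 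Using \eqref{eqn514} and the uniform boundedness of the correspondence $\beta_\infty \mapsto h_{\beta_\infty}^T$ from $RW_{\tau, T}^{1,p}$ to $RW_{\tau, T}^{2,p}$ (as in the proof of Lemma \ref{lemma55}), these terms are bounded in the appropriate weighted $L^p$ norm by $C (\hbar T)^{-1} \|(\eta', \eta'', \eta''')\|$.

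On the intermediate neck region $s \in [2T, 3T]$, we have $\chi_\infty \equiv 1$ but $u_\alpha$ differs from $\mz t_\alpha^*(\sigma)$: writing both in exponential coordinates around the constant solution $\tilde \upsilon$, the difference is controlled by $\rho_\Sigma^T \zeta_\Sigma$, which has $C^1$-norm at most $C e^{-\tau_0 (2-\hbar) T}$ by the exponential convergence of ${\bm u}$ toward $\tilde \upsilon$ at rate $\tau_0$, and symmetrically for the $\zeta_\infty$ contribution near the other end of the neck. Since the linearized operators depend Lipschitz-continuously on the underlying section (and on the finite-dimensional data $h$ and $\delta$ extracted from the gauge field, as in Fact \ref{fact51}), the pointwise difference $D_\alpha \circ p_\infty - {\it paste} \circ D_{\infty, T}$ is bounded by $C e^{-\tau_0 (2-\hbar) T}$ times the $W^{1,p}$-norm of $\chi_\infty \diamond \xi_\infty$, yielding the second term in the asserted bound after integration.

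The main obstacle is the first step: the augmented soliton uses the gauge field $\vartheta \in RW^{1,p}_{\tau, T}$, whose asymptotic value $\beta_\infty^-$ at $-\infty$ need not vanish, whereas $A_\alpha$ on the preglued curve has only weighted $L^p$-decay modulo the fixed background $\frac{\i m}{r} dt$. I would carefully track how the splitting $\beta_\infty = \rho_T^- \beta_\infty^- + \beta_\infty^T$ and the identity $\chi_\infty \diamond \beta_\infty = \ov\partial(\chi_\infty h_{\beta_\infty}^T) + \partial(\chi_\infty \ov{h_{\beta_\infty}^T})$ interact with the twist $\mz t_\alpha^*$, so that on the pure region both the gauge-covariant Cauchy--Riemann operator and the superpotential-gradient term on the two sides are identified verbatim. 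With this compatibility in place, summing the three contributions produces the estimate $\|{\rm II}\| \leq c_2 \left[ (\hbar T)^{-1} + e^{-\tau_0 (2-\hbar) T} \right] \|(\eta', \eta'', \eta''')\|$.
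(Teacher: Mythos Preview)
Your three-region scheme is natural, but there is a genuine gap on the cut-off region $[(2-\hbar)T, 2T]$. Recall that ${\it paste}$ is defined with a \emph{sharp} split at $s = 2T$: on $\Sigma \setminus C_{2T}$ it takes only the $\Sigma$-component and discards the $\infty$-component. Hence on the entire cut-off strip (which lies in $\Sigma \setminus C_{2T}$) one has ${\it paste}\circ D_{\infty,T}(\chi_\infty \diamond \xi_\infty) = 0$, and therefore
\[
{\rm II}\big|_{[(2-\hbar)T, 2T]} \;=\; D_\alpha\big(p_\infty(\chi_\infty \diamond \xi_\infty)\big).
\]
This is not a commutator: expanding the first-order operator produces a bulk term $\chi_\infty\, D_\alpha(p_\infty \xi_\infty)$ of size comparable to $\|\xi_\infty\|$, with no smallness coming from $\nabla\chi_\infty$ or from closeness of $u_\alpha$ to ${\mz t}_\alpha^*\sigma$. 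Your argument as written gives no control over this term.

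The paper handles this by exploiting that $(\xi_\Sigma,\xi_\infty) = Q_T^\sld \circ {\it cut}(\eta)$, so on $(-\infty,-2T]\times S^1$ (where ${\it cut}$ sets $\eta_\infty = 0$) one has the identity $D_\infty'(v_\infty) + U_{\infty,T}(\beta_\infty) + T_\infty^\Sigma(\xi_\Sigma) = 0$ (this is \eqref{eqn531}). Adding and subtracting $\chi_\infty\, p_\infty$ of this zero converts the bulk term into a genuine operator-difference $\chi_\infty[D_\alpha\circ p_\infty - p_\infty\circ D_{\infty,T}](\xi_\infty)$, which is small by the Lipschitz dependence you describe, plus the residual $\chi_\infty\, p_\infty\, T_\infty^\Sigma(\xi_\Sigma)$, which is small by Lemma~\ref{lemma53}. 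This use of the linearized equation is the essential mechanism you are missing; without it the estimate does not close. A secondary point: on the pure region $s \geq 3T$ the two operators are not literally equal, since $A_\alpha$ carries the (exponentially small but nonzero) data $\phi_\alpha,\psi_\alpha,h_\alpha$ from the principal component while the augmented soliton sits at $\vartheta = 0$; the discrepancy is absorbed into the exponential error term, but you should not claim exact equality.
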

\begin{lemma}\label{lemma59} ${\rm III} = 0$.
\end{lemma}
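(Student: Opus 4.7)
The plan is to show that each of the two inputs to the outer ${\it paste}$ in ${\rm III}$ vanishes on the very region from which ${\it paste}$ reads it. By construction, ${\it paste}$ extracts its first argument on $\Sigma \setminus C_{2T}$ and its second argument on $C_{2T}$, which after the twist ${\mz t}_\alpha$ corresponds to $\{s \geq -2T\}$ in the soliton cylinder. Meanwhile the cut-offs have been chosen so that $\chi_\Sigma \equiv 1$ on $\{s \leq 2T\}$ and $\chi_\infty \equiv 1$ on $\{s \geq -2T\}$, precisely on the paste regions. So the proof reduces to two claims: (a) the diamond operation satisfies $\chi \diamond \xi = \xi$ on the open set $\{\chi = 1\}$, and (b) this pointwise equality persists after applying $D_\Sigma$ respectively $D_{\infty, T}$.

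For claim (a), the section component is immediate from $\chi \diamond v = \chi v$. For the gauge-field component, $\chi \diamond \beta = \ov\partial(\chi h_\beta) + \partial(\chi \ov{h_\beta})$ together with the affine-linear, unique correspondence $\beta \mapsto h_\beta$ of Fact \ref{fact51}(1) (and its soliton-cylinder analog coming from the decomposition \eqref{eqn57} and the Dolbeault primitive $Q_{dB}$) gives $h_{\chi \diamond \beta} = \chi \cdot h_\beta$. On $\{\chi = 1\}$ this equals $h_\beta$, so $\chi \diamond \beta = \beta$ there. Applied with $\chi_\Sigma$ on $\{s < 2T\}$ and with $\chi_\infty$ on $\{s > -2T\}$, this yields that $\xi_\Sigma - \chi_\Sigma \diamond \xi_\Sigma$ vanishes on the open set $\Sigma \setminus \ov{C_{2T}}$ and that $\xi_\infty - \chi_\infty \diamond \xi_\infty$ vanishes on the open set $\{s > -2T\}$.

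For claim (b), I propagate these vanishings through $D_\Sigma$ and $D_{\infty, T}$. Even though neither operator is strictly local (the expressions \eqref{eqn53}--\eqref{eqn54} and the off-diagonal block $U_{\infty, T}$ involve $h_\beta$ and $Q_{dB}$ respectively), the non-local pieces enter only through the $\beta \mapsto h_\beta$ correspondence; by the identity $h_{\chi \diamond \beta} = \chi \cdot h_\beta$ from step (a), the outputs of $D_\Sigma$ on $\xi_\Sigma$ and on $\chi_\Sigma \diamond \xi_\Sigma$ agree pointwise on $\{s < 2T\}$, and similarly for $D_{\infty, T}$ on $\{s > -2T\}$. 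Hence ${\it paste}$ receives the zero element on each of its two pieces, and ${\rm III} = 0$ as an element of ${\mc E}_\alpha|_{{\bm u}_\alpha}$. The one conceptual pitfall is exactly this non-locality: treating $D_\Sigma$ and $D_{\infty, T}$ as pointwise operators would be an oversight, but the diamond operation has been designed so that the primitive $h$ transforms compatibly with multiplication by $\chi$, reducing the lemma to a bookkeeping check on the supports of $\chi_\Sigma$ and $\chi_\infty$.
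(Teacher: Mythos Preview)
Your approach is essentially the same as the paper's: both arguments rest on the observation that ${\it paste}$ reads its two inputs precisely on the regions $\Sigma\setminus C_{2T}$ and $\{s\geq -2T\}$ where $\chi_\Sigma\equiv 1$ and $\chi_\infty\equiv 1$, so that the differences $\xi_\Sigma-\chi_\Sigma\diamond\xi_\Sigma$ and $\xi_\infty-\chi_\infty\diamond\xi_\infty$ (and their primitives) vanish there, and this vanishing survives passage through $D_\Sigma$ and $D_{\infty,T}$.

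There is one point where your articulation is slightly off. You write that ``the non-local pieces enter only through the $\beta\mapsto h_\beta$ correspondence,'' citing \eqref{eqn53}--\eqref{eqn54}. But \eqref{eqn54} is $D_\Sigma'''$, whose non-locality comes through $D_{\beta_\Sigma}\delta$ and the associated $v_{\beta_\Sigma}$, \emph{not} through $h_\beta$. The paper handles this by invoking Item~(2) of Fact~\ref{fact51}: $\delta$ depends on $A$ only through a fixed compact subset of $C$, which for $T$ large lies inside $\{\chi_\Sigma=1\}$. Hence $D_{\beta_\Sigma-\chi_\Sigma\diamond\beta_\Sigma}\delta=0$ (and consequently $v_{\beta_\Sigma-\chi_\Sigma\diamond\beta_\Sigma}=0$), so $D_\Sigma'''$ contributes nothing to the difference. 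Your identity $h_{\chi\diamond\beta}=\chi\,h_\beta$ handles $D_\Sigma''$ correctly, but you should add this separate observation for $D_\Sigma'''$; with that amendment your proof is complete and matches the paper's.
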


Now we start to prove the three lemmata, \ref{lemma57}, \ref{lemma58} and \ref{lemma59}. Let
\begin{align*}
&\ \pi_1: {\mc E}_\alpha|_{{\bm u}_\alpha} \to L^p ( \Sigma^*, u_\alpha^* T^\bot Y ),\ &\ \pi_2: {\mc E}_\alpha|_{{\bm u}_\alpha} \to L_\tau^p (\Sigma^*, {\mf g}) \oplus \ov{L_\tau^p (\Sigma^*, {\mf g})}
\end{align*}
be the natural projections. The proofs are all straightforward calculations.
\begin{proof}[Proof of Lemma \ref{lemma57}]
Using the notations specified in Subsection \ref{subsection52} (see \eqref{eqn52} and nearby discussions), we have
\beq\label{eqn518}
\pi_1 ({\rm I}) |_{\Sigma \setminus C_{2T}} = D_\alpha \big( p_\Sigma ( \beta_\Sigma, v_\Sigma, e_\Sigma) \big) - p_\Sigma D_\Sigma \big( \beta_\Sigma, v_\Sigma, e_\Sigma).
\eeq
On the other hand, since over $C_{2T}$, $D_{\bm u} ( \xi_\Sigma) =\pi_1 {\it cut}(\eta) = 0$, we have
\beqn
D_\Sigma' ( \beta_\Sigma, v_\Sigma) + D_\Sigma'' ( \beta_\Sigma) + D_{\Sigma}''' ( \beta_\Sigma) = 0,\ {\rm over}\ C_{2T}.
\eeqn
Using this fact, we see that over $C_{2T}$,
\beq\label{eqn519}
\begin{split}
\pi_1({\rm I})  = &\ D_\alpha \Big( \big( \ov\partial (\chi_\Sigma h_{\beta_\Sigma} ) + \partial (\chi_\Sigma \ov{h_{\beta_\Sigma}}), \ \chi_\Sigma p_\Sigma (v_\Sigma) \big) - {\it paste} \big( T_\infty^\Sigma ( \chi_\Sigma \diamond \xi_\Sigma ) \Big)\\
 = &\ (\ov\partial \chi_\Sigma) p_\Sigma (v_\Sigma ) +  {\mc X}_{(\ov\partial \chi_\Sigma) h_{\beta_\Sigma}} - \chi_\Sigma \cdot {\it paste} \Big( T_\infty^\Sigma ( \xi_\Sigma ) \Big) \\
&\ + \chi_\Sigma \Big(  D_\alpha' \big(p_\Sigma (\beta_\Sigma, v_\Sigma) \big) + D_\alpha''\big( p_\Sigma (\beta_\Sigma)\big)  + D_\alpha''' \big( p_\Sigma( \beta_\Sigma) \big) \Big) \\
&\ - \chi_\Sigma \cdot p_\Sigma \Big( D_\Sigma' ( \beta_\Sigma, v_\Sigma) + D_\Sigma'' ( \beta_\Sigma) + D_\Sigma''' ( \beta_\Sigma) \Big)\\
&\ + \big( 1- \chi_\Sigma \big) \Big( D_\alpha \big( p_\Sigma ( \beta_\Sigma) \big) - {\it paste} \Big(T_\infty^\Sigma (\xi_\Sigma) \Big) \Big).
\end{split}
\eeq
Here we used the fact that $T_\infty^\Sigma( \chi_\Sigma \diamond \xi_\Sigma) = T_\infty^\Sigma( \xi_\Sigma)$ which follows from Item (2) of Fact \ref{fact51}. One sees that \eqref{eqn519} is equal to \eqref{eqn518} over $\Sigma \setminus C_{2T}$, where $\chi_\Sigma \equiv 1$ and ${\it paste}( T_\infty^\Sigma(\xi_\Sigma)) = 0$. Now we estimate each part of \eqref{eqn519}. By \eqref{eqn514}, we have
\beq\label{eqn520}
\big\| (\ov\partial \chi_\Sigma) p_\Sigma (v_\Sigma) \big\|_{L^p} + \big\| {\mc X}_{(\ov\partial \chi_\Sigma) h_{\beta_\Sigma}} \big\|_{L^p} \leq \frac{2}{\hbar T} \| ( \beta_\Sigma, v_\Sigma, e_\Sigma) \| .
\eeq
By Lemma \ref{lemma53}, for some $c_{11}>0$, we have
\beq\label{eqn521}
\big\| \chi_\Sigma \cdot {\it paste} T_\infty^\Sigma ( \xi_\Sigma) \big\|_{L^p} \leq \big\| T_\infty^\Sigma ( \xi_\Sigma ) \big\|_{L^p([-2T, -(2-\hbar)T]\times S^1)} \leq c_{11} e^{-\tau_0(2-\hbar) T} \| \xi_\Sigma \|.
\eeq
Over the support of $\chi_\Sigma$, the distance between $u$ and $u_\alpha$ is controlled by $e^{-\tau_0 (2 -\hbar)T}$. Hence for some $c_{12}>0$,
\beq\label{eqn522}
\Big\| \chi_\Sigma \big(  D_\alpha' \circ p_\Sigma ( \beta_\Sigma, v_\Sigma) \big)  -  p_\Sigma \big( D_\Sigma'( \beta_\Sigma, v_\Sigma) \big) \Big\|_{L^p} \leq c_{12} e^{-\tau_0 (2 - \hbar) T}.
\eeq
Similarly, for some $c_{13}>0$,
\begin{multline}\label{eqn523}
\Big\| \chi_\Sigma \big(  D_\alpha'' \circ p_\Sigma ( \beta_\Sigma) \big) - p_\Sigma \big( D_\Sigma'' ( \beta_\Sigma) \big) \Big\|_{L^p}  \\
+ \Big\| \chi_\Sigma \big( D_\alpha''' \circ p_\Sigma (\beta_\Sigma) \big) - p_\Sigma \big( D_\Sigma''' (\beta_\Sigma) \big) \Big\|_{L^p} \leq c_{13} e^{-  \tau_0 (2 - \hbar) T}.
\end{multline}
Lastly, on the support of $(1-\chi_\Sigma)$, by \eqref{eqn54},
\beqn
\begin{split}
D_\alpha''' \circ p_\Sigma(\beta_\Sigma) &\ = D_A^{0,1}(v_{\beta_\Sigma}) + \phi \Big( D_\delta H_W^\delta( h_\alpha, (u_\alpha)_\phi ) \Big) \circ D_{\beta_\Sigma} \delta,  \\
T_\infty^\Sigma (\xi_\Sigma) &\ = D_A^{0,1}(v_{\beta_\Sigma}) + \phi \Big( D_\delta H_W^\delta (0, \sigma_\phi) \Big) \circ D_{\beta_\Sigma} \delta.
\end{split}
\eeqn
Since the distance between $u_\alpha$ and ${\mz t}_\alpha^* \sigma$ is exponentially small, and $h_\alpha$ is exponentially small on the support of $1-\chi_\Sigma$, we see that for some $c_{14}>0$,
\beq\label{eqn524}
\Big\| ( 1- \chi_\Sigma) \big( D_\alpha'''\circ p_\Sigma (\beta_\Sigma) - {\it paste} T_\infty^\Sigma (\xi_\Sigma ) \big) \Big\|_{L^p} \leq  c_{14} e^{- 2 \tau_0 T} \| \xi_\Sigma \|.
\eeq
Therefore, by \eqref{eqn520}--\eqref{eqn523}, for an appropriate $c_{15} >0$, we have
\beq\label{eqn525}
\big\|\pi_1( {\rm I} )\big\|_{L^p} \leq c_{15} \Big( \frac{1}{\hbar T} + e^{-\tau_0 (2- \hbar) T} \Big) \| ( \eta', \eta'', \eta''') \|.
\eeq

Now we estimate $\pi_2({\rm I})$. By definition, over $\Sigma\setminus C_{2T}$,
\beq\label{eqn526}
\begin{split}
\pi_2( {\rm I} ) &\ = \pi_2 \Big[ \big( D_\alpha \circ p_\Sigma - {\it paste} \circ D_\Sigma - {\it paste} \circ T_\infty^\Sigma \big) ( \chi_\Sigma \diamond \xi_\Sigma) \Big] \\
 &\ = \Big(\ * d \beta_\Sigma + \nu d\mu \cdot p_\Sigma (v_\Sigma),\ \ov{d^*} \beta_\Sigma\ \Big) - \Big(\ * d \beta_\Sigma + \nu d\mu \cdot v_\Sigma, \ov{d^*} \beta_\Sigma \ \Big) \\
 &\ = \Big(\ \nu d\mu \cdot p_\Sigma (v_\Sigma) - \nu d\mu \cdot v_\Sigma,\ 0\ \Big).
\end{split}
\eeq
On the other hand, using the fact that over $C_{2T}$, $\pi_2({\rm I}) = 0$, one obtains
\beq\label{eqn527}
\begin{split}
&\ \pi_2({\rm I})|_{C_{2T}}  \\
= &\ \Big(\ * d \big( \ov\partial (\chi_\Sigma h_{\beta_\Sigma} ) + \partial (\chi_\Sigma \ov{h_{\beta_\Sigma} })  \big) + \nu d\mu \cdot p_\Sigma ( \chi_\Sigma v_\Sigma),\ \ov{d^*} \big( \ov\partial (\chi_\Sigma h_{\beta_\Sigma} ) + \partial (\chi_\Sigma \ov{h_{\beta_\Sigma}} ) \big) \Big)\\
= &\ \Phi_1^\Sigma ( \nabla \chi_\Sigma, \nabla h_{\beta_\Sigma} ) + \Phi_2^\Sigma ( \nabla^2 \chi_\Sigma, h_{\beta_\Sigma} ) \\
&\ + \chi_\Sigma \Big(\ * d \beta_L +  \nu d\mu \cdot p_\Sigma (v_\Sigma),\ \ov{d^*} \beta_\Sigma\ \Big) - \chi_\Sigma \Big( * d\beta_\Sigma + \nu d\mu \cdot v_\Sigma,\ \ov{d^*} \beta_\Sigma\ \Big)\\
= &\ \Phi_1^\Sigma (\nabla \chi_\Sigma, \nabla h_{\beta_\Sigma}  ) + \Phi_2^\Sigma ( \nabla^2 \chi_\Sigma, h_{\beta_\Sigma} ) + \chi_\Sigma \Big(\ \nu d\mu \cdot p_\Sigma (v_\Sigma) - \nu d\mu \cdot v_\Sigma,\ 0\ \Big)
\end{split}
\eeq
Here $\Phi_1^\Sigma$, $\Phi_2^\Sigma$ are certain bilinear functions. One sees that \eqref{eqn527} is equal to \eqref{eqn526} over $\Sigma \setminus C_{2T}$ where $\chi_\Sigma \equiv 1$. Then for certain $c_{16}>0$ and $c_{17}>0$, we have
\beq\label{eqn528}
\begin{split}
\big\| \pi_2({\rm I}) \big\|_{L^p_\tau} \leq &\ \big\| \Phi_1^\Sigma \big\|_{L^p_\tau} + \big\| \Phi_2^\Sigma \big\|_{L_\tau^p} + \big\| \chi_\Sigma \nu\big( d\mu  \cdot p_\Sigma (v_\Sigma ) - d\mu  \cdot v_\Sigma \big) \big\|_{L_\tau^p}\\
\leq &\ c_{16} \Big( \| \nabla \chi_\Sigma \|_{L_\infty} + \| \nabla^2 \chi_\Sigma\|_{L_\infty}\Big) \| \beta_\Sigma \|_{W_\tau^{1, p}} + c_{16} e^{-  2( 2- \tau) T} \| v_\Sigma \|_{W^{1, p}}\\
\leq &\ c_{17} \Big( \frac{1}{\hbar T} + \frac{1}{(\hbar T)^2} +  e^{ -2 (2 - \tau) T} \Big) \| (\eta', \eta'', \eta''') \|.
\end{split}
\eeq
\eqref{eqn525} and \eqref{eqn528} conclude this Lemma.
\end{proof}

\begin{proof}[Proof of Lemma \ref{lemma58}]
Over $C_{2T}$, one has
\beq\label{eqn529}
\begin{split}
\pi_1({\rm II}) = &\ \pi_1 \Big(  \big( D_\alpha \circ p_\infty - {\it paste} \circ D_\infty \big) ( \chi_\infty \diamond \xi_\infty ) \Big) \\
 = &\ \ov\partial_{A_\alpha} \big( p_\infty (v_\infty) \big) + {\mc X}_{{\mz t}_\alpha^* (\chi_\infty \diamond \beta_\infty)^{0,1}} (u_\alpha) + D_\alpha'' \big( {\mz t}_\alpha^* \chi_\infty \diamond \beta_\infty \big)\\
&\ -p_\infty \Big(  D_\infty (v_\infty) + {\mc X}_{ (\chi_\infty \diamond \beta_\infty)^{0,1}} (\sigma ) + D_\infty''( \chi_\infty \diamond \beta_\infty) \Big)
\end{split}
\eeq
Over $\Sigma\setminus C_{2T}$, by the definition of ${\it paste}$,
\beq\label{eqn530}
\begin{split}
 \pi_1( {\rm II} )  = &\  \ov\partial_{A_\alpha} \big( p_\infty ( \chi_\infty v_\infty ) \big) + {\mc X}_{{\mz t}_\alpha^* (\chi_\infty \diamond \beta_\infty)^{0,1}}  + D_\alpha'' \big( {\mz t}_\alpha^* \chi_\infty \diamond \beta_\infty \big) \\
  = &\  {\mz t}_\alpha^* (\ov\partial \chi_\infty) p_\infty (v_\infty ) + {\mc X}_{ {\mz t}_\alpha^* (\ov\partial (\chi_\infty) h_{\beta_\infty}^T ) } (u_\alpha) \\
&\ + ({\mz t}_\alpha^* \chi_\infty) \Big( \ov\partial_{A_\alpha} (p_\infty (v_\infty) ) + {\mc X}_{{\mz t}_\alpha^* ( \beta_\infty^T)^{0,1}} (u_\alpha) + D_\alpha'' ( {\mz t}_\alpha^* ( \beta_\infty^T) \Big)\\
&\ - ({\mz t}_\alpha^*  \chi_\infty) p_\infty \Big( D_\infty' ( v_\infty ) + U_{\infty, T} (\beta_\infty)  +  \eta_T T_\infty^\Sigma ( \xi_\Sigma)  \Big).
 \end{split}
\eeq
Here $\eta_T: \Theta \to {\mb R}$ is the characteristic function of $(-\infty, -2T]\times S^1$. Note that in the last equality we used the fact that over $(-\infty, -2T] \times S^1$,
\beq\label{eqn531}
D_\infty' (v_\infty) + U_\infty^T ( \beta_\infty) + T_\infty^\Sigma( \xi_\Sigma) = \pi_1 ( {\it cut}( \eta', \eta'', \eta''') )= 0.
\eeq
Hence \eqref{eqn530} is equal to \eqref{eqn529} over $C_{2T}$, where $\chi_\infty \equiv 1$ and $\eta_T = 0$. Then for some $c_{21}>0$ we have
\beqn
\begin{split}
\big\| \pi_1( {\rm II} ) \big\|_{L^p} \leq &\ \big\| {\mz t}_\alpha^* ( \ov\partial \chi_\infty) p_\infty (v_\infty ) \big\|_{L^p} + \big\| {\mc X}_{ {\mz t}_\alpha^* (\ov\partial (\chi_\infty) h_{\beta_\infty}^T )} (u_\alpha)\big\|_{L^p}  + \big\| \chi_\infty \eta_T T_\infty^\Sigma ( \xi_\Sigma ) \big\|_{L^p} \\
&\ + \Big\|( {\mz t}_\alpha^* \chi_\infty) \Big( \ov\partial_{A_\alpha} \big( p_\infty (v_\infty) \big) - p_\infty \big( D_\infty' (v_\infty) \big) \Big) \Big\|_{L^p} \\
&\ + \Big\| ({\mz t}_\alpha^* \chi_\infty) \Big( {\mc X}_{ {\mz t}_\alpha^* (\ov\partial h_{\beta_\infty}^T  )} (u_\alpha) - p_\infty {\mc X}_{\ov\partial h_{\beta_\infty}^T  } (\sigma) \Big) \Big\|_{L^p}\\
&\ + \Big\| ({\mz t}_\alpha^* \chi_\infty) \Big( D_\alpha'' \big( {\mz t}_\alpha^* \beta_\infty^T \big)- p_\infty \big( D_\infty''( \beta_\infty^T ) \big) \Big)\Big\|_{L^p} \\
 \leq &\ c_{21} \Big[ \frac{1}{\hbar T} + e^{- \tau_0 (2-\hbar) T} \Big] \big\| (\xi_\Sigma, \xi_\infty) \big\|.
\end{split}
\eeqn
In deriving the last inequality, we used Lemma \ref{lemma53}, \eqref{eqn514}, and the fact that $u_\alpha$ and ${\mz t}_\alpha^* \sigma$ are exponentially closed over $C_T$.

On the other hand, over $C_{2T}$, by \eqref{eqn521},
\begin{multline}\label{eqn532}
\pi_2( {\rm II} ) = \Big(\ * d \big( {\mz t}_\alpha^* ( \ov\partial h_{\beta_\infty}^T + \partial \ov{h_{\beta_\infty}^T  }) \big) + \nu d\mu  \cdot p_\infty (v_\infty),\ \ov{d^*} \big({\mz t}_\alpha^*( \ov\partial h_{\beta_\infty}^T + \partial  \ov{h_{\beta_\infty}^T  } ) \big) \ \Big)\\
- \Big(\ * d \big( {\mz t}_\alpha^*( \ov\partial h_{\beta_\infty}^T + \partial \ov{h_{\beta_\infty}^T }) \big),\ \ov{d^*} \big( {\mz t}_\alpha^*( \ov\partial h_{\beta_\infty}^T + \partial  \ov{h_{\beta_\infty}^T  } )  \big) \ \Big) = \Big(\ \nu d\mu \cdot p_\infty (v_\infty),\ 0 \ \Big).
\end{multline}
Using a $\pi_2$-version of \eqref{eqn531}, i.e., over $\ov{C}_{-2T}$, $\pi_2(D_{\infty, T}(\xi_\infty)) = 0$, we see that over $\Sigma \setminus C_{2T}$,
\beq\label{eqn533}
\begin{split}
\pi_2( {\rm II} ) = &\ {\mz t}_\alpha^* \Big(\ * d \big( \ov\partial (\chi_\infty h_{\beta_\infty}^T  ) + \partial (\chi_\infty \ov{h_{\beta_\infty}^T   }  ) \big),\ \ov{d^*}  \big( \ov\partial (\chi_\infty h_{\beta_\infty}^T   ) + \partial (\chi_\infty \ov{h_{\beta_\infty}^T  }) \big)\ \Big) \\
&\  +  \Big(\ \nu d\mu \cdot p_\infty (\chi_\infty v_\infty),\ 0\ \Big) \\
= &\ {\mz t}_\alpha^* \chi_\infty \Big( \ * d ( \beta_\infty^T - \beta_\infty ) ,\ \ov{d^*} \big( \beta_\infty^T - \beta_\infty\big) \Big) \\
&\ + \Psi_1 ( \nabla \chi_\infty, \nabla h_{\beta_\infty}^T  ) + \Psi_2( \nabla^2 \chi_\infty, h_{\beta_\infty}^T  ) + \Big(\ \nu d\mu \cdot p_\infty ( \chi_\infty v_\infty ),\ 0\ \Big)\\
= &\ \Psi_1 ( \nabla \chi_\infty, \nabla h_{\beta_\infty}^T  ) + \Psi_2( \nabla^2 \chi_\infty, h_{\beta_\infty}^T  ) + \Big(\  \nu d\mu \cdot p_\infty (\chi_\infty v_\infty),\ 0\ \Big).
\end{split}
\eeq
The last equality follows from the fact that the supports of $\chi_\infty$ and $\beta_\infty - \beta_\infty^T$ are disjoint. \eqref{eqn533} is equal to \eqref{eqn532} over $C_{2T}$ where $\chi_\infty \equiv 1$. Then for some $c_{22}, c_{23}>0$, 
\beqn
\begin{split}
\big\| \pi_2 ({\rm II}) \big\| \leq &\ \big\| \Psi_1\big\| + \big\| \Psi_2 \big\| + \big\| \nu d\mu \cdot p_\infty (\chi_\infty v_\infty )  \big\| \\
\leq &\ c_{22} \Big[ \big\| \nabla \chi_\infty \big\|_{L_\infty} + \big\| \nabla^2 \chi_\infty \big\|_{L_\infty} \Big] \big\| \beta_\infty \big\|  + c_{22}	 e^{- 2T} \big\| v_\infty \big\|_{L^p}\\
\leq &\ c_{23} \Big[ (\hbar T)^{-1} + e^{- 2T} \Big] \| \xi_\infty \|.\qedhere
\end{split}
\eeqn 
\end{proof}

\begin{proof}[Proof of Lemma \ref{lemma59}]
By definition (see \eqref{eqn515})
\beqn
\xi_\Sigma - \chi_\Sigma \diamond \xi_\Sigma = \Big(  \ov\partial ( ( 1- \chi_\Sigma) h_{\beta_\Sigma} ) + \partial ( (1-\chi_\Sigma) \ov{h_{\beta_\Sigma}} ),\ (1- \chi_\Sigma) v_\Sigma \Big).
\eeqn
Since $ 1- \chi_\Sigma$ is supported in $C_{2T}$, $D_\Sigma ( \xi_\Sigma - \chi_\Sigma \diamond \xi_\Sigma)$ is also supported in $C_{2T}$\footnote{Here we used Item (2) of Fact \ref{fact51}.}. Hence after applying ${\it paste}$, $D_\Sigma( \xi_\Sigma - \chi_\Sigma \diamond \xi_\Sigma)$ vanishes. On the other hand, by \eqref{eqn515}
\beqn
\xi_\infty - \chi_\infty \diamond \xi_\infty = \Big( \beta_\infty - \ov\partial ( \chi_\infty h_{\beta_\infty}^T ) - \partial ( \chi_\infty \ov{h_{\beta_\infty}^T }),\ v_\infty - \chi_\infty v_\infty \Big).
\eeqn
Since $v_\infty - \chi_\infty v_\infty$ is supported in $(-\infty, -2T] \times S^1$, by the definition of ${\it paste}$,
\beqn
{\it paste} \Big( D_{\infty, T} ( 0,\ v_\infty - \chi_\infty v_\infty ) \Big) = 0.
\eeqn
On the other hand, by the definition of $\tilde {\mc F}_{\infty, T}^\delta$ (see \eqref{eqn510}), 
\beqn
D_{\infty, T} \big(\ \beta_\infty - \ov\partial( \chi_\infty h_{\beta_\infty}^T  ) - \partial ( \chi_\infty \ov{h_{\beta_\infty}^T  }),\ 0\ \big) = D_{\infty, T} \Big( \beta_\infty^T - \ov\partial ( \chi_\infty h_{\beta_\infty}^T  ) - \partial (\chi_\infty \ov{ h_{\beta_\infty}^T  } ),\ 0\  \Big).
\eeqn
By \eqref{eqn514x}, it is supported in $\Sigma \setminus C_{2T}$, hence vanishes after composing with ${\it paste}$. Therefore ${\rm III} = 0$. \end{proof}

\section{Constructing an Atlas}\label{section6}

The local charts constructed in Section \ref{section4} don't have coordinate changes among them. In this section we construct from these local charts an atlas which satisfies \cite[Definition A.14]{Tian_Xu_3}. This section is organized as follows. In Subsection \ref{subsection61} we construct the charts for the lower stratum and state the main propositions. In Subsection \ref{subsection62} we construct the transition functions between these charts. In Subsection \ref{subsection63} we complete the atlas construction on the whole moduli space $\tilde{\mc M}$, which finishes the proof of Theorem \ref{thm34}. In Subsection \ref{subsection64} we determine the orientation on the boundary component corresponding to BPS soliton solutions. 

\subsection{The sum charts}\label{subsection61}

In Section \ref{section4}, for each $q \in \tilde{\mc M}^s$ we have constructed a local chart $C_q$ for $\tilde{\mc M}$. We first want to construct a system of charts whose footprints cover $\tilde{\mc M}^s$, the moduli space of non-BPS soliton solutions, and which have coordinate changes among them. By Corollary \ref{cor416}, we can do the same thing independently for $\tilde{\mc M}^b$, the moduli space of BPS soliton solutions.

\subsubsection{The thickened moduli}

By the compactness of $\tilde{\mc M}^s$, one can choose finitely many $q_i \in \tilde{\mc M}^s$ such that the union of the footprints $F_{q_i}$ of the charts $C_{q_i}$ contains $\tilde{\mc M}^s$. Let $\Gammait_{q_i} \subset S^1$ be the automorphism group of the representative $\tilde{\bm u}_{q_i}^\sld$ of $q_i$. Define
\beqn
\mbit{I}^s =  \Big\{ I \subset \{1, \ldots, N\} \ |\ I \neq \emptyset,\ \bigcap_{i\in I} \ov{ F_{q_i} } \cap \tilde{\mc M}^s \neq \emptyset \Big\}.
\eeqn
This set has a natural partial order given by inclusion of sets. For each $I \in \mbit{I}^s$, define
\beqn
F_I = \bigcap_{i\in I} F_{q_i},\ \Gammait_I = \prod_{i\in I} \Gammait_{q_i},\ E_I:= \bigoplus_{i \in I} E_{q_i}. 
\eeqn

We still need to consider a thickened moduli first. Let $\bar {\mz m}_I$ be the moduli space of configuration of points of $\Sigma$ indexed by elements in $I$. There is a universal curve $\bar {\mz u}_I \simeq \bar {\mz m}_{I + 1}$ with a forgetful map $\bar {\mz u}_I \to \bar {\mz m}_I$. Consider the stratum of $\bar {\mz m}_I$ corresponding to configurations in which all points lie in a rational component attached to ${\bf z}$, and let $\bar {\mz m}_I^* \subset \bar {\mz m}_I$ be a small open neighborhood of this stratum and $\bar {\mz u}_I^*$ be the corresponding universal curve. Let $\alpha_I$ denote a point of $\bar {\mz m}_I^*$ and $\Sigma^{\alpha_I}$ be the preimage of $\alpha_I$ under $\bar {\mz u}_I^* \to \bar {\mz m}_I^*$, which is a curve having at most two components and $I$-tuple of markings. Similar as before, we can lift the fibre bundle $Y \to \Sigma$ to a family $Y_I$ over $\bar {\mz u}_I^*$. We can still define the Banach manifolds $\tilde {\mc B}^{\alpha_I}$ in a similar fashion as defining $\tilde {\mc B}^\alpha$ previously. 

By forgetting all markings but only the $i$-th marking, one also obtains a map $\bar {\mz u}_I^* \to \bar {\mz u}_i \simeq \bar {\mz u}_1$. Hence one obtains a linear map 
\beqn
\bigoplus_{i \in I} E_{p_i}=:E_I \to C^\infty( Y_I, T^\bot Y ).
\eeqn
Define the thickened moduli space $\tilde {\mc M}_{E_I}$ by (compare to \eqref{eqn45})
\beqn
\tilde {\mc M}_{E_I} = \left\{ \tilde {\mz u}_I = (\alpha_I, \tilde {\bm u}, e_I)\ \left|\ \begin{array}{c} \alpha_I \in \bar {\mz m}_I^*,\ \tilde {\bm u} \in \tilde {\mc B}^{\alpha_I},\ e_I \in E_I\\ 
\tilde {\mz F}_{E_I}^\alpha( {\bm u}, e_I):= e_I( \alpha_I, {\bm u}) + \tilde {\mc F}({\bm u}) = 0  \end{array} \right. \right\}.
\eeqn

\subsubsection{Local charts}

Choose $i \in I$. In Section \ref{section4} we constructed a family $\tilde {\mz u}_{q_i, {\bm \xi}_i}^{\alpha_i} \in \tilde {\mc M}_{E_{q_i}}$ for $\alpha_i \in \Delta^{\epsilon_i}$ and ${\bm \xi}_i \in \tilde N_{q_i}^{\epsilon_i}$ that satisfy the normalization condition for $q_i$. Consider
\beq\label{eqn61}
 H_{I, q_i} = \Big\{ \tilde {\mz u}_{q_i, {\bm \xi}_i}^{\sld} = ( \tilde {\bm u}_{q_i, {\bm \xi}_i}^\sld, e_{q_i, {\bm \xi}_i}^\sld) \in \tilde {\mc M}_{E_{q_i}} \ |\ S_{q_i} (\sld, {\bm \xi}_i) = 0 ,\ [\tilde{\bm u}_{q_i, {\bm \xi}_i}^\sld] \in F_I \cap \tilde{\mc M}^s \Big\}.
\eeq
This is a $\Gammait_{q_i}$-invariant subset of $\tilde {\mc M}_{E_{q_i}}$. Since each ${\bm u}_{q_i, {\bm \xi}_i}^\sld$ is close to ${\bm u}_{q_j}^\sld$ for all $j \in I$ (up to translation on the rational component), there exists exactly one $\Gammait_{q_j}$-orbit of points ${\bm w}_{j, {\bm \xi}}$ on the rational component such that $({\bm u}_{q_i, {\bm \xi}_i }^\sld, {\bm w}_{j, {\bm \xi}_i} )$ satisfies the normalization condition with respect to $q_j$. By adding these markings, and using the inclusion $E_{q_i} \hookrightarrow E_I$, we obtain a $\Gammait_I$-invariant subset $K_{I, q_i} \subset \tilde {\mc M}_{E_I}$ together with a covering map $K_{I, q_i} \to H_{I, q_i}$. 

The construction of the local chart is based on the following proposition. 

\begin{prop}\label{prop61}\hfill
\begin{enumerate}

\item There exists a $\Gammait_I$-invariant open subset $\hat U_{E_I, q_i} \subset \tilde {\mc M}_{E_I}$ containing $K_{I, q_i}$ which is homeomorphic to a topological manifold. 

\item There is a family of right inverses $\tilde Q_{\tilde {\mz u}}^{\alpha_I}: {\mc E}_{\tilde {\mz u}}^{\alpha_I} \to T_{\tilde {\mz u}} \tilde {\mz B}_{E_I}^{\alpha_I} $ for all $(\alpha_I, \tilde {\mz u}) \in \hat U_{E_I, q_i}$ to the defining equation of $\tilde {\mc M}_{E_I}$, such that the $E_{q_j}$-components of the images of $\tilde Q_{\tilde {\mz u}}^{\alpha_I}$ vanish for all $j \neq i$. 

\item The map $n_{I, q_i}: \hat U_{E_I, q_i} \to {\mb C}^I$ defined by
\beqn
n_{I, q_i}( \alpha, {\bm \xi}) = \Big( \sum_{g \in \Gammait_{q_i} } h_{q_i} \Big( \uds u{}_{\bm \xi}^\alpha( g \cdot {\bm w}_{q_i}^\alpha)  \Big) \Big)_{i \in I}
\eeqn
is $\Gammait_I$-invariant and is transversal to $0 \in {\mb C}^I$.
\end{enumerate}
\end{prop}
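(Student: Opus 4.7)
The strategy is to extend the single-basepoint gluing analysis of Section \ref{section4} uniformly over the compact $\Gammait_I$-invariant subset $K_{I, q_i} \subset \tilde{\mc M}_{E_I}$. The crucial observation, which dictates the choice of right inverse in item (2), is that $E_{q_i}$ alone already achieves transversality of the defining equation at every point of $K_{I, q_i}$: since $K_{I, q_i}$ projects into $F_I \cap \tilde{\mc M}^s \subset F_{q_i}$, the chart $C_{q_i}$ of Corollary \ref{cor44} is defined there, and its obstruction space $E_{q_i}$ spans the cokernel of the linearization. Hence at each basepoint one can construct right inverses taking values purely in the $T\tilde{\mc B} \oplus E_{q_i}$ summand of $T\tilde{\mc B} \oplus E_I$, which gives (2).

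First, for each basepoint $\tilde{\mz u}_\star \in K_{I, q_i}$, I would perform the pregluing of Subsection \ref{subsection44} to produce approximate solutions parametrized by a gluing parameter $\alpha_I \in \bar{\mz m}_I^*$ (which encodes simultaneously the neck length and twist angle, and the positions of the auxiliary markings indexed by $I \smallsetminus \{i\}$) together with $\bm\xi$ in a kernel space $M_\star$ modeled on \eqref{eqn46}. The right inverse at $\tilde{\mz u}_\star$ is the operator $Q^\sld$ of \eqref{eqn513}, trivially extended by zero in the $E_{q_j}$-directions for $j \neq i$, and it propagates to an approximate right inverse at the glued objects by the recipe of Subsection \ref{subsection55}; the estimates of Lemmas \ref{lemma55} and \ref{lemma56} apply verbatim, uniformly in $\tilde{\mz u}_\star$ by compactness of $K_{I, q_i}$. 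The implicit function theorem (Proposition \ref{prop47}) then produces a family of exact solutions giving $\hat U_{E_I, q_i}$ and the right inverses of (2).

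For (3), decompose $n_{I, q_i}$ componentwise. The $i$-component is the map $n_{q_i}$ of Proposition \ref{prop43}, which is transverse to $0 \in {\mb C}$. For $j \neq i$, the $j$-th component depends on the position of the $j$-th auxiliary marking, which is an independent coordinate in $\alpha_I \in \bar{\mz m}_I^*$; varying this coordinate infinitesimally slides the evaluation point along the soliton $\sigma_{q_j}$ underlying the base object, and this soliton is transverse to the hypersurface $D_{q_j} = \exp_{\bm x_{q_j}} H_{q_j}$ by the choice of $H_{q_j}$ in Subsection \ref{subsection42}. Hence each component is transverse at the stratum $\alpha_I = \sld$, and transversality is an open condition. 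The $\Gammait_I$-invariance of $n_{I, q_i}$ is immediate from the summation over each factor $\Gammait_{q_j}$.

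The principal obstacle will be verifying that the gluing constructions at varying basepoints $\tilde{\mz u}_\star$ assemble into a \emph{single} topological manifold chart rather than an unrelated patchwork. This reduces to two things: first, uniformity in $\tilde{\mz u}_\star$ of the constants $\bm c, \bm d$ of Lemma \ref{lemma48}, which follows from compactness of $K_{I, q_i}$ together with continuous dependence of $Q^\sld$ on the basepoint; and second, a local surjectivity statement in the spirit of Lemma \ref{lemma411} ensuring no exact solution in a neighborhood of $K_{I, q_i}$ escapes the image of the constructed map. Once these are in hand, the continuous-bijection-to-homeomorphism argument from the end of Subsection \ref{subsection45} upgrades the construction to an open topological manifold embedding, yielding (1).
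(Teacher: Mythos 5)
Your treatment of items (1) and (2) follows the paper's own reasoning closely: the paper likewise observes that since $E_{q_i}\subset E_I$ and the right inverse used to build $\hat U_{E_{q_i}}$ already suffices, the same right inverse (extended by zero in the $E_{q_j}$ directions, $j\neq i$) constructs the chart $\hat U_{E_I,q_i}$, which gives item (2), while item (1) follows from that transversality together with the Section \ref{section4} gluing analysis done uniformly over the compact set $K_{I,q_i}$.

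Where you diverge is item (3), and there is a gap in your argument. You verify componentwise that moving the $j$-th auxiliary marking slides the evaluation point along a curve transverse to $D_{q_j}$, deduce transversality of $n_{I,q_i}$ on the boundary stratum $\alpha_I=\sld$, and then invoke ``transversality is an open condition.'' But the openness you are appealing to is a fact about \emph{smooth} transversality of $C^1$ maps; here transversality is the \emph{topological} notion from Section \ref{section2} (local flatness of $n_{I,q_i}^{-1}(0)$ plus the local-homeomorphism property on complementary slices), and $\hat U_{E_I,q_i}$ is only a topological manifold near the boundary stratum --- the dependence of $n_{I,q_i}$ on the gluing parameter is merely continuous there. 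A pointwise tangent-space computation on the boundary stratum does not, by itself, propagate to a local-flatness and slice-homeomorphism statement for the whole chart. The paper handles this differently: it introduces $\hat V_{E_I,q_i}=n_{I,q_i}^{-1}(0)$, uses the pregluing-plus-correction homeomorphism $\hat U_{E_I,q_i}^0\times[0,\epsilon)\simeq\hat U_{E_I,q_i}$ to make $\hat V_{E_I,q_i}^0\times[0,\epsilon)$ a topological submanifold, and then shows that every point of $\hat V_{E_I,q_i}^0\times[0,\epsilon)$ can be uniquely corrected to a point of $\hat V_{E_I,q_i}$ by sliding the marked points. This correction realizes $\hat V_{E_I,q_i}$ directly as a locally flat submanifold with a canonical germ of tubular neighborhoods parametrized by $\mathbb{C}^I$, which is exactly what topological transversality demands. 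Your componentwise observation is the right geometric input (it is precisely what makes the markings-correction uniquely solvable), but you still need to use it to \emph{build} the submanifold-with-tubular-neighborhood structure across the gluing boundary rather than deduce it from an openness principle that does not apply in this category.
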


\begin{proof}
The fact that a neighborhood of $K_{I, q_i}$ in $\tilde {\mc M}_{E_I}$ is a topological manifold follows from transversality and the gluing construction of Section \ref{section4}. Moreover, since we have used a right inverse to the defining equation of $\tilde {\mc M}_{E_{q_i}}$ to construct the local chart $\hat U_{E_{q_i}}$, and $E_{q_i} \subset E_I$, we can use the same right inverse to construct the local chart of $\tilde {\mc M}_{E_I}$. 

For the transversality of $n_{I, q_i}$, denote $\hat V_{E_I, q_i} = n_{I, q_i}^{-1}(0) \subset \hat U_{E_I, q_i}$. Let $\hat V_{E_I, q_i}^0 \subset \hat V_{E_I, q_i}$, $\hat U_{E_I, q_i}^0 \subset \hat U_{E_I, q_i}$ be the intersections with the lower stratum. Then there is a local homeomorphism $\hat U_{E_I, q_i}^0 \times [0, \epsilon) \simeq \hat U_{E_I, q_i}$ which is given by ``pregluing $+$ correction.'' Then under this homeomorphism, $\hat V_{E_I, q_i}^0 \times [0, \epsilon)$ is a topological submanifold. $\hat V_{E_I, q_i}^0 \times [0, \epsilon)$ does not coincide with $\hat V_{E_I, q_i}$, but every point in the former can be corrected in a unique way to a point in the latter, by moving the marked points. This shows that $\hat V_{E_I, q_i}$ is a topological submanifold with a canonical germ of tubular neighborhoods. Hence $n_{I, q_i}$ is transversal. 
\end{proof}

This proposition is proved in the next subsection. From it one obtains a local chart of $\tilde{\mc M}$. Shrink the footprint $F_{q_i}$ to $F_{q_i}'\sqsubset F_{q_i}$ such that the union of $F_{q_i}'$ still cover $\tilde {\mc M}^s$. Define $F_I' = \bigcap_{i \in I} F_{q_i}'$ and let $K_{I, q_i}' \subset K_{I, q_i}$ be the subset defined in the same way as $K_{I, q_i}$ via \eqref{eqn61} but replacing $F_I$ with $F_I'$. Then there exists a continuous family of $\Gammait_I$-invariant open subsets $\hat U_{E_I, q_i}^\epsilon \subset \hat U_{E_I, q_i}$ which as $\epsilon \to 0$ converge to $\ov{K_{I, q_i}'}$. Since $n_{I, q_i}$ is transverse,
\beqn
\hat U_{I, q_i}^\epsilon:= n_{I, q_i}^{-1}(0) \subset \hat U_{E_I, q_i}
\eeqn
is a topological manifold with a $\Gammait_I$-action. Let the quotient orbifold be $U_{I, q_i}^\epsilon:= \hat U_{I, q_i}^\epsilon/ \Gammait_I$, on which we have an orbibundle
\beqn      
 E_{I, q_i}^\epsilon:= \bigslant{ \hat U_{I, q_i}^{\epsilon} \times E_I}{\Gammait_I} \to U_{I, q_i}^\epsilon.
\eeqn
There is a natural section $S_{I, q_i}^\epsilon: U_{I, q_i}^\epsilon \to E_{I, q_i}^\epsilon$, and a natural map $\psi_{I, q_i}^\epsilon: ( S_{I, q_i}^\epsilon)^{-1}( \varepsilon_I) \to \tilde{\mc M}$, whose image is denoted by $F_{I, q_i}^\epsilon \subset \tilde{\mc M}$.

\begin{cor}\label{cor62}
For sufficiently small $\epsilon$, the tuple $C_{I, q_i}^\epsilon:= ( U_{I, q_i}^\epsilon, E_{I, q_i}^\epsilon, S_{I, q_i}^\epsilon, \psi_{I, q_i}^\epsilon, F_{I, q_i}^\epsilon )$ is a local chart of $\tilde{\mc M}$ whose footprint contains an open neighborhood of $\ov{F_I'} \cap \tilde{\mc M}^s$.
\end{cor}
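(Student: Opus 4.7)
The plan is to follow the pattern of Corollary \ref{cor44}, using Proposition \ref{prop61} as the technical input in place of Proposition \ref{prop43}. First, I would observe that Item (1) of Proposition \ref{prop61} makes $\hat U_{E_I, q_i}^\epsilon \subset \tilde {\mc M}_{E_I}$ a $\Gammait_I$-invariant topological manifold for each $\epsilon$, while Item (3) shows that $n_{I, q_i}: \hat U_{E_I, q_i} \to {\mb C}^I$ is a $\Gammait_I$-invariant transversal map. Hence $\hat U_{I, q_i}^\epsilon := n_{I, q_i}^{-1}(0) \cap \hat U_{E_I, q_i}^\epsilon$ is a locally flat topological submanifold with a residual $\Gammait_I$-action, and the quotient $U_{I, q_i}^\epsilon = \hat U_{I, q_i}^\epsilon / \Gammait_I$ is a topological orbifold. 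The trivial bundle $\hat U_{I, q_i}^\epsilon \times E_I$ inherits the diagonal $\Gammait_I$-action through the chosen embedding $E_I \hookrightarrow \bigoplus_{i \in I} C^\infty(Y_I, \pi^* \Lambda^{0,1} \otimes T^\bot Y)$, and descends to the orbifold bundle $E_{I, q_i}^\epsilon \to U_{I, q_i}^\epsilon$. The section $S_{I, q_i}^\epsilon$ is induced from the equivariant assignment $(\alpha_I, \tilde{\bm u}, e_I) \mapsto e_I$, and this is continuous by the continuity established in Proposition \ref{prop61}.

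Next I would verify that $\psi_{I, q_i}^\epsilon$ is well-defined and a homeomorphism onto an open subset. By construction, $(S_{I, q_i}^\epsilon)^{-1}(\varepsilon_I)$ consists of $\Gammait_I$-orbits of tuples $(\alpha_I, \tilde{\bm u}, 0)$ which, after pulling back by the normalization condition $n_{I, q_i} = 0$, correspond to genuine (unperturbed) gauged Witten solutions. The map $\psi_{I, q_i}^\epsilon$ sends such an orbit to the class $[\tilde{\bm u}]$ in $\tilde{\mc M}$; this is well-defined because reparametrizing by a symmetry changes only the auxiliary markings ${\bm w}_{j, {\bm \xi}}$ in a $\Gammait_I$-equivariant way. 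Local injectivity is then the standard fact that if two choices of markings on the same underlying solution both satisfy the normalization condition, they must differ by the symmetry group $\Gammait_I$, since the normalization condition $n_{I, q_i} = 0$ cuts out a single $\Gammait_I$-orbit of markings for each nearby solution. For local surjectivity, I would apply the same argument used in the proof of Corollary \ref{cor44}: any $[\tilde{\bm u}] \in \tilde{\mc M}$ sufficiently close to a point of $\ov{F_I'} \cap \tilde{\mc M}^s$ admits, via the transversality of $n_{I, q_i}$ and the implicit function theorem in the topological category, a choice of markings ${\bm w}_{q_i}^\alpha$ satisfying the normalization equations $\sum_{g \in \Gammait_{q_i}} h_{q_i}(\uds u(g \cdot {\bm w}_{q_i}^\alpha)) = 0$ for each $i \in I$.

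Once these two properties are established, $\psi_{I, q_i}^\epsilon$ is a continuous bijection from the compact space $(S_{I, q_i}^\epsilon)^{-1}(\varepsilon_I)$ (shrinking $\epsilon$ if necessary to guarantee compactness) onto its image, hence a homeomorphism, and the image $F_{I, q_i}^\epsilon$ is open. For the footprint condition, observe that by construction the subset $K_{I, q_i}' \subset \tilde{\mc M}_{E_I}$ maps surjectively onto $F_I' \cap \tilde{\mc M}^s$, and $\hat U_{E_I, q_i}^\epsilon$ was chosen to shrink to $\ov{K_{I, q_i}'}$ as $\epsilon \to 0$; thus for $\epsilon$ small enough the footprint $F_{I, q_i}^\epsilon$ contains an open neighborhood of $\ov{F_I'} \cap \tilde{\mc M}^s$.

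The main obstacle I anticipate is making the local surjectivity argument precise at points lying in the lower stratum (soliton degeneration), where the underlying curve has two components and the auxiliary markings live on the rational component. There one must argue that the additional markings ${\bm w}_{q_j, {\bm \xi}}$ for $j \neq i$ can be chosen consistently on the soliton component, and that the resulting $\Gammait_I$-orbit of marked configurations depends continuously on the solution even as the neck collapses. This is essentially the content of Lemma \ref{lemma415} generalized to multiple markings, combined with the gluing estimates of Section \ref{section5}, and I would handle it by running the same pregluing/correction argument inside each thickened moduli and invoking the uniqueness clause of Proposition \ref{prop410}.
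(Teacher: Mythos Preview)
Your proposal is correct and follows exactly the approach the paper takes: the paper's entire proof is ``Same as Corollary \ref{cor44},'' and you have spelled out precisely that argument, using Proposition \ref{prop61} in the role that Proposition \ref{prop43} played before. The additional detail you supply about the lower stratum is consistent with the gluing machinery already established in Sections \ref{section4}--\ref{section5} and is not a genuine obstacle.
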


\begin{proof}
Same as Corollary \ref{cor44}.
\end{proof}

\subsection{The coordinate changes}\label{subsection62}

For each $I \in \mbit{I}^s$ and each $i \in I$, fix some $\epsilon$ satisfying Corollary \ref{cor62} and denote the chart by $C_{I, i}$. In this section we would like to construct various coordinate changes among these charts.

First, consider the following subsets of $H_{I, q_i}$ (see \eqref{eqn61})
\beqn
H_{JI, q_i} = \Big\{ \tilde {\mz u}_{q_i, {\bm \xi}_i}^\sld \in H_{I, q_i} \ |\ [\tilde{\bm u}_{q_i, {\bm \xi}_i}^\sld] \in F_J \Big\},\ H_{JI, q_i}' = \Big\{ \tilde {\mz u}_{q_i, {\bm \xi}_i}^\sld \in H_{I, q_i} \ |\ [\tilde{\bm u}_{q_i, {\bm \xi}_i}^\sld] \in F_J' \Big\}
\eeqn
and the corresponding subsets of $K_{I, q_i}$, denoted by 
\beqn
K_{JI, q_i},\ K_{JI, q_i}' \subset K_{I, q_i}.
\eeqn
For each element of $K_{JI, q_i}$, we can upgrade them by adding markings ${\bm w}_{q_j}$ for all $j \in J - I$, such that ${\bm u}_{q_i, {\bm \xi}_i}^\sld$ together with ${\bm w}_{q_j}$ satisfies the normalization condition with respect to $q_j$. The way of adding ${\bm w}_{q_j}$ is not unique, but different choices differ by an action of $\Gammait_{q_j}$. For $\epsilon$ sufficiently small, denote by $\hat U_{JI, q_i}^\epsilon$ the $\epsilon$-neighborhood of $\ov{K_{JI, q_i}'}$ inside $\hat U_{I, q_i}$. Then this operation can be extended to all members of $\hat U_{JI, q_i}^\epsilon$, and we obtain a covering 
\beqn
\vcenter{  \xymatrix{ \Gammait_{J - I} \ar[r] & {{\hat V}}{}_{JI, q_i}^{\epsilon} \ar[d] \\
                                           &     \hat U_{JI, q_i}^\epsilon } }.
\eeqn
Here $\Gammait_{J-I}$ is the direct product of $\Gammait_{q_j}$ for all $j \in J - I$. There is also a $\Gammait_J$-action on the total space ${\hat{V}}{}_{JI, q_i}^\epsilon$. By the inclusion $E_I \hookrightarrow E_J$, ${\hat{V}}{}_{JI, q_i}^\epsilon$ is naturally included into the thickened moduli space $\tilde {\mc M}_{E_J}$ and is contained in the locus of $n_{J, q_j}^{-1}(0)$. Moreover, choosing $j \in J$, if $\epsilon$ is small enough, then it is included in $\hat U_{J, q_j} \subset \hat U_{E_J, q_j}$ and the inclusion is $\Gammait_J$-equivariant. Hence (see Remark \ref{rem25} and the diagram \eqref{eqn21}) it induces maps
\begin{align*}
&\ \phi_{JI}^{\; ji}: \bigslant{\hat U_{JI, q_i}^\epsilon}{ \Gammait_I }  \hookrightarrow U_{J, q_j},\ &\ \wh\phi_{JI}^{\; ji}: \bigslant{ E_I \times \hat U_{JI, q_i}^\epsilon}{\Gammait_I} \hookrightarrow \bigslant{ E_J \times \hat U_{J, q_j}}{\Gammait_J}. 
\end{align*}

\begin{lemma}\label{lemma63}
$(\phi_{JI}^{\; ji}, \wh\phi_{JI}^{\; ji})$ is an orbibundle embedding.
\end{lemma}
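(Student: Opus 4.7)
Following the framework of Remark \ref{rem25}, it suffices to construct a $\Gammait_J$-equivariant, locally flat topological embedding at the level of total spaces
\[
\tilde\phi_{JI}^{\;ji}\colon \hat V_{JI, q_i}^\epsilon \hookrightarrow \hat U_{J, q_j}
\]
together with the induced $\Gammait_J$-equivariant bundle embedding $\hat V_{JI, q_i}^\epsilon \times E_I \hookrightarrow \hat U_{J, q_j} \times E_J$, and to exhibit a subbundle $E_{I;J} \subset E_J$ over a neighborhood of the image that witnesses the tangent bundle condition of Definition \ref{defn22}. Passing to the $\Gammait_J$-quotient then yields the orbibundle embedding $(\phi_{JI}^{\;ji}, \wh\phi_{JI}^{\;ji})$.

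The map $\tilde\phi_{JI}^{\;ji}$ is defined pointwise: a configuration $(\alpha_I, \tilde{\bm u}, e_I; \{{\bm w}_{q_k}\}_{k \in J-I})$ in $\hat V_{JI, q_i}^\epsilon$ is sent to $(\alpha_J, \tilde{\bm u}, e_I \oplus 0; \{{\bm w}_{q_k}\}_{k \in J-I})$ in $\hat U_{J, q_j}$, where $\alpha_J \in \bar{\mz m}_J^*$ is the $J$-marked curve determined by $\alpha_I$ and the extra markings, and where $e_I$ is viewed as an element of $E_J = E_I \oplus E_{J-I}$ with trivial $E_{J-I}$-component. Since the extra obstruction directions contribute nothing, the image satisfies $\tilde{\mz F}_{E_J}^{\alpha_J}(\tilde{\bm u}, e_I \oplus 0) = 0$; the normalization conditions for all $q_k$ with $k \in J$ hold by the very construction of $\hat V_{JI, q_i}^\epsilon$, so the image lies in $n_{J, q_j}^{-1}(0)$, and for $\epsilon$ small enough inside $\hat U_{J, q_j}$. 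Continuity and $\Gammait_J$-equivariance are immediate, and injectivity follows because the underlying $\tilde{\bm u}$ together with the full $J$-tuple of markings uniquely recovers $\alpha_I$, $e_I$, and $\{{\bm w}_{q_k}\}_{k \in J-I}$.

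For local flatness and the tangent bundle condition, take
\[
E_{I;J} := E_I \subset E_I \oplus E_{J-I} = E_J
\]
as a constant-fiber sub-bundle over a neighborhood $N$ of the image. Decomposing $S_J = (S_J^{E_I}, S_J^{E_{J-I}})$, one sees that $S_J^{-1}(E_{I;J}) \cap N$ is precisely the locus on which $S_J^{E_{J-I}}$ vanishes, and this locus coincides with $\tilde\phi_{JI}^{\;ji}(\hat V_{JI, q_i}^\epsilon)$ by the construction above. Transversality of $S_J|_N$ to $E_{I;J}$ is the assertion that $S_J^{E_{J-I}}$ is topologically transverse to zero near the image. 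This follows from the choice of obstruction spaces in Subsection \ref{subsection42}: each $E_{q_k}$ for $k \in J-I$ is transverse to the linearization of the $E_I$-thickened Fredholm problem at $q_k$, and by the uniform right-inverse bound in Lemma \ref{lemma46} this transversality persists along the entire gluing family over $\bar{\mz m}_J^*$. Quinn's topological transversality theorem, recalled in Section \ref{section2}, then upgrades this to the locally flat submanifold structure required by Definition \ref{defn22}. The bundle map is the obvious fiberwise inclusion $E_I \hookrightarrow E_J$, which is continuous and equivariant.

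The main technical obstacle will be verifying uniformity of the transversality of $S_J^{E_{J-I}}$ along the entire image, particularly as the gluing parameter $\alpha_J$ degenerates to the nodal stratum $\sld$. The key input is that the sections of each $E_{q_k}$ have compact support confined to a single component of the limiting nodal fiber, so that they remain linearly independent of the kernel of the pregluing linearization as the neck becomes arbitrarily long; this is precisely the content of the estimates underlying Lemma \ref{lemma45} and Lemma \ref{lemma46}, so no analysis beyond Section \ref{section5} is required.
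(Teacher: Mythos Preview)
Your overall picture is correct: the map is injective, the candidate normal bundle is $E_{J-I}$, and the substantive content is local flatness of the image together with the tangent bundle condition. However, the argument you give for this last step has a genuine gap.

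In the topological category used here, transversality of a section to zero is \emph{defined} in Section~\ref{section2} by requiring that the zero locus be a locally flat submanifold with the appropriate local homeomorphism property. So saying ``$S_J^{E_{J-I}}$ is topologically transverse to zero'' is not something you can deduce from Banach-level surjectivity statements and then invoke to obtain local flatness; it \emph{is} the local flatness statement. Your appeal to Quinn's theorem is misplaced: that theorem produces transversality by an isotopy (a perturbation), it does not certify that an already-given section is transverse. Moreover, the charts $\hat U_{J,q_j}$ are only topological manifolds, built via a gluing map that is merely continuous in the gluing parameter, so smooth transversality at the Banach-manifold level does not automatically descend.

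The paper's proof supplies exactly what is missing: an explicit construction of a tubular neighborhood of the image, with $E_{J-I}$ as fiber. The mechanism is Item~(2) of Proposition~\ref{prop61}, which provides right inverses to the linearization of the $E_I$-thickened equation whose images have vanishing $E_{q_j}$-components for $j\neq i$. Given a point $\hat w=(\alpha_J,\tilde{\bm u},e_I\oplus e_{J-I})$ in $\hat U_{J,q_j}$ near the image, one forgets the extra markings and drops $e_{J-I}$ to obtain an approximate solution of the augmented $E_I$-problem (equation plus normalization $n_{I,q_i}$) with error linear in $e_{J-I}$; the implicit function theorem, applied with these specific right inverses, then corrects it uniquely to a point of the image. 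This yields a continuous retraction $\hat w\mapsto(\text{image point},e_{J-I})$, i.e.\ the tubular neighborhood, and simultaneously verifies the tangent bundle condition. Your proof would be repaired by carrying out this IFT step rather than appealing to an abstract transversality principle.
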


\begin{proof}
It is obvious that $\phi_{JI}^{\; ji}$ and $\wh \phi_{JI}^{\; ji}$ are injective. Hence by definition, we only need to construct tubular neighborhoods to fulfill the locally flatness condition of $\phi_{JI}^{\; ji}$. By Item (2) of Proposition \ref{prop61}, there is a family of right inverses $\tilde Q_{\tilde {\mz u}}^{\alpha_I}$ for all $(\alpha_I, \tilde {\mz u}) \in \hat U_{E_I, q_i}$ to the linearization of $\tilde {\mz F}_{E_I, q_i}^{\alpha_I}$. Consider the ``augmented'' equation 
\beqn
(\alpha_I, \tilde {\bm u}, e_I) \mapsto \Big( \tilde {\mz F}_{E_I}^{\alpha_I}(\tilde {\bm u}, e_I), n_{I, q_i}(\alpha_I, \tilde {\bm u}) \Big) = 0.
\eeqn
One also has a family of right inverses to the linearizations of this equation. Indeed, the derivative of $n_{I, q_i}$ restricted to the direction of varying the parameter $\alpha_I$ plus the direction of reparametrizing the soliton is an isomorphism onto ${\mb C}^I$, so it has a unique right inverse. 

Equip every obstruction space $E_I$ a $\Gammait_I$-invariant norm. Notice that over the embedding image of $\hat V_{JI, q_i}^\epsilon$ in $\hat U_{J, q_j}$, the $E_{J-I}$-component of the values of the projection $\hat U_{J, q_j} \to E_J$, denoted by $\hat S_{J-I}$, is zero. Hence for any $r > 0$, there exists a neighborhood $\hat N_{JI, q_i}^{\epsilon ;r} \subset \hat U_{J, q_j}$ of the embedding image of $\hat V_{JI, q_i}^{\epsilon}$ over which $\| \hat S_{J-I}\| < r$. For each $\hat w = (\alpha_J, \tilde {\bm u}, e_J ) \in \hat N_{JI, q_i}^{\epsilon;r}$, forgetting markings labeled by $j \in J - I$ we obtain a parameter $\alpha_I$; write $e_J = (e_I, e_{J-I} )$. Then the family $(\alpha_I, \tilde {\bm u}, e_I)$ satisfy
\beqn
\Big(\tilde {\mz F}^{\alpha_I}_{E_I} (\tilde {\bm u}, e_I),\ n_{I,q_i}(\alpha_I, \tilde {\bm u})\Big) = {\rm error}
\eeqn
where the right hand side is linear in $e_{J-I} = \hat S_{J-I}(\hat w)$ and hence its norm is controlled by $\epsilon$. Hence by the implicit function theorem, using the right inverses we have discussed (we use different Banach manifolds in the lower stratum and the top stratum), for $r$ sufficiently small, one can correct it to $(\alpha_I', \tilde {\bm u}', e_I')$ satisfying $\tilde {\mz F}_{E_I}^{\alpha_I} (\tilde {\bm u}', e_I') = 0$ and $n_{I, q_i}(\alpha_I', \tilde {\bm u}') = 0$. This gives a (germ of)  tubular neighborhood of the embedding image of $\hat V_{JI, q_i}^\epsilon$, and $E_{J-I}$ parametrizes the normal directions. 
\end{proof}

Everything in the cocycle condition (see Definition \ref{defn24}) is straightforward to check. The same construction can be done for BPS-solitons. We omit the details. The set indexing charts for $\tilde {\mc M}^b$ is denoted by $\mbit{I}^b$. Define $\mbit{I}^\sld = \mbit{I}^s \sqcup \mbit{I}^b$. We summarize what we have obtained so far.

\begin{prop}\hfill
\begin{enumerate}

\item For each $I \in \mbit{I}^\sld$, there is a local chart $C_I$ of $\tilde {\mc M}$. The footprints of these charts cover $\tilde {\mc M}^\sld$.

\item Whenever $I\preq J$, there is a coordinate change $T_{JI}: C_I \to C_J$ whose footprint contains $\ov{F_J'} \cap \tilde {\mc M}^\sld$.
\item These coordinate changes satisfy the cocycle condition. 
\end{enumerate}
\end{prop}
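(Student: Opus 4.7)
The plan is to assemble the three items from the constructions already carried out in Subsections \ref{subsection61} and \ref{subsection62}, handling the non-BPS stratum $\tilde{\mc M}^s$ and the BPS stratum $\tilde{\mc M}^b$ in parallel (the two can be treated separately because Corollary \ref{cor416} guarantees that they are disjoint compact subsets of $\tilde{\mc M}^\sld$, so no chart needs to straddle both strata). First I would fix, once and for all, a representative $q_i$ for each $I \in \mbit{I}^\sld$ and a distinguished element $\iota(I) \in I$; the chart $C_I$ will be $C_{I, q_{\iota(I)}}$ produced by Corollary \ref{cor62} (in the non-BPS case) or by its BPS analogue sketched at the end of Subsection \ref{subsection62}. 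Item (1) is then immediate: the charts exist, and the Covering Condition follows from the compactness of $\tilde{\mc M}^\sld$ together with the choice of the shrunken footprints $F_{q_i}' \sqsubset F_{q_i}$ made in Subsection \ref{subsection61}.

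For Item (2), given $I \preq J$ I would take the pair $(i, j) = (\iota(I), \iota(J))$ and apply Lemma \ref{lemma63}, which produces a $\Gammait_I$-equivariant embedding $({\phi}_{JI}^{\,ji}, \wh{\phi}_{JI}^{\,ji})$ of orbibundles with the locally flat tubular neighborhood condition verified. To promote this to a coordinate change in the sense of Definition \ref{defn23}, I still need the footprint matching $\psi_I(S_I^{-1}(0) \cap U_{JI}) = F_I \cap F_J$ and the sequential closedness of the domain $U_{JI}$; both follow directly from the construction of the sets $\hat U_{JI, q_i}^\epsilon$ as $\epsilon$-neighborhoods of $\ov{K_{JI,q_i}'}$. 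The statement that the footprint of $T_{JI}$ contains $\ov{F_J'} \cap \tilde{\mc M}^\sld$ is built into the definition of $K_{JI, q_i}'$, provided $\epsilon$ is chosen small enough (shrink the finitely many $\epsilon$'s simultaneously to obtain uniform control).

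For Item (3), suppose $I \preq J \preq K$. The key observation is that all three embeddings in question are induced by a common mechanism: adding markings ${\bm w}_{q_l}$ (for $l$ in the larger index set) determined by the normalization equation \eqref{eqn44}, together with the natural inclusions $E_I \hookrightarrow E_J \hookrightarrow E_K$ of obstruction spaces. Since the normalization equations for different indices $l$ involve different functions $h_{q_l}$ evaluated at disjoint $\Gammait_{q_l}$-orbits of markings, the order in which markings are added does not affect the resulting augmented object; hence on $U_{KJI}$ both $\wh\phi_{KI}$ and $\wh\phi_{KJ} \circ \wh\phi_{JI}$ produce the same configuration $(\alpha_K, \tilde{\bm u}, e_I) \in \tilde{\mc M}_{E_K}$. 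The Overlapping Condition from Definition \ref{defn24} is automatic from the definition of the index set $\mbit{I}^\sld$ as the set of $I$ for which $\bigcap_{i \in I} \ov{F_{q_i}} \cap \tilde{\mc M}^\sld \neq \emptyset$, ordered by inclusion.

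I expect the main obstacle to be the verification of the tangent bundle condition implicit in the coordinate change, which requires exhibiting an extension subbundle $E_{I;J} \subset E_J|_N$ and checking transversality of $S_J$ to it. This is packaged inside the proof of Lemma \ref{lemma63}: the subbundle is the $E_{J-I}$-summand, and transversality uses the right inverses provided by Item (2) of Proposition \ref{prop61} together with the implicit function theorem applied to the augmented equation $(\tilde{\mz F}_{E_I}^{\alpha_I}, n_{I, q_i}) = 0$. One delicate point is that the right inverses in the lower and top strata live in different Banach manifolds, so the implicit function theorem must be applied stratumwise and the resulting local model matched across the strata using the pregluing/correction homeomorphism $\hat U_{E_I, q_i}^0 \times [0, \epsilon) \simeq \hat U_{E_I, q_i}$ used in the proof of Proposition \ref{prop61}.
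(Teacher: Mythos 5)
Your proof is essentially correct and takes the same route as the paper, which itself gives almost no argument here (it states only that the cocycle condition is ``straightforward to check'' and that the BPS case is analogous): Item (1) from Corollary \ref{cor62} and compactness of $\tilde{\mc M}^\sld$, Item (2) from Lemma \ref{lemma63}, Item (3) from the order-independence of adding normalized markings together with the natural inclusions of obstruction spaces.

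One small correction: your aside that the Overlapping Condition of Definition \ref{defn24} is ``automatic from the definition of the index set $\mbit{I}^\sld$'' is not right. If $\ov{F_{q_i}} \cap \ov{F_{q_j}} \cap \tilde{\mc M}^s \neq \emptyset$ with $i \neq j$, then both singletons $\{i\}$ and $\{j\}$ lie in $\mbit{I}^s$, the closures of $F_{\{i\}} = F_{q_i}$ and $F_{\{j\}} = F_{q_j}$ meet, yet neither index is contained in the other; so the raw footprints $F_I$ need not satisfy the overlapping condition. That condition is only achieved after the shrinking of Subsection \ref{subsection63} (the $F_{I}^\bullet$ of \eqref{eqn63} and Lemma \ref{lemma65}). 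Since the proposition you are proving does not assert the Overlapping Condition, this does not create a gap in your argument, but the remark should be removed.
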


\subsection{The virtual orbifold atlas}\label{subsection63}

The charts constructed so far cover a neighborhood of the lower stratum. We need to construct more charts to cover the remaining of the moduli space. First, there is a precompact open neighborhoods ${\mc U}_0^\sld \sqsubset \tilde{\mc M}^\sld$ such that 
\beqn
\ov{ {\mc U}_0^\sld } \subset \bigcup_{i=1}^m F_{q_i}.
\eeqn
Then for all $q \in \tilde{\mc M} \setminus {\mc U}_0^\sld$, one can construct a local chart $C_q$ around $q$ as in Subsection \ref{subsection41}. We can choose finitely many $C_{q_j}$, $j = m+ 1, \ldots, m+ m'$ such that 
\beqn
\tilde{\mc M} \setminus {\mc U}_0^\sld  \subset \bigcup_{1}^{m'} F_{q_{m+j}}.
\eeqn
We obtain an open cover of $\tilde{\mc M}$ by the open sets $F_{q_i}$ for $i = 1, \ldots, m + m'$. Abbreviate $F_i = F_{q_i}$. We can shrink them to precompact open subsets $F_i' \sqsubset F_i$ such that all $F_i'$ still cover $\tilde {\mc M}$. Then define
\beqn
\mbit{I}:= \Big\{ I  = I^\sld \sqcup I^c \subset \{ 1, \ldots, m + m' \} \ |\  I \neq \emptyset,\ \bigcap_{i \in I} \ov{F_i} \neq \emptyset  \Big\}.
\eeqn
We still use $\preq$ to denote the partial order on $\mbit{I}$ defined by inclusions of subsets. Define 
\begin{align*}
&\ F_I = \bigcap_{i \in I} F_i,\ &\ F_I' = \bigcap_{i \in I} F_i'.
\end{align*}
We have constructed charts $C_I$ for all $I \in \mbit{I}^\sld \subset \mbit{I}$ whose footprints cover $\ov{F_I'}$. Using the same method as in \cite{Tian_Xu_3} it is much easier to construct charts $C_I$ for all $I \in \mbit{I} \setminus \mbit{I}^\sld$ whose footprints cover $\ov{F_I'}$. We have also constructed coordinate changes $T_{JI}$ for all $I \preq J$ and $I, J \in \mbit{I}^\sld$ whose footprints contain $\ov{F_J'}$. It is also much easier to construct coordinate changes $T_{JI}$ for all other relations $I \preq J$ in $\mbit{I}$, whose footprints cover $\ov{F_J'}$.

So far we have construction a collection of charts and a collection of coordinate changes that satisfy the cocycle condition. However to make these data a virtual orbifold atlas, their footprints have to satisfy the overlapping condition of Definition \ref{defn24}. This can be done by properly shrinking the charts. The specific method of shrinking we use here is modified from a similar one in \cite{MW_3}.

Order the set $\mbit{I}$ as $\{ I_1, \ldots, I_m\}$ such that if $I_k \preq J$ then $J \in \{ I_k, \ldots, I_m\}$. For each $i$ choose precompact shrinkings
\beqn
F_i' =: G_{i, 1} \sqsubset G_{i,1} \sqsubset \cdots \sqsubset G_{i, m} \sqsubset F_{i, m} \sqsubset F_i.
\eeqn
Then for $I_k \in \mbit{I}$, define
\beq\label{eqn63}
F_{I_k}^\bullet:= \Big[ \bigcap_{i \in I_k} F_{i, k} \Big] \setminus \Big[  \bigcup_{i\notin I_k } \ov{ G_{i, k} }	 \Big] \subset F_{I_k}.
\eeq
\begin{lemma}\label{lemma65}
The collection $\{ F_{I_k}^\bullet \ |\  I_k \in \mbit{I}\}$ is an open cover of $\tilde{\mc M}$ and satisfies the overlapping condition, namely
\beqn
\ov{F_I^\bullet}  \cap  \ov{F_J^\bullet} \neq \emptyset \Longrightarrow I \preq J \ {\rm or}\ J \preq I.
\eeqn
\end{lemma}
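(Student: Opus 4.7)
The plan is to verify the two assertions separately; the overlap condition is purely combinatorial and follows from the chain of precompact inclusions, while the covering condition requires, in addition, a careful inductive choice of the shrinkings $G_{i,k} \sqsubset F_{i,k}$ in the style of \cite{MW_3}. I expect the main obstacle to be the covering condition, since the overlap will follow from a direct argument.

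For the \emph{overlap condition}, assume $\ov{F_{I_k}^\bullet} \cap \ov{F_{I_l}^\bullet} \neq \emptyset$ with (without loss of generality) $k < l$, and argue by contradiction: suppose there exists $i_0 \in I_k \setminus I_l$. Pick $x \in \ov{F_{I_k}^\bullet} \cap \ov{F_{I_l}^\bullet}$. From \eqref{eqn63} we have $F_{I_k}^\bullet \subset F_{i_0, k}$, and the precompact chain $\ov{F_{i_0, k}} \subset G_{i_0, k+1} \subset \cdots \subset G_{i_0, l}$ forces $x \in G_{i_0, l}$. On the other hand, $i_0 \notin I_l$ gives by definition $F_{I_l}^\bullet \cap \ov{G_{i_0, l}} = \emptyset$; since $G_{i_0, l}$ is open, this propagates to $\ov{F_{I_l}^\bullet} \cap G_{i_0, l} = \emptyset$, contradicting $x \in G_{i_0, l}$. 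Hence $I_k \subset I_l$, i.e., $I_k \preq I_l$.

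For the \emph{covering condition}, let $x \in \tilde{\mc M}$ and set $I(x) := \{i : x \in F_i\}$. This is nonempty because the $F_i'$ cover $\tilde{\mc M}$, and it belongs to $\mbit{I}$ since $x$ itself witnesses the nonemptiness of $\bigcap_{i \in I(x)} \ov{F_i}$. Let $k$ be the index of $I(x)$ in the enumeration. I claim $x \in F_{I_k}^\bullet$. The exclusion part is immediate: for $i \notin I_k$ we have $x \notin F_i$, hence $x \notin \ov{G_{i, k}}$ because $\ov{G_{i, k}} \subset F_i$ by precompactness. The inclusion part, $x \in F_{i, k}$ for $i \in I_k$, is not automatic from $x \in F_i$; it requires that the shrinkings $F_{i, k}$ be chosen inductively so that at stage $k$ they still contain the set $\{y \in \tilde{\mc M} : I(y) = I_k\}$. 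This is possible because the ordering of $\mbit{I}$ with supersets later, combined with the alternating chain $G_{i, k} \sqsubset F_{i, k} \sqsubset G_{i, k+1} \sqsubset \cdots$, provides the buffer needed: at each step only finitely many constraints must be satisfied simultaneously, and the precompactness of the relevant closed subsets of $F_i$ allows the shrinkings to be chosen inductively while preserving both the covering and the overlap argument above.
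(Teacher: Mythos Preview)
Your overlap argument is correct and essentially identical to the paper's.

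Your covering argument, however, takes an unnecessarily hard route. You attempt to show that $x \in F_{I(x)}^\bullet$ where $I(x) = \{i : x \in F_i\}$, and you correctly observe that the inclusion $x \in \bigcap_{i \in I(x)} F_{i,k}$ is not automatic---it would force extra constraints on the shrinkings. You then propose to satisfy these constraints by an inductive construction, but you do not actually carry it out; the sentence beginning ``This is possible because\ldots'' is a hope, not a proof. In fact your constraint is delicate: for $I_k = I(x)$ you would need the (generally non-precompact) set $\{y : I(y) = I_k\}$ to lie inside the precompact $F_{i,k}$, which is not obviously arrangeable.

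The paper avoids this entirely by a different choice of index. Rather than using $I(x)$, it takes the \emph{largest} $k$ such that $x \in \bigcap_{i \in I_k} F_{i,k}$; such a $k$ exists because $x \in F_i' = G_{i,1} \subset F_{i,k'}$ for some singleton $I_{k'} = \{i\}$. The inclusion half of $x \in F_{I_k}^\bullet$ is then free by construction. For the exclusion half, if $x \in \ov{G_{i_0,k}}$ for some $i_0 \notin I_k$, then $I_l := I_k \cup \{i_0\}$ lies in $\mbit{I}$ with $l > k$, and the chain $F_{i,k} \subset F_{i,l}$, $\ov{G_{i_0,k}} \subset F_{i_0,l}$ yields $x \in \bigcap_{i \in I_l} F_{i,l}$, contradicting maximality. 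This argument works for \emph{any} chain of precompact shrinkings---no inductive care is required. The maximality trick is the missing idea in your proposal.
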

\begin{proof}
We first prove the overlapping condition. Suppose $ x\in \ov{F_{I_k}^\bullet} \cap \ov{F_{I_l}^\bullet}$ with $k < l$. If $I_k \preq I_l$ does not hold, then there is some $i_0 \in I_k \setminus I_l$. Then 
\beqn
x \in \ov{F_{I_k}^\bullet} \subset \ov{\bigcap_{i \in I_k} F_{i, k}} \subset \bigcap_{i \in I_k} \ov{F_{i, k}} \subset \ov{F_{i_0, k}} \subset G_{i_0, l}.
\eeqn
This contradicts $x \in \ov{F_{I_l}^\bullet}$ because
\beqn
x \in \ov{F_{I_l}^\bullet} \subset \ov{ \bigcap_{i \notin I_l} \left[ \tilde {\mc M} \setminus \ov{G_{i, l}} \right] } \subset \ov{ \tilde {\mc M} \setminus \ov{G_{i_0, l}}} \subset \tilde {\mc M} \setminus G_{i_0, l}.
\eeqn
Therefore $I_k \preq I_l$. So the overlapping condition holds. To prove that all $F_I^\bullet$ cover $\tilde {\mc M}$, take an arbitrary $x \in \tilde {\mc M}$. Since all $F_I'$ cover $\tilde {\mc M}$, there is some $k$ such that 
\beqn
x \in F_{I_k}' = \bigcap_{i \in I_k} F_i' \subset \bigcap_{i \in I_k} F_{i, k}.
\eeqn
Let $k$ be the largest number satisfying the above relation. We claim that $x \in F_{I_k}^\bullet$. If it is not the case, then there is some $i_0 \notin I_k$ such that $x \in \ov{G_{i_0, k}} \subset F_{i_0}$, and there is some $l>k$ such that $I_l$ contains $i_0$ and $I_k$. Then 
\beqn
x \in \bigcap_{i \in I_k} F_{i, k} \cap \ov{G_{i_0, k}} \subset \bigcap_{i \in I_l} F_{i, l}
\eeqn
which contradicts the maximality of $k$. This proves the covering property.\end{proof}

Therefore we can shrink the charts $C_I$ so that the shrunk footprints are $F_I^\bullet$. To simplify the notations, we still denote the shrunk charts by $C_I$ and shrunk footprints by $F_I$. Then by Definition \ref{defn24}, we have constructed a virtual orbifold atlas
\beqn
{\mf A}= \Big( \big\{ C_I\ |\ I \in \mbit{I} \big\},\ \big\{ T_{JI}\ |\ I \preq J \big\} \Big)
\eeqn
on the moduli space $\tilde {\mc M}$. Therefore we finish the proof of Theorem \ref{thm34} except for the part about orientations. 

\subsection{Orientation of the boundary}\label{subsection64}

In this last part, we settle the signs appeared in the wall-crossing formula Theorem \ref{thm33}. As is well-known, choosing (independently) orientations on the unstable manifolds $W_{\kappa_{\iota}}^u$ and $W_{\upsilon_{\iota}}^u$ of the negative gradient flow line of the real part of $W_\iota$ determines orientations on the moduli space ${\mc M}_\iota( \upsilon_\iota)$ and ${\mc M}(\kappa_\iota)$. These orientations can be extended in a continuous way to all $\iota \in [\iota_-, \iota_+]$. It then induces an orientation on all charts over the uncompactified moduli space $\tilde{\mc M}^*(\kappa)$. So the first two components of \eqref{eqn36} are oriented as 
\beqn
\Big[ - {\mc M}_{\iota_-}(\kappa_{\iota_-}) \Big] \sqcup \Big[ + {\mc M}_{\iota_+}(\kappa_{\iota_+} ) \Big]. 
\eeqn

Now we determine the induced orientation on the third component ${\mc M}_{\iota_0}^b(\kappa_{\iota_0})$. Let $q \in \tilde{\mc M}^b (\kappa)\simeq {\mc M}^b_{\iota_0}(\kappa_{\iota_0})$ be represented by a soliton solution $(\iota_0, {\bm u}_q^\sld) = (\iota_0; {\bm u}_q, \sigma_q)$. To study orientation, it suffices to assume that ${\bm u}_q$ is regular and $\sigma_q$ is maximally transverse. Here $\sigma_q$ being maximally transverse means that the linearization of the equation 
\beqn
(h, \sigma) \mapsto \frac{d \sigma}{ds} + \nabla W_{\iota_0} (\sigma(s)) - h J \nabla W_{\iota_0} (\sigma(s)) = 0
\eeqn
at $(0, e^{ \frac{\i m t}{r}} \sigma_q)$ is surjective (see \cite[Section 5]{Tian_Xu_3} for more details). 

Since we are only considering the index zero case, the orientation on the zero dimensional moduli space ${\mc M}_{\iota_0}(\upsilon)$ determines a sign
\beqn
{\bf Sign}({\bm u}_q) \in \{ \pm 1\}.
\eeqn
On the other hand, the element $\dot \sigma_q \otimes J \dot \sigma_q$ in the determinant line of the linearization of the gradient flow equation provides a sign 
\beqn
{\bf Sign}(\sigma_q) \in \{\pm 1\}.
\eeqn
Then the counting $\# {\mc N}(\upsilon_{\iota_0}, \kappa_{\iota_0})$ appearing in Theorem \ref{thm33} is defined as 
\beqn
\# {\mc N}(\upsilon_{\iota_0}, \kappa_{\iota_0}):= \sum_{[\sigma_q] \in {\mc N}(\upsilon_{\iota_0}, \kappa_{\iota_0})} {\bf Sign}(\sigma_q).
\eeqn

Recall that in Corollary \ref{cor414}, we constructed a family of objects
\beqn
\tilde{\bm u}_{t, \sigma_q} \in \tilde{\mc M}^*(\kappa),\ t \in (0, \epsilon)
\eeqn
which converges as $ t\to 0$ to the soliton solution ${\bm u}_q^\sld$ such that this family gives a continuous map $[0, \epsilon) \to \tilde{\mc M}(\kappa)$ which is a homeomorphism onto its image. Here the construction depends on choosing a concrete BPS soliton $\sigma_q: (-\infty, +\infty) \to X$ as part of a representative of the point $q$. On the other hand, if we replace $\sigma_q$ by a reparametrization $\sigma_q^a$ defined as
\beqn
\sigma_q^a (s) = \sigma_q(s + a),
\eeqn
then fixing $t_0 \in (0, \epsilon)$ the family $\tilde{\bm u}_{t_0, \sigma_q^a}$ as $a$ varies near $0$ also gives a local homeomorphism into $\tilde{\mc M}(\kappa)$ (which is a one-dimensional manifold with boundary near $q$). It is easy to see that the direction in which $t$ increases is the same as the direction in which $a$ decreases. Therefore, to evaluate the boundary contribution, we need to compare this direction with the canonical orientation class in $\det \tilde D_{\tilde{\bm u}_{t, \sigma_q}}$ (where $\tilde D_{\tilde{\bm u}_{t, \sigma_q}}$ is the linearization of the gauged Witten equation, including the gauge fixing, at $\tilde{\bm u}_{t, \sigma_q}$).

Now we are ready to examine the orientation on $\tilde {\mc M}(\kappa)$. Write the linearized operator 
\beqn
D_q^\sld: {\mb R} \times T_{{\bm u}_q^\sld} {\mc B}^\sld \to {\mc E}^\sld \simeq {\mc E}_\upsilon \oplus {\mc E}_\infty'
\eeqn
at the BPS soliton solution $\tilde{\bm u}_q^\sld = (\iota_0; {\bm u}_q^\sld) := (\iota_0; {\bm u}_q, \sigma_q)$ in the block form as 
\beqn
D_q^\sld = \left[\begin{array}{ccc} L_\Sigma &  D_\Sigma &  0 \\
                                    L_\infty  &  T_\infty^\Sigma &  D_\infty' \end{array}  \right].
\eeqn
Here we use the type of notations similar to \eqref{eqn511}. Notice that $L_\Sigma$ and $L_\infty$ are the directional derivatives in the direction of increasing the value of $\iota$, hence are of rank one; $T_\infty^\Sigma$ is the derivative of the parameter $\delta$ in \eqref{eqn58}, hence is also of rank one. Since both $D_\Sigma$ and $D_\infty'$ are Fredholm and oriented, there is an naturally induced orientation of the following deformed operator
\beqn
\tilde F_q^T = \left[ \begin{array}{ccc}  0 & D_\Sigma & 0  \\
                                      T L_\infty & 0 & D_\infty' \end{array} \right],\ T \geq 0.
\eeqn
Let $\tilde D_\infty^T: {\mb R} \times T_{\sigma_q} {\mc B}_\infty' \to {\mc E}_\infty'$ be the operator represented by the last row above. Introduce the symbol $\overset{\rm can}{\simeq}$ which means ``canonically isomorphic'' (up to homotopy). Then one has
\beqn
\det \tilde D_{\tilde{\bm u}_{t, \sigma_q}} \overset{\rm can}{\simeq} \det \tilde D_q^\sld \overset{\rm can}{\simeq}  \det \tilde F_q^T \overset{\rm can}{\simeq} \det D_\Sigma \otimes  \det \tilde D_\infty^T.
\eeqn
Here the first isomorphism is from the coherence of the orientation in the sense of Floer--Hofer \cite{Floer_Hofer_orientation}, and the second isomorphism is from the deformation of operators described above. At $T = 0$, a positive element of $\det D_\Sigma \otimes \det \tilde D_\infty^0 \overset{\rm can}{\simeq} \det \tilde D_{\sigma_q}^0$ is 
\beqn
\tilde \theta_q^0 = {\bf Sign} ({\bm u}_q) \cdot {\bf Sign} (\sigma_q)  \Big[ \partial_\iota \wedge \dot \sigma_q \Big] \otimes \Big[ J \dot \sigma_q \Big].
\eeqn
By \cite[Lemma 6.1]{Tian_Xu_3}, the canonical identification $\det \tilde D_{\sigma_q}^0 \overset{\rm can}{\simeq} \det \tilde D_{\sigma_q}^T$ sends $\tilde \theta_q^0$ to 
\beqn
\tilde \theta_q^T = {\bf Sign} ({\bm u}_q) \cdot {\bf Sign}(\sigma_q) \cdot {\bf Sign} \langle L_\infty (\partial_\iota), J \dot \sigma_q \rangle \Big[ \dot \sigma_q \Big] \otimes 1.
\eeqn
By the calculation we did in the last part of \cite[Section 6]{Tian_Xu_3}, we know that the third sign is equal to $(-1)^{\tilde F}$. On the other hand, as shown previously, $[\dot \sigma_q] \otimes 1$ is the direction opposite to there the gluing parameter increases (this is exactly the opposite to the sign appeared in Picard--Lefschetz formula, see \cite[Section 6]{Tian_Xu_3}). Therefore, the third oriented component of $\partial \tilde{\mc M}(\kappa)$ is 
\beqn
 (-1)^{\tilde F} \tilde{\mc M}^b(\kappa) \simeq (-1)^{\tilde F} {\mc M}^b_{\iota_0}(\kappa_{\iota_0})
\eeqn
where ${\mc M}_{\iota_0}^b(\kappa_{\iota_0} )$ is equipped with the product orientation (though the moduli space $\tilde{\mc M}_{\iota_0}^b(\kappa_{\iota_0})$ is not a product). This finishes the proof of Theorem \ref{thm34} and hence Theorem \ref{thm33}.

\bibliography{mathref}

\bibliographystyle{amsplain}

\end{document}